\documentclass[11pt]{amsart}
\usepackage{graphicx}
\usepackage{amssymb,amscd,color,latexsym,epsfig,xypic,hyperref, float}
\usepackage[mathscr]{euscript}

\usepackage{tikz-cd}
\usepackage{tikz}
\usetikzlibrary{calc,trees,positioning,arrows,chains,shapes.geometric,%
    decorations.pathreplacing,decorations.pathmorphing,shapes,%
    matrix,shapes.symbols,arrows.meta}

\tikzcdset{tikzcd left hook/.tip = {cm left hook[scale=2]},
                 tikzcd right hook/.tip = {cm left hook[scale=2,swap]}}

\tikzset{
>=stealth',
  punktchain/.style={
    rectangle,
    rounded corners,
    draw=black, thick,
    minimum height=3em,
    text centered,
    on chain},
  line/.style={draw, thick, <-},
  element/.style={
    tape,
    top color=white,
    bottom color=blue!50!black!60!,
    minimum width=8em,
    draw=blue!40!black!90, very thick,
    text width=10em,
    minimum height=3.5em,
    text centered,
    on chain},
  every join/.style={->, thick,shorten >=1pt},
  decoration={brace},
  tuborg/.style={decorate},
  tubnode/.style={midway, right=2pt},
}

\DeclareFontFamily{OT1}{rsfs}{}
\DeclareFontShape{OT1}{rsfs}{n}{it}{<-> rsfs10}{}
\DeclareMathAlphabet{\curly}{OT1}{rsfs}{n}{it}

\DeclareFontFamily{U}{mathb}{\hyphenchar\font45}
\DeclareFontShape{U}{mathb}{m}{n}{
      <5> <6> <7> <8> <9> <10> gen * mathb
      <10.95> mathb10 <12> <14.4> <17.28> <20.74> <24.88> mathb12
      }{}
\DeclareSymbolFont{mathb}{U}{mathb}{m}{n}

\makeatletter
\newcommand{\eqnum}{\refstepcounter{equation}\textup{\tagform@{\theequation}}}
\makeatother
\definecolor{darkgreen}{RGB}{10,130,0}

\renewcommand\;{\hspace{.6pt}}

\newcommand\C{\mathbb C}
\newcommand\Q{\mathbb Q}
\newcommand\R{\mathbb R}

\newcommand\Z{\mathbb Z}
\renewcommand\AA{\mathbb A}

\newcommand\EE{\mathbb E}

\newcommand\HH{\mathbb H}

\newcommand\LL{\mathbb L}

\newcommand\PP{\mathbb P}

\newcommand\TT{\mathbb T}

\newcommand\cO{\mathcal O}

\newcommand\cC{\mathcal C}

\renewcommand\cH{\mathcal H}
\newcommand\cI{\mathcal I}

\newcommand\ft{\mathfrak t}
\newcommand\fg{\mathfrak g}
\newcommand\fh{\mathfrak h}

\newcommand{\fs}{\mathfrak{s}}

\newcommand{\sH}{\mathsf{H}}
\newcommand{\sG}{\mathsf{G}}

\newcommand{\sL}{\mathsf{L}}
\newcommand{\sS}{\mathsf{S}}
\newcommand{\sT}{\mathsf{T}}
\newcommand{\sX}{\mathsf{X}}

\newcommand{\sZ}{\mathsf{Z}}

\newcommand{\Bd}{\theta}

\newcommand{\Bz}{\mathbf{z}}

\newcommand\eps{\epsilon}

\renewcommand\({\big(}
\renewcommand\){\big)}

\makeatletter
\newcommand{\so}{\ \ext@arrow 0359\Rightarrowfill@{}{\hspace{3mm}}\ }
\makeatother
\newcommand{\rt}[1]{\xrightarrow{\ #1\ }}
\newcommand\To{\longrightarrow}

\newcommand\into{\hookrightarrow}
\newcommand\INTO{\ \ar@{^(->}[r]<-.2ex>}
\newcommand{\Into}{\ensuremath{\lhook\joinrel\relbar\joinrel\rightarrow}}
\newcommand\onto{\to\hspace{-3mm}\to}
\newcommand\Onto{\longrightarrow\hspace{-5.5mm}\longrightarrow}
\newcommand\Mapsto{\ \longmapsto\ }

\newcommand\wt{\widetilde}
\renewcommand\_{^{}_}

\newcommand\<{\langle}
\renewcommand\>{\rangle}

\newfont{\bigtimesfont}{cmsy10 scaled \magstep5}
\newcommand{\bigtimes}{\mathop{\lower0.9ex\hbox{\bigtimesfont\symbol2}}}
\renewcommand\={\ =\ }

\newcommand\udot{^{\bullet}}
\newcommand{\Wedge}{\mbox{\Large $\wedge$}}
\newcommand\acts{\curvearrowright}

\DeclareMathSymbol{\lefttorightarrow}{3}{mathb}{"FC}
\DeclareMathSymbol{\righttoleftarrow}{3}{mathb}{"FD}

\newcommand\op{\operatorname}
\newcommand\rank{\op{rank}}
\newcommand\codim{\op{codim}}
\newcommand\pt{\op{pt}}
\renewcommand\ss{\op{ss}}

\newcommand\rk{\op{rank}}
\newcommand\vir{\op{vir}}

\newcommand\res{\op{res}}
\newcommand\Res{\op{Res}}
\newcommand\red{\op{red}}
\newcommand\sr{\op{sr}}
\newcommand\vd{\op{vd}}

\newcommand\im{\op{im}}
\newcommand\id{\op{id}}

\newcommand\Hom{\op{Hom}}

\newcommand\Pic{\op{Pic}}

\newcommand\Spec{\op{Spec}}
\newcommand\Hilb{\op{Hilb}}
\newcommand\cone{\op{Cone}}
\newcommand\rspan{\op{span}}
\newcommand\JK{\op{JK}}
\newcommand\TJK{\op{TJK}}
\newcommand{\CT}{\mathrm{CT}}

\newcommand\Sym{\op{Sym}}

\renewcommand{\div}{/\!\!/}

\newcommand{\Faces}{\mathrm{Faces}(\Sigma)}

\newcommand{\sm}{\;\mathrm{sm}}
\newcommand\Perf{\op{Perf}}
\newcommand\coh{\op{coh}}

\newcommand{\dT}{{\sT_{\!\mathrm{diag}}}}
\newcommand{\wsigma}{\widetilde\sT_{\!\sigma^\perp}}
\newcommand{\wSigma}{\widetilde\sT_{\!\Sigma^\perp}}
\newcommand{\dsigma}{\widetilde\sT_{\!\sigma^\perp}^{\;\diag}}
\newcommand{\sSigma}{\mathsf{\Sigma_\C}}
\newcommand{\diag}{{\mathrm{diag}}}

\newcommand{\Xaff}{\Spec H^0(X,\cO_X)}

\newcommand\commentOn{
	\newcommand{\commentO}[1]{\noindent\textcolor{blue}{\texttt{RO:}##1}}
    \newcommand{\commentT}[1]{\noindent\textcolor{red}{\texttt{RT:}##1}}}

\newcommand\beq[1]{\begin{equation}\label{#1}}
\newcommand\eeq{\end{equation}}
\newcommand\beqa{\begin{eqnarray*}}
\newcommand\eeqa{\end{eqnarray*}}

\newcommand\arXiv[1]{\href{http://arxiv.org/abs/#1}{arXiv:#1}}
\newcommand\mathAG[1]{\href{http://arxiv.org/abs/math/#1}{math.AG/#1}}
\newcommand\mathSG[1]{\href{http://arxiv.org/abs/math/#1}{math.SG/#1}}

\newtheorem{theorem}{Theorem}[section]

\theoremstyle{definition}
\newtheorem{definition}[theorem]{Definition}
\newtheorem{remark}[theorem]{Remark}
\newtheorem{remarks}[theorem]{Remarks}
\newtheorem{example}[theorem]{Example}

\newtheorem{lemma}[theorem]{Lemma}
\newtheorem{corollary}[theorem]{Corollary}
\newtheorem{proposition}[theorem]{Proposition}

\title{Virtual Jeffrey--Kirwan localisation}
\author{Riccardo Ontani}
\author{Richard P. Thomas}

\begin{document}
\commentOn
\begin{abstract} We express integrals over virtual cycles of GIT quotients $X\div\sG$ in terms of integrals over virtual cycles of fixed loci $X^\sT$.  The results hold for both perfect obstruction theories and $(-2)$-shifted symplectic structures, in cohomology and in $K$-theory, and for noncompact Deligne-Mumford stacks acted on by a reductive group with compact quotient.
\end{abstract}
\maketitle

\section{Introduction}

Let $\sG \curvearrowright(X,L)$ be the linearised action of a reductive group on a polarised smooth projective variety. Let $\sT\subset \sG$ be a maximal torus.

\medskip

\noindent\textbf{ABBV localisation.} The Atiyah--Bott--Berline--Vergne localisation formula \cite{AtiyahBott,BerlineVergne} expresses integrals over $X$ via integrals over the components $F$ of the fixed locus $X^\sT$\!,
$$
\int_X\alpha\=\sum_{F \subseteq X^\sT} \int_F \frac{\alpha|_F}{e^\sT(N_{F/X})}\ \text{ for }\,\alpha\,\in\,H^*_\sT(X).
$$

\noindent\textbf{JK localisation.}
By contrast the Jeffrey--Kirwan localisation formula \cite{JeffreyKirwan} expresses integrals over the \emph{GIT quotient} $(X\div\sG,L_0)$ of $(X,L)$ in terms of integrals over the fixed locus $X^\sT$. For instance in the abelian case $\sG=\sT$ with no strictly semistable points it gives
$$
\int_{X\div\;\sT}\alpha\_0\=\sum_{F \subseteq X^\sT}\JK^{\eta}_{\mu(F)+\eps}\int_F \frac{\alpha|_F}{e^\sT(N_{F/X})}\ \text{ for }\,\alpha\,\in\,H^*_\sT(X).
$$
Here $\alpha\_0$ is the image of $\alpha|_{X^{\ss}}\in H^*_\sT(X^{\ss})\cong H^*(X^{\ss}/\;\sT)=H^*(X\div\;\sT)$ and $\mu:X\to\ft^\vee$ is the moment map. The residue operation $\JK$ will be explained in Section \ref{section:JK_res}; it depends on a choice of $\eta\in\ft$, and only $F$ with $\langle\mu(F),\eta\rangle>0$ contribute to the formula. (The perturbation $\epsilon$ is explained in Section \ref{section:JK_loc}; it is not necessary if the points $\mu(F)\in\ft^\vee$ avoid a certain hyperplane arrangement.) There are generalisations to nonabelian $\sG$, noncompact $X$ with compact $X\div\sG$ and a perhaps more familiar version with $\alpha$ replaced by $\alpha\cdot\exp(c_1^\sT(L))$. The result is particularly dramatic when applied to a vector space $V$, computing integrals over $V\div\sG$ from a single contribution at the origin $\{0\}=V^\sT\subset V$.\medskip

\noindent\textbf{Virtual ABBV localisation.} There are two generalisations of ABBV localisation to projective schemes or Deligne-Mumford stacks with $\sT$-equivariant obstruction theory $\EE\to\LL_X$.
Graber--Pandharipande's \cite{GraberPandharipande} applies when $\EE$ is \emph{perfect} and 2-term \cite{BehrendFantechi},
$$
\int_{[X]^{\vir}}\alpha\=\sum_{F \subseteq X^\sT} \int_{[F]^{\vir}}\frac{\alpha|_F}{e^\sT(N^{\vir}_{F/X})}\,.
$$
There is a similar formula \cite{OhThomas} for the virtual cycles of \cite{BorisovJoyce} when $\EE$ is \emph{symmetric} 3-term\footnote{In fact we need a slight derived enhancement of $(X,\EE)$ called a $(-2)$-shifted symplectic structure.} and \emph{oriented} \cite{CGJ}, using the Edidin-Graham class $\sqrt e$ of \cite[Equation 21]{OhThomas}:
$$
\int_{[X]^{\vir}}\alpha\=\sum_{F \subseteq X^\sT} \int_{[F]^{\vir}}\frac{\alpha|_F}{\sqrt e^\sT\!\(N^{\vir}_{F/X}\)}\,.
$$

\noindent\textbf{Virtual JK localisation.} 
In this paper we complete the square by proving generalisations of JK localisation to Deligne-Mumford stacks $X$ with $\sG$-equivariant obstruction theory $\EE\to\LL_X$ which is either (\emph{i}) perfect and 2-term \cite{BehrendFantechi} or (\emph{ii}) symmetric 3-term and oriented \cite{OhThomas}. We show that the GIT quotient $X\div\sG$ inherits an obstruction theory of the same type if there are no strictly semistable points. It therefore has a virtual cycle $[X\div\sG]^{\mathrm{vir}}$, integrals over which we express in terms of integrals over $[X^\sT]^{\mathrm{vir}}$.
For abelian $\sG=\sT$ and the virtual cycles of \cite{BehrendFantechi, LiTian} the formula is
\beq{vJK1}\tag{JK1}
\int_{[X\div\;\sT]^{\vir}}\alpha\_0\=\sum_{F \subseteq X^\sT}\JK^{\eta}_{\mu(F)+\eps}\int_{[F]^{\vir}}\frac{\alpha|_F}{e^\sT(N^{\vir}_{F/X})}\,.
\eeq
For the virtual cycles of \cite{BorisovJoyce, OhThomas} we require that $X$'s symmetric obstruction theory comes from a $\sG$-equivariant oriented $(-2)$-shifted symplectic derived structure. Then
\beq{vJK2}\tag{JK2}
\int_{[X\div\;\sT]^{\vir}}\alpha\_0\=\sum_{F \subseteq X^\sT}\JK^{\eta}_{\mu(F)+\eps}\int_{[F]^{\vir}}\frac{\alpha|_F}{\sqrt e^\sT\!\(N^{\vir}_{F/X}\)}\,.
\eeq
We may replace $\alpha$ by $\alpha\;e^{c_1^\sT(L)}$ in both formulae.  In Theorems \ref{th:abelian_JK} and \ref{th:abelian_JK_general} below we generalise \eqref{vJK1} to noncompact $X$ with compact $X\div\,\sT$, and in Theorem \ref{th:JK_cy4} we do the same for \eqref{vJK2}. \medskip

\noindent\textbf{Nonabelian quotients.} 
In Section \ref{section:NA_JK_loc} we adapt the algebraic version \cite{Maddock} of a method of Martin \cite{Martin} to deduce results for nonabelian $\sG$ directly from the abelian formulae \eqref{vJK1},\,\eqref{vJK2}. The most general result is Theorem \ref{th:JK},
$$
            \int_{[X\div \sG]^{\vir}} \alpha\_0 \= \frac{1}{\vert W \vert}\sum_{\substack{F\subseteq X^\sT\\\
            \langle \mu(F), \eta \rangle >0}} \JK^{\eta}_{\mu(F)+\eps}\left(e^\sT(\fg_\C/\ft_\C) \cdot\int_{[F]^{\vir}} \frac{\alpha^\sT\vert_{F}}{e^\sT(N^{\vir}_{F/X})}\right)
$$
for the virtual cycles of \cite{BehrendFantechi, LiTian}. For the virtual cycles of \cite{BorisovJoyce, OhThomas}, Theorem \ref{th:JK_cy4} proves the same formula with $e^\sT$ replaced by $\surd e^\sT$ in the denominator only. Here $W$ is the Weyl group of $\sG$ and $\fg_\C/\ft_\C$ carries the adjoint representation of $\sT$ so $e^\sT(\fg_\C/\ft_\C)$ is the product $\prod\epsilon_i$ of the roots $\eps_i\in\ft^\vee_\Z$ of $\sT\subset\sG$. Replacing $\alpha$ by $\alpha\cdot\exp(c_1^\sT(L))$ gives a formula which is more familiar in some settings.
\medskip

\noindent\textbf{Virtual $K$-theoretic JK localisation.} In Section \ref{KKJK}  we prove a number of $K$-theoretic JK localisation formulae under similar conditions, such as 
$$
            \chi\(\cO^{\vir}_{X\div\sG}\otimes V_0\) \= \frac1{|W|}\sum_{\substack{F\subseteq X^\sT\\ \langle\mu(F), \eta \rangle>0}}\TJK^\eta_{\mu(F)+\eps} \left(\Lambda\udot\op{(\fg_\C/\ft_\C)}^\vee \cdot\,\chi\_\sT\!\left(\!F,\,\frac{V\otimes\cO^{\vir}_{X^\sT}\big|_F}{\Lambda\udot\(N^{\vir}_{F/X}\)^\vee}\right)\right)
$$
for $V\in K^0_\sG(X)$ and $X$ with a $\sG$-invariant perfect obstruction theory. Here $V_0$ is the class $V|_{X^{\ss}}\in K^0_\sG(X^{\ss})\cong K^0(X^{\ss}/\sG)\cong K^0(X\div\sG)$ while $\TJK$ is the toric JK residue of Definition \ref{def:TJK} and $\Lambda\udot(\fg_\C/\ft_\C)^\vee=\prod\nolimits_{i}(1-t^{\eps_i})$ with $\epsilon_i \in \ft_\Z^\vee$ the roots of $\sG$. For $X$ with an oriented $(-2)$-shifted symplectic structure, we use the twisted virtual structure sheaves $\widehat\cO^{\vir}$ of \cite[Definition 5.9]{OhThomas} and replace the $K$-theoretic Euler class $\Lambda\udot(N^{\vir}_{F/X})^\vee$\vspace{-1mm} used above by the square-rooted version $\sqrt{\mathfrak e^\sT}(N^{\vir}_{X/F})$ of \cite[Section 7]{OhThomas}. The result is
$$
            \chi\(\widehat\cO^{\vir}_{X\div\sG}\otimes V_0\) \= \frac1{|W|}\sum_{\substack{F\subseteq X^\sT\\ \langle\mu(F), \eta \rangle>0}}\TJK^\eta_{\mu(F)+\eps} \left(\Lambda\udot(\fg_\C/\ft_\C)^\vee \cdot\,\chi\_\sT\!\left(\!F,\,\frac{V\otimes\widehat\cO^{\vir}_{X^\sT}\big|_F}{\sqrt{\mathfrak e^\sT}\(N^{\vir}_{X/F}\)}\right)\right).
$$
Finally in Section \ref{C*sec} we prove versions of these results when $X\div\sG$ is noncompact but acted on by a further $\C^*$ action.\medskip

\noindent\textbf{Method.} The standard proofs of (cohomological) JK localisation use the Laplace transform for the moment map,\footnote{For symplectic manifolds, abelian JK localisation is just Fubini's theorem $\int_X=\int_{\ft^\vee}\circ\,\mu_*$ for the moment map $\mu:X\to\ft^\vee$, combined with ABBV localisation to evaluate $\int_X$ and the (inverse) Laplace transform to invert $\int_{\ft^\vee}$. In our singular algebro-geometric setting, with no access to $\mu$, we have to work much harder. But we do have $\im\mu$\,---\,the moment polytope\,---\,and we make heavy use of it.} which we do not have access to in algebraic geometry. We do, however, have access to its \emph{image}, the moment polytope, and this will allow us to generalise the different proofs of Lerman \cite{Lerman} and Jeffrey-Kogan \cite{JeffreyKogan}. (By contrast the Fourier series relevant to $K$-theoretic JK localisation are much simpler than Fourier or Laplace transforms, making the proofs in Section \ref{KKJK} much easier.) 
\medskip

Lerman handled the special case of a hamiltonian $S^1$ action on a symplectic manifold $X$, showing how to deduce the JK localisation formula from the ABBV localisation formula applied to the \emph{symplectic cut} of $X$.\medskip

Denoting the moment map by $\mu\,:\,X\to\R$, the symplectic cut of $X$ is formed by taking $\mu^{-1}[0,\infty)$ and dividing the boundary $\mu^{-1}(0)$ by the $S^1$ action. Thus it is the open set $\{\mu>0\}$ of $X$ compactified with a copy of the symplectic reduction $\mu^{-1}(0)/S^1$\,---\,which is the GIT quotient $X\div\C^*$ in the algebraic case.

The $S^1$ action on $X$ induces one on the cut with fixed loci of two types: \begin{enumerate}
\item fixed loci $F$ of $X$ with $\mu(F)>0$, and
\item a new fixed locus $\mu^{-1}(0)/S^1$ over $\mu=0$.
\end{enumerate}
Applying the ABBV localisation formula to an equivariant form of degree $\dim X-2$ expresses its integral (zero!) as a sum of contributions at (1) and one at (2). Taking their residues give the right (respectively left) hand side of the JK formula, thus proving it for $S^1$ actions.\medskip

Jeffrey and Kogan showed how to extend this geometric method\footnote{For a different approach see \cite{GK}.} to the action of a higher rank torus $\sT_{\!\R}$ with moment map $\mu\,:\,X\to\ft_\R^\vee$. Here the analogue of $[0,\infty)$ is a choice of simplicial cone\footnote{The same cone used to define the Jeffrey-Kirwan residue operator appearing in the localisation formula.}  $\Sigma\subset\ft_\R^\vee$, from which we define the cut by quotienting the boundary of $\mu^{-1}(\Sigma)$ by subtori of $\sT_{\!\R}$. (Over the interior $\mathring\sigma$ of the stratum $\sigma\subset\partial\Sigma$ we divide $\mu^{-1}(\mathring\sigma)$ by the subtorus $(\sT_{\!\sigma^\perp})\_\R\subseteq\sT_{\!\R}$ with Lie algebra $\sigma^\perp\subseteq\ft_\R$.) Again this carries a $\sT_{\!\R}$ action and we integrate an appropriate equivariant class to get a function on $\ft_\R$. Again a certain residue (or combination of residues) of this function gives zero. Evaluating this by the ABBV formula instead, we get contributions from
\begin{enumerate}
\item those fixed loci $F$ of $X$ with $\mu(F)\in\mathring\Sigma$, 
\item the symplectic reduction $\mu^{-1}(0)/\;\sT_{\!\R}$ over $\mu=0\in\Sigma$, and 
\item some additional fixed loci created by the cutting process.
\end{enumerate}
For $\Sigma$ sufficiently wide, Jeffrey-Kogan show the loci (3) contribute zero to the residue of the total integral, thus recovering the JK formula for higher rank tori.\medskip

Edidin-Graham put Lerman's method into algebraic geometry in \cite{EG_alg_cut}. This allows $X$ to have singularities (which is important for us since our $X$ or $X\div\sG$ will usually be a moduli space with a virtual cycle). So we begin by generalising \cite{EG_alg_cut} to the higher rank Jeffrey-Kogan setting by defining, in Section \ref{section:alg_cut}, an algebraic cut for the action of an algebraic torus $\sT$ on a quasi-projective Deligne-Mumford stack $X$. The possible noncompactness and reducibility of $X$\,---\,meaning the ``moment polytope'' is in fact a union of polyhedra, possibly intersecting or even overlapping\,---\,and the finite stabilisers makes it much more complicated than we would have liked.\medskip

In Section \ref{BFsec} we show how a perfect obstruction theory on $X$ induces one on the cut. What is much harder is to prove the analogous results for $(-2)$-shifted symplectic structures. This uses derived algebraic geometry and the shifted symplectic reduction of \cite{calaque2015lagrangian, safronov2016quasi,Park}, as we explain in Section \ref{sec:derived_symp_red}.\medskip

Thus we can apply virtual ABBV localisation on these algebraic cuts to try to deduce the abelian virtual JK formula. For perfect obstruction theories and the virtual cycles of \cite{BehrendFantechi, LiTian} we apply Graber-Pandharipande's virtual ABBV formula \cite{GraberPandharipande} in Section \ref{section:loc_cut}. For $(-2)$-shifted symplectic structures and the virtual cycles of \cite{BorisovJoyce, OhThomas} we apply the virtual ABBV formula of \cite{OhThomas} in Section \ref{sec:derived_symp_red}.\medskip

The hardest thing is to show\,---\,for sufficiently wide cone $\Sigma$\,---\,the vanishing of the contributions (3) in our singular, virtual setting. This involves clarifying the (complicated!) method of Jeffrey-Kogan and extending it to more general weights. For smooth manifolds the denominators $e^\sT(N_{F/X})$ in the localisation formula, considered as $H^*(X)$-valued functions on $\ft$, are products of powers of the linear functions on $\ft$ given by the weights of the $\sT$ action on $N_{F/X}$. Their negatives lie along rays of the moment polytope $\mu(X)$ emanating from the vertex $\mu(F)$. (Or, if $\mu(F)$ lies in the interior of $\mu(X)$, they span all the directions in $\mu(X)$.) When $N_{F/X}$ is replaced by $N_{F/X}^{\vir}$ there could be more weights, which we control in Proposition \ref{pro:weights_normal_bundle}. This allows us to generalise Jeffrey-Kogan's vanishing of the contributions (3) to the virtual setting in Section \ref{section:JK_loc}.\medskip

Finally Martin \cite{Martin} showed how to use the rational fibration $\xymatrix@1{X\div\;\sT\, \ar@{-->}[r]& \,X\div\sG}$ to deduce the nonabelian JK formula from the abelian one. He made crucial use of the moment map, but  Maddock \cite{Maddock} found an algebro-geometric argument which we show how to generalise to noncompact and virtual settings in Section \ref{section:NA_JK_loc}.\medskip

So our results all fall squarely inside the convex hull of what was already known.
The main contributions of this paper are in
\begin{itemize}
\item finding algebraic proofs for results in symplectic geometry \emph{without using the moment map},
\item generalising these proofs to singular, reducible, non-reduced and noncompact varieties (with compact quotient), all of which are important to the applications in moduli theory and all of which add significant technical difficulty,
\item generalising to the two virtual settings: perfect obstruction theories and $(-2)$-shifted symplectic structures, and
\item combining ideas and proofs from disparate areas that few people know all of.
\end{itemize}
Along the way
we clarify, simplify and clean up a number of those results which had been written in conflicting and confusing ways\,---\,in a field shared between symplectic geometers, algebraic geometers and physicists some mess seems inevitable.

\subsection*{Acknowledgements} We thank Frances Kirwan, Johan Martens and Jacopo Stoppa for introducing us to\,---\,and for useful conversations about\,---\,cohomological JK localisation. Thanks to Henry Liu and \cite[Appendix A]{AganagicOkounkov} for teaching us the $K$-theoretic analogue. We are most grateful to Hyeonjun Park for generous assistance with the derived algebraic geometry of Section \ref{sec:derived_symp_red}.

There are now a number of sophisticated wall crossing formulae for virtual cycles and virtual structure sheaves such as \cite{bojko2025,halpernL2025,joyce2021,karpovmoreira,kuhnliuthimm}. 
The GIT setting considered here is less general\footnote{The abelian JK formula is a wall crossing formula for the variation of GIT problem with $(X,L)\div\,\sT$ at one end and the empty set $\emptyset=(X,L\otimes\Psi)\div\,\sT$ at the other. (Here $\Psi$ is a character chosen so that the moment polyhedron $\Delta(X,L)\subset\ft^\vee_\Q$ of Section \ref{mpoly}, translated by $\Psi$, misses the origin $0\in\ft^\vee_\Q$.) So the difference of the two JK formulae for two different linearisations gives the general VGIT wall crossing formula.} but more concrete, lending itself to easier explicit calculation.  And the extraordinary number of applications of JK localisation in the physics literature, especially since \cite{benini2014elliptic}, suggest the final result should be a useful tool in enumerative algebraic geometry. 

\tableofcontents

\section*{Notation}
\begin{enumerate}
\makeatletter
    \let\c@enumi\c@equation 
    \makeatother
    \renewcommand{\theenumi}{(\theequation)} 
    \renewcommand{\labelenumi}{\theenumi}
\item\label{weak} Throughout $X$ will always be a projective-over-affine Deligne-Mumford stack over $\C$ whose coarse moduli space $|X|$ has finitely generated ring of global functions $H^0\(\cO_{|X|}\)$. For most of the paper (Sections 1-8 and the Appendices) we use the terms (quasi-)projective in the weak sense that $|X|$ is (quasi-)projective. So our conditions are equivalent to $|X|$ admitting a closed embedding in some $\PP^m\times\AA^n$. In Sections 9-10 we have to assume $X$ is quasi-projective in the strong sense of \cite{Kr2}.
\item\label{irredcpts} We do \emph{not} assume $X$ is irreducible or reduced, because in applications a quotient of $X$ will be a moduli space. We consider an \emph{irreducible component} of a scheme (or Deligne-Mumford stack) to carry its maximalist scheme structure, incorporating all embedded components supported on it. Formally, the component is cut out of $X$ by the intersection of all primary ideals\,---\,taken from a primary decomposition of the zero ideal\,---\,whose associated primes contain the minimal prime corresponding to the component. Thus $X$ is the union of its irreducible components.
\item $(X,L)$ denotes a Deligne-Mumford stack $X$ polarised by an ample line bundle $L$. This could be pulled back from $|X|$; we do \emph{not} require the local orbi-ampleness condition that the stabiliser groups of points $x$ act faithfully on the line $L|_x$.
\item $\sG$ will always denote a reductive algebraic group with maximal torus $\sT$ and Weyl group $W$.
\item $\sG\acts(X,L)$ denotes a (left) action of $\sG$ on $X$ plus the choice of a linearisation of the action on $L$. This defines the open subset of semistable points
$$
X^{L-\ss}\=X^{\ss}\ :=\ \bigcup\nolimits_{n \geq 0}\  \bigcup\nolimits_{s \in H^0(X,L^n)^\sG} \ \big\{x\in X\ :\ s(x) \neq 0\big\}.
$$
We will always assume that stability\,=\,semistability for $\sG\acts(X,L)$, so that all points of $X^{\ss}$ are stable and hence have finite stabilisers. Therefore the stack quotient of $X^{\ss}$ by $\sG$ is a Deligne-Mumford stack, the \emph{GIT quotient stack}
$$
X\div\sG\ :=\ X^{\ss}/\sG.
$$
The underlying GIT quotient \emph{scheme} is the coarse moduli space of $X\div\sG$,
$$
|X\div\sG|\=\mathrm{Proj}\left(\bigoplus\nolimits_{n\geq 0} H^0(X,L^{n})^\sG\right)
$$
because $X\to|X\div\sG|$ is a \emph{geometric quotient}.
\item For $\sG=\sT$ an algebraic torus, the derivative of the action defines a map $\Omega_X\otimes\ft\to\cO_X$ whose image is an ideal $\cI_{X^\sT}\subset\cO_X$ defining the fixed locus $X^\sT\subseteq X$. It is the largest substack on which some \emph{cover} of $\sT$ acts trivially \cite[Theorem 1.1\;(2)]{AJ}.
\item All (Chow) (co)homology groups are taken with \emph{rational coefficients}. For the equivariant group $A_*^\sG(X)$ we use the Chow homology of the Artin stack $A_*(X/\sG)$ \cite{Kr1}.
\item If $\sG\acts X$ has finite stabilisers and $V$ is a $\sG$-representation we denote the associated bundle on the quotient by $X \times_\sG V \to X/\sG$. Its total space is the quotient of $X\times V$ by the diagonal action of $\sG$. If $V$ is a $(\sG \times \sH)$-representation and $(\sG\times\sH)\acts X$ then $X \times_\sG V$ is $\sH$-equivariant.
\item We use $\bigsqcup X_i$ for set-theoretic disjoint union (such as a stratification) and $\coprod X_i$ for a topological disjoint union (i.e. the closures $\overline X_i=X_i$ are disjoint).
\item $\sT$ is a complex algebraic torus of rank $r$ with character and cocharacter lattices
$$
\ft^\vee_\Z\ :=\ \Hom(\sT, \C^\ast) \qquad \text{and} \qquad \ft_\Z := \Hom(\C^\ast, \sT).
$$
For any lattice $\Gamma_\Z$ and field $k$ we write $\Gamma_k := \Gamma_\Z \otimes_\Z k$. So $\ft_\C$ is the Lie algebra of $\sT$ and $\ft_\R$ is the Lie algebra of its maximal compact subgroup $\sT_{\!\R}\subset\sT$.
\item\label{psipsi} We use the same notation for $\psi\in\ft^\vee_\Z$ and the associated character $\psi:\sT\to\C^*$, but on composing with $\C^*\into\C$ to think of it as a \emph{function} we denote it by $t^\psi:\sT\to\C$.
\item Given a convex subset $\tau \subseteq \ft^\vee_\Q$ we denote by $\sT_{\!\tau^\perp} \subseteq \sT$ the subtorus annihilated by the tangent space to $\tau$,
$$
\sT_{\!\tau^\perp}\ :=\ \bigcap\nolimits_{\psi\in\langle\tau-p\rangle\_\Q\,\cap\,\ft^\vee_\Z\,}\ker\!\(\psi:\sT\rightarrow \C^\ast\),
$$
where we intersect over all the integral elements $\psi$ of the $\Q$-linear span of the translation (by any element $p\in\tau$) of $\tau$ to the origin. Although this conflicts with the notation $\sT_x\subset\sT$ for the stabiliser group of a point $x$ of a $\sT$-space $X$, it will always be clear which is meant.
\item\label{kone} Given rational characters $\psi_1, \dots, \psi_k\in\ft^\vee_\Q$\,---\,which without loss of generality we will always assume to be \emph{primitive} and \emph{integral}\,---\,we let
$$
\cone(\psi_1, \dots, \psi_k)\ :=\ \Big\{\sum a_i\psi_i\ :\ a_i\in\Q\_{\ge0}\Big\}\,\subset\,\ft^\vee_\Q\,.
$$
\item\label{Tperptilde}  If the primitive integral $\psi_i$ in \ref{kone} are linearly independent in $\ft^\vee_\Q$  the cone is called \emph{simplicial}. Calling it $\sigma$ we get an extension $\widetilde\sT_{\!\sigma^\perp}$ of $\sT_{\!\sigma^\perp}$ by a finite group, given by
$$
\widetilde\sT_{\!\sigma^\perp}\ :=\ \bigcap\nolimits_{i=1}^k\ker\!\(\psi_i:\sT\rightarrow \C^\ast\)\ \subseteq\ \sT.
$$
\item\label{sigma_rep} In the setting of \ref{kone},\,\ref{Tperptilde} we use $\Psi_i\cong\C$ to denote the 1-dimensional $\sT$ representation induced by $\psi_i$ and $\sigma\_\C$ for the $\C$-linear span of the $\psi_i$,
$$
\sigma\_\C\ :=\ \bigoplus\nolimits_{i=1}^k \Psi_i.
$$
Let $\{z_i\}$ be the dual basis of linear functions on $
\sigma\_\C$. Then we denote by
$$
\sigma_\C^\circ\ :=\ \big\{z_i\ne0\big\}\ \subseteq\ \sigma\_\C
$$
the open subset where the torus $\sT/\,\wsigma$ acts with finite stabilisers, and
$$
    \Bz^\theta\ :=\ \prod\nolimits_{i=1}^k z_i^{\theta_i}\ \in\ \C[z_1, \dots, z_k],
$$
a monomial function on $\sigma\_\C$ of $\sT$-weight $-\theta$, where $\theta=(\theta_1,\dots,\theta_k)$.
\item Given a representation $V$ of $\sT$ we denote its $\psi$-weight space by $V_\psi\subseteq V$ for any character $\psi\in\ft_\Z^\vee$.
Equivalently $V_\psi=(V\otimes\Psi^{-1})^\sT\otimes\Psi$.
\item Given $\sT\acts(X,L)$ and a polyhedron $\sigma\subset\ft_\Q^\vee$ we define
$$
X^{\sigma-\ss}\ :=\ \bigcup\nolimits_{\phi \in \sigma}X^{L\otimes\Phi^{-1}-\ss}\=\big\{x\in X:s(x)\ne0\text{ for some }s\in H^0(L^k)_{k\psi\,\in\,k\sigma}\big\},
\hspace{-1cm}
$$
using only the powers $L^k\otimes\Phi^{-k}$ with $k\phi\in\ft_\Z^\vee$ to define the locus of semistable points.
\item For the moment ``polyhedron" $\Delta=\mu(X)$ of $\sT\acts(X,L)$ and the special substacks $\mu^{-1}(\tau_p)\subset X$ see Definitions \ref{def:moment_polyhedron} and \ref{def:mu_tau} in the next Section.
\end{enumerate}

\section{The moment polyhedron}\label{mpoly}

Let $(X,L)$ be a polarised projective-over-affine Deligne-Mumford stack over $\C$ acted on by an algebraic torus $\sT$.


\begin{definition}\label{def:moment_polyhedron}
    The \textit{moment polyhedron} $\Delta=\Delta^\sT(X,L)\subset\ft_\Q^\vee$ of $(X,L)$ is
$$
    \mu(X)\ :=\ \Delta\ :=\ \Big\{\psi\in\ft_\Q^\vee\ :\ \exists\,k>0\text{ such that }H^0\big(X_{\text{red}},L^k|_{X_{\text{red}}}\big)_{k\psi}\ne0\Big\}.
$$
\end{definition}
\noindent Equivalently, recalling that a closed point $x\in X$ is $\sT$-semistable in $(X,L)$ if there exists $s\in H^0(X,L^k)^\sT$ with $s(x)\ne0$,
\beq{Deltass}
\Delta\,=\,\big\{\psi\in\ft_\Q^\vee\,:\,\text{the semistable locus } X^{L\otimes\Psi^{-1}-\ss} \text{ is nonempty}\big\}.\hspace{-2mm}
\eeq

\begin{remarks}\label{rem:polyhedron}
In Appendix \ref{appendix} we prove various facts about $\Delta$,  for instance  that it is indeed a polyhedron if $X$ is irreducible. (See Lemma \ref{lem:polyhedron_structure}; if $X$ is also projective then $\Delta$ is a polytope, denoted by $P_{\;\sT}(X,L)$ in \cite{Brion}.) In general $\Delta$ is the union of the moment polyhedra of its irreducible components, but we abuse notation by calling it the moment polyhedron anyway.

While the moment map $\mu\colon X\to\ft_\R^\vee$ of symplectic geometry is not available in algebraic geometry, its image $\mu(X)\subset\ft_\R^\vee$ \emph{is} and its rational points are precisely $\Delta\subset\ft_\Q^\vee$. For this reason we will often denote $\Delta$ by $\mu(X)$ or $\mu^\sT(X,L)$. This is compatible with passing to $\sT$-invariant substacks $Y\subset X$, with $\mu^\sT(Y,L|_Y)\subset\mu^\sT(X,L)$.  In particular if $x \in F\subseteq X^\sT$ is an irreducible component of the fixed locus then
$$
    \mu(x)\=\mu(F)\ =\  \mathrm{wt}\_\sT(L|_x)\ =\ c_1^\sT(L|_x)
$$
is a single point in $\ft_\Q^\vee$ (or $\ft_\Z^\vee$ if $X$ is a scheme) which appears in the localisation formulae.
Note these points can be in the interior of $\mu(X)$\,---\,see Example \ref{example:P1P1} for the diagonal action of $\C^*$ on $\PP^1\times\PP^1$.
But if $X$ is irreducible and projective then $\Delta$ is the convex hull of the points $\mu(X^\sT)$ by \cite[Theorem 1;(iii)]{Brion}.

More generally if there is a $\C^\ast \subset \sT$ whose action on $H^0(X,\cO_X)$ has weights all nonzero of the same sign then
each vertex of $\Delta$ is of the form $\mu(F)$ for a \emph{unique} irreducible component $F$ of $X^\sT$. This is a combination of Lemma \ref{lem:polyhedron_structure} and \cite[Theorem 1\,(ii)]{Brion}.

The moment polyhedron of an irreducible $X$ has the obvious scaling property $\Delta(X,L^d)=d\Delta(X,L)$. If $L^d$ descends to the coarse moduli space $|X|$ then these both equal $\Delta(|X|,L^d)$. It quickly follows that we may take $k\gg0$ in the original definition of $\Delta$.

\end{remarks}

\subsection{Stratification}
We stratify $\Delta$ by the dimensions of stabiliser groups.
\begin{figure}[h]
    \centering
    \includegraphics{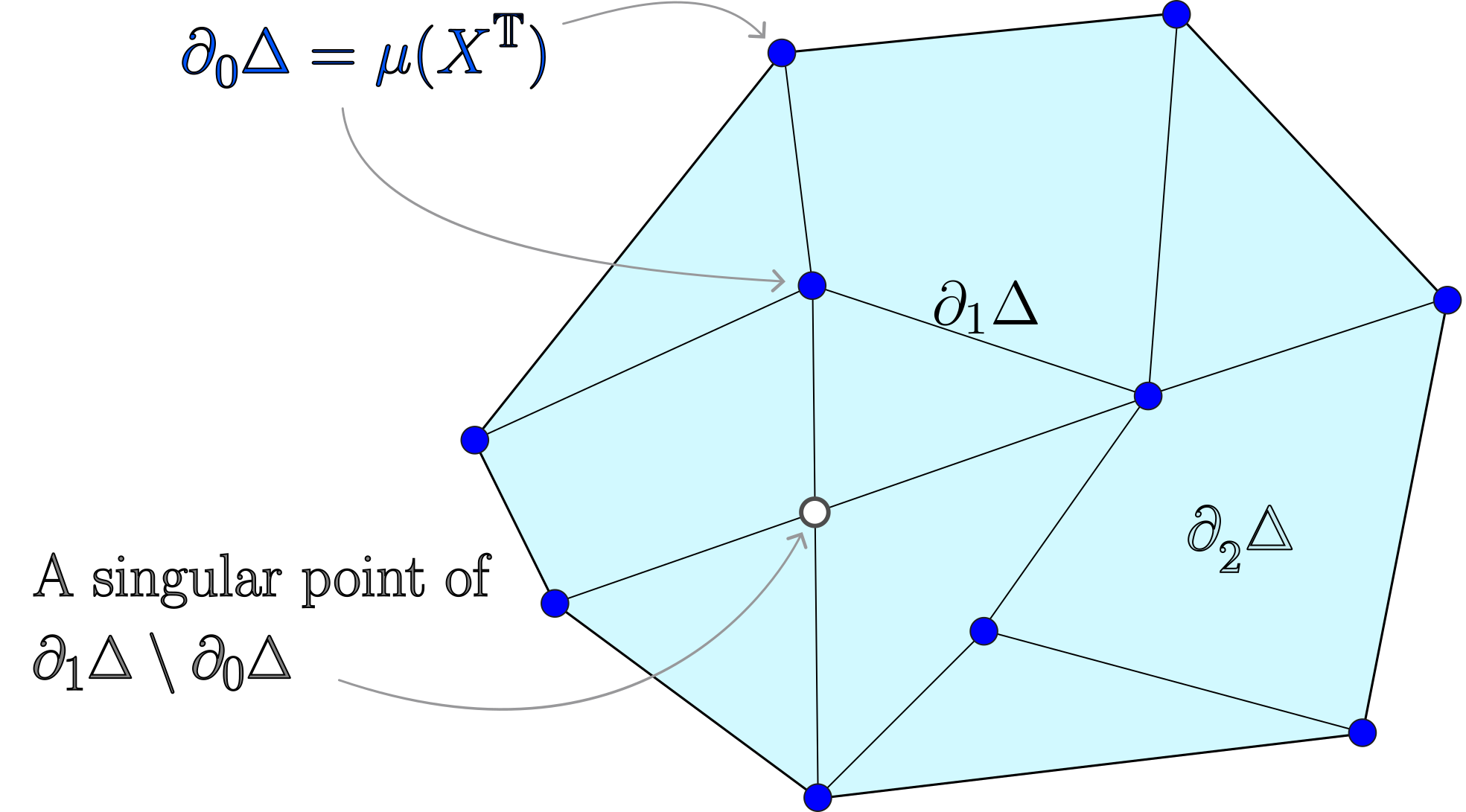}
    \caption{Stratification of a 2-dimensional moment polytope of an irreducible $X$s.}
    \label{fig:polytope_strata}
\end{figure} 
\begin{definition}
    For $0 \leq d \leq \dim(\sT)$ the \textit{dimension-$d$ stratum of $\Delta$} is the closed subset
\begin{align*}
    \partial_{d} \Delta\,:=\,\big \lbrace \psi \in \Delta\,:\,\exists\,x \in X^{L\otimes\Psi^{-1}-\ss} \text{ such that dim}(\sT_x) \geq r-d \big \rbrace\ = \bigcup_{\dim(\sS)=r-d} \mu\big(X^\sS\big).
\end{align*}
The union over all subtori $\sS \hookrightarrow \sT$ of codimension $d$ is finite by Lemma \ref{lem:codimension}. This $\partial_{d} \Delta$ is really a finite union of dimension $\le d$ polyhedra of $\ft^\vee_\Q$ because each of the moment polyhedra $\mu(X^\sS_i)$ of the irreducible components of $X^\sS$ is contained in a translate of $\sS^\perp \subset \ft^\vee_\Q$ by Lemma \ref{lem:trivial_act_from_polyhedron}. Their topological boundaries are all contained in $\partial_{d-1}\Delta$ by Lemma \ref{lem:interior_polytope}.
\end{definition}

Note that these polyhedra can have varying dimension $\le d$, with no constraint on the way they intersect each other: they can be disjoint, overlapping, nested or even equal. 

\begin{example}\label{example:P1P1}
 Let $t\in\C^\ast$ act on $\(\PP^1 \times \PP^1, \cO(1,1)\)$ by $(tx_0,x_1),\,(y_0, t^{-1}y_1)$. The moment polyhedron $\Delta = [-1,1]$ has zero-dimensional stratum $\partial_0\Delta = \lbrace -1, 0,1 \rbrace$ with the interior point $0 = \mu(0 \times \infty) = \mu(\infty \times 0)$ being the moment polyhedron of \emph{two distinct} fixed points.
\end{example}

Thus not all pairwise intersections of the polyhedra comprising $\partial_d\Delta$ lie in $\partial_{d-1}\Delta$ so\,---\,as illustrated in Figure \ref{fig:polytope_strata}\,---\,$\partial_d\Delta\setminus\partial_{d-1}\Delta$ need not be smooth. Removing all the intersections which cause a singularity\footnote{I.e. we remove any intersection which is not an \emph{overlap} where the two components are locally isomorphic.} gives the \emph{smooth relative interior}
$$
\partial^{\sm}_d\Delta\ \subset\ \partial_d\Delta \setminus \partial_{d-1}\Delta.
$$
By Lemma \ref{lem:codimension} $\partial^{\sm}_d\Delta$ is \textit{smooth of dimension $d$}\,---\,about any point it is locally an affine space of dimension $d$. We formalise this in a definition.

\begin{definition}\label{def:tau}
    Given $p \in \partial^{\sm}_d\Delta$, we denote by $\tau_p \subseteq \ft^\vee_\Q$ the affine space coinciding with $\partial_d \Delta$ locally around $p$.
\end{definition}

    By Lemma \ref{lem:codimension}, every connected component of the $\sT_{\!\tau_p^\perp}$-fixed locus $X^{\!\sT_{\!\tau_p^\perp}}$ has moment polyhedron contained in a translate of $\tau_p\subseteq\ft^\vee_\Q$. This allows us to generalise the symplectic submanifold $\mu^{-1}(\tau_p)$ from symplectic geometry (though beware the following definition is in general smaller than the set theoretic preimage of the moment map in symplectic geometry which will contain loci not fixed by $\sT_{\!\tau_p^\perp}$ if $\tau_p\cap\Delta$ is not in the boundary of $\Delta$).
 
\begin{definition}\label{def:mu_tau}
        We \textit{define} $\mu^{-1}(\tau_p)$ to be the union of the connected components $Y \subseteq X^{\!\sT_{\!\tau_p^\perp}}$ such that $\mu(Y) \subseteq \tau_p$.
    \end{definition}

Finally we record for later that
\begin{equation}\label{eq:union_polyhedra}
    \partial_d^{\sm}\Delta \text{ is an open dense subset in a finite union of $d$-dimensional polyhedra}
\end{equation}
obtained by removing their boundaries and finitely many polyhedra of smaller dimension. And that taking $d=r=\dim\sT$, Lemma \ref{lem:interior_polytope} shows that $\partial^{\sm}_r\Delta$ is contained in the interior of $\Delta$ and that for all $\psi\in\partial^{\sm}_r\Delta$, all $L\otimes\Psi^{-1}$-semistable points have finite stabilisers, so they are $L\otimes\Psi^{-1}$-stable by \cite[Remark 8.3]{Kirwan}.




\section{The algebraic cut}\label{section:alg_cut}
Let a compact torus $\sT_{\!\R}$ act on a compact symplectic manifold $X$ with moment map $\mu : X \rightarrow \ft^\vee_\R$ such that the origin is in the interior of the moment polytope $\Delta := \mu(X)$. Let $\Sigma$ be the cone in $\ft^\vee_\R$ generated by an integral basis of $\ft_\Z^\vee$. 

The \textit{symplectic cut} of $X$ by $\Sigma$ is defined in \cite{JeffreyKogan}, building on the definition of \cite{Lerman} for $\sT_{\!\R}=S^1$. It is a symplectic $\sT_{\!\R}$-manifold with moment polytope $\mu(X)\cap\Sigma$ that compactifies the open subset $\mu^{-1}(\mathring\Sigma)$ of $X$. It is constructed from $\mu^{-1}(\Sigma)$ by quotienting each stratum $\mu^{-1}(\mathring\sigma),\ \sigma\subset\partial\Sigma$ of its boundary $\mu^{-1}(\partial\Sigma)$ by the action of $(\sT_{\!\sigma^\perp})\_\R\subset \sT_{\!\R}$, the torus annihilated by the tangent space of $\sigma\subset\ft^\vee_\R$.
It admits a $\sT_{\!\R}$-equivariant stratification whose locally closed strata are indexed by faces of $\Sigma$:
\begin{align}\label{eq:symplectic_cut}
    X_\Sigma\= \bigsqcup_{\sigma \in \mathrm{Faces}(\Sigma)}\left(X_\sigma^\circ\,:=\,\frac{\mu^{-1}(\mathring\sigma)}{(\sT_{\!\sigma^\perp})\_\R}\right)\!.
\end{align}
For $(X,L)$ a polarised projective variety and $\sT= \C^\ast$ a version of Lerman's construction was introduced in \cite{EG_alg_cut}. In generalising this to $\sT$ of rank $r$ we find it convenient to define cuts by cones $\Sigma\subset\ft^\vee_\Q$ of all dimensions $\le r$ since these will describe strata in the cut by a cone of maximal dimension $r$\,---\,in fact the closed strata of the stratification \eqref{eq:algcutstrata} which generalises \eqref{eq:symplectic_cut}.
\begin{figure}[h]
    \centering
    \includegraphics[width=.7\textwidth, keepaspectratio]{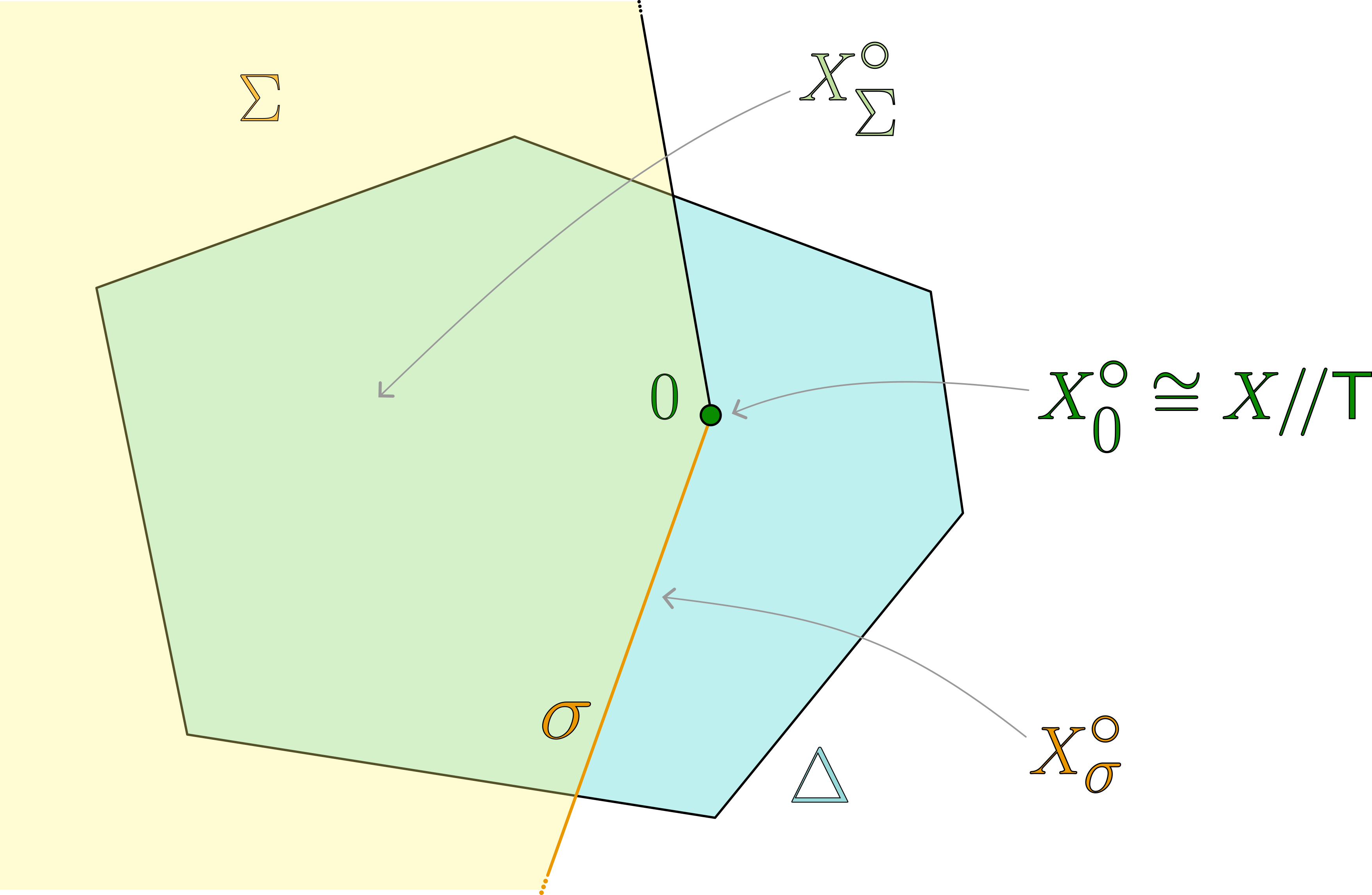}
    \caption{Stratification of the cut $X_\Sigma$ seen from $\Delta \cap \Sigma$.}
    \label{fig:stratification}
\end{figure}

\subsection{Definition of algebraic cut}
Fix an algebraic torus $\sT$ of rank $r$ acting on a polarised projective-over-affine Deligne-Mumford stack $(X,L)$, with moment polyhedron $\Delta$ such that $L$-semistable points are $L$-stable.

Let $\Sigma:=\cone(\psi_1, \dots, \psi_k) \subset \ft^\vee_\Q$ be a simplicial cone of dimension $k:= \dim(\Sigma)\leq r$ spanned by primitive integral elements $\psi_i \in \ft_\Z^\vee$. We denote by $ \Sigma_\C:=\bigoplus_{i=1}^k \Psi_i$ the associated $\sT$-representation of Notation\;\ref{sigma_rep}. The dual basis $\{z_i\}$ defines coordinates $z_1, \dots, z_k$ on $\Sigma_\C$. Let two copies of $\sT$ act on $X \times \Sigma_\C$, the first on $X$ only and the second diagonally,
$$
\sT\times\dT\ \acts\ (X \times \Sigma_\C,\pi_X^\ast L).
$$

\begin{definition}
Suppose $\Sigma$ is transverse to $\Delta(X)$ in the sense of Definition \ref{def:transversal} below. The \textit{algebraic cut} of $X$ by $\Sigma$ is the GIT quotient stack
    \begin{align}\label{eq:alg_cut}
        (X_\Sigma,\,L_\Sigma)\ :=\ \big(X \times \Sigma_\C,\,\pi_X^*L \big)\div\;\dT
    \end{align}
with its residual $\sT$ action.
\end{definition}

\begin{remark}\label{rem:action_factorisation}
Note that the stabiliser subgroup $\wSigma \subset \sT$ of $\Sigma_\C$ defined in Notation\;\ref{Tperptilde} acts trivially on $X_\Sigma$, so the $\sT$ action factors through the action of $\sT/\;\wSigma$.
\end{remark}

The transversality assumption is to ensure that the quotient \eqref{eq:alg_cut} is a Deligne-Mumford stack (non-transversal $\Sigma$ usually produce an Artin stack), as we prove in Lemma \ref{lem:semistable_finite_stabiliser} below. This Section will be devoted to proving the following basic properties of $(X_\Sigma,L_\Sigma)$.

\begin{theorem}\label{thm:structure_cut}
For transverse $\Sigma$ the cut $(X_\Sigma,L_\Sigma)$ has the following properties.
    \begin{enumerate}
    \item[(a)] $X_\Sigma$ is a projective-over-affine Deligne-Mumford stack.
        \item[(b)] The moment polyhedron of $\sT\acts(X_\Sigma, L_\Sigma)$ is $\Delta\cap \Sigma$.
        \item[(c)] If $\,\im\!\big[H^0(X, \cO_X)^\sT\to H^0(X^{\Sigma-\ss}, \cO_{X^{\Sigma-\ss}})^\sT\big]$ is finite dimensional\,\footnote{For instance if $X$ is compact, or if the fixed locus $X^\sT$ is compact and intersects the closure of every $\sT$ orbit so that $H^0(X, \cO_X)^\sT$ is finite dimensional.} and $\Sigma\cap\Delta$ is bounded then $X_\Sigma$ is projective.
        \item[(d)] There is a $\sT$-invariant stratification of $X_\Sigma$ whose closed strata $\lbrace X_\sigma\,:\,\sigma \in \Faces \rbrace$ are algebraic cuts by the faces of $\Sigma$. In particular taking $\sigma=\{0\}$ gives $X\div\;\sT\subset X_\Sigma$.
        \item[(e)] If $\sigma$ is a face of $\Sigma$, the corresponding locally closed stratum $X_\sigma^\circ \subset X_\sigma$ is
\beq{eq:algcutstrata}
        X_\sigma^\circ\ \cong\ X^{\sigma-\ss}/\,\wsigma \qquad \text{where} \qquad X^{\sigma-\ss}\ :=\ \bigcup\nolimits_{\phi \in \sigma} X^{L\otimes\Phi^{-1}-\ss}.
\eeq
    \end{enumerate}
\end{theorem}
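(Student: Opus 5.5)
The plan is to compute everything from the explicit description of $\dT$-invariant sections on $X\times\Sigma_\C$. Write a section of $\pi_X^*L^n$ as $\sum_\theta s_\theta\,\Bz^\theta$ with $s_\theta\in H^0(X,L^n)$. The monomial $\Bz^\theta$ has $\dT$-weight $-\sum\theta_i\psi_i$. So the $\dT$-invariant sections are sums of products $s\,\Bz^\theta$ with $s\in H^0(X,L^n)_{\sum\theta_i\psi_i}$ and $\theta\in\Z^k_{\ge0}$. Hence
$$
\bigoplus\nolimits_n H^0(X\times\Sigma_\C,L^n)^{\dT}\ \cong\ \bigoplus\nolimits_{n}\ \bigoplus\nolimits_{\theta\in\Z^k_{\ge0}}H^0(X,L^n)_{\theta\cdot\psi}\,\Bz^\theta,
$$
graded additionally by the residual $\sT$ weight.

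For (e) and (d), I would take a point $(x,z)$ and let $\sigma$ be the face spanned by those $\psi_i$ with $z_i\neq0$. Then $(x,z)$ is semistable iff some $s\in H^0(X,L^n)_{\theta\cdot\psi}$ with $\mathrm{supp}\,\theta\subseteq\sigma$ satisfies $s(x)\ne0$. Rescaling, this says exactly $x\in X^{L\otimes\Phi^{-1}-\ss}$ for some rational $\phi\in\sigma$, i.e.\ $x\in X^{\sigma-\ss}$ as in \eqref{eq:algcutstrata}. (One has to allow $\phi$ on the boundary of $\sigma$ as well, which is consistent with the union over $\phi\in\sigma$.)

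On the locus $\{z_i\neq0\iff\psi_i\in\sigma\}$, I would use $\dT$ to normalise those $z_i$ to $1$. This is possible because the $\psi_i$ are linearly independent, so the map $\dT\to(\C^*)^{\dim\sigma}$ is surjective. The residual group is the kernel $\wsigma$. This gives the locally closed strata $X_\sigma^\circ\cong X^{\sigma-\ss}/\wsigma$.

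The closed stratum $\{z_i=0 \text{ for } \psi_i\notin\sigma\}$ is $(X\times\sigma_\C)^{\ss}/\dT$, and by the same invariant computation it is identified with the cut $X_\sigma$. Taking $\sigma=\{0\}$ gives $X^{\ss}/\sT=X\div\sT$. For these quotients to be Deligne--Mumford I would invoke transversality through Lemma \ref{lem:semistable_finite_stabiliser}.

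For (a), $X\times\Sigma_\C$ is projective-over-affine and $X_\Sigma$ is its GIT quotient, with all stabilisers finite by that lemma. Its coarse space is therefore Proj over $\Spec$ of the degree-zero invariants, which is projective over affine. Finite generation of this ring follows from Hilbert's theorem for reductive group actions on the finitely generated ring $H^0(\cO_{|X|})[z]$.

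For (b), the $\sT$-weight of $s\,\Bz^\theta$ on $X_\Sigma$ is the weight of $s$ on $X$, which equals $\theta\cdot\psi\in$ cone. So nonzero sections of $L_\Sigma^n$ of weight $n\phi$ exist iff $\phi\in\Sigma$ and $H^0(X_\red,L^n)_{n\phi}\ne0$. This gives $\Delta\cap\Sigma$, with the usual care about passing to powers $L_\Sigma^n$ that descend, and about reducedness, which is harmless since $\Delta$ is defined via $X_\red$. I would also check that the definition of the moment polyhedron on the stack quotient uses the same sections.

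For (c), projectivity needs the degree-zero part of the invariant ring to be finite dimensional (in fact $\C$-like). That part consists of $\theta=0$ and $n=0$, namely $H^0(\cO_X)^\sT$ restricted to the semistable locus, which is finite dimensional by hypothesis. I would also need each graded piece to be finite dimensional, and boundedness of $\Sigma\cap\Delta$ gives this. I expect the main obstacle here: this degree-zero ring must be computed on $X^{\Sigma-\ss}$ rather than on $X$, and the image condition is exactly what handles this.
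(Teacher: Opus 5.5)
Your arguments for (a), (b), (d) and (e) follow the paper's route. These are the decomposition of $\dT$-invariant sections into $s\,\Bz^\theta$, normalising the nonzero $z_i$ to $1$ so that only $\wsigma$ remains, and Lemma~\ref{lem:semistable_finite_stabiliser} for finite stabilisers. In (b) the ``usual care'' you mention is Lemma~\ref{lem:ss_on_ss_is_ss}, which identifies $H^0\big((X_\Sigma)_{\red},L_\Sigma^k\big)$ with $H^0\big(X_{\red}\times\Sigma_\C,\pi_X^*L^k\big)^{\dT}$ for $k$ large and divisible.

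The genuine gap is in (c). The $n=0$ part of your invariant ring is not only the $\theta=0$ summand. It is $H^0(\cO_X)^\sT\oplus\bigoplus_{\theta\ne0}H^0(\cO_X)_{\theta\cdot\psi}\,\Bz^\theta$, and any nonzero function on $X$ whose weight lies in $(\Z_{\ge0}\psi_1+\dots+\Z_{\ge0}\psi_k)\setminus\{0\}$ gives a nonzero invariant there. Such functions can exist even when $\Sigma\cap\Delta$ is bounded. Your image hypothesis concerns only $H^0(\cO_X)^\sT$, so it says nothing about them. Your substitute, that boundedness makes ``each graded piece'' finite dimensional, does not work either. Boundedness does not control the dimension of weight spaces at all; that is the job of the image hypothesis. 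And finiteness of each $\theta$-piece separately would not handle the infinitely many $\theta$ anyway.

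The missing idea is that boundedness forces these extra invariants to become nilpotent on the semistable locus. By Proposition~\ref{pro:proj_criterion} you need the restriction map $R$, from $\dT$-invariant functions on $X\times\Sigma_\C$ to those on $(X\times\Sigma_\C)^{\ss}$, to have finite rank.

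On the $\theta=0$ summand this is your hypothesis, because $\pi_X$ maps $(X\times\Sigma_\C)^{\ss}$ into $X^{\Sigma-\ss}$ by Lemma~\ref{lem:stratification_stable_locus}.

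For $\theta\ne0$, take $f\in H^0(\cO_X)_{\theta\cdot\psi}$ and suppose $f(x)\ne0$ at a closed point $x\in X^{\Sigma-\ss}$. Pick $\phi\in\Sigma$ and $s\in H^0(X,L^k)_{k\phi}$ with $s(x)\ne0$. Then $f^\ell s$ is nonzero at $x$ and has weight $k\phi+\ell\,\theta\cdot\psi$. Hence $\phi+\tfrac{\ell}{k}\,\theta\cdot\psi\in\Sigma\cap\Delta$ for every $\ell>0$, contradicting boundedness. So $R(f\,\Bz^\theta)$ is nilpotent.

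It follows that $\im R$ is a finitely generated algebra whose reduction is spanned by the image of the $\theta=0$ summand, hence is finite dimensional. A finitely generated algebra with finite-dimensional reduction is itself finite dimensional, so $R$ has finite rank. Equivalently, you can use that a quasi-projective Deligne--Mumford stack is projective if and only if its reduction is.
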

        

\begin{remark}
The irreducible components of $X_\Sigma$ are $\sT$-equivariantly birational to irreducible components of $X$ if $\Sigma$ is spanned by an integral basis of $\ft^\vee_\Z$. More generally irreducible components of $X_\Sigma$ are $\sT$-equivariantly birational to irreducible components of $X/\,\wSigma$ by \eqref{eq:algcutstrata}, which shows the open stratum $X^\circ_\Sigma \cong X^{\Sigma-\ss}/\,\wSigma$. Note $\wSigma$ is a finite group when $\Sigma$ is full dimensional.
\end{remark}  

We can also describe the fixed locus of $\sT\curvearrowright X_\Sigma$ in terms of the zero-dimensional intersections between the faces of $\sigma$ and the strata of $\Delta$.
\begin{proposition}\label{pro:fixed_locus}
    If $\Sigma$ is transversal to $\Delta$ in the sense of Definition \ref{def:transversal}, then
    \begin{equation}\label{eq:mu_fixed_loci}
        \mu(X_\Sigma^\sT) \= \partial_0(\Sigma\cap\Delta)  \= \coprod\nolimits_{\sigma \in \Faces} \sigma \cap \partial_{r-\dim(\sigma)}\Delta
    \end{equation}
is a finite set of points $p$. Moreover,
    \begin{enumerate}
        \item Each such $p\in\sigma\cap\partial_{r-\dim(\sigma)}\Delta$ lies in the smooth part $\partial_{r-\dim(\sigma)}^{\sm}\Delta$ of the stratum.
        \item\label{cond:fix} Therefore $\partial_{r-\dim(\sigma)}\Delta$ coincides near $p$ with the affine space $\tau_p \subseteq \ft_\Q^\vee$ of Definition \ref{def:tau}. Every face of $\Sigma$ is transversal to $\tau_p$; in particular $\sigma\cap \tau_p = \lbrace p \rbrace$ and $\Sigma \cap \tau_p$ is a simplicial cone of full dimension in $\tau_p$, generated by the projections of the generators of $\Sigma$ to $\tau_p$ along $\rspan(\sigma)$.
        \item\label{cond:3} The component $F_p$ of $X_\Sigma^\sT$ such that $\mu(F_p) = p$ is the algebraic cut of $\mu^{-1}(\tau_p)$ (from Definition \ref{def:mu_tau}) by $\sigma$,
    \begin{align*}
        F_p \= \(\mu^{-1}(\tau_p)\)_\sigma\ \cong\ \mu^{-1}(\tau_p) \div\,\wsigma
    \end{align*}
    embedded in the stratum $X_\sigma^\circ \cong X^{\sigma-\ss}/\,\wsigma$. In particular $F_0 \cong X\div\;\sT$.
    \end{enumerate}
\end{proposition}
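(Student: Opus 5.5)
We work stratum by stratum using Theorem \ref{thm:structure_cut}(d),(e). A $\sT$-fixed point of $X_\Sigma$ lies in a unique locally closed stratum $X_\sigma^\circ\cong X^{\sigma-\ss}/\,\wsigma$. The residual torus acting on this stratum is $\sT/\,\wsigma$. A point $[x]$ is fixed exactly when the orbit $\sT x$ is a finite union of $\wsigma$-orbits. Equivalently, the stabiliser $\sT_x$ has identity component of dimension $\dim\wsigma=r-\dim\sigma$, namely $(\sT_x)^0=\sT_{\!\sigma^\perp}$ up to finite index, since $\sT_x\cap\wsigma$ is finite by stability. Hence $\mu([x])$, computed via Theorem \ref{thm:structure_cut}(b) as the weight of $L_\Sigma$ at $[x]$, is a point $\phi\in\sigma$. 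At this point $x$ is $L\otimes\Phi^{-1}$-semistable with $\dim\sT_x\ge r-\dim\sigma$, so $\phi\in\sigma\cap\partial_{r-\dim\sigma}\Delta$. Conversely, any point of $\sigma\cap\partial_{r-\dim\sigma}\Delta$ is witnessed by some semistable $x$ fixed by a codimension $(r-\dim\sigma)$ subtorus $\sS$. Transversality forces $\sS=\sT_{\!\sigma^\perp}$ and $x\in X^{\sigma-\ss}$, so $[x]$ is $\sT$-fixed. This gives the two equalities in \eqref{eq:mu_fixed_loci}. The union is disjoint because distinct faces have disjoint relative interiors, and transversality should exclude boundary intersections. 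Finiteness then follows because $\partial_{r-\dim\sigma}\Delta$ is a finite union of polyhedra of dimension $\le r-\dim\sigma$ (Lemma \ref{lem:codimension}), and transversality makes each of them meet $\rspan(\sigma)$, a space of complementary dimension, in at most finitely many points.

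Parts (1) and (2) should follow directly from Definition \ref{def:transversal}. I expect that definition to require $\Sigma$ and each of its faces to avoid $\partial_{d-1}\Delta$ in the relevant dimensions, and to avoid the non-smooth intersection loci. It should also require transverse intersection with the affine spans of the polyhedra comprising $\partial_d\Delta$. So $p\in\partial^{\sm}_{r-\dim\sigma}\Delta$, and $\tau_p$ is defined. Since $\dim\tau_p+\dim\sigma=r$ and the spans are transverse, $\sigma\cap\tau_p=\{p\}$. For the simplicial cone statement we project along $\rspan(\sigma)$. The generators of $\Sigma$ not in $\sigma$ project to a basis of the tangent space of $\tau_p$, and $\Sigma\cap\tau_p$ near $p$ is the cone they generate. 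This is linear algebra.

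For (3), take the component $F_p$ of fixed points over $p$, which lies in $X_\sigma^\circ$. By the first paragraph its points are represented by $x\in X^{\sigma-\ss}$ fixed by $\sT_{\!\sigma^\perp}=\sT_{\!\tau_p^\perp}$, whose component has moment polyhedron through $p$. By Lemma \ref{lem:codimension} such a component is contained in a translate of $\tau_p$, hence in $\tau_p$ itself. So $x\in\mu^{-1}(\tau_p)$ (Definition \ref{def:mu_tau}). Conversely, $\mu^{-1}(\tau_p)\cap X^{\sigma-\ss}$ lands in $F_p$. On $\mu^{-1}(\tau_p)$ the only weight of $\sigma$ available is $p$, because $\sigma\cap\tau_p=\{p\}$. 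So its $\sigma$-semistable locus is the $L\otimes P^{-1}$-semistable locus, and $F_p\cong\mu^{-1}(\tau_p)^{\sigma-\ss}/\,\wsigma$. This agrees with both $(\mu^{-1}(\tau_p))_\sigma$, via Theorem \ref{thm:structure_cut}(e) applied to $\mu^{-1}(\tau_p)$, and $\mu^{-1}(\tau_p)\div\,\wsigma$. The case $\sigma=\{0\}$ gives $\tau_0=\ft^\vee_\Q$ and $F_0\cong X\div\sT$.

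The main obstacle is the scheme-theoretic statement in (3). Fixed loci are defined by the ideal $\cI_{X^\sT}$, which may be non-reduced, so the identification must hold as stacks and not just on points. I would handle this by working on the étale slice $X^{\sigma-\ss}\times\sigma_\C^\circ\to X_\sigma^\circ$. The first step is to check that the $\sT$-fixed ideal of the quotient pulls back to the $\sT_{\!\sigma^\perp}$-fixed ideal upstairs. Stabilisers differ by finite groups, which is why "fixed by a cover" in the Notation is the right notion.
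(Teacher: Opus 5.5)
Your strategy is the paper's: work stratum by stratum in $X_\sigma^\circ\cong X^{\sigma-\ss}/\wsigma$, describe the $\sT$-fixed points there through stabilisers of lifts, then use Lemma \ref{lem:trivial_act_from_polyhedron} and transversality to place them in $\mu^{-1}(\tau_p)$. But the central step identifies the wrong subtorus.

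If $[x]$ is $\sT$-fixed then $\sT\cdot x$ is a finite union of $\wsigma$-orbits. So it is the \emph{orbit} that has dimension $r-\dim\sigma$. The stabiliser $\sT_x$ therefore has dimension $\dim\sigma$. Because $\sT_x\cap\wsigma$ is finite, its identity component $\sH$ is \emph{complementary} to $\sT_{\!\sigma^\perp}$ (its Lie algebra $\fh$ satisfies $\fh\oplus\sigma^\perp=\ft$), not equal to it.

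Your claim $(\sT_x)^0=\sT_{\!\sigma^\perp}$ contradicts the finiteness of $\sT_x\cap\wsigma$ that you invoke in the same sentence. It already fails for $\sigma=\{0\}$: every point of $X_0^\circ=X\div\sT$ is fixed, but its lifts have finite stabilisers, not stabiliser $\sT$. The swap then propagates through the proposal:
\begin{itemize}
\item Your inequality $\dim\sT_x\ge r-\dim\sigma$ only puts $\phi$ in $\partial_{\dim\sigma}\Delta$, not $\partial_{r-\dim\sigma}\Delta$; the definition needs $\dim\sT_x\ge\dim\sigma$.
\item In the converse, a subtorus of $\wsigma$ acts trivially on $X^{\sigma-\ss}/\wsigma$, so being fixed by $\sT_{\!\sigma^\perp}$ says nothing about $[x]$ being $\sT$-fixed.
\item In (3), the equality $\sT_{\!\sigma^\perp}=\sT_{\!\tau_p^\perp}$ is false. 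These subtori have dimensions $r-\dim\sigma$ and $\dim\sigma$ and are complementary; this is exactly the isogeny $\sT_{\!\tau_p^\perp}\times\wsigma\to\sT$ used later in Lemma \ref{lem:normal_bundles}.
\end{itemize}

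The repair is what the paper does. The $\sT$-fixed locus of $X^{\sigma-\ss}/\wsigma$ is the image of $\bigcup_\sH(X^\sH)^{\sigma-\ss}$, taken over subgroups $\sH$ complementary to $\wsigma$. Take a component $Y$ of $X^\sH$ meeting $X^{\sigma-\ss}$. Lemma \ref{lem:trivial_act_from_polyhedron} puts $\mu(Y)$ in a translate of $\fh^\perp$. This translate is complementary to $\rspan(\sigma)$, so it meets $\sigma$ in exactly one point $p$. Transversality identifies the translate with $\tau_p$, so $\sH$ is $\sT_{\!\tau_p^\perp}$ up to finite index and $Y\subseteq\mu^{-1}(\tau_p)$. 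Conversely, points of $\mu^{-1}(\tau_p)^{\sigma-\ss}$ are fixed by $\sT_{\!\tau_p^\perp}$, which together with $\wsigma$ generates $\sT$, so their images are $\sT$-fixed.

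With this correction the rest of your outline goes through: finiteness, disjointness, the linear algebra in (2), and the identification of the semistable locus of $\mu^{-1}(\tau_p)$ in (3).

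You also need not guess at Definition \ref{def:transversal}. Put $d=r-\dim\sigma$. Its condition (2) then has negative expected dimension, so $\sigma$ misses $\partial_d\Delta\setminus\partial^{\sm}_d\Delta$. Its condition (1) makes $\sigma\cap\partial^{\sm}_d\Delta$ zero dimensional. Together these give part (1) directly.
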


\begin{figure}[ht]
    \centering
    \includegraphics[width=0.9\textwidth, keepaspectratio]{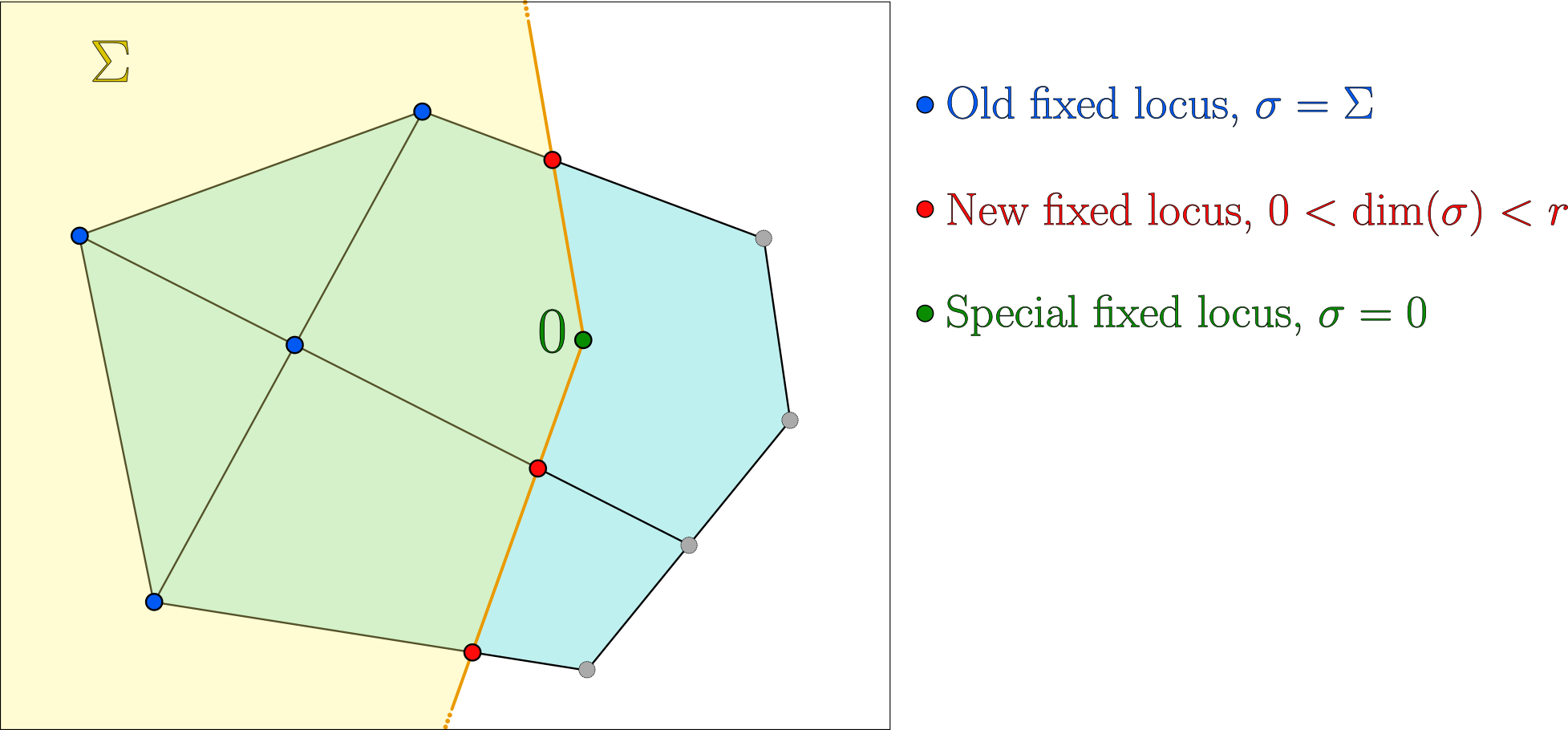}
    \caption{The classification of the components of $X_\Sigma^\sT$ into special, old and new fixed loci, together with the corresponding face $\sigma$ from \eqref{eq:mu_fixed_loci}.}
    \label{fig:classification_fix}
\end{figure}

\begin{remark}\label{rem:classification_fix_loci}
We illustrate Proposition \ref{pro:fixed_locus} for a full dimensional $\Sigma\subset\ft^\vee_\Q$ in Figure \ref{fig:classification_fix}. Components $F_p$ of $X_\Sigma^\sT$ fall into three categories, depending on the face $\sigma$ of $\Sigma$ in whose interior the point $p=\mu(F_p)$ lies: (1) the smallest, (2) the biggest, or (3) another.
\begin{enumerate}
\item \textcolor{darkgreen}{The special fixed locus} $F_0 \cong X\div\;\sT$ with $\sigma=\{0\}=\mu(X\div\;\sT)$.
\item \textcolor{blue}{The old fixed loci} are the components of the fixed locus $(X^\sT \cap X^{\Sigma-\ss})/\,\wSigma$ corresponding to the big face $\sigma=\Sigma$. These are the components of $X^\sT$ about which the birational equivalence between $X/\,\wSigma$ and $X_\Sigma$ is a local isomorphism.
\item  \textcolor{red}{The new fixed loci} are the other components. Their moment polyhedra are points of some proper face of $\Sigma$. We consider them undesired artifacts of the algebraic cut construction.
\end{enumerate}
\end{remark}

The rest of this Section is devoted to proving Theorem \ref{thm:structure_cut} and Proposition \ref{pro:fixed_locus} and could be skipped by a trusting reader.

\subsection{Semistable locus on the product} 
Here we describe a stratification of the $\dT$-semistable locus of $X\times \Sigma_\C$. Since every face $\sigma$ of $\Sigma$ is a simplicial cone in $\ft^\vee_\Q$ we get a subrepresentation $\sigma\_\C \subset \Sigma_\C$ of dimension $\dim\sigma$ by the construction of Notation\;\ref{sigma_rep}. In terms of the expression $\Sigma=\cone(\psi_1,\dots,\psi_k)$ and the dual basis of coordinates $z_i$ \ref{sigma_rep} we have $\sigma=\cone(\psi_i)_{i\in I}$ for some $I$ and so $\sigma\_\C=\{z_j=0\,:\,j\not\in I\}\subseteq\Sigma_\C$. Then we set
$$
\sigma_\C^\circ\ :=\ \big\{z_i\ne0\ :\ i\in I\big\}\ \subseteq\ \sigma\_\C
$$
to be the open subset where the torus $\sT/\,\wsigma$ acts with finite stabilisers, as in \ref{sigma_rep}.

\begin{lemma}\label{lem:stratification_stable_locus} There is a stratification of the $\dT$-semistable locus
\beq{eq:stratification_ss}
(X \times \Sigma_\C)^{\ss}\ = \bigsqcup_{\sigma \in \Faces} X^{\sigma-\ss} \times \sigma_\C^\circ \qquad \text{where} \qquad X^{\sigma-\ss}\ :=\ \bigcup_{\psi \in \sigma} X^{L\otimes\Psi^{-1}-{\ss}}
\eeq
into locally closed substacks.
\end{lemma}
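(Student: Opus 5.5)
The plan is to compute the $\dT$-invariant sections of powers of $\pi_X^*L$ on $X\times\Sigma_\C$ explicitly, and then read off the semistable locus stratum by stratum.

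\emph{Step 1: invariant sections.} Since $\Sigma_\C$ is affine with coordinate ring $\C[z_1,\dots,z_k]$, we have
$$
H^0(X\times\Sigma_\C,\pi_X^*L^n)=H^0(X,L^n)\otimes\C[z_1,\dots,z_k].
$$
Decompose into $\sT$-weight spaces, using that $\Bz^\theta$ has weight $-\sum\theta_i\psi_i$ (Notation \ref{sigma_rep}). The $\dT$-invariants are then spanned by the products $s\cdot\Bz^\theta$ with $s\in H^0(X,L^n)_{\sum\theta_i\psi_i}$ and $\theta\in\Z_{\ge0}^k$. Setting $\phi=\frac1n\sum\theta_i\psi_i\in\Sigma$, such an $s$ is exactly an invariant section of $L^n\otimes\Phi^{-n}$ with $n\phi$ integral.

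\emph{Step 2: semistability of a point.} A point $(x,z)$ is semistable iff some such product is nonzero there. This holds iff $s(x)\neq0$ and $z_i\neq0$ for every $i$ in the support $I(\theta)=\{i:\theta_i>0\}$.

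Now fix $z$ and let $I$ be the set of indices with $z_i\ne0$, so that $z\in\sigma^\circ_\C$ for the face $\sigma=\cone(\psi_i)_{i\in I}$. The admissible $\theta$ are exactly those supported in $I$, so the admissible $\phi$ range over exactly $\sigma$, with $n\phi$ integral. Hence $(x,z)$ is semistable iff $x\in X^{L\otimes\Phi^{-1}-\ss}$ for some $\phi\in\sigma$, i.e.\ iff $x\in X^{\sigma-\ss}$. A small bookkeeping point arises here. Every rational $\phi\in\sigma$ can be written as $\frac1n\sum\theta_i\psi_i$ with $\theta_i\in\Z_{\ge0}$ supported in $I$, after passing to a multiple $n$; this uses that the $\psi_i$ are a $\Q$-basis of $\rspan\sigma$. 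It also matches the convention in the definition of $X^{\sigma-\ss}$ that only powers with $k\phi$ integral are used. Finally, semistability for $L\otimes\Phi^{-1}$ is insensitive to replacing $k$ by multiples, so the two descriptions agree.

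\emph{Step 3: the stratification.} The sets $\sigma^\circ_\C$, for $\sigma\in\Faces$, are locally closed and partition $\Sigma_\C$ by coordinate support. Intersecting $(X\times\Sigma_\C)^{\ss}$ with the preimage of each piece therefore gives the disjoint decomposition \eqref{eq:stratification_ss} into locally closed substacks. Each $X^{\sigma-\ss}$ is open in $X$, being a union of open semistable loci, so $X^{\sigma-\ss}\times\sigma^\circ_\C$ is open in $X\times\sigma^\circ_\C$ and hence locally closed.

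\emph{Main obstacle.} The only delicate point is Step 2's identification: checking that $z\in\sigma^\circ_\C$ exactly forces the support condition, so that the parameters $\phi$ fill out all of $\sigma$ (including its boundary faces) and not just its interior. This works because a monomial supported on a subset $J\subseteq I$ is still nonzero at $z$. Stack-theoretic issues are harmless here, since semistability is defined via sections on $X$ exactly as for schemes.
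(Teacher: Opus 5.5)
Your proof is correct and follows the same route as the paper. Both arguments decompose the $\dT$-invariant sections of $\pi_X^*L^\ell$ as $\bigoplus_\theta H^0(X,L^\ell)_\theta\otimes\Bz^\theta$ and observe that $s\otimes\Bz^\theta$ is nonzero at $(x,\Bz_0)$ exactly when $s(x)\neq0$ and the support of $\theta$ lies inside the support of $\Bz_0$. Your extra bookkeeping, which indexes by exponent vectors $\theta\in\Z^k_{\ge0}$ and passes to powers to reach every rational point of $\sigma$, spells out a step the paper glosses over when it simply indexes by $\theta\in\Sigma$.
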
 

\begin{proof}
    A point $(x,\Bz_0)$ belongs to $(X\times \Sigma_\C)^{\ss}$ if and only if there is a $\ell>0$ and a $\dT$-invariant section of $\pi_X^\ast L^\ell$ which does not vanish on it. These sections decompose as
    \begin{equation}\label{eq:section_decomposition}
        H^0(X\times \Sigma_\C, \pi_X^\ast L^\ell)^{\dT}\ \cong\ \bigoplus\nolimits_{\theta \in \Sigma} H^0(X,L^\ell)\_{\theta} \otimes \Sym(\Sigma_\C^\vee)\_{-\theta}
    \end{equation}
and $\Sym(\Sigma^\vee_\C)\_{-\theta}$ is spanned by the monomials ${\bf z}^\theta$ of Notation\;\ref{sigma_rep}. Therefore $(x,\Bz_0)\in(X\times \Sigma_\C)^{\ss}$ if and only if there is $\ell>0,\,\theta\in\Sigma$ and a section
$$
s\otimes{\bf z}^\theta\ \in\ H^0(X,L^\ell)\_{\theta} \otimes \Sym(\Sigma_\C^\vee)\_{-\theta}
$$
such that $(s\otimes\Bz^\theta)(x,\Bz_0)\ne0$. This holds if and only if
$$
s(x)\,\ne\,0\ \text{and}\ \Bz_0^\theta\,\ne\,0\ \,\iff\ \,\exists\,\sigma\subset\Sigma\ \text{such that}\ x\,\in\,X^{\sigma-\ss} \text{ and } \Bz_0\,\in\,\sigma_\C^\circ,
$$
where $\sigma=\cone\!\(\psi_i:(\Bz_0)_i\ne0\)$ is the unique face of $\Sigma$ such that $\Bz_0\in\sigma^\circ_\C$.
\end{proof}

We now consider conditions under which the algebraic cut is a Deligne-Mumford stack. Recall from \eqref{eq:union_polyhedra} that, given $0\leq d \leq r$, the smooth locus $\partial_d^{\sm}\Delta$ is a union of dense open subsets of polyhedra of dimension $d$.

\begin{definition}\label{def:transversal}
        If $X$ is irreducible we say that $\Sigma$ \textit{is transverse} to $\Delta$ if, for every face $\sigma$ of $\Sigma$ and every $d\leq \dim\Delta$,
		\begin{enumerate}
            \item\label{cond:first} $\sigma \cap \partial^{\sm}_{d} \Delta$ is either empty or has pure dimension $\dim\sigma+d-r$, and
            \item\label{cond:second} $\sigma \cap \(\partial_{d}\Delta\setminus\partial_{d}^{\sm}\Delta\)$ is smaller, i.e. of dimension $<\dim\sigma+d-r$.
		\end{enumerate}
		For $X$ with irreducible components $X_i$ we say that $\Sigma$ is transversal to $\Delta(X)$ if $\Sigma$ is transversal to each $\Delta(X_i)$ and every face $\sigma$ of $\Sigma$ intersects $\partial_{r-\dim\sigma\,}\Delta(X)$ inside $\partial^{\sm}_{r-\dim\sigma\,}\Delta(X)$.\footnote{We already know this intersection lies in one of the $\partial_{r-\dim\sigma\,}\Delta(X_i)$ so this last condition is that it should not lie in an intersection of two of them unless they coincide locally.}
	\end{definition}

    \begin{figure}[ht]
    \begin{minipage}{0.48\textwidth}\ 
      \includegraphics[height=0.66\linewidth]{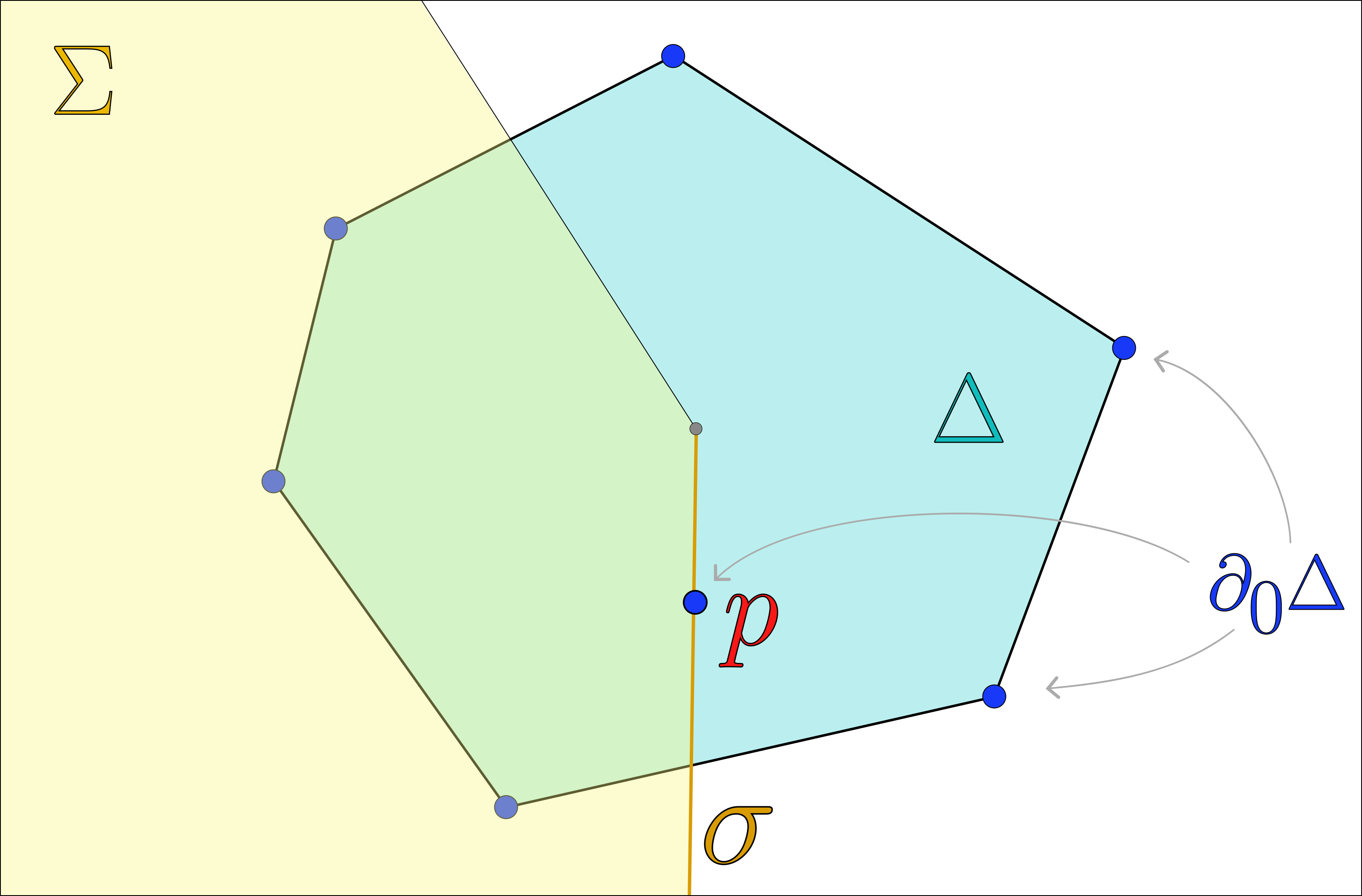}
    \end{minipage}
    \begin{minipage}{0.48\textwidth}\quad
      \includegraphics[height=0.66\linewidth]{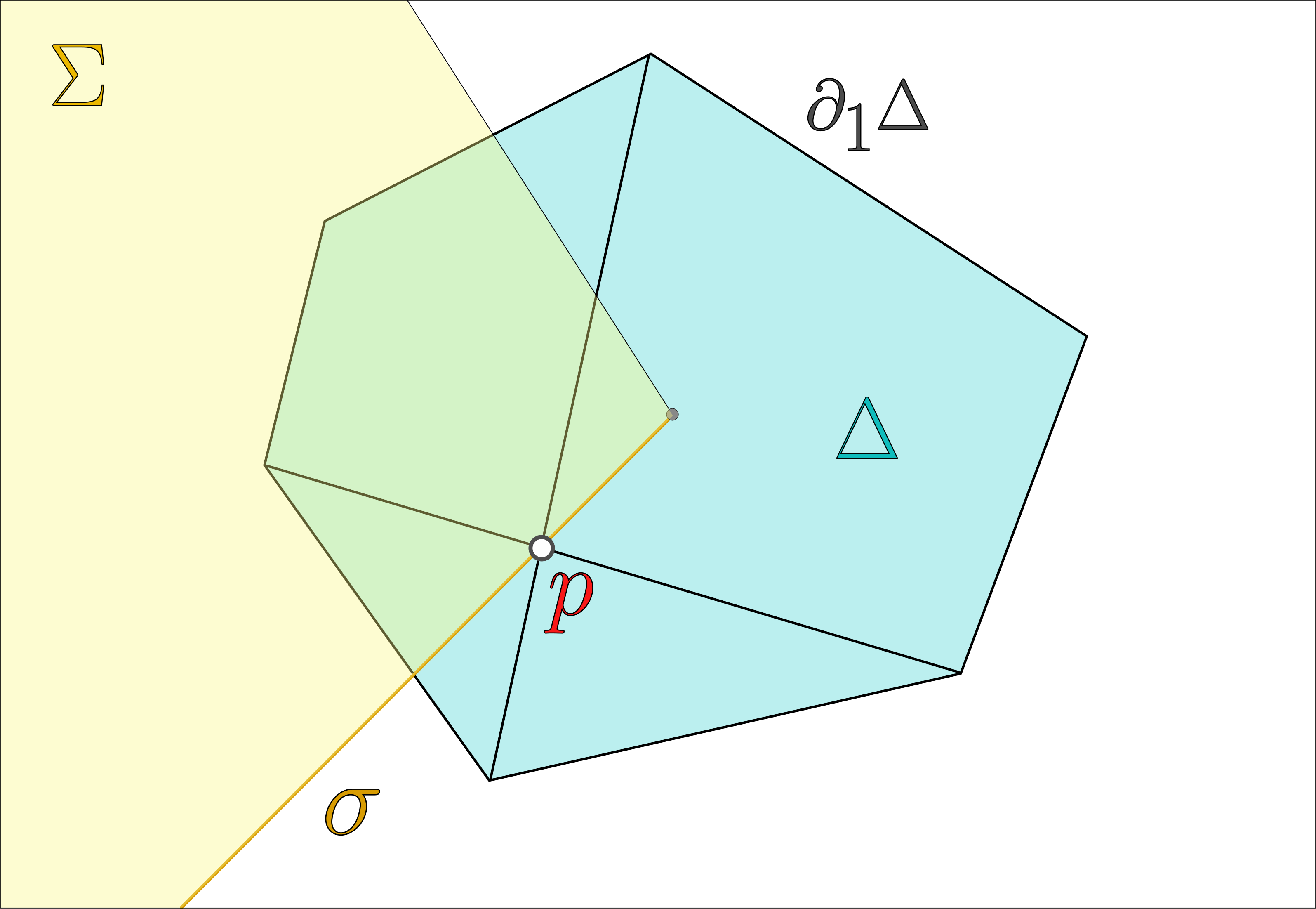}
    \end{minipage}
    \caption{Two examples of cones $\Sigma$ \emph{not} transversal to $\Delta$. On the left \eqref{cond:first} fails because $p\in\partial_0\Delta$; on the right \eqref{cond:second} fails because $p\in\partial_1\Delta\setminus\partial_1^{\sm}\Delta$.}
    \end{figure}
    \begin{figure}[h]
    \centering
    \includegraphics[width=.5\textwidth, keepaspectratio]{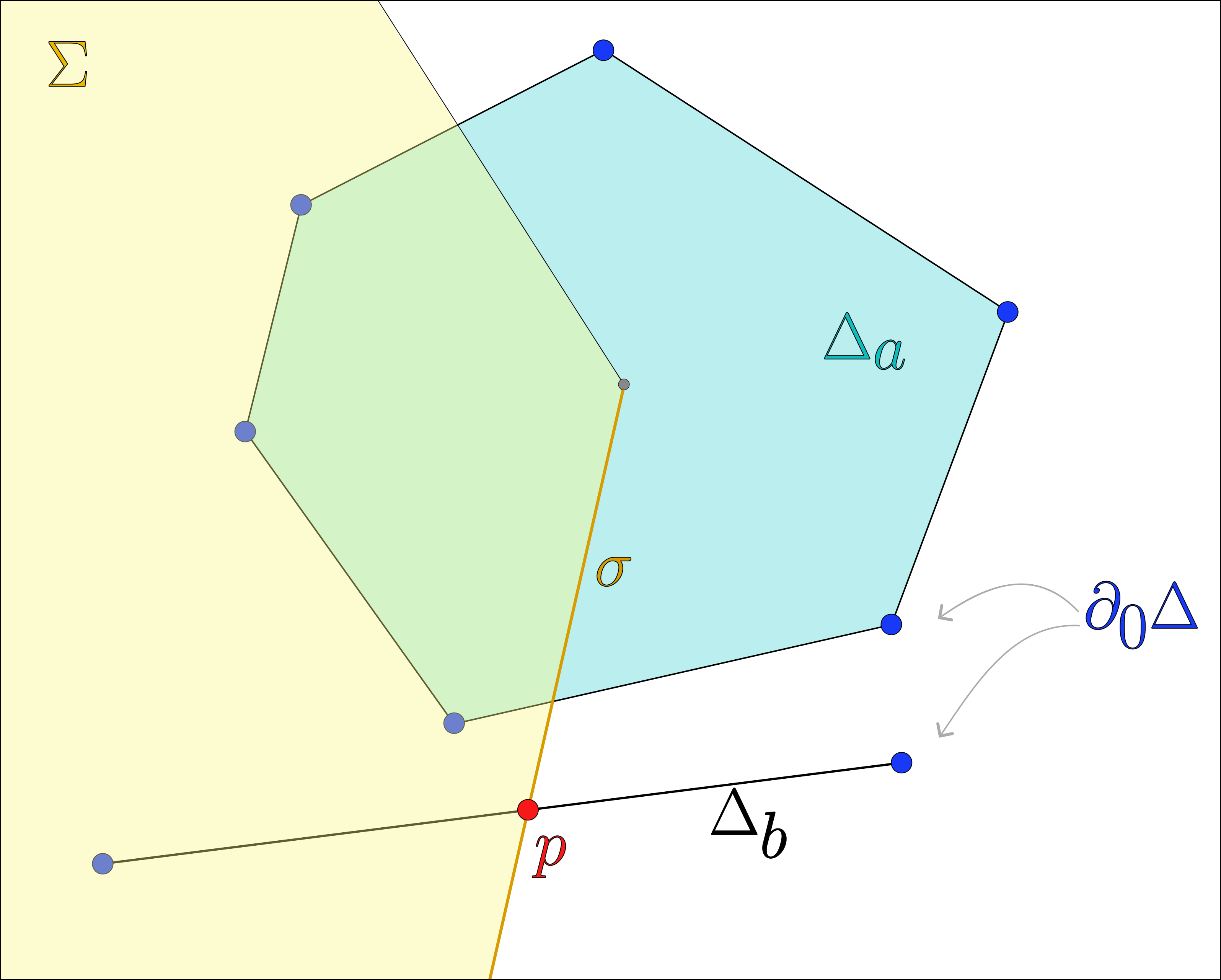}
    \caption{An example of $\Sigma$ transversal to $\Delta$ when $X = X_a \cup X_b$ is reducible. Although $p$ belongs to $\sigma \cap(\partial_2\Delta_b\setminus\partial_2^{\sm}\Delta_b)$, this does not break Definition \ref{def:transversal}\;(2) because that only applies to $\partial_d$ with  $d\le\dim\Delta_b = 1$.}
    \label{fig:transversal}
    \end{figure}   

    Note that we could have asked for (1) and (2) to hold for all $d\le\dim\Delta$ for any $X$. But by dealing with its irreducible components separately, thus letting $d$ vary, we allow for transversality in examples like the one illustrated in Figure \ref{fig:transversal} where some component of $X$ has a moment polyhedron of smaller dimension. This will be important in the applications to moduli spaces.
    
    \begin{lemma}\label{lem:semistable_finite_stabiliser}
If $\Sigma$ is transversal to $\Delta$ then $(\pi_X^\ast L)$-semistable points are $(\pi_X^\ast L)$-stable for the $\dT$ action on $X \times \Sigma_\C$. In particular $X_\Sigma$ is a Deligne--Mumford stack.
	\end{lemma}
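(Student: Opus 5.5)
The approach is to use the stratification of Lemma \ref{lem:stratification_stable_locus} and compute stabilisers stratum by stratum. A point $(x,\Bz_0)$ of $(X\times\Sigma_\C)^{\ss}$ lies in $X^{\sigma-\ss}\times\sigma_\C^\circ$ for a unique face $\sigma=\cone(\psi_i)_{i\in I}$. Its $\dT$-stabiliser is
$$
\sT_x\cap\wsigma,
$$
because $t$ fixes $\Bz_0$ exactly when $\psi_i(t)=1$ for all $i\in I$. So it suffices to show that for every $x\in X^{\sigma-\ss}$ the group $\sT_x\cap\wsigma$ is finite. Equivalently, the Lie algebra of $\sT_x$ meets $\sigma^\perp$ trivially, i.e.\ $\dim\sT_x+\dim\sigma^\perp\le r$ and the two are transverse. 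Once we have finite stabilisers at all semistable points, stability equals semistability by the argument of \cite[Remark 8.3]{Kirwan}, already used at the end of Section \ref{mpoly}. Since quotients with finite stabilisers of DM stacks are DM, $X_\Sigma$ is then a Deligne--Mumford stack.

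To prove the finiteness, suppose $x\in X^{L\otimes\Phi^{-1}-\ss}$ with $\phi\in\sigma$, and let $\sS$ be the identity component of $\sT_x$, of dimension $r-d$. Then
$$
\phi\in\mu\big(X^\sS\big)\subseteq\partial_d\Delta .
$$
Because $x$ lies in an irreducible component of $X^\sS$, Lemma \ref{lem:trivial_act_from_polyhedron} shows that $\mu$ of that component lies in an affine translate of $\sS^\perp$. We may work on a single irreducible component $X_i$ of $X$ containing $x$, so we take $d\le\dim\Delta(X_i)$.

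By transversality, $\sigma\cap\partial_d\Delta$ is nonempty and contained in $\partial^{\sm}_d\Delta$ up to lower dimensional pieces, and it has dimension $\dim\sigma+d-r$. Near a generic point $p$ of this intersection, $\partial_d\Delta$ is the affine space $\tau_p$, a translate of $\sS'^\perp$ for the relevant codimension-$d$ torus $\sS'$. The intersection $\sigma\cap\tau_p$ has the expected dimension $\dim\sigma+d-r$, and this forces
$$
\rspan(\sigma)+\sS'^\perp=\ft^\vee_\Q .
$$
Dually, $\sigma^\perp\cap\op{Lie}(\sS')=0$, which gives finiteness of $\sS'\cap\wsigma$.

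The main obstacle is passing from the point $\phi$ and the torus $\sS$ to such a generic smooth point $p$. The point $\phi$ itself may lie in $\partial_{d-1}\Delta$, or in the non-smooth part, or the intersection may jump there. I would handle this with a downward induction on $d$. If $\phi\in\partial_{d'}\Delta$ for some $d'<d$, then there is a point $x'$, semistable for the same $\phi$, whose stabiliser has larger dimension. Choose $d$ minimal with $\phi\in\partial_d\Delta$ and $x$ of maximal stabiliser dimension. Then condition (2) of Definition \ref{def:transversal}, applied to all lower strata, together with the dimension count of condition (1), shows $\phi\in\partial^{\sm}_d\Delta$ with local affine space $\tau_\phi$ parallel to $\sS^\perp$. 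The dimension count then yields $\sigma^\perp\cap\op{Lie}(\sS)=0$. In the reducible case we additionally use the last clause of Definition \ref{def:transversal}, which keeps this intersection inside a single component's smooth stratum.

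Finally, a point $x$ with a smaller stabiliser than the maximal one has $\sT_x^\circ\subseteq$ a torus already shown to meet $\wsigma$ finitely. I expect this to follow because $\dim$ decreases, though this also needs care: we want every $\sS$ occurring at $\phi$, not just the maximal one, and I would argue that each occurring $\sS$ gives $\phi\in\mu(X^\sS)\subseteq$ a translate of $\sS^\perp$ of dimension $\ge\dim\sigma\cap\cdot$ as above.
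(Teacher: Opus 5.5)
There is a genuine gap, at exactly the step you flag as the main obstacle. You claim that, after choosing $d$ minimal and $x$ of maximal stabiliser dimension, conditions (1) and (2) of Definition \ref{def:transversal} force $\phi\in\partial^{\sm}_d\Delta$. This is false. Condition (2) only says that $\sigma\cap(\partial_d\Delta\setminus\partial^{\sm}_d\Delta)$ has dimension $<\dim\sigma+d-r$. So when $\dim\sigma+d-r\ge1$ the face $\sigma$ may pass through points where two $d$-dimensional pieces of $\partial_d\Delta$ cross, and such a point can be the $\phi$ at which your $x$ is semistable. The point $\phi$ may also lie on $\partial\sigma$. No pointwise genericity of $\phi$ is available.

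Your final paragraph also does not close. A point $x'$ semistable at $\phi$ with larger stabiliser dimension need not satisfy $\sT_{x'}\supseteq\sT_x$, so finiteness for $x'$ says nothing about $x$. For the same reason, the transversality you get at an auxiliary generic point $p$, for some torus $\sS'$, is never connected back to $\sT_x$.

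The missing idea is to make $\phi$ an interior point of both convex sets and then let it vary.

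\emph{Reduction.} If $\lambda\subseteq\sigma$ is a face, $y\in X^{\lambda-\ss}$ and $\sT_x\subseteq\sT_y$, then $\sT_x\cap\wsigma\subseteq\sT_y\cap\widetilde\sT_{\lambda^\perp}$. By this monotonicity you may assume two things: $\sigma$ is the smallest face with $x\in X^{\sigma-\ss}$, and $\sT_x$ is inclusion-maximal among stabilisers of points of $X^{\sigma-\ss}$. Such a maximum exists because only finitely many stabiliser subgroups occur.

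\emph{Interior points.} Let $Y$ be the component of $X^{\sT_x}$ through $x$. The first reduction puts $\phi\in\mathring\sigma$. For the second, apply Lemma \ref{lem:interior_polytope} to $Y$: any relative boundary point of $\mu(Y)$ carries a semistable $y\in Y\subseteq X^{\sT_x}$ with strictly bigger stabiliser, necessarily containing $\sT_x$, which maximality forbids. So $\phi$ lies in the relative interior of the $d$-dimensional polyhedron $\mu(Y)\subseteq\partial_d\Delta$, which spans a translate of $(\sT_x^0)^\perp$.

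\emph{Dimension count.} A whole neighbourhood of $\phi$ in the affine space $\rspan(\sigma)\cap\big(\phi+(\sT_x^0)^\perp\big)$ therefore lies in $\sigma\cap\partial_d\Delta$. By (1) and (2) together, that set has dimension $\le\dim\sigma+d-r$. Condition (2) is genuinely needed here, since the neighbourhood may meet the non-smooth part of $\partial_d\Delta$. This bound forces $\rspan(\sigma)+(\sT_x^0)^\perp=\ft^\vee_\Q$, i.e. $\sT_x^0\cap\wsigma$ is finite, and so $\sT_x\cap\wsigma$ is finite.
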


	\begin{proof}
By working one irreducible component at a time we may assume that $X$ is irreducible. We will show that the stabiliser group $(\dT)_{(x,\Bz)}$ of every semistable point $(x,\Bz)\in(X\times\Sigma_\C)^{\ss}$ is finite. By \cite[Remark 8.3]{Kirwan} this implies that semistable points are stable.

By Lemma \ref{lem:stratification_stable_locus} $(x,\Bz) \in X^{\sigma-\ss} \times \sigma_\C^\circ$ for some face $\sigma\subseteq\Sigma$. Without loss of generality,
        \begin{enumerate}
            \item[(a)] We assume $\sigma$ is the smallest face such that $x \in X^{\sigma-\ss}$, because passing to a smaller face makes the possible stabiliser groups larger. Indeed, suppose $\lambda \subset \sigma$ is a smaller face with $x \in X^{\lambda-\ss}$ and $\mathbf{w} \in \lambda^\circ_\C$, then $(x, \mathbf{w})$ is semistable and
            $$
            (\dT)_{(x,\Bz)}\ \cong\ \sT_x \cap \wsigma\,\subset\ \sT_x \cap \widetilde\sT_{\lambda^\perp} \= (\dT)_{(x, \mathbf{w})}.
            $$
            \item[(b)] We similarly assume $\sT_x$ is maximal among the stabilisers groups of points in $X^{\sigma-\ss}$. If $y \in X^{\sigma-\ss}$ is another point with $\sT_x \subset \sT_y$ then $(y,\Bz)$ is semistable and
            $$
            (\dT)_{(x,\Bz)}\ \cong\ \sT_x \cap \wsigma\,\subset\ \sT_y \cap \wsigma \= (\dT)_{(y, \Bz)}.
            $$
        \end{enumerate}

        Let $Y\subseteq X^{\sT_x}$ be the irreducible component containing $x$. By Lemma \ref{lem:trivial_act_from_polyhedron}, $\mu(Y) \subset \partial_d\Delta$ is a polyhedron of dimension $d:=r-\dim\sT_x$ contained in a translate of $(\sT_x^0)^\perp$, where $\sT_x^0$ is the identity component of $\sT_x$ (and a torus). Since $x \in X^{\sigma-\ss} \cap Y$, we know that $\sigma$ intersects $\mu(Y)$.
Thus by (a) we know the relative interior of $\sigma$ intersects $\mu(Y)$, and by (b) combined with Lemma \ref{lem:interior_polytope} we know that this intersection is inside the relative interior of $\mu(Y)$. 
        
Now apply Definition \ref{def:transversal}\,(1) and (2) to the parts of $\mathring\sigma\cap\mathring\mu(Y)$ that lie in $\partial_d^{\sm}\Delta$ and $\partial_{d}\Delta\setminus\partial_{d}^{\sm}\Delta$ respectively. This shows that $\rspan(\sigma)$ is transversal to $(\sT_x^0)^\perp$\,---\,the tangent space to $\mu(Y)$. Equivalently the intersection $\sT_x^0\cap\wsigma$ of their annihilators is $\{0\}$, so $(\dT)_{(x,\Bz)} = \sT_x \cap \wsigma$ is finite.
	\end{proof}

\subsection{Properness}
We next show that $X_\Sigma$ is projective if $\Sigma \cap \Delta$ is bounded and $H^0(X,\cO_X)^\sT$ is finite dimensional. This latter condition is too strong (for instance if $X$ has an entirely unstable component $Y$ with infinite dimensional $H^0(Y,\cO_Y)^\sT$) so we weaken it as follows.

    \begin{lemma}
        Suppose the restriction map $H^0(X,\cO_{X})^\sT \rightarrow H^0(X^{\Sigma-\ss},\cO_{X^{\Sigma-\ss}})^\sT$ has finite dimensional image. If $\Sigma$ is transversal to $\Delta$ and $\Sigma \cap \Delta$ is bounded, then $X_\Sigma$ is projective.
    \end{lemma}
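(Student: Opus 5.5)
By Lemma \ref{lem:semistable_finite_stabiliser}, semistability equals stability for $\dT\acts(X\times\Sigma_\C,\pi_X^*L)$. So $X_\Sigma$ is a Deligne-Mumford stack whose coarse space is the GIT quotient
$$
|X_\Sigma|\=\mathrm{Proj}\,R,\qquad R\ :=\ \bigoplus\nolimits_{\ell\ge0}H^0(X\times\Sigma_\C,\pi_X^*L^\ell)^{\dT}.
$$
This Proj is projective over $\Spec R_0$. The plan therefore has two parts: show that the part of $R_0$ which actually acts on the semistable locus is finite dimensional, and show that each graded piece of the relevant quotient of $R$ is finite over it. Together these make the quotient projective over a finite-dimensional (Artinian) ring, i.e. projective.

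First I would use the decomposition \eqref{eq:section_decomposition},
$$
R_\ell\ \cong\ \bigoplus\nolimits_{\theta\in\Sigma}H^0(X,L^\ell)_\theta\otimes\Bz^\theta .
$$
In degree $0$ only $\theta=0$ contributes, so $R_0=H^0(X,\cO_X)^\sT$. This ring may be infinite dimensional, so I would replace $R$ by its image $\overline R$ under restriction to the semistable locus $(X\times\Sigma_\C)^{\ss}$. Proj of $R$ is unchanged by this, because sections vanishing on the semistable locus are nilpotent on $\mathrm{Proj}$ (they vanish wherever some positive-degree section is nonzero). By Lemma \ref{lem:stratification_stable_locus} the semistable locus projects into $X^{\Sigma-\ss}\times\Sigma_\C$. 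Hence $\overline R_0$ is a quotient of the image of $H^0(X,\cO_X)^\sT\to H^0(X^{\Sigma-\ss},\cO)^\sT$, which is finite dimensional by hypothesis.

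Next I would show $\overline R$ is finitely generated over $\overline R_0$ with finite-dimensional graded pieces. Since $X$ is projective-over-affine, $\bigoplus_\ell H^0(X,L^\ell)$ is finitely generated over $H^0(\cO_X)$. The $\dT$-invariants in the product algebra are then finitely generated over the $\dT$-invariants of $H^0(\cO_X)\otimes\Sym\Sigma_\C^\vee$ by reductivity (Hilbert/Nagata). The key step is boundedness: a summand $H^0(X,L^\ell)_\theta\otimes\Bz^\theta$ survives in $\overline R_\ell$ only if $H^0(X,L^\ell)_\theta$ has a section nonzero somewhere on $X^{\Sigma-\ss}$. Such a section gives a nonempty semistable locus for the character $\theta/\ell$, so $\theta/\ell\in\Delta\cap\Sigma$ by \eqref{Deltass}, and by hypothesis $\Delta\cap\Sigma$ is bounded. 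Consequently, in each degree $\ell$ only finitely many lattice points $\theta\in\ell(\Delta\cap\Sigma)$ occur. Each weight space $H^0(X,L^\ell)_\theta$ restricted to $X^{\Sigma-\ss}$ is a module over the image of $H^0(\cO_X)^\sT$, and it is a finitely generated module (invariants of a finitely generated module over a reductive group are finitely generated over the invariant ring). Since that ring image is finite dimensional, each weight space is finite dimensional. So each $\overline R_\ell$ is finite dimensional and $\overline R$ is a finitely generated graded algebra over an Artinian ring, which gives $|X_\Sigma|=\mathrm{Proj}\,\overline R$ projective. Since $X_\Sigma$ is a DM stack with projective coarse space, it is projective in the weak sense of Notation \ref{weak}.

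I expect the main obstacle to be the passage from $R$ to $\overline R$. One must check carefully that killing sections vanishing on $X^{\Sigma-\ss}\times\Sigma_\C$ does not change Proj, in particular on nonreduced or reducible $X$ where nilpotents and unstable components interfere. One must also check that the module-finiteness argument applies to the restricted weight spaces rather than the full ones. A cleaner route may be to work on an affine $\sT$-invariant cover or directly with the finitely generated algebra of $H^0(\cO_X)$-modules and then take images.
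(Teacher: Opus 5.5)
There is a genuine gap, and it is in your very first step. You claim that in degree $0$ only $\theta=0$ contributes, so that $R_0=H^0(X,\cO_X)^\sT$. But for $\ell=0$ the decomposition \eqref{eq:section_decomposition} reads
\[
R_0\ \cong\ H^0(X,\cO_X)^\sT\ \oplus\ \bigoplus\nolimits_{\theta\in\Sigma\setminus\{0\}}H^0(X,\cO_X)_\theta\otimes\Bz^\theta ,
\]
and for noncompact $X$ the summands with $\theta\ne0$ are typically nonzero.

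For example, let $\sT=\C^*$ act on $X=\AA^1$ so that the coordinate $x$ has weight $1$. Linearise $L=\cO_X$ so that its generating section has weight $-1$, and take $\Sigma=\Q_{\ge0}$. Then $\Delta=[-1,\infty)$, $\Sigma$ is transverse to $\Delta$, and $H^0(\cO_X)^\sT=\C$. Yet $xz\in R_0$, $(X\times\Sigma_\C)^{\ss}=\{x\ne0\}$, and $X_\Sigma\cong\Spec\C[xz]\cong\AA^1$. So your claim that $\overline R_0$ is a quotient of the finite-dimensional image of $H^0(X,\cO_X)^\sT$ is false in general. Here it fails because $\Sigma\cap\Delta=[0,\infty)$ is unbounded, but your Step 1 never uses boundedness.

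A symptom of the problem: the only place you do use boundedness, namely finite dimensionality of $\overline R_\ell$ for $\ell>0$, is redundant. Once $\overline R_0$ is finite dimensional and $\overline R$ is finitely generated over it, every $\overline R_\ell$ is automatically finite dimensional.

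The $\theta\ne0$ summands of $R_0$ are exactly where the paper spends the boundedness hypothesis. By \eqref{diagT} and Proposition \ref{pro:proj_criterion}, it suffices that the restriction map from $R_0$ to $H^0\big((X\times\Sigma_\C)^{\ss},\cO\big)^{\dT}$ has finite rank. The $\theta=0$ summand is covered by the hypothesis. For $\theta\ne0$ and $f\in H^0(X,\cO_X)_\theta$, one shows that the restriction of $f\otimes\Bz^\theta$ is nilpotent:
\begin{itemize}
\item[] If not, $f(x)\ne0$ at some closed point $x\in X^{\Sigma-\ss}$, so there are $\psi\in\Sigma\cap\Delta$ and $s\in H^0(X,L^k)_{k\psi}$ with $s(x)\ne0$.
\item[] Then $f^ms\in H^0(X,L^k)_{k\psi+m\theta}$ is nonzero at $x$, so $\psi+m\theta/k\in\Sigma\cap\Delta$ for every $m>0$.
\item[] This contradicts boundedness, since $\theta\ne0$.
\end{itemize}
The paper disposes of nilpotents, and hence of your worries about nonreduced structure, by noting that $X_\Sigma$ is projective if and only if its reduction is.

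With this inserted, your argument via Proj over an Artinian ring is fine; it is just an unpacking of the proof of Proposition \ref{pro:proj_criterion}. You could even run the contradiction inside your own framework. Multiplying powers of $f\otimes\Bz^\theta$ by a fixed degree-$k$ invariant section nonvanishing at the relevant point would give infinitely many distinct $\sT$-weights in the single graded piece $\overline R_k$.
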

    \begin{proof} We work with the commutative diagram
\beq{diagT}
\begin{tikzcd}[row sep=18pt]
H^0(X,\cO_X)^\sT \ar[r, "r"]\ar[d, "\pi_X^*"']& H^0\(X^{\Sigma-\ss},\cO_{X^{\Sigma-\ss}}\)^\sT \ar[d, "\pi_X^*"] \\
H^0(X\times\Sigma_\C,\cO_{X\times\Sigma_\C})^\dT \ar[r, "R"]& H^0\((X\times\Sigma_\C)^{L-\ss},\cO_{(X\times\Sigma_\C)^{L-\ss}}\)^\dT.
\end{tikzcd}
\eeq
The horizontal arrows are the restriction maps. For the right hand vertical map note that $\pi_X:X\times\Sigma_\C\to X$ restricts to $(X\times\Sigma_\C)^{L-\ss}\to X^{\Sigma-\ss}$ by \eqref{eq:stratification_ss}.

By Proposition \ref{pro:proj_criterion} the GIT quotient $(X\times\Sigma_\C,\,\pi_X^*L)\div\;\dT$ is projective if and only if the restriction map $R$
has finite dimensional image. It is sufficient to prove this after dividing by nilpotents since a quasi-projective Deligne-Mumford stack $Y$ is projective if and only if $Y_{\red}$ is.
As in \eqref{eq:section_decomposition} we write the bottom left hand term as
$$
H^0(X\times\Sigma_\C,\cO_{X\times\Sigma_\C})^\dT\ \cong\ H^0(X,\cO_X)^\sT\,\oplus\ \bigoplus\nolimits_{\theta\in\Sigma\setminus\{0\}}H^0(X, \cO_X)\_\theta \otimes \Sym(\Sigma_\C^\vee)\_{-\theta}.
$$
The first summand is in the image of the top left of \eqref{diagT}, on which $r$ has finite rank by assumption. Therefore the restriction of $R$ to this summand also has finite rank.

We will show that the boundedness of  $\Sigma\cap\Delta$ implies that the generators $f \otimes \Bz^\theta$ of the other summands have nilpotent $R(f \otimes \Bz^\theta)$.

Suppose not; then $f(x) \neq 0$ for some closed point $x\in X^{\Sigma-\ss}$. By the definition of $X^{\Sigma-\ss}$ this means $x\in X^{L\otimes\Psi^{-1}-\ss}$ for some $\psi\in\Sigma\cap\Delta$\,---\,i.e. $\exists\,s\in H^0(X,L^k)_{k\psi}$ with $s(x)\ne0$.

Then $f^\ell  s \in H^0(X,L^k)_{k\psi+\ell\Bd}$ is nonzero, so $\psi+\frac{\ell \Bd}{k} \in \Sigma \cap \Delta$ for every $\ell>0$. Since $\theta\ne0$ this makes $\Sigma\cap\Delta$ unbounded.
    \end{proof}
    \subsection{The moment polyhedron}
    \begin{lemma}\label{lem:polyhedron_of_cut}
        The moment polyhedron for $\sT \curvearrowright (X_\Sigma,L_\Sigma)$ is $\Sigma \cap \Delta\subseteq\ft^\vee_\Q$.
    \end{lemma}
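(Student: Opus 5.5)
We compute the moment polyhedron of the cut directly from its sections. By Definition \ref{def:moment_polyhedron}, a point $\psi\in\ft^\vee_\Q$ lies in $\Delta(X_\Sigma)$ if and only if some $L_\Sigma^\ell$ has a nonzero $\ell\psi$-weight section on $(X_\Sigma)_{\red}$. Since $X_\Sigma$ is a GIT quotient, for $\ell$ divisible enough these sections are the $\dT$-invariant sections of $\pi_X^*L^\ell$ on the semistable locus $(X\times\Sigma_\C)^{\ss}$, with the residual $\sT$ acting on them. For $\ell\gg0$ they contain the image of the global invariant sections, and the image is all of them up to nilpotents and saturation. Because we may take $\ell\gg0$ (Remarks \ref{rem:polyhedron}), it is enough to compute the $\sT$-weights appearing in the image of the restriction map of
\eqref{eq:section_decomposition},
$$
\bigoplus\nolimits_{\theta\in\Sigma} H^0(X,L^\ell)_\theta\otimes\Sym(\Sigma_\C^\vee)_{-\theta}.
$$
We also need to know which summands survive restriction to the semistable locus.

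\textbf{The weight of each summand.} The first copy of $\sT$ acts only on $X$, so on the summand indexed by $\theta$ it acts with weight $\theta$ (it does not act on $\Bz^\theta$). This pins down the candidate weights: $\ell\psi=\theta$ with $\theta\in\Sigma$. The section $s\otimes\Bz^\theta$ is nonzero on the reduced semistable locus exactly when $s$ is non-nilpotent at some point $x$ with $(x,\Bz)$ semistable.

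\textbf{Inclusion $\Delta(X_\Sigma)\subseteq\Sigma\cap\Delta$.} Suppose $\psi\in\Delta(X_\Sigma)$. Then for some $\ell$ there is a nonzero section $s\otimes\Bz^{\ell\psi}$ with $\ell\psi\in\Sigma$ and $s\in H^0(X,L^\ell)_{\ell\psi}$ not nilpotent. Hence $s|_{X_\red}\neq0$, so $\psi\in\Delta\cap\Sigma$.

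\textbf{Inclusion $\Sigma\cap\Delta\subseteq\Delta(X_\Sigma)$.} Let $\psi\in\Sigma\cap\Delta$ and choose $s\in H^0(X,L^k)_{k\psi}$ with $s(x)\neq0$ at some closed point $x$. By definition $x\in X^{L\otimes\Psi^{-1}-\ss}\subseteq X^{\sigma-\ss}$, where $\sigma$ is the face of $\Sigma$ containing $\psi$ in its relative interior. Pick $\Bz_0\in\sigma^\circ_\C$. Then $(x,\Bz_0)$ is semistable by Lemma \ref{lem:stratification_stable_locus}. Moreover $\Bz^{k\psi}$ involves only the coordinates $z_i$ with $i\in I$, since $k\psi\in\sigma$, so $\Bz_0^{k\psi}\neq0$. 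Thus $s\otimes\Bz^{k\psi}$ is nonvanishing at a point of the semistable locus. It therefore descends to a nonzero $\sT$-weight-$k\psi$ section of $L_\Sigma^k$ on $(X_\Sigma)_\red$, and $\psi\in\Delta(X_\Sigma)$.

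\textbf{Main obstacle.} The delicate step is relating sections on the quotient stack to invariant sections upstairs, in the projective-over-affine setting with finite stabilisers and non-reduced $X$. Powers of $L_\Sigma$ may only be defined, or descend, after passing to multiples, and a section on $(X_\Sigma)_\red$ need not extend to a global section of $X\times\Sigma_\C$. For the inclusion $\subseteq$ I would argue that any section of $L_\Sigma^\ell$ of weight $\ell\psi$ on the quotient is, after raising to a power, a ratio of restricted global invariant sections. This uses the description of the quotient via $\mathrm{Proj}$ relative to $\Spec$ of invariant functions. Weights add under multiplication, so the weight $\ell\psi$ still lies in the cone generated by weights of the form $\theta/\ell'$ with $\theta\in\Sigma$ and $H^0(X_\red,L^{\ell'})_\theta\neq0$. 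Such weights lie in $\Sigma\cap\Delta$, and this set is closed under the relevant convex combinations componentwise. For reducible $X$ I would run the argument on each irreducible component, using that $\Delta$ is the union of the components' polyhedra.
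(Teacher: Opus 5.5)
Your weight computation (the summand indexed by $\theta$ has residual $\sT$-weight $\theta$) is the same as the paper's. Your inclusion $\Sigma\cap\Delta\subseteq\Delta(X_\Sigma)$ is also fine: exhibiting $s\otimes\Bz^{k\psi}$ and checking directly that it is nonvanishing at the semistable point $(x,\Bz_0)$ is clean and self-contained.

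\textbf{The gap is in the inclusion $\Delta(X_\Sigma)\subseteq\Sigma\cap\Delta$.} There you take an arbitrary nonzero weight-$\ell\psi$ section of $L_\Sigma^\ell$ on $(X_\Sigma)_{\red}$ and write it as $s\otimes\Bz^{\ell\psi}$ with $s$ a global section on $X$. That presupposes that every $\dT$-invariant section on the semistable locus extends to all of $X\times\Sigma_\C$. This is exactly what you flag as the main obstacle, and you do not establish it.

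The repair you sketch does not work. If a section is (a power of) a ratio $a/b$ of restricted global invariant sections, its weight is $\mathrm{wt}(a)-\mathrm{wt}(b)$, a \emph{difference}. But $\Sigma\cap\Delta$ is neither a cone nor closed under differences, so nothing forces $\psi$ into $\Sigma\cap\Delta$. Your hedge ``up to nilpotents and saturation'' is not needed either: in high, divisible degree there is no discrepancy.

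\textbf{The missing ingredient is Lemma \ref{lem:ss_on_ss_is_ss}.} For $k$ sufficiently large and divisible, restriction gives an isomorphism $H^0(Y,L^k)^{\sG}\cong H^0(Y^{\ss},L^k|_{Y^{\ss}})^{\sG}$. Both sides equal sections of $\cO(k)$ on $|Y\div\sG|=\mathrm{Proj}\bigoplus_n H^0(Y,L^n)^{\sG}$, and in high degree these agree with the graded pieces of the invariant ring.

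The paper applies this to $Y=X_{\red}\times\Sigma_\C$ with the $\dT$ action. Its quotient is $(X_\Sigma)_{\red}$, using Remark \ref{rem:ss_on_closed}. By Remarks \ref{rem:polyhedron} one may restrict to $k\gg0$ divisible in the definition of $\Delta$, since powers of a non-nilpotent section are non-nilpotent. Hence the sections of $L_\Sigma^k$ on $(X_\Sigma)_{\red}$ are exactly $\bigoplus_{\theta\in\Sigma}H^0(X_{\red},L^k)_\theta\otimes\langle\Bz^\theta\rangle$, and your weight computation gives both inclusions at once. Working directly with $X_{\red}\times\Sigma_\C$ also removes your need to lift sections from $X_{\red}$ to $X$.

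The correct version of your $\mathrm{Proj}$ idea needs no denominators at all: in high enough divisible degree, a section on the quotient already lies in the invariant ring. That is precisely what Lemma \ref{lem:ss_on_ss_is_ss} records.
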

    \begin{proof}
A standard feature of GIT quotients, reviewed in Lemma \ref{lem:ss_on_ss_is_ss}, gives
$$
            H^0\((X_\Sigma)_{\red},\,L_\Sigma^k\)\ \cong\ H^0\(X_{\red}\times \Sigma_\C, \pi_X^\ast L^k\)^{\dT}\ \text{ for $k\gg0$ sufficiently divisible}.
$$
As in \eqref{eq:section_decomposition} the right hand side decomposes as
$\bigoplus_{\theta \in \Sigma} H^0(X_{\red},L^k)\_{\theta} \otimes \Sym(\Sigma_\C^\vee)\_{-\theta}$. This is acted on by $\sT$ (not $\dT$!) with the subspace of $\sT$-weight $\psi$ being 
\beq{Tspace}
H^0(X,L^k)_{\psi} \otimes\langle\Bz^{\psi}\rangle\ \text{ if }\psi\,\in\, \Sigma
\eeq
and zero otherwise. But $H^0(X,L^{k})_{\psi}\ne0$ if and only if $\psi/k\in\Delta$ by the definition of $\Delta$ (restricted to $k\gg0$ as in the last of the Remarks \ref{rem:polyhedron}), so \eqref{Tspace} is nonzero if and only if $\psi/k\in\Sigma\cap\Delta$. This is therefore $\Delta(X_\Sigma)$.
    \end{proof}
    \subsection{The stratification}
The stratification $(X \times \Sigma_\C)^{\ss}\ = \bigsqcup_{\sigma \in \Faces} X^{\sigma-\ss} \times \sigma_\C^\circ$ of Lemma \ref{lem:stratification_stable_locus} is $(\sT\times\dT)$-invariant. Quotienting by $\dT$ gives the $\sT$-invariant stratification of the algebraic cut illustrated in Figure \ref{fig:stratification},
$$
    X_\Sigma \= \bigsqcup\nolimits_{\sigma \in \Faces} X_\sigma^\circ\,.
$$
The closed strata $X_{\sigma}$ and locally closed strata $X_{\sigma}^\circ$ are
\begin{align*}
    X_\sigma\ :=\ \left( X \times \sigma\_\C \right)\div\,\dT \qquad \text{and} \qquad X_{\sigma}^\circ\ :=\ X_{\sigma} \setminus \Bigl( \bigcup\nolimits_{\sigma^\prime \subset \sigma} X_{\sigma^\prime} \Bigr).
\end{align*}
Notice the first of these is precisely the algebraic cut of $X$ by the cone $\sigma$, as the notation suggests, embedded in $X_\Sigma$ by
\begin{align*}
    X \times \sigma\_\C\ \subset\ X \times \Sigma_\C \qquad \Longrightarrow \qquad X_\sigma\ \subset\ X_\Sigma.
\end{align*}
We next prove the simple description of the strata $X_\sigma^\circ$ claimed in Theorem \ref{thm:structure_cut}\,(e).

\begin{lemma}\label{lem:polyhedron_strata}
    Given a face $\sigma \subseteq \Sigma$, the corresponding locally closed stratum is
$$
        X^\circ_{\sigma}\ \cong\ X^{\sigma-\ss}/\,\wsigma.
$$
    In particular $X_0 \cong X^\circ_0 \cong X\div\;\sT$ and $\mu(X_{\sigma}, L_\sigma) = \Delta \cap \sigma$ for each face $\sigma \subseteq \Sigma$.
\end{lemma}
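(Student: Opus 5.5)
By Lemma \ref{lem:stratification_stable_locus}, the preimage of $X_\sigma^\circ$ in the semistable locus $(X\times\Sigma_\C)^{\ss}$ is exactly the stratum $X^{\sigma-\ss}\times\sigma_\C^\circ$. Indeed, $X_\sigma$ is the image of $(X\times\sigma\_\C)\cap(X\times\Sigma_\C)^{\ss}$, which is the union of the strata indexed by faces $\sigma'\subseteq\sigma$. Removing the images of the $X_{\sigma'}$ for proper faces $\sigma'\subset\sigma$ leaves the single stratum for $\sigma$.

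So
$$
X_\sigma^\circ\ \cong\ \(X^{\sigma-\ss}\times\sigma_\C^\circ\)/\dT,
$$
where $\sT\times\dT$ acts on the open set $X^{\sigma-\ss}\times\sigma_\C^\circ$, which is invariant under both factors.

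The key step is to slice by $\sigma_\C^\circ$. The diagonal torus acts on $\sigma_\C^\circ=(\C^*)^{\dim\sigma}$ through the characters $\psi_i$, $i\in I$. Since the $\psi_i$ are linearly independent, this action is transitive with stabiliser $\wsigma=\bigcap_{i\in I}\ker\psi_i$. The stabiliser is defined scheme-theoretically as this intersection of kernels, so it need not be connected or reduced-issue-free only up to this definition. Thus $\sigma_\C^\circ\cong\dT/\wsigma$ as a homogeneous space.

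Fix the point $\mathbf 1=(1,\dots,1)\in\sigma_\C^\circ$. The map
$$
X^{\sigma-\ss}\times\dT\To X^{\sigma-\ss}\times\sigma_\C^\circ,\qquad (x,t)\Mapsto (t\cdot x,\,t\cdot\mathbf 1),
$$
then gives an isomorphism of quotient stacks
$$
\(X^{\sigma-\ss}\times\sigma_\C^\circ\)/\dT\ \cong\ X^{\sigma-\ss}/\,\wsigma.
$$
This is the standard induction isomorphism $(Y\times \sG/\sH)/\sG\cong Y/\sH$, applied with $\sG=\dT$ acting diagonally on $Y\times\sG/\sH$. It also holds for a non-connected $\sH$, by realising $\sG\times_\sH Y\cong Y\times \sG/\sH$ through the shear map.

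I would check that this identification intertwines the residual $\sT$-actions, so that it is $\sT$-equivariant. The $\sT$ acting only on $X$ commutes with $\wsigma\subset\dT$ and corresponds to the obvious action on $X^{\sigma-\ss}/\,\wsigma$. The main point of care is non-reducedness and stackiness. Because the argument is purely an isomorphism of quotient stacks via the shear map, it works for any DM stack $X$ without reducedness assumptions.

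For the consequences, take $\sigma=\{0\}$. Then $\sigma_\C^\circ$ is a point and $\wsigma=\sT$. Also $X^{\{0\}-\ss}=X^{\ss}$, so $X_0^\circ\cong X^{\ss}/\sT=X\div\;\sT$. There are no smaller faces, so $X_0=X_0^\circ$.

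Finally, $X_\sigma$ is by construction the algebraic cut of $X$ by the simplicial cone $\sigma$. Transversality of $\Sigma$ passes to its faces, since Definition \ref{def:transversal} is a condition imposed on every face. Hence Lemma \ref{lem:polyhedron_of_cut} applies with $\sigma$ in place of $\Sigma$ and gives $\mu(X_\sigma,L_\sigma)=\Delta\cap\sigma$. Here $L_\sigma$ is the restriction of $L_\Sigma$, equivalently the descent of $\pi_X^*L$ from $X\times\sigma\_\C$.

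I do not expect a real obstacle. The only subtle point is justifying that the preimage of $X_\sigma$ is exactly the union of the strata indexed by the faces of $\sigma$. This uses Lemma \ref{lem:ss_on_ss_is_ss}, or directly the description of semistability in the proof of Lemma \ref{lem:stratification_stable_locus}, to show that semistability on $X\times\sigma\_\C$ agrees with semistability on $X\times\Sigma_\C$ restricted to it. This holds because the invariant sections on $X\times\sigma\_\C$ are exactly the restrictions of those of the form $s\otimes\Bz^\theta$ with $\theta\in\sigma$.
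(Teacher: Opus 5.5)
Your proof is correct and is essentially the paper's argument: both identify $\sigma_\C^\circ$ with the homogeneous space $\sT/\,\wsigma$ for the diagonal torus, use this to reduce the $\dT$-quotient of $X^{\sigma-\ss}\times\sigma_\C^\circ$ to the $\wsigma$-quotient of $X^{\sigma-\ss}$, and then obtain $\mu(X_\sigma,L_\sigma)=\Delta\cap\sigma$ by applying Lemma \ref{lem:polyhedron_of_cut} to the face $\sigma$. The differences are minor: the paper quotients in two steps (first by $(\wsigma)_{\diag}$, then by $(\sT/\,\wsigma)_{\diag}\cong\sigma_\C^\circ$) where you use the one-step shear/induction isomorphism, and you spell out two points the paper leaves implicit, namely that the preimage of $X_\sigma^\circ$ is exactly the stratum $X^{\sigma-\ss}\times\sigma_\C^\circ$ and that transversality passes from $\Sigma$ to its faces.
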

\begin{proof}
We express the quotient of the stratum $X^{\sigma-\ss} \times \sigma_\C^\circ$ by $\dT$ in two steps\,---\,first by $(\widetilde\sT_{\sigma^\perp})_{\diag}$ and then by $(\sT/\;\widetilde\sT_{\sigma^\perp})_{\diag}$.
    \begin{align*}
        X^\circ_{\sigma}\ \cong\ \frac{X^{\sigma-\ss} \times \sigma_\C^\circ}{\dT}\ \cong\ \frac{(X^{\sigma-\ss}/\,\wsigma) \times \sigma_\C^\circ}{(\sT/\;\widetilde\sT_{\sigma^\perp})_{\diag}}\ \cong\ X^{\sigma-\ss}/\,\wsigma,
    \end{align*}
    where in the last step we have used that $(\sT/\;\widetilde\sT_{\sigma^\perp})_{\diag}\cong\sigma_\C^\circ$. Finally $\mu(X_{\sigma}, L_\sigma) = \Delta \cap \sigma$ is just Lemma \ref{lem:polyhedron_of_cut} applied to each cut $X_{\sigma}$.
\end{proof}

\subsection{The fixed loci on the cut}
We now describe the fixed locus $X_\Sigma^\sT$, assuming that $\Sigma$ is transversal to $\Delta$.
\begin{proof}[Proof of Proposition \ref{pro:fixed_locus}]
Since $\Sigma$ is transversal to $\Delta$, every face $\sigma \subseteq \Sigma$ intersects $\partial_{r-\dim(\sigma)}\Delta$ in only discrete points $p\in\partial^{\sm}_{r-\dim(\sigma)}\Delta$. Let $\tau_p$ be the affine space of Definition \ref{def:tau} approximating $\partial_{r-\dim(\sigma)}\Delta$ locally around $p$.

    Let $F\subseteq X^\sT_\Sigma$ be a connected component. Denote by $\sigma$ the unique face of $\Sigma$ such that the point $\mu(F, L_\Sigma)$ lies in $\mathring\sigma$. Thus $F$ is a closed substack of $X_\sigma^\circ$, so there is a decomposition
    $$
    X_\Sigma^\sT\ \cong\ \coprod\nolimits_{\sigma \in \Faces} (X_\sigma^\circ)^\sT.
    $$
Recall $X_\sigma^\circ\cong X^{\sigma-\ss}/\,\wsigma$ from Lemma \ref{lem:polyhedron_strata}.
We claim its $\sT$-fixed locus $(X_\sigma^\circ)^\sT$ is
\beq{cupH}
 \left(\frac{X^{\sigma-\ss}}\wsigma\right)^{\!\!\sT}\=\frac{\bigcup_{\sH}\bigl(X^\sH\bigr)^{\sigma-\ss}}\wsigma\ \text{ inside }\ \frac{X^{\sigma-\ss}}\wsigma\,,
\eeq
    where the union is over the subgroups $\sH \subset \sT$ complementary to $\wsigma$ (i.e. $\sH \times \sT_{\!\sigma^\perp} \to \sT$ is an isogeny). The inclusion $\supseteq$ is clear. To see $\subseteq$ we pick a point $[x]\in X^{\sigma-\ss}/\,\wsigma$. Then $\wsigma$ acts quasi-freely on the lift $x\in X^{\sigma-\ss}$ because $X_\Sigma$ is a Deligne-Mumford stack. Therefore the infinitesimal action $\ft\to T_x\(\wsigma\!\cdot x\)$ to the tangent space to the orbit is a surjection $\ft\onto\sigma^\perp$ whose kernel $\fh\subset\ft$ defines the subgroup $\sH\subset\sT$ such that $x\in X^\sH$.

Recall from Lemma \ref{lem:trivial_act_from_polyhedron} that the moment polyhedron of a component $Y$ of $X^\sH$ lies in a translate of $\fh^\perp$ which intersects $\sigma$ in at most a single point (since $\fh$ is a complement to $\sigma^\perp\subset\ft$). For those $Y$ which intersect $X^{\sigma-\ss}$ the intersection is a point $p$, so $\mu(Y)\subset\tau_p$ and $Y$ is a component of $\mu^{-1}(\tau_p)$ from Definition \ref{def:mu_tau}. Thus \eqref{cupH} becomes
\begin{equation*}
    (X_\sigma^\circ)^\sT\ \cong\ \bigcup\nolimits_{p \in \sigma\;\cap\;\partial_{r-\dim(\sigma)}\Delta}\ \frac{\mu^{-1}(\tau_p)^{\sigma-\ss}}\wsigma\,.
\end{equation*}
Finally we claim all the terms in the union are distinct and disjoint because they have distinct $L_\Sigma$-moment polytopes $\{p\}$. This is because
the $\sT$-weights of
$$
H^0\(\mu^{-1}(\tau_p)^{\sigma-\ss}/\,\wsigma,L_\Sigma^k\)\=H^0\(\mu^{-1}(\tau_p)^{\sigma-\ss},L^k\)^{\wsigma}\ \text{ for }\ k\gg0
$$
lie in both $\tau_p$ and the annihilator $\sigma\subset\ft^\vee_\Q$ of $\wsigma$. So the only weight is $\{p\}=\sigma\cap\tau_p\subset\ft^\vee_\Q$.
\end{proof}

\section{Behrend--Fantechi virtual classes on quotients}\label{BFsec}
In this Section we recall how to descend an equivariant perfect obstruction theory to a GIT quotient with finite stabilisers. Applied to the algebraic cut  
when $\Sigma$ is transversal to $\Delta$, this shows a $\sT$-equivariant perfect obstruction theory on $X$ induces a $\sT$-equivariant perfect obstruction theory on $X_\Sigma$. We also prove a certain compatibility between these two obstruction theories.

\subsection{Descending perfect obstruction theories}
Suppose $\sG$ acts on a quasi-projective polarised Deligne-Mumford stack $(X,L)$ such that all $L$-semistable points are $L$-stable, so that $X \div \sG$ is a Deligne-Mumford stack too. Let $(\EE,\phi)$ be a $\sG$-equivariant \emph{obstruction theory}: a morphism 
$$
\phi:\EE\To\LL_X
$$
in the equivariant derived category $D^-(\coh^\sG X)$ to the cotangent complex of $X$ such that
\begin{itemize}
\item $h^0(\phi)\colon h^0(\EE)\to\Omega_X$ is an isomorphism and
\item $h^{-1}(\phi)\colon h^{-1}(\EE)\to h^{-1}(\LL_X)$ is a surjection.
\end{itemize}
In this section we will assume that $(\EE,\phi)$ is a \emph{perfect} obstruction theory \cite{BehrendFantechi}, which means that in addition $\EE\in\Perf^{\;\sG}(X)$ is a perfect complex of amplitude $[-1,0]$. If $X$ satisfies the resolution property (for instance if it is a scheme; see \cite[Proposition 5.1]{Kr2} for other sufficient conditions) then $\EE$ is quasi-isomorphic to a two-term complex of $\sG$-equivariant vector bundles $E^{-1}\to E^0$. However a perfect obstruction theory defines a virtual cycle even without this condition \cite[Theorem 6.2.1]{Kr1}.

\begin{lemma}\label{lem:inducingPotOnQuotient}
$(\EE,\phi)$ induces a perfect obstruction theory on $X\div\sG$ by \eqref{morphism_of_triangles} below.
\end{lemma}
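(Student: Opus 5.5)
We would work on the open substack $U:=X^{\ss}$, writing $q\colon U\to U/\sG=X\div\sG$ for the quotient map. This is a principal $\sG$-bundle, smooth of relative dimension $\dim\sG$, and representable because all semistable points are stable with finite stabilisers. Its relative cotangent complex is therefore $\LL_q\cong\cO_U\otimes\fg^\vee$, concentrated in degree $0$. The transitivity triangle
$$
q^*\LL_{X\div\sG}\To\LL_U\To\LL_q\cong\cO_U\otimes\fg^\vee
$$
is $\sG$-equivariant, so it descends to a triangle on $X\div\sG$. Here $\LL_U\to\LL_q$ is the dual of the infinitesimal action $\cO_U\otimes\fg\to T_U$.

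The plan is to build the analogous triangle for $\EE$. Compose $\phi|_U$ with $\LL_U\to\cO_U\otimes\fg^\vee$ to obtain an equivariant map $\EE|_U\to\cO_U\otimes\fg^\vee$. Let $\EE_0$ be its cocone, taken in $D^-(\coh^\sG U)\simeq D^-(\coh\, X\div\sG)$. By the octahedral axiom, or simply because both triangles share the third term and the map between them is the identity there, we get a morphism of triangles
\beq{morphism_of_triangles}
\begin{tikzcd}[row sep=16pt]
\EE_0 \ar[r]\ar[d,"\phi_0"]& \EE|_U \ar[r]\ar[d,"\phi"]& \cO_U\otimes\fg^\vee\ar[d,equal]\\
q^*\LL_{X\div\sG}\ar[r]& \LL_U\ar[r]& \LL_q.
\end{tikzcd}
\eeq
Everything is $\sG$-equivariant, so $\phi_0$ descends to a map $\EE_0\to\LL_{X\div\sG}$ on the quotient.

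We then check the obstruction theory conditions using the long exact cohomology sequences and the five lemma.
\begin{itemize}
\item Since $\Omega_U\to\fg^\vee\otimes\cO_U$ is surjective (the action is infinitesimally free on $U$, as stabilisers are finite), we get $h^1(\EE_0)=0$.
\item $h^0(\phi_0)$ is an isomorphism, because $h^0(\phi)$ is.
\item $h^{-1}(\EE_0)=h^{-1}(\EE)$ and $h^{-1}(q^*\LL)=h^{-1}(\LL_U)$, so the surjectivity of $h^{-1}(\phi_0)$ follows from that of $h^{-1}(\phi)$.
\end{itemize}
These properties descend along the smooth, faithfully flat map $q$.

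Perfectness with amplitude $[-1,0]$ is checked on fibres at closed points. Dualising, the cocone has $h^{1}$ of the derived fibre equal to the cokernel of $\fg\to h^0(\EE^\vee|_x)$. Injectivity of $\fg\to T_xU=h^0(\EE^\vee|_x)$, which again uses finite stabilisers, shows that no $h^1$ fibre survives. So $\EE_0$ is perfect of amplitude $[-1,0]$. When a global two-term resolution $E^{-1}\to E^0$ exists, $\EE_0$ is represented explicitly by $E^{-1}\to\ker(E^0\to\fg^\vee\otimes\cO)$, a vector bundle complex.

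I expect the main obstacle to be making the morphism of triangles \eqref{morphism_of_triangles} canonical and equivariant. Cones are not functorial in the derived category, so we need to know that $\phi_0$ is uniquely determined and descends. Since $\Hom^{-1}(\EE_0,\LL_q)=0$ for degree reasons, the fill-in map is unique. To be safe we would instead work with the dg or $\infty$-categorical fibre, or use the explicit two-term model. We would also rely on the equivalence of $\sG$-equivariant complexes on $U$ with complexes on $U/\sG$, in the sense of \cite{Kr1}.
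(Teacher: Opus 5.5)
Your proposal is correct and follows essentially the same route as the paper: you take the cocone of $\EE|_{X^{\ss}}\to\cO\otimes\fg_\C^\vee$ against the transitivity triangle for $X^{\ss}\to X\div\sG$, use the surjectivity of $\Omega_{X^{\ss}}\to\cO\otimes\fg_\C^\vee$ (which comes from finite stabilisers) in the long exact cohomology sequence, and descend via $\Perf^{\sG}(X^{\ss})\simeq\Perf(X\div\sG)$; your fibrewise amplitude check and the uniqueness of the fill-in from $\Hom^{-1}(\EE_0,\LL_q)=0$ are welcome extra care. One small slip: $h^1$ of the derived fibre of the cocone is the cokernel of $\Omega_{X^{\ss}}|_x\to\fg_\C^\vee$, which is dual to the \emph{kernel} (not the cokernel) of $\fg_\C\to T_xX^{\ss}$, and that is exactly why the injectivity you then invoke is what is needed.
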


\begin{proof}
Differentiating the $\sG$ action gives a morphism
\beq{surj}
\LL_{X^{\ss}}\To\cO_{X^{\ss}}\otimes\fg_\C^\vee\ \text{ inducing a surjection }\,\Omega_{X^{\ss}}\Onto\cO_{X^{\ss}}\otimes\fg_\C^\vee
\eeq
on $h^0$ because semistable points are stable and so have finite stabiliser groups. Therefore taking cocones completes the right hand square below to a commutative diagram of exact triangles that defines $\(\EE_{X\div\sG},\phi_{X\div\sG}\)$:
		\begin{equation}\label{morphism_of_triangles}
			\begin{tikzcd}
				\EE_{X\div\sG} \arrow[r] \arrow[d, "\phi_{X\div\sG}"] & {\mathbb{E}|\_{X^{\ss}}} \arrow[d, "\phi|\_{X^{\ss}}"] \arrow[r]& {\mathcal{O}_{X^{\ss}} \otimes \fg_\C^\vee} \arrow[d, equal]\\
				\pi^*\mathbb{L}_{X\div\sG} \arrow[r] & {\mathbb{L}}_{X^{\ss}} \arrow[r] & {\mathcal{O}_{X^{\ss}} \otimes \fg_\C^\vee.\!}
			\end{tikzcd}
		\end{equation}
Here $\pi\colon X^{\ss}\to X\div\sG$ is the projection. Taking the long exact sequence of cohomologies and using \eqref{surj} then shows that $\mathbb{E}_{X\div\sG}\in\Perf^{\;\sG}(X^{\ss})$ has amplitude $[-1,0]$ and $\phi_{X\div\sG}$ is an isomorphism on $h^0$ and a surjection on $h^{-1}$. Therefore  $\(\mathbb{E}_{X\div\sG},\phi_{X\div\sG}\)$ descends under the equivalence $\pi^*:\Perf X\div\sG\cong\Perf^{\;\sG}X^{\ss}$ to a perfect obstruction theory on $X\div\sG$.
	\end{proof}
\begin{remark} Let $\sH \triangleleft \sG$ be a closed normal subgroup. (It is therefore also reductive by \cite[Definition 1.1.11 and below]{Conrad}.) Then replacing $\sG$ by $\sH$ in the above construction gives (the pullback to $X^{\sH-\ss}$ of) a perfect obstruction theory on $X\div\;\sH$ from \eqref{morphism_of_triangles}. But we can form \eqref{morphism_of_triangles} in $D^-\(\!\coh^\sG(X^{\sH-\ss})\)\cong D^-\(\!\coh^{\sG/\sH}(X\div\;\sH)\)$, thus ensuring the induced perfect obstruction theory on $X\div\;\sH$ is $\sG/\sH$-equivariant.
\end{remark}

Let $\sT$ be a reductive (not necessarily normal) subgroup of $\sG$ such as a maximal torus. Via its action on $(X,L)$ inherited from that of $\sG$ we have $X^{\sG-\ss}\subseteq X^{\sT-\ss}$ and so a diagram
\begin{equation}\label{eq:abelianisation_diagram}
\begin{tikzcd}[row sep=18pt]
X^{\sG-\ss}/\;\sT\ \ar[r, hook, "i"]\ar[d, "p", shorten <= -.8mm]& X\div\;\sT \\
X\div\sG,\!
\end{tikzcd}
\end{equation}
with $p$ smooth and $i$ an open embedding. Thus they define flat pullback maps on cycles.

	\begin{proposition}\label{pro:compatibility_of_obs}
	Let $\phi\colon\EE\to\LL_X$ be a $\sG$-equivariant perfect obstruction theory on $(X,L)$. Suppose $\sT$-semistable points are $\sT$-stable (so $\sG$-semistable points are also $\sG$-stable, as shown in (1) of Section \ref{section:NA_JK_loc}). Then the induced virtual fundamental classes \cite{BehrendFantechi} satisfy
		\begin{align*}
			p^\ast [X\div\sG]^{\vir} \= i^\ast [X\div\;\sT]^{\vir}\ \text{ in }\ A_\ast(X^{\sG-\ss}/\;\sT)\ \cong\ A^\sT_\ast(X^{\sG-\ss}).
		\end{align*}
	\end{proposition}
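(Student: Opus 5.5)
Throughout write $U:=X^{\sG-\ss}$, so that $U/\;\sT$ is open in $X\div\;\sT$ and the map $p\colon U/\;\sT\to X\div\sG$ is smooth with fibre $\sG/\sT$. The plan is to compare two perfect obstruction theories on $U/\;\sT$. The first is $i^*$ of the obstruction theory $\EE_{X\div\sT}$ of Lemma \ref{lem:inducingPotOnQuotient}, which on $U$ is the cocone of $\EE|_U\to\cO_U\otimes\ft_\C^\vee$. The second is the natural obstruction theory for the smooth morphism $p$ built from $\EE_{X\div\sG}$, namely the cone of the map $\LL_p[-1]\to p^*\EE_{X\div\sG}$ induced by the triangle $p^*\LL_{X\div\sG}\to\LL_{U/\sT}\to\LL_p$.

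Since flat pullback of a virtual class along an open embedding is the virtual class of the restricted obstruction theory, we have $i^*[X\div\;\sT]^{\vir}=[U/\;\sT]^{\vir}_{i^*\EE_{X\div\sT}}$. For the smooth map $p$, functoriality of virtual classes (Kim--Kresch--Pantev, or Behrend--Fantechi Proposition 7.2 for smooth base change) gives $p^*[X\div\sG]^{\vir}=[U/\;\sT]^{\vir}_{\EE'}$, where $\EE'$ is any obstruction theory fitting into a compatible triangle $p^*\EE_{X\div\sG}\to\EE'\to\LL_p$. It therefore suffices to show that $i^*\EE_{X\div\sT}$ fits into such a triangle, compatibly with the maps to the cotangent complexes.

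To construct the triangle, work $\sT$-equivariantly on $U$. Here $\LL_p$ pulls back to $\cO_U\otimes(\fg_\C/\ft_\C)^\vee$, the relative cotangent bundle of $U/\;\sT\to U/\sG$. Pulled back to $U$, the element $\EE_{X\div\sG}$ is the cocone of $\EE|_U\to\cO\otimes\fg_\C^\vee$, while $\EE_{X\div\sT}$ is the cocone of the composite $\EE|_U\to\cO\otimes\fg_\C^\vee\to\cO\otimes\ft_\C^\vee$. The octahedral axiom applied to this composite, together with the exact sequence $0\to(\fg_\C/\ft_\C)^\vee\to\fg_\C^\vee\to\ft_\C^\vee\to0$, produces a triangle
$$\EE_{X\div\sG}\To\EE_{X\div\sT}\To\cO\otimes(\fg_\C/\ft_\C)^\vee.$$
Doing the same construction with $\LL_U$ in place of $\EE|_U$ gives the triangle $\LL_{X\div\sG}\to\LL_{X\div\sT}\to\LL_p$ on $U$. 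The maps $\phi$ then give a morphism between these two triangles, which is the identity on the third terms. This is exactly the compatibility datum required by the functoriality theorem. Descending along $U\to U/\;\sT$ uses the same equivalence $\Perf^{\sT}U\cong\Perf(U/\;\sT)$ as in Lemma \ref{lem:inducingPotOnQuotient}.

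The main obstacle is making the octahedral construction genuinely commute with the vertical maps $\phi$, and not just produce abstractly isomorphic triangles. The maps to $\cO\otimes\fg_\C^\vee$ from $\EE|_U$ and from $\LL_U$ agree through $\phi$ by construction, so the needed commutativity reduces to functoriality of cocones applied to a commuting square. I would handle this by writing everything in a fixed category of $\sG$-equivariant complexes (or, more robustly, in the $\infty$-categorical setting), so that the cocones are strict mapping cones and the required morphisms of triangles come from honest maps of complexes. The virtual class comparison then follows from the cited functoriality.
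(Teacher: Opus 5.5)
Your proposal is correct and follows essentially the same route as the paper. The paper also first gets $i^*[X\div\sT]^{\vir}=[X^{\sG-\ss}/\sT]^{\vir}$ from the open-immersion case. It then compares the $\sG$- and $\sT$-versions of the cocone construction \eqref{morphism_of_triangles} to obtain a triangle $p^*\EE_{X\div\sG}\to\EE_{X^{\sG-\ss}/\sT}\to\op{Ad}^\vee\cong\Omega_p$ mapping to the cotangent-complex triangle with the identity on the third term, and concludes by smooth functoriality for $p$, citing \cite[Proposition 5.10]{BehrendFantechi} for both steps and not addressing the strictness issue you raise.
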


	\begin{proof}
Firstly \cite[Proposition 5.10]{BehrendFantechi} applied to the (smooth and therefore lci) open immersion $i$ proves $i^\ast[X\div\;\sT]^{\vir}=[X^{\sG-\ss}/\;\sT]^{\vir}$ by taking $u=v=i$ in \cite[Equation 11]{BehrendFantechi}.

Use $p^*$ to map \eqref{morphism_of_triangles} to its version with $\sG$ replaced by $\sT$, and take cones. The result is the two versions of the left hand vertical arrow sit in following the diagram of exact triangles on $X^{\sG-\ss}/\;\sT$,
$$
\begin{tikzcd}
p^*\EE_{X\div\sG} \ar[r]\ar[d,"p^*\phi_{X\div\sG}"] & \EE_{X^{\sG-\ss}/\;\sT} \ar[d,"\phi_{X^{\sG-\ss}/\;\sT}"] \ar[r]& \op{Ad}^\vee \ar[d,equals,shift right=.5ex]\\
p^*\LL_{X\div\sG} \ar[r] & \LL_{X^{\sG-\ss}/\;\sT} \ar[r] & \op{Ad}^\vee\!.\!
\end{tikzcd}
$$
Here $\op{Ad}^\vee\cong\Omega_p$ is the bundle $X^{\sG-\ss}\times\_\sT(\fg/\ft)^\vee$ on $X^{\sG-\ss}/\;\sT$, where $\sT$ acts with the coadjoint action on $(\fg/\ft)^\vee$.
This gives $p^* [X\div\sG]^{\vir}= [X^{\sG-\ss}/\;\sT]^{\vir}$ by taking both $u$ and $v$ to be the smooth morphism $p$ in \cite[Proposition 5.10]{BehrendFantechi}.
	\end{proof}

\subsection{The obstruction theory on the cut}
Consider the setting of Theorem \ref{thm:structure_cut}: a polarised projective-over-affine Deligne-Mumford stack $(X,L)$ acted on by a torus $\sT$, with a simplicial cone  $\Sigma := \cone(\psi_1, \dots, \psi_k)\subset\ft^\vee_\Q$ transverse to the moment polytope $\Delta(X)$. Thus $X\div\;\sT$ and the algebraic cut $X_\Sigma$ are Deligne-Mumford stacks.

\begin{corollary}\label{cor:obs_cut}
A $\sT$-equivariant perfect obstruction theory $(\EE,\phi)$ on $X$ induces the perfect obstruction theory $(\EE_{X_\Sigma},\phi_{X_\Sigma})$ of \eqref{eq:morph_triangles_cut} on $X_\Sigma$.
\end{corollary}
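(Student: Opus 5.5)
The plan is to realise $X_\Sigma$ as a GIT quotient and apply Lemma \ref{lem:inducingPotOnQuotient}, with $\sG=\dT$ acting on $X\times\Sigma_\C$, keeping track of the extra $\sT$-equivariance via the Remark following that lemma.

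First I build a $(\sT\times\dT)$-equivariant perfect obstruction theory on $X\times\Sigma_\C$. Since $\Sigma_\C$ is a smooth affine space, $\LL_{X\times\Sigma_\C}\cong\pi_X^*\LL_X\oplus\cO\otimes\Sigma_\C^\vee$. I therefore set
$$
\EE_{X\times\Sigma_\C}\ :=\ \pi_X^*\EE\,\oplus\,\cO\otimes\Sigma_\C^\vee\ \xrightarrow{\ \pi_X^*\phi\,\oplus\,\id\ }\ \LL_{X\times\Sigma_\C}.
$$
This is perfect of amplitude $[-1,0]$, an isomorphism on $h^0$ and a surjection on $h^{-1}$, because $\pi_X$ is flat so pullback preserves these properties. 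It is equivariant for the first copy of $\sT$, acting on $X$ only, by the $\sT$-equivariance of $\phi$. It is also equivariant for $\dT$: the diagonal action is the product of the given action on $X$ and the linear action on $\Sigma_\C$, and $\phi$ and the splitting of $\LL$ are compatible with both.

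Next, by Lemma \ref{lem:semistable_finite_stabiliser} the $\dT$-semistable locus $(X\times\Sigma_\C)^{\ss}$ has finite stabilisers. Hence the infinitesimal action gives a surjection $\Omega\onto\cO\otimes\ft_\C^\vee$ on $(X\times\Sigma_\C)^{\ss}$, which is exactly \eqref{surj}. The cocone construction \eqref{morphism_of_triangles} then gives a diagram of exact triangles, which I label \eqref{eq:morph_triangles_cut}:
$$
\begin{tikzcd}
\EE_{X_\Sigma}\ar[r]\ar[d,"\phi_{X_\Sigma}"]&\EE_{X\times\Sigma_\C}|_{(X\times\Sigma_\C)^{\ss}}\ar[r]\ar[d]&\cO\otimes\ft_\C^\vee\ar[d,equal]\\
\pi^*\LL_{X_\Sigma}\ar[r]&\LL_{(X\times\Sigma_\C)^{\ss}}\ar[r]&\cO\otimes\ft_\C^\vee.
\end{tikzcd}
$$
Lemma \ref{lem:inducingPotOnQuotient} shows that this descends to a perfect obstruction theory on $X_\Sigma=(X\times\Sigma_\C)\div\,\dT$.

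The main point to check is the residual $\sT$-equivariance. I form the whole diagram in $D^-\big(\coh^{\sT\times\dT}\big)$ on the semistable locus. The map $\LL\to\cO\otimes\ft^\vee_\C$ coming from differentiating the $\dT$-action is $(\sT\times\dT)$-equivariant, since the group is abelian and the coadjoint action is trivial. So the cocone is equivariant for the product group, and it descends to $\sT$-equivariant objects on $X_\Sigma$ by the equivalence $D^-(\coh^{\sT\times\dT}(X\times\Sigma_\C)^{\ss})\cong D^-(\coh^{\sT}X_\Sigma)$, as in the Remark after Lemma \ref{lem:inducingPotOnQuotient} with $\sH=\dT\triangleleft\sT\times\dT$. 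The only real subtlety is verifying the surjectivity in \eqref{surj}, and that is supplied by transversality through Lemma \ref{lem:semistable_finite_stabiliser}.
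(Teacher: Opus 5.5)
Your proposal is correct and is essentially the paper's proof. You take the $(\sT\times\dT)$-equivariant product obstruction theory $\phi\boxplus\id:\EE\boxplus\Omega_{\Sigma_\C}\to\LL_X\boxplus\Omega_{\Sigma_\C}$ on $X\times\Sigma_\C$, then descend the cocone diagram \eqref{eq:morph_triangles_cut} to $X_\Sigma$ via Lemma \ref{lem:inducingPotOnQuotient}, using the Remark after it for the residual $\sT$-equivariance; the only difference is that you make explicit the appeal to Lemma \ref{lem:semistable_finite_stabiliser} for the surjection \eqref{surj}, which the paper leaves implicit in the transversality hypothesis.
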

\begin{proof}
$(\EE,\phi)$ induces a $(\sT \times \dT)$-equivariant perfect obstruction theory
    \begin{align*}
        \phi \boxplus \id : \EE \boxplus \Omega_{\Sigma_\C} \To \LL_X \boxplus \Omega_{\Sigma_\C}
    \end{align*}
on $X \times \Sigma_\C$.    By Lemma \ref{lem:inducingPotOnQuotient} it induces a $\sT$-equivariant perfect obstruction theory on $X_\Sigma$
as (descent down $\pi:(X\times\Sigma_\C)^{\ss}\to X_\Sigma$ of) the $(\sT \times \dT)$-equivariant morphism
\begin{equation}\label{eq:morph_triangles_cut}
	\begin{tikzcd}[row sep=2pt]
        \mathbb E \boxplus\(\cO\otimes\Sigma_\C^\vee\) \arrow[r] \arrow[dd] & \cO \otimes \ft_\C^\vee \arrow[dd, equal] &  & \pi^*\mathbb{E}_{X_\Sigma} \arrow[dd, "\pi^*\phi_{X_\Sigma}"] \\ & &\cong & \\
    \mathbb{L}_X \boxplus\(\cO\otimes\Sigma_\C^\vee\) \arrow[r] & \cO \otimes \ft_\C^\vee &  & \pi^*\mathbb{L}_{X_\Sigma},\!
    \end{tikzcd}
    \end{equation}
    where $\LL_X \rightarrow \cO \otimes \ft_\C^\vee$ and $\Sigma_\C^\vee\rightarrow\ft_\C^\vee$ are induced from the $\sT$ actions on $X$ and $\Sigma_\C$.
\end{proof}
    
\noindent We now have two obstruction theories on the big open stratum $X_\Sigma^\circ \subset X_\Sigma$ of Theorem \ref{thm:structure_cut},
    \begin{enumerate}
        \item\label{item:first_obs} one induced from $(\EE_{X_\Sigma},\phi_{X_\Sigma})$ by restriction to $X_\Sigma^\circ \subset X_\Sigma$, and
        \item\label{item:second_obs} one induced on $X_\Sigma^\circ \cong X^{\Sigma-\ss}/\,\wSigma$ via Lemma \ref{lem:inducingPotOnQuotient} from the $\wSigma$-equivariant obstruction theory $(\EE,\phi)$ on $X$.
    \end{enumerate}

\begin{lemma}\label{lem:restriction_open_stratum}
    The obstruction theories \eqref{item:first_obs} and $\eqref{item:second_obs}$ agree on $X_\Sigma^\circ$.
\end{lemma}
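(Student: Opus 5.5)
The plan is to compare the two constructions on the open locus $X^{\Sigma-\ss}\times\Sigma_\C^\circ\subset(X\times\Sigma_\C)^{\ss}$, which by Lemma \ref{lem:stratification_stable_locus} is the preimage of $X_\Sigma^\circ$. The quotient is taken in two steps, exactly as in the proof of Lemma \ref{lem:polyhedron_strata}: first by $(\wSigma)_\diag$, then by $(\sT/\,\wSigma)_\diag$, which acts freely and transitively on $\Sigma_\C^\circ$. Since $\wSigma$ acts trivially on $\Sigma_\C$, the first step gives $(X^{\Sigma-\ss}/\,\wSigma)\times\Sigma_\C^\circ$. Choose the section $\Sigma_\C^\circ\ni(1,\dots,1)$. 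This identifies $X^\circ_\Sigma$ with $X^{\Sigma-\ss}/\,\wSigma$ via the slice $s\colon X^{\Sigma-\ss}\into X^{\Sigma-\ss}\times\Sigma_\C^\circ$, $x\mapsto(x,\mathbf 1)$. This slice is equivariant for $\wSigma$ acting on the left side and $(\wSigma)_\diag$ acting on the right side.

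On the slice, restrict the top row of \eqref{eq:morph_triangles_cut} along $s$. The term $\cO\otimes\Sigma_\C^\vee$ maps to $\cO\otimes\ft_\C^\vee$ by the dual of the infinitesimal action on $\Sigma_\C$ at $\mathbf 1$. That infinitesimal action is the surjection $\ft_\C\onto\Sigma_\C$ with kernel $\mathrm{Lie}(\wSigma)=\Sigma^\perp$. So the dual map is injective with cokernel $\cO\otimes(\Sigma^\perp)^\vee$. I will use the octahedral axiom on the composite
$$\EE\To\EE\oplus(\cO\otimes\Sigma_\C^\vee)\To\cO\otimes\ft_\C^\vee.$$
Equivalently, I will split off the acyclic piece $\cO\otimes\Sigma_\C^\vee\xrightarrow{\sim}\mathrm{im}$. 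The cocone of $\EE\oplus(\cO\otimes\Sigma^\vee_\C)\to\cO\otimes\ft^\vee_\C$ is then identified with the cocone of $\EE\to\cO\otimes(\Sigma^\perp)^\vee$. Here the latter map is the composition of the $\sT$-action map with restriction to $\Sigma^\perp$, i.e. the $\wSigma$-action map used in Lemma \ref{lem:inducingPotOnQuotient}. Doing the same on the bottom row of \eqref{eq:morph_triangles_cut}, with $\LL_X$ in place of $\EE$, identifies $s^*\pi^*\LL_{X_\Sigma}$ with $\pi'^*\LL_{X^{\Sigma-\ss}/\,\wSigma}$. These identifications are compatible with $\phi\oplus\id$ because the cocone construction is functorial in the map of triangles. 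Hence $s^*\pi^*\phi_{X_\Sigma}$ agrees with the pullback of the obstruction theory \eqref{item:second_obs}.

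Next I need the identifications to be equivariant, so that they descend. This means equivariance for $\wSigma$ (the stabiliser of the slice) and for the residual $\sT$. The residual $\sT$ moves the slice, so I will compose with the diagonal action to return to $\mathbf 1$. This is standard, but it is where care is needed: it must be checked that the induced $\sT$-linearisation on the slice is the original one on $X$. Descent then follows from $\Perf(X_\Sigma^\circ)\cong\Perf^{\wSigma}(X^{\Sigma-\ss})$, and likewise for the morphisms to the cotangent complexes.

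I expect the main obstacle to be making the cocone identifications canonical rather than merely isomorphic, because cones in triangulated categories are not functorial. I would handle this by working with explicit two-term complexes, or in the derived $\infty$-category where cocones are functorial. In that setting the splitting-off of $\cO\otimes\Sigma^\vee_\C\cong\mathrm{im}$ is an honest quasi-isomorphism of complexes compatible with $\phi$.
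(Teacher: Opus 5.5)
Your argument is essentially the paper's: restrict \eqref{eq:morph_triangles_cut} to $X^{\Sigma-\ss}\times\Sigma_\C^\circ$, observe that $\cO\otimes\Sigma_\C^\vee\to\cO\otimes\ft_\C^\vee$ is injective there with image $\ker(\ft_\C^\vee\to\fs_\C^\vee)$, quotient both rows by this acyclic subcomplex at the level of complexes (which also settles your canonicity concern), and recognise the triangle \eqref{morphism_of_triangles} for $\wSigma$. The slice $x\mapsto(x,\mathbf 1)$ is a harmless way to make the descent explicit, and the equivariance check is easier than you expect: the residual $\sT$ acts on the $X$ factor only, so it preserves $X^{\Sigma-\ss}\times\{\mathbf 1\}$ and induces the original action on $X^{\Sigma-\ss}$, with no need to move back to the slice.
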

\begin{proof}
 By construction the obstruction theory $\big[\mathbb{E}_{X_\Sigma} \to \mathbb{L}_{X_\Sigma}\big]\big|_{X_\Sigma^\circ}$ is given by restricting \eqref{eq:morph_triangles_cut} to $X^{\Sigma-\text{ss}} \times \Sigma_\C^\circ$ and then descending it to $X_\Sigma^\circ\cong(X^{\Sigma-\text{ss}} \times \Sigma_\C^\circ)/\;\dT\cong X^{\Sigma-\text{ss}}/\,\wSigma$.
    The $\sT$ action induces the isomorphism
    $$
    \Sigma^\circ_\C\ \cong\ \sT/\,\wSigma \qquad \text{and thus} \qquad \Sigma_\C^\vee\ \cong\ \ker(\ft_\C^\vee \to \fs_\C^\vee),
    $$
    where $\fs_\Z$ is the cocharacter lattice of $\wSigma$. Dividing both rows of \eqref{eq:morph_triangles_cut} by the subcomplex $\Sigma_\C^\vee \xrightarrow{\ \sim\ }\ker(\ft_\C^\vee \to \fs_\C^\vee)$ makes it quasi-isomorphic to (the restriction to $X^{\Sigma-\text{ss}}$ of)
$$    
\begin{tikzcd}[row sep=16pt]
        \mathbb E \arrow[r] \arrow[d] & \cO \otimes \fs_\C^\vee \arrow[d, equal] \\
    \mathbb{L}_X \arrow[r] & \cO \otimes \fs_\C^\vee.\!
    \end{tikzcd}
    $$
Comparing this to \eqref{morphism_of_triangles} we see it is (the pullback to  $X^{\Sigma-\text{ss}}$ of) the obstruction theory on $X^{\Sigma-\ss}/\,\wSigma$ induced by $(\EE,\phi)$.
\end{proof}

\section{Noncompactness}
While it would be easiest to assume $(X,L)$ is projective, we make a small digression to discuss the weakest conditions under which $(X,L)\div\,\sT$ is projective and our results hold.

Our conditions\,---\,which we call (weak) \emph{$\eta$-semiprojectivity} by analogy with \cite{HRV}\footnote{Closely related notions carry names like ``attracting 1-parameter subgroup" or ``BB-complete $\C^*$ action".}\,---\,ensure that our projective-over-affine Deligne-Mumford stack $X$ behaves, $\sT$-equivariantly, as if it were projective. We state a number of conditions and relations between them, then spend the rest of the Section proving them.
(In Section \ref{C*sec} we go even further and allow $(X,L)\div\sT$ to be noncompact in the presence of an extra $\C^*$ action.)

Picking $\eta\in \ft_\Q\setminus\{0\}$, we define the 1-parameter subgroup $\widetilde\eta:\C^*\into\sT$ to be the minimal integral cocharacter in $\ft_\Z\cap\(\Q_{>0}\cdot\eta\)$. Then the condition is that it \emph{contracts $X$ to a compact core $X^{\widetilde\eta}$} in the following sense.

\begin{definition}\label{etasp}
    Given $\eta\in \ft_\Q$, we say that $X$ is \textit{$\eta$-semiprojective} if
    \begin{enumerate}
        \item\label{cond:semipr_1} $\lim_{s\to 0}\widetilde{\eta}(s)\cdot x$ exists for every closed point $x \in X$, and
        \item\label{cond:semipr_2} the fixed locus $X^{\widetilde{\eta}}$ is projective.
    \end{enumerate}
\end{definition}
    
So $\eta$-semiprojectivity is a topological property of $\widetilde\eta:\C^*\acts X$.
We can rephrase \eqref{cond:semipr_1} in terms of the cone $\cC(X)\subseteq\ft_\Q^\vee$
of $\Q_{\ge0}$-linear combinations of $\sT$-weights in $H^0(\cO_X)_{\red}$ of \eqref{Cdef}, and \eqref{cond:semipr_2} in terms of $H^0(\cO_X)^{\widetilde{\eta}}$.

\begin{proposition}\label{pro:alg_char_eta}
$\eta$-semiprojectivity is open in $\eta$ and equivalent to
    \begin{itemize}
        \item[$(1')$] $\cC(X) \subset \lbrace \eta\le0\rbrace$, and\vspace{.5mm}
        \item[$(2')$] $H^0(\cO_X)^{\widetilde{\eta}}$ is finite dimensional.
    \end{itemize}
Moreover \eqref{cond:semipr_1} $=(1')$, while \eqref{cond:semipr_1},\,\eqref{cond:semipr_2} together imply $(1'')\ \cC(X) \setminus \lbrace 0 \rbrace \subset \lbrace \eta<0\rbrace$.
    \end{proposition}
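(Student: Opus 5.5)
Everything reduces to the affinisation $X_0:=\Spec H^0(\cO_X)$ and the projective morphism $\pi\colon X\to|X|\to X_0$. The map is $\sT$-equivariant, and since $X$ is projective-over-affine, $\pi$ is proper. We may pass to $X_{\red}$ throughout, because limits and fixed loci are topological.

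The first step is to prove \eqref{cond:semipr_1}$\iff(1')$. Embed $X_0$ equivariantly in a representation $V$ via finitely many weight-vector generators $f_j$ of $H^0(\cO_X)_{\red}$, with weights $\chi_j$.
\begin{itemize}
\item If $(1')$ holds, then each $f_j(\widetilde\eta(s)x)=s^{\pm\langle\chi_j,\widetilde\eta\rangle}f_j(x)$ stays bounded as $s\to0$ (fix the sign convention so that $\langle\chi_j,\eta\rangle\le0$ means contraction). So the orbit map $\C^*\to X_0$ extends over $0$. Properness of $\pi$ and the valuative criterion, applied to the $\C^*$-orbit in the DM stack after a finite base change $s\mapsto s^m$, lift this to a limit in $X$.
\item Conversely, suppose some weight vector $f$ has $\langle\chi,\eta\rangle>0$ and is not nilpotent. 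Pick $x$ with $f(x)\ne0$. Then $f(\widetilde\eta(s)x)$ blows up as $s\to0$, so no limit exists.
\end{itemize}

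The second step is the equivalence of \eqref{cond:semipr_2} with $(2')$ given $(1')$. The key identity is $\pi(X^{\widetilde\eta})=X_0^{\widetilde\eta}$. One inclusion is clear. For the other, take $y\in X_0^{\widetilde\eta}$; the fibre $\pi^{-1}(y)$ is proper and $\widetilde\eta$-invariant, so Borel's fixed point theorem on the coarse space gives a fixed point in it. Hence $X^{\widetilde\eta}$ is projective iff its affine image $X_0^{\widetilde\eta}=\Spec$ of the $\widetilde\eta$-coinvariants is finite. Now use $(1')$, which says all weights of $H^0(\cO_{X_0})$ have $\langle\chi,\eta\rangle\le0$. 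The ideal of $X_0^{\widetilde\eta}$ is generated by the nonzero-weight functions. So $\cO(X_0^{\widetilde\eta})$ is a quotient of $H^0(\cO_X)^{\widetilde\eta}$ (the weight-zero part for $\widetilde\eta$), and conversely $H^0(\cO_X)^{\widetilde\eta}$ surjects onto it with nilpotent kernel modulo the nonzero-weight ideal. I expect a short graded argument to show one ring is finite dimensional iff the other is.

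The third step is $(1'')$. Suppose a weight $\chi\ne0$ in $\cC(X)$ has $\langle\chi,\eta\rangle=0$. This $\chi$ can be taken to be a genuine weight of a non-nilpotent $f$: the boundary face of $\cC(X)$ inside $\{\eta=0\}$ is spanned by actual weights. Then the powers $f^n$ are $\widetilde\eta$-invariant, nonzero on $X_{\red}$, and have distinct $\sT$-weights, contradicting $(2')$.

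The last step is openness in $\eta$. By $(1'')$ and finite generation, $\cC(X)$ is a finitely generated cone contained in $\{\eta<0\}\cup\{0\}$. This condition is open in $\eta$. Moreover $(2')$ for nearby $\eta'$ follows, because $H^0(\cO_X)^{\widetilde\eta'}$ then consists only of weight $0$ for $\sT$, namely $H^0(\cO_X)^\sT$. That space lies inside $H^0(\cO_X)^{\widetilde\eta}$, which is finite dimensional.

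The main obstacle is the lifting and fixed-point arguments on a nonreduced DM stack. I would handle it by passing to the coarse space and reduced structure, where limits and fixed loci are unaffected.
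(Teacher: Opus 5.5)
Your proposal is correct. Steps 1 and 3 are the paper's arguments, but steps 2 and 4 take different routes.

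\textbf{Step 2.} For $(1)+(2)\Rightarrow(2')$ the paper argues directly with limits. By (1) every $\widetilde\eta$-orbit closure meets $X^{\widetilde\eta}$, so an invariant $f$ satisfies $f(X)=f(X^{\widetilde\eta})$, which is finite because $X^{\widetilde\eta}$ is projective. Hence $f$ is locally constant and the Noetherian ring $H^0(\cO_X)^{\widetilde\eta}$ is Artinian.

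You instead prove $\pi(X^{\widetilde\eta})=X_0^{\widetilde\eta}$ and compare $H^0(\cO_X)^{\widetilde\eta}$ with $H^0(\cO_X)/J$. Your half $(2')\Rightarrow(2)$ is exactly Lemma \ref{trayn} and needs no $(1')$. The half you only ``expect'' is where $(1')$ enters, and it goes through as follows:
\begin{itemize}
\item the kernel $J\cap H^0(\cO_X)^{\widetilde\eta}$ is spanned by products $gh$ of weight vectors of opposite nonzero $\widetilde\eta$-weights, and one factor of each is nilpotent by $(1')$;
\item so the kernel lies in the nilradical, which is nilpotent because $H^0(\cO_X)$ is Noetherian;
\item since $H^0(\cO_X)^{\widetilde\eta}$ is itself Noetherian, filtering by powers of the kernel shows it is finite dimensional whenever $H^0(\cO_X)/J$ is.
\end{itemize}
Borel's theorem is avoidable here: given (1), the limit of any point of $\pi^{-1}(y)$ is a fixed point in the same fibre. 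Your route pins down exactly where $(1')$ is used and gives the unconditional criterion ``$X^{\widetilde\eta}$ projective iff $\dim H^0(\cO_X)/J<\infty$''. The paper's route is shorter.

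\textbf{Step 4.} For openness the paper propagates (2), noting $X^{\widetilde\eta'}\subseteq X^{\widetilde\eta}$ for $\eta'$ near $\eta$. You propagate $(2')$ instead. Your claim $H^0(\cO_X)^{\widetilde\eta'}=H^0(\cO_X)^{\sT}$ is false for nonreduced $X$: a nilpotent function can have nonzero weight orthogonal to $\eta'$, even to $\eta$ itself. For instance, take $\C[x,\epsilon]/(\epsilon^2)$ with weights $(-1,0),(0,1)$ and $\eta=(1,0)$. So at this step you must genuinely invoke your reduction to $X_{\red}$. That is legitimate because $\eta$-semiprojectivity is topological, even though $(2')$ is not.
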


    \begin{proposition}\label{pro:eta_semiproj_dominant}
        Given a $\sT$-equivariant (or $\widetilde\eta$-equivariant) morphism $f: X \to Y$ between projective-over-affine Deligne-Mumford stacks,
        \begin{itemize}
            \item $Y$ is $\eta$-semiprojective $\!\so\! X$ is $\eta$-semiprojective if $f$ is proper,
            \item $X$ is $\eta$-semiprojective $\!\so\! Y$ is $\eta$-semiprojective if $f$ is dominant.
        \end{itemize}
    \end{proposition}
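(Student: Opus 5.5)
We prove the two implications by different routes. The first uses the topological Definition \ref{etasp} directly. The second uses the algebraic characterisation of Proposition \ref{pro:alg_char_eta}. The point is that pullback of functions behaves well under dominant maps, while limits of orbits lift well under proper maps.

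\emph{Proper case.} Suppose $Y$ is $\eta$-semiprojective and $f$ is proper.

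For \eqref{cond:semipr_1}, fix a closed point $x\in X$ and consider the orbit map $\C^*\to X$, $s\mapsto\widetilde\eta(s)\cdot x$.
\begin{itemize}
\item Composing with $f$ gives $s\mapsto\widetilde\eta(s)\cdot f(x)$, which extends to $\AA^1\to Y$ by the semiprojectivity of $Y$.
\item Apply the valuative criterion of properness to $f$, using the DVR at $0\in\AA^1$. For Deligne--Mumford stacks we may need a finite base change $s\mapsto s^n$ first.
\item This gives a lift $\AA^1\to X$ extending $s\mapsto\widetilde\eta(s^n)\cdot x$. Its value at $0$ is $\lim_{s\to0}\widetilde\eta(s)\cdot x$, so this limit exists.
\end{itemize}

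For \eqref{cond:semipr_2}, note that $f$ is equivariant, so it sends $X^{\widetilde\eta}$ into $Y^{\widetilde\eta}$. This is compatible with the convention that $X^{\widetilde\eta}$ is the locus fixed by some cover of $\C^*$.
\begin{itemize}
\item So $X^{\widetilde\eta}$ is a closed substack of $f^{-1}(Y^{\widetilde\eta})$.
\item $f^{-1}(Y^{\widetilde\eta})$ is proper over the projective stack $Y^{\widetilde\eta}$, hence proper.
\item Therefore $X^{\widetilde\eta}$ is proper. It is also quasi-projective in the weak sense of Notation \ref{weak}, being closed in $X$.
\item A proper quasi-projective Deligne--Mumford stack has projective coarse space, so $X^{\widetilde\eta}$ is projective.
\end{itemize}

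\emph{Dominant case.} Suppose $X$ is $\eta$-semiprojective and $f$ is dominant. By Proposition \ref{pro:alg_char_eta} it suffices to verify $(1')$ and $(2')$ for $Y$.
\begin{itemize}
\item Since $f$ is dominant, $f^*:H^0(\cO_Y)_{\red}\to H^0(\cO_X)_{\red}$ is injective. Indeed, a function on $Y_{\red}$ vanishing on the dense image of $f$ vanishes.
\item This map is $\sT$-equivariant, so every $\sT$-weight of $H^0(\cO_Y)_{\red}$ is a weight of $H^0(\cO_X)_{\red}$. Hence $\cC(Y)\subseteq\cC(X)\subseteq\{\eta\le0\}$, which is $(1')$.
\item By the same injectivity, $\big(H^0(\cO_Y)_{\red}\big)^{\widetilde\eta}$ embeds in $\big(H^0(\cO_X)_{\red}\big)^{\widetilde\eta}$.
\item Invariants of the reductive group $\widetilde\eta(\C^*)$ are exact. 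So $\big(H^0(\cO_X)_{\red}\big)^{\widetilde\eta}$ is a quotient of $H^0(\cO_X)^{\widetilde\eta}$, which is finite dimensional by $(2')$ for $X$. Hence $\big(H^0(\cO_Y)_{\red}\big)^{\widetilde\eta}$ is finite dimensional.
\end{itemize}

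It remains to get $(2')$ for $Y$ itself, not just its reduction; this is the step needing care.
\begin{itemize}
\item $H^0(\cO_Y)$ is finitely generated, so by Hilbert's theorem for reductive groups $A:=H^0(\cO_Y)^{\widetilde\eta}$ is a finitely generated $\C$-algebra.
\item By exactness of invariants, $A/\mathrm{nil}(A)$ is a quotient of $\big(H^0(\cO_Y)_{\red}\big)^{\widetilde\eta}$, so it is finite dimensional.
\item A finitely generated algebra with finite dimensional reduction is Artinian, hence finite dimensional. This gives $(2')$ for $Y$.
\end{itemize}

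I expect the main obstacle to be the technical points about stacks. In the proper case this is the valuative criterion for Deligne--Mumford stacks, which only gives the lift after a ramified base change; one must check that this still produces the limit. In the dominant case it is the handling of nilpotents and non-reducedness, both in the injectivity of $f^*$ and in passing from the reduced invariant ring back to $H^0(\cO_Y)^{\widetilde\eta}$.
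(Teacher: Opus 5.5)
Your proof is correct and follows the same two routes as the paper: properness lets $\widetilde\eta$-limits in $Y$ lift to $X$, and for dominant $f$ the conditions $(1')$ and $(2')$ of Proposition~\ref{pro:alg_char_eta} transfer along the pullback $f^*$ on global functions. You are more careful than the paper's terse argument in two places. In the proper case the paper does not check the projectivity condition~\eqref{cond:semipr_2} for $X^{\widetilde\eta}$, which you do. In the dominant case the paper writes $H^0(\cO_Y)\subseteq H^0(\cO_X)$, although for nonreduced $Y$ the map $f^*$ is only injective modulo nilpotents; your passage through $H^0(\cO_Y)_{\red}$ and the Noetherian argument for $H^0(\cO_Y)^{\widetilde\eta}$ handle exactly this.
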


The results of the paper rely on cutting $X$ in the $\eta$-positive direction, so they only depend on the closure of $X^{\lbrace \eta\geq 0\rbrace-\ss} :=\bigcup_{\psi \in \lbrace \eta\ge 0 \rbrace} X^{L\otimes\Psi^{-1}-\ss}$ in $X$. Throwing the rest away gives a weaker notion of $\eta$-semiprojectivity that will suffice for our purposes.

\begin{definition}\label{weaksemi}
    We say that $(X,L)$ is \textit{weakly $\eta$-semiprojective} if the closure $\overline{X^{\lbrace \eta\geq 0 \rbrace-\ss}}$ is $\eta$-semiprojective.
\end{definition}

\begin{proposition}\label{weak2} Weak $\eta$-semiprojectivity is open in $\eta$. It is equivalent to \eqref{cond:semipr_1} $=(1')$ of Definition \ref{etasp} plus any of the following three equivalent conditions,
    \begin{enumerate}
        \setcounter{enumi}{2}
        \item\label{cond:semipr_3} $H^0\Big(\cO_{\overline {X^{\lbrace \eta\geq 0 \rbrace-\ss}}}\Big){}^{\raisebox{0.6ex}{$\scriptstyle\!\;\widetilde\eta$}}$ is finite dimensional,
        \item\label{cond:semipr_4} $H^0(\cO_X)^{\widetilde\eta}\to H^0\(\cO_{X^{\lbrace \eta\geq 0 \rbrace-\ss}}\)^{\widetilde\eta}$ has finite rank,
        \item\label{cond:semipr_5} $\dim H^0\bigl(\cO_{Y}\bigr)^{\widetilde\eta} <\infty$ for every irreducible component $Y \subseteq X$ with $\mu(Y) \cap \lbrace \eta\geq 0 \rbrace \neq \emptyset$.
    \end{enumerate}
        \end{proposition}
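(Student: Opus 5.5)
Write $Z:=\overline{X^{\{\eta\ge0\}-\ss}}$, a closed $\sT$-invariant substack of $X$. By Definition \ref{weaksemi}, $X$ is weakly $\eta$-semiprojective exactly when $Z$ is $\eta$-semiprojective. By Proposition \ref{pro:alg_char_eta} applied to $Z$, this means: $\cC(Z)\subset\{\eta\le0\}$, and $H^0(\cO_Z)^{\widetilde\eta}$ is finite dimensional. The second condition is \eqref{cond:semipr_3}. So the plan has three parts: match the first condition with \eqref{cond:semipr_1} for $X$, prove the three finiteness conditions are equivalent, and deduce openness.

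\emph{The limit condition.} Proposition \ref{weak2} says weak semiprojectivity is equivalent to \eqref{cond:semipr_1}, read on $X$ as stated, plus \eqref{cond:semipr_3}. So one direction is free: \eqref{cond:semipr_1} for $X$ implies it for the closed substack $Z$, since limits in a closed invariant substack stay there. For the converse I would show that each ingredient of $\cC$ restricts well, using the surjection $H^0(\cO_X)\to H^0(\cO_Z)$ for the closed embedding of the affine part. Even so, $\cC(Z)\subseteq\cC(X)$ only gives the implication already in hand. The paper evidently intends the limit condition on $X$ itself, perhaps read on $Z$. I would therefore interpret \eqref{cond:semipr_1} as the condition on $Z$ if needed, and put the main work into the finiteness equivalences.

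\emph{\eqref{cond:semipr_3}$\Leftrightarrow$\eqref{cond:semipr_4}.} Let $U:=X^{\{\eta\ge0\}-\ss}$, an open dense subset of $Z$. The restriction $H^0(\cO_X)^{\widetilde\eta}\to H^0(\cO_U)^{\widetilde\eta}$ factors through $H^0(\cO_Z)^{\widetilde\eta}$. The first map $H^0(\cO_X)\to H^0(\cO_Z)$ is surjective on $\widetilde\eta$-invariants: since $|X|$ is projective over the affine $\Spec H^0(\cO_X)$ and $Z$ is closed, the image of $H^0(\cO_X)$ has finite codimension-type control. More precisely, I would argue with coarse spaces and the finite generation in Notation \ref{weak}. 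The second map is injective modulo nilpotents, because $U$ is dense in $Z$ (the maximalist component convention of \ref{irredcpts} matters here). Injectivity up to nilpotents suffices, since finite dimensionality of invariants of a finitely generated algebra can be checked on the reduced ring; to see this I would use a finite filtration by nilradical powers and a graded-module argument.

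\emph{\eqref{cond:semipr_3}$\Leftrightarrow$\eqref{cond:semipr_5}.} The components of $Z$ are exactly the components $Y$ of $X$ meeting $U$. Such a $Y$ has $\mu(Y)\cap\{\eta\ge0\}\neq\emptyset$, and the converse holds because a semistable locus for $\psi\in\mu(Y)$ is nonempty and open in $Y$. Then $H^0(\cO_Z)$ embeds in $\prod_Y H^0(\cO_Y)$ with finite-type cokernel control over $H^0(\cO_Z)$, giving the equivalence; the map $\cO_Z\to\prod\cO_Y$ is injective by the primary-decomposition convention.

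\emph{Openness.} Openness follows from Proposition \ref{pro:alg_char_eta} applied to $Z$, provided $Z$ is locally constant in $\eta$. This fails in general, since $\{\eta\ge0\}$ moves. So instead I would use \eqref{cond:semipr_5}. The set of components $Y$ is finite, and the condition $\mu(Y)\cap\{\eta'\ge0\}\ne\emptyset$ for $\eta'$ near $\eta$ can add only components whose polyhedra touch $\{\eta=0\}$. For those, semiprojectivity of $Y$ at $\eta$ must already be shown to hold. This is the step I expect to be the main obstacle: one needs $(1'')$ for $Y$ to force $\mu(Y)$ into a region where nearby $\eta'$ still see finite invariants. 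I would try the strict inequality $(1'')$ on $\cC(Y)$ from Proposition \ref{pro:alg_char_eta}, which is an open condition on finitely generated cones.
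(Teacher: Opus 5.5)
The genuine gap is the limit condition. The proposition asserts equivalence with \eqref{cond:semipr_1} on all of $X$. You prove only that \eqref{cond:semipr_1} for $X$ implies it for $Z$, and then propose to reinterpret the statement. So ``weakly $\eta$-semiprojective $\Rightarrow$ all $\widetilde\eta$-limits exist on $X$'' is never established.

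The missing idea is that the closed complement $C:=X\setminus X^{\lbrace\eta\ge0\rbrace-\ss}$ satisfies \eqref{cond:semipr_1} automatically:
\begin{itemize}
\item By Remark \ref{rem:ss_on_closed}, each irreducible component $C_i$ of $C$ has $C_i^{L\otimes\Psi^{-1}-\ss}=C_i\cap X^{L\otimes\Psi^{-1}-\ss}=\emptyset$ whenever $\langle\psi,\eta\rangle\ge0$, so $\Delta(C_i)\subseteq\{\eta<0\}$.
\item Lemma \ref{lem:polyhedron_structure} gives $\Delta(C_i)=P+\cC(C_i)$, so the recession cone satisfies $\cC(C_i)\subseteq\{\eta\le0\}$. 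This is $(1')$, hence \eqref{cond:semipr_1}, for $C_i$.
\item Since $X=Z\cup C$ with both closed and invariant, \eqref{cond:semipr_1} for $X$ is equivalent to \eqref{cond:semipr_1} for $Z$.
\end{itemize}
Only after this does Proposition \ref{pro:alg_char_eta}, applied to $Z$, give the stated equivalence.

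Two further steps need repair.

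\emph{Surjectivity.} In \eqref{cond:semipr_3}$\Leftrightarrow$\eqref{cond:semipr_4}, and in your attempt at the limit condition, you use surjectivity of $H^0(\cO_X)\to H^0(\cO_Z)$. This is false in general; two points in $\PP^1$ already give a counterexample. What holds, and what the paper uses, is that $Z\to X_{\mathrm{aff}}$ is projective. So $H^0(\cO_Z)$ is a finite $H^0(\cO_X)$-module, and taking $\widetilde\eta$-invariants, $H^0(\cO_Z)^{\widetilde\eta}$ is a finite module over $\im\big[H^0(\cO_X)^{\widetilde\eta}\to H^0(\cO_Z)^{\widetilde\eta}\big]$. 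That image is finite dimensional by \eqref{cond:semipr_4} together with your injectivity-mod-nilpotents of restriction to the dense open $X^{\lbrace\eta\ge0\rbrace-\ss}$. The same module-finiteness of each $H^0(\cO_Y)$ is what your ``cokernel control'' in \eqref{cond:semipr_3}$\Leftrightarrow$\eqref{cond:semipr_5} has to mean.

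\emph{Openness.} You give no argument here. The paper's route is \eqref{closedinc}: for $\eta'$ near $\eta$, the reduced closure for $\eta'$ is a closed substack of the fixed reduced closure for $\eta$. The $\eta$-semiprojectivity of the latter is open by Proposition \ref{pro:alg_char_eta} and passes to closed invariant substacks.

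Your worry about new components is nonetheless well founded. \eqref{closedinc} needs every component with $\mu(Y)\subseteq\{\eta<0\}$ to stay out of $\{\eta'\ge0\}$. This can fail when the recession cone $\cC(Y)$ (not $\mu(Y)$ itself, as you suggest) has a ray on $\{\eta=0\}$. For example, take $\sT$ of rank $2$ acting on $\AA^1$ with coordinate of weight $v$, where $\langle v,\eta\rangle=0$. Linearise so that $\mu(\AA^1)=c+\Q_{\ge0}v$ with $\langle c,\eta\rangle<0$.
\begin{itemize}
\item This is weakly $\eta$-semiprojective, since $\overline{X^{\lbrace\eta\ge0\rbrace-\ss}}=\emptyset$.
\item Once $\langle v,\eta'\rangle>0$, the closure for $\eta'$ is all of $\AA^1$, where $\widetilde\eta'$-limits fail. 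So it is not weakly $\eta'$-semiprojective.
\end{itemize}
So openness as stated seems to need an extra hypothesis, such as $(1'')$ on all of $X$.
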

        
It is sometimes simpler to ignore (2),\,$(2')$,\,(3),\,(4) or (5) and replace them by an assumption using $\sT$ in place of $\widetilde\eta$.

\begin{proposition}\label{Tversion}
Consider the following conditions,
    \begin{itemize}
        \item[$(2'')$] $H^0(\cO_X)^\sT$ is finite dimensional or $X^\sT$ is projective,
        \item[$(3'')$] $H^0\Big(\cO_{\overline{X^{\lbrace \eta\geq 0\rbrace-\ss}}}\Big){}^{\raisebox{0.6ex}{$\scriptstyle\!\;\sT$}}$ is finite dimensional.
    \end{itemize}
If \eqref{cond:semipr_1},\,$(2'')$ hold then $X$ is $\eta^\prime$-semiprojective for a general small perturbation $\eta^\prime$ of $\eta$.

\noindent If \eqref{cond:semipr_1},\,$(3'')$ hold then $(X,L)$ is weakly $\eta^\prime$-semiprojective for a general small perturbation $\eta^\prime$.
\end{proposition}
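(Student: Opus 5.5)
The strategy is to reduce both statements to the algebraic criterion of Proposition \ref{pro:alg_char_eta}. That proposition says $X$ is $\eta'$-semiprojective if and only if $(1')$ $\cC(X)\subset\{\eta'\le0\}$ and $(2')$ $H^0(\cO_X)^{\widetilde\eta'}$ is finite dimensional. The plan is to choose the perturbation $\eta'$ so that $\cC(X)\setminus\{0\}\subset\{\eta'<0\}$. Then the only $\sT$-weight of $H^0(\cO_X)$ on which $\widetilde\eta'$ acts trivially is $0$, so $(2')$ reduces to $\dim H^0(\cO_X)^\sT<\infty$. Since $\eta'$-semiprojectivity is open in $\eta'$, this is then automatic for a general small perturbation.

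\textbf{Step 1: the perturbation.} Since $H^0(\cO_X)_{\red}$ is finitely generated, $\cC(X)$ is a finitely generated rational polyhedral cone, generated by finitely many weights $w_1,\dots,w_m$. By \eqref{cond:semipr_1}$=(1')$ we have $\langle w_j,\eta\rangle\le0$ for all $j$.
\begin{itemize}
\item The weights with $\langle w_j,\eta\rangle<0$ stay negative under any small perturbation.
\item The others span the face $\cC_0:=\cC(X)\cap\eta^\perp$.
\end{itemize}
If $\cC_0$ is a \emph{pointed} cone, pick $\xi\in\ft_\Q$ with $\xi<0$ on $\cC_0\setminus\{0\}$ and set $\eta'=\eta+\epsilon\xi$ for small $\epsilon>0$. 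This gives $\cC(X)\setminus\{0\}\subset\{\eta'<0\}$, and the same holds for all nearby $\eta'$ in an open set.

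\textbf{Step 2: $\cC_0$ is pointed and $H^0(\cO_X)^\sT$ is finite dimensional.}

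First suppose $H^0(\cO_X)^\sT$ is finite dimensional. I would work one irreducible component $Y$ at a time, with its reduced structure, so that $H^0(\cO_Y)$ is a domain. The finiteness of $H^0(\cO_X)^\sT$ should pass to $H^0(\cO_Y)^\sT$ because $H^0(\cO_Y)$ is finite over the image of $H^0(\cO_X)$, by reductivity of $\sT$ on affinisations. Now suppose a line in $\cC_0$ were spanned by weights, so that there are nonzero $f,g$ of opposite weights $\psi,-\psi$ (after taking multiples). Then the powers $(fg)^n$ are nonzero invariants. Each is a product of nonzero elements of $\cC_0\cap\cC(Y)$-weight spaces in a domain, and this contradicts finite dimensionality. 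The key point is to make this argument account for the full lineality space of $\cC_0$, not just for single lines.

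Next suppose instead that $X^\sT$ is projective. Then I would show that $H^0(\cO_X)^\sT$ is finite dimensional by proving that restriction $H^0(\cO_X)^\sT\to H^0(\cO_{X^\sT})$ is injective. The idea is that an invariant function is constant on orbit closures. Using \eqref{cond:semipr_1} together with a sequence of one-parameter limits along generic cocharacters in $-\cC(X)^\vee$, every point should flow into $X^\sT$. Making this flow argument rigorous on a possibly non-reduced, reducible stack is the step I expect to be the main obstacle. My first attempt would be an induction on $\dim\sT$ via the fixed loci $X^{\widetilde\eta}$, each of which carries a residual torus action.

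\textbf{Step 3: the weak version.} Replace $X$ by the closed $\sT$-invariant substack $Z:=\overline{X^{\{\eta\ge0\}-\ss}}$. Condition \eqref{cond:semipr_1} is inherited by closed invariant substacks, since limits in $X$ of points of $Z$ stay in $Z$. Condition $(3'')$ is exactly $(2'')$ for $Z$. So the first part, applied to $Z$, shows $Z$ is $\eta'$-semiprojective for a general small perturbation $\eta'$. One then checks, using the openness in Proposition \ref{weak2}, that $\overline{X^{\{\eta'\ge0\}-\ss}}\subseteq Z$ for $\eta'$ close to $\eta$. This gives weak $\eta'$-semiprojectivity by Proposition \ref{pro:eta_semiproj_dominant} applied to the closed (hence proper) inclusion.
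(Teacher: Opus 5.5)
Step~2 is where your proposal breaks, and it cannot be repaired: pointedness of $\cC_0=\cC(X)\cap\{\eta=0\}$ follows from neither alternative in $(2'')$. The invariants $(fg)^n$ you construct can all be constants (for example $g=f^{-1}$ with $f$ a unit of nonzero weight). So they do not contradict $\dim H^0(\cO_X)^\sT<\infty$. The argument in the proof of Proposition~\ref{pro:alg_char_eta} is different: it uses powers of a single $f$ of nonzero $\sT$-weight but zero $\widetilde\eta$-weight. These are linearly independent $\widetilde\eta$-invariants, so that argument needs $(2')$, not finiteness of $\sT$-invariants.

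Here is a concrete counterexample. Let $\sT=(\C^*)^2$ act on $X=\C^*\times\C=\Spec\C[x^{\pm1},y]$ by $(t_1,t_2)\cdot(x,y)=(t_1x,t_2y)$, with $L=\cO_X$ and $\widetilde\eta(s)=(1,s)$.
\begin{itemize}
\item \eqref{cond:semipr_1} holds, $H^0(\cO_X)^\sT=\C$ and $X^\sT=\emptyset$, so $(2'')$ holds.
\item $\Delta=\cC(X)$ is the half-plane $\{\eta\le0\}$, and the only functions with weights on $\{\eta=0\}$ are the units $cx^a$. Hence $X^{\{\eta\ge0\}-\ss}=X$ and $(3'')$ holds too.
\item If $\eta'$ is near $\eta$ and $\eta'\notin\Q_{>0}\eta$, then $\widetilde\eta'(s)\cdot(x,y)$ has no limit in $\C^*\times\C$. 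Also $\{\eta'\ge0\}$ meets the interior of $\Delta$, so $\overline{X^{\{\eta'\ge0\}-\ss}}=X$.
\item If $\eta'\in\Q_{>0}\eta$, then $X^{\widetilde\eta'}=\C^*\times\{0\}$ is not projective.
\end{itemize}
So $X$ is neither $\eta'$-semiprojective nor weakly $\eta'$-semiprojective for any $\eta'$ near $\eta$: both parts of the proposition fail as stated.

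Your flow argument for the second alternative of $(2'')$ also fails. In this example nothing can flow into $X^\sT=\emptyset$. Moreover $X\times\C^*$, with trivial action on the new factor, has $X^\sT=\emptyset$ projective but $\sT$-invariant functions $\C[z^{\pm1}]$.

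The paper's proof has the same hole. It takes $\eta'$ general so that $X^{\widetilde\eta'}=X^\sT$, which is projective under either alternative of $(2'')$ (via Lemma~\ref{trayn} for the first). It then asserts that a small perturbation ``too'' satisfies the open condition $(1'')$. But Proposition~\ref{pro:alg_char_eta} derives $(1'')$ at $\eta$ only from full $\eta$-semiprojectivity, i.e.\ using \eqref{cond:semipr_2} at $\eta$, which is not assumed here. Condition \eqref{cond:semipr_1} alone gives only $(1')$, and the example above satisfies $(1')$ but not $(1'')$. So you correctly isolated the real crux, which the paper passes over.

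The statement becomes true if one also assumes $\cC(X)\cap\{\eta=0\}=\{0\}$, or the same for $\overline{X^{\{\eta\ge0\}-\ss}}$ in the weak version. Then your Step~1 is immediate because $(1'')$ is open, and Step~3 goes through as you describe; it is the paper's argument via \eqref{closedinc}. Step~2 becomes unnecessary. Rather than proving $\dim H^0(\cO_X)^\sT<\infty$ to feed into $(2')$, verify \eqref{cond:semipr_2} directly from $X^{\widetilde\eta'}=X^\sT$ for general $\eta'$, as the paper does.
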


So the summary is that if $C(X)\subset\ft^\vee_\Q$ is contained in a half space and $X^\sT$ is compact then we can choose $\eta$ to ensure $X$ is $\eta$-semiprojective. And for weak $\eta$-semiprojectivity we replace the second condition by the compactness of $\(\overline{X^{\lbrace \eta\geq 0 \rbrace-\ss}}\)^\sT$. 

\begin{proposition}\label{pro:eta_proj_compact}
   Suppose $(X,L)$ is weakly $\eta$-semiprojective and $\Sigma \setminus \lbrace 0 \rbrace \subset \lbrace \eta>0\rbrace$ is a simplicial cone transverse to $\Delta(X)$. Then $X\div\,\sT$ and the algebraic cut $X_\Sigma$ are projective.
\end{proposition}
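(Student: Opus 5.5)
The plan is to reduce to Theorem \ref{thm:structure_cut}\,(c), i.e.\ to the Lemma in the ``Properness'' subsection. For a cone $\Sigma$ transverse to $\Delta$ with $\Sigma\cap\Delta$ bounded, that lemma gives projectivity of $X_\Sigma$ as soon as the restriction map
$$
r\colon\ H^0(X,\cO_X)^\sT\To H^0\(X^{\Sigma-\ss},\cO_{X^{\Sigma-\ss}}\)^\sT
$$
has finite dimensional image. Since $X\div\;\sT=X_{\{0\}}$ is the closed stratum of $X_\Sigma$ indexed by the face $\{0\}$ (Theorem \ref{thm:structure_cut}\,(d)), it is closed in a projective stack and hence projective. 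So it suffices to check two things: (i) $\Sigma\cap\Delta$ is bounded, and (ii) $r$ has finite rank.

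For (ii): by the hypothesis $\Sigma\setminus\{0\}\subset\{\eta>0\}$ we have $X^{\Sigma-\ss}\subseteq X^{\{\eta\ge0\}-\ss}$. Hence $r$ factors through restriction to $X^{\{\eta\ge0\}-\ss}$, and $\sT$-invariants are in particular $\widetilde\eta$-invariants. Condition \eqref{cond:semipr_4} of Proposition \ref{weak2}, which weak $\eta$-semiprojectivity guarantees, says the restriction $H^0(\cO_X)^{\widetilde\eta}\to H^0(\cO_{X^{\{\eta\ge0\}-\ss}})^{\widetilde\eta}$ has finite rank. Therefore $r$ also has finite rank.

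For (i), which I expect to be the main step, I would argue by contradiction using the recession cone. Suppose $\Sigma\cap\Delta$ is unbounded. Since $\Delta$ is a finite union of polyhedra (Lemma \ref{lem:polyhedron_structure}), some irreducible component $Y$ has $\Sigma\cap\Delta(Y)$ unbounded. Then there is a nonzero recession direction $\theta\in\Sigma$ of $\Delta(Y)$, so $\langle\theta,\eta\rangle>0$. Since $\Delta(Y)$ meets $\Sigma\subset\{\eta\ge0\}$, the component $Y$ lies in $\overline{X^{\{\eta\ge0\}-\ss}}$. I would then show that the recession cone of $\Delta(Y)$ is contained in the cone $\cC(Y)$ of weights of $H^0(\cO_Y)_{\red}$, presumably via the structure Lemma \ref{lem:polyhedron_structure} in the appendix. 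The idea is that large multiples of sections in the direction $\theta$ have ratios which are regular functions of weight close to $\theta$. This gives a weight of $H^0(\cO_{\overline{X^{\{\eta\ge0\}-\ss}}})$ with positive $\eta$-pairing, contradicting $(1'')$ of Proposition \ref{pro:alg_char_eta} applied to the $\eta$-semiprojective closure.

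The delicate part is establishing the inclusion of the recession cone into $\cC$ for reducible and non-reduced $Y$. If needed, I would reduce to $Y_{\red}$ and irreducible $Y$, and use the finite generation of the section ring there.
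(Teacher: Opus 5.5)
Your proposal is correct and follows the paper's proof. You get finite rank of $r$ by factoring through condition (4) of Proposition \ref{weak2}, and boundedness of $\Sigma\cap\Delta$ from $(1'')$ together with Lemma \ref{lem:polyhedron_structure}, after which Theorem \ref{thm:structure_cut}\,(c) applies. Two small points. First, your ``delicate part'' is already done for you: Lemma \ref{lem:polyhedron_structure} gives $\Delta(Y)=P+\cC(Y)$ with $P$ a polytope for any irreducible, possibly non-reduced, $Y$. Second, apply $(1'')$ to $Y$ itself, which is $\eta$-semiprojective as a closed substack of $\overline{X^{\{\eta\ge0\}-\ss}}$, rather than to functions on the closure. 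You deduce that $X\div\,\sT$ is projective because it is the closed stratum $X_{\{0\}}\subset X_\Sigma$, whereas the paper gets this directly from Proposition \ref{pro:proj_criterion}; both routes work.
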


Of course we can combine this with Proposition \ref{Tversion}: if (1) and $(2'')$ or $(3'')$ hold then $(X,L)$ is weakly $\eta'$-semiprojective so applying Proposition \ref{pro:eta_proj_compact} to $\eta'$ (using that the condition $\Sigma \setminus \lbrace 0 \rbrace \subset \lbrace \eta>0\rbrace$ is open in $\eta$) shows that $X\div\sT,\,X_\Sigma$ are projective. The rest of this Section is devoted to proving the above results.

\begin{proof}[Proof of Proposition \ref{pro:alg_char_eta}]
We begin with \eqref{cond:semipr_1} $\iff(1')$. Pick nonconstant equivariant algebra generators $f_1, \dots, f_n\in H^0(\cO_X)$ to define an equivariant closed embedding $X_\mathrm{aff} \into \AA^n$.
  
Since $X \to X_\mathrm{aff}$ is projective the existence of $s\to 0$ limits on $X$ is equivalent to the existence of such limits on $X_\mathrm{aff}$. And since $X_\mathrm{aff} \into \AA^n$ is closed this is equivalent to the existence of limits on  $\AA^n$, i.e. to the representation $\widetilde\eta \acts \AA^n$ having nonnegative weights.\medskip
  
Notice $(2')\!\so\!$ \eqref{cond:semipr_2} is given by Lemma \ref{trayn}.\medskip

Next we show \eqref{cond:semipr_1} and \eqref{cond:semipr_2} together imply $(2')$.
By \eqref{cond:semipr_1} the closure of every orbit $\widetilde\eta\cdot x$ on $X$ contains a point $\lim_{s\to0}\widetilde\eta(s)\cdot x$ in $X^{\widetilde\eta}$, so $f(X) = f(X^{\widetilde\eta})$ for every
$f \in H^0(\cO_X)^{\widetilde\eta}$. And $X^{\widetilde\eta}$ is projective by \eqref{cond:semipr_2}, so $f(X)$ is a finite number of points and $f$ is locally constant.
Thus the Noetherian ring $H^0(\cO_X)^{\widetilde\eta}$ is Artinian, giving $(2')$.\medskip

We claim that $(2')$ implies that $\cC(X) \setminus \lbrace 0 \rbrace$ does not intersect $\{\eta=0\}$. Otherwise we get $f\in H^0(\cO_X)_{\red}$ with nonzero weight but $\widetilde\eta$-weight zero. Its powers are nonzero (because the ring is reduced) and linearly independent (because their weights are distinct) so they make $\dim H^0(\cO_X)^{\widetilde\eta}=\infty$, contradicting $(2')$.

Therefore $\eta$-semiprojectivity implies $(1'')\ \cC(X) \setminus \lbrace 0 \rbrace\subset\{\eta<0\}$\,---\,an open condition in $\eta$ which implies \eqref{cond:semipr_1}. Since small perturbations $\eta'$ of $\eta$ only make the fixed locus smaller $X^{\widetilde\eta^\prime} \subseteq X^{\widetilde\eta}$ (as can be seen by embedding in $\PP^m \times \AA^n$), condition \eqref{cond:semipr_2} is also open. Thus $\eta$-semiprojectivity is open.
\end{proof}

\begin{proof}[Proof of Proposition \ref{pro:eta_semiproj_dominant}]
If $f$ is proper the closure of an $\wt\eta(\C^*)$ orbit upstairs is proper over the closure of the orbit downstairs, so if $\lim_{s\to0}$ exists downstairs then it also exists upstairs. Thus the $\eta$-semiprojectivity of $X$ implies the the $\eta$-semiprojectivity of $Y$.

By Proposition \ref{pro:alg_char_eta} $\eta$-semiprojectivity is a property of $\wt\eta(\C^*)\acts H^0(\cO_X)$ for projective-over-affine Deligne-Mumford stacks. If
$f$ is dominant then $\wt\eta(\C^*)\acts H^0(\cO_Y)\subseteq H^0(\cO_X)$ enjoys the same properties.
    \end{proof}

\begin{proof}[Proof of Proposition \ref{weak2}]
Since $X^{\lbrace \eta\geq 0 \rbrace-\ss}$ is open its complement is projective-over-affine, so \eqref{cond:semipr_1} and $(1')$ are equivalent for it. But by construction $\Delta\(X\setminus X^{\lbrace \eta\geq 0 \rbrace-\ss}\)\subseteq\{\eta\le0\}$ which implies that $\cC\(X\setminus X^{\lbrace \eta\geq 0 \rbrace-\ss}\)\subseteq\{\eta\le0\}$ by Lemma \ref{lem:polyhedron_structure}.
Thus all $s\to0$ limits exist on $X\setminus X^{\lbrace \eta\geq 0 \rbrace-\ss}$ and $(1')=$ \eqref{cond:semipr_1} holds automatically on it.

So checking \eqref{cond:semipr_1} for $X$ is equivalent to checking it for $\overline{X^{\lbrace \eta\geq 0 \rbrace-\ss}}$. Therefore by Definition \ref{weaksemi} weak $\eta$-semiprojectivity is equivalent to \eqref{cond:semipr_1} and \eqref{cond:semipr_3}.\medskip

For \eqref{cond:semipr_3} $\iff$ \eqref{cond:semipr_4} note the map \eqref{cond:semipr_4} factors through \eqref{cond:semipr_3} as
    \beq{neweq}
H^0(\cO_X)^{\widetilde\eta}\ \rt r\ H^0\Big(\cO_{\overline {X^{\lbrace \eta\geq 0 \rbrace-\ss}}}\Big){}^{\raisebox{0.6ex}{$\scriptstyle\!\;\widetilde\eta$}}
\ \Into\ H^0\(\cO_{X^{\lbrace \eta\geq 0 \rbrace-\ss}}\)^{\widetilde\eta}.
    \eeq
Therefore it has finite rank if the central term is finite dimensional. Thus \eqref{cond:semipr_3} $\so$ \eqref{cond:semipr_4}.

Conversely, \eqref{cond:semipr_4} implies that $\im r$ in \eqref{neweq} is finite dimensional. Because the composition $\overline {X^{\lbrace \eta\geq 0 \rbrace-\ss}} \into X \to X_{\mathrm{aff}}$ is projective, $H^0\(\cO_{\overline {X^{\lbrace \eta\geq 0 \rbrace-\ss}}}\)$ is a finitely generated module over $H^0(\cO_X)$. Taking $\widetilde\eta$-invariants we find the central term of \eqref{neweq} is a finitely generated module over the finite dimensional $\im r$. Thus it is finite dimensional, giving \eqref{cond:semipr_3}.\medskip

For \eqref{cond:semipr_3} $\iff$ \eqref{cond:semipr_5} we use
\beq{bigcup}
\overline{X^{\lbrace \eta\geq 0 \rbrace-\ss}}\=\bigcup\nolimits_{Y\subseteq X\,:\,\mu(Y)\;\cap\;\{\eta\ge0\}\,\neq\,\emptyset}\ Y,
\eeq
where the $Y\subseteq X$ are the irreducible components of $X$ with their maximalist scheme structure of Notation\;\ref{irredcpts}. This gives
$$
H^0\(\,\overline{X^{\lbrace \eta\geq 0 \rbrace-\ss}}\,\)^{(\widetilde\eta)}\ \Into\ \bigcup\nolimits_Y H^0(\cO_Y)^{(\widetilde\eta)},
$$
first without the $(\widetilde\eta)$s and then with by taking $\widetilde\eta$-invariants. Thus the left hand side is finite dimensional if and only the right hand side is.\medskip

The openness of weak $\eta$-semiprojectivity will also follow from \eqref{bigcup}. Since each $\mu(Y)$ is a union of polyhedra, perturbing $\eta$ can only make the set of $Y$s with $\mu(Y)\cap\{\eta\ge0\}\neq\emptyset$ smaller, so \eqref{bigcup} gives\footnote{We pass to reduced structures because $X^{\lbrace \eta'\geq 0 \rbrace-\ss}$ could have an embedded locus not in $X^{\lbrace \eta\geq 0 \rbrace-\ss}$.}
\beq{closedinc}
\(\,\overline{X^{\lbrace \eta'\geq 0 \rbrace-\ss}}\,\)_{\red}\ \subseteq\ \(\,\overline{X^{\lbrace \eta\geq 0 \rbrace-\ss}}\,\)_{\red}.
\eeq
Notice Definition \ref{etasp} depends only on the reduced structure, and if it holds for the right hand side of \eqref{closedinc} then it holds for the \emph{closed} subset on the left. That is, if $(X,L)$ is weakly $\eta$-semiprojective, then it is $\eta'$-semiprojective.
\end{proof}

\begin{proof}[Proof of Proposition \ref{Tversion}]
Suppose \eqref{cond:semipr_1} and $(2'')$ hold. Since $X^{\widetilde\eta'} = X^\sT$ for a general $\eta'\in \ft_\Q$ (as can be seen by embedding into $\PP^m \times \AA^n$) we see $X^{\widetilde\eta'}$ is projective by Lemma \ref{trayn}. This is \eqref{cond:semipr_2} for $\eta'$. And if it is a sufficiently small perturbation of $\eta$ it too satisfies the open condition $(1'')$ which implies \eqref{cond:semipr_1}. Hence $X$ is $\eta'$-semiprojective.\medskip

Finally suppose \eqref{cond:semipr_1} and $(3'')$ hold for $(X,L)$. It follows that \eqref{cond:semipr_1} holds for the closed subset $\(\overline{X^{\lbrace \eta\geq 0 \rbrace-\ss}}\)_{\red}\subseteq X$. Applying Proposition \ref{pro:alg_char_eta} to this subset shows it is $\eta^\prime$-semiprojective for a small perturbation $\eta'$ of $\eta$. By the closed inclusion \eqref{closedinc} this means \eqref{cond:semipr_1} and \eqref{cond:semipr_2} also hold for $\(\overline{X^{\lbrace \eta'\geq 0 \rbrace-\ss}}\)_{\red}$. Thus $(X,L)$ is weakly $\eta'$-semiprojective.
\end{proof}

\begin{proof}[Proof of Proposition \ref{pro:eta_proj_compact}]
 Since the composition
 $$
        H^0(\cO_X)^\sT\To H^0\(\cO_{X^{\lbrace \eta \geq 0 \rbrace-\ss}}\)^\sT \To H^0\(\cO_{X^{\Sigma-\ss}}\)^\sT
$$
    factors through (4) it has finite rank.  By Proposition \ref{pro:proj_criterion}  this shows $X\div\,\sT$ is projective.
  
    By Proposition \ref{pro:alg_char_eta}\;($1''$) and Lemma \ref{lem:polyhedron_structure} we find that $\lbrace \eta > 0 \rbrace \cap \Delta(X)$ is bounded, so $\Sigma\cap \Delta(X)$ is too. By Theorem \ref{thm:structure_cut}\;(c) this implies that $X_\Sigma$ is projective.
\end{proof}

\section{Localisation on the algebraic cut}\label{section:loc_cut}

\begin{definition} \label{data} Our \emph{standard data} $\sT\acts(X,L,\EE,\phi),\,\alpha\in H^*_\sT(X)$ and assumptions are
\begin{itemize}
\item an algebraic torus $\sT\acts(X,L)$ acting on a polarised projective-over-affine Deligne-Mumford stack such that semistable points are stable,
\item a $\sT$-equivariant perfect obstruction theory $(\EE,\phi)$ on $X$,
\item an equivariant cohomology class $\alpha \in H_\sT^\ast(X)$.
\end{itemize}
To do the algebraic cut we will use the following \emph{auxiliary data} and assumptions,
\begin{itemize}
\item a simplicial cone $\Sigma := \cone(\psi_1, \dots, \psi_r)\subseteq\ft^\vee_\Q$ of full dimension $r=\rk\sT$, transverse to the moment polytope $\Delta=\Delta^\sT(X,L)$, such that
\item $\Sigma\cap\Delta$ is bounded and $\;\im\!\big[H^0(X, \cO_X)^\sT\to H^0(X^{\Sigma-\ss}, \cO_{X^{\Sigma-\ss}})^\sT\big]$ is finite dimensional, so $X\div\;\sT$ and $X_\Sigma$ are projective Deligne-Mumford stacks by Theorem \ref{thm:structure_cut}.
\end{itemize}
\end{definition}

By Corollary \ref{cor:obs_cut} $(\EE,\phi)$ induces a $\sT$-equivariant perfect obstruction theory on $X_\Sigma$, while $\alpha$ induces a class $\alpha\_\Sigma\in H^*_\sT(X_\Sigma)$ as follows. Let $\sT \times \dT$ act on $X$ through $\sT$ via the homomorphism $(t_1,t_2)\mapsto t_1t_2$. This makes $\pi_X:X\times\Sigma_\C\to X$ into a $(\sT \times \dT)$-equivariant map and defines
\beq{classes_on_cut}
\ H^\ast_\sT(X)\To H^\ast_{\sT \times \dT\!}(X)\xrightarrow{\,\pi_X^\ast}H^\ast_{\sT \times \dT\!}(X\times \Sigma_\C)\To H^*_{\sT\times\dT\!}\((X\times\Sigma_\C)^{\dT-\ss}\)\,\cong\,H^*_\sT(X_\Sigma),\ 
\eeq
which we denote by $\alpha\mapsto\alpha\_\Sigma$. Note its restriction to $X\div\;\sT\subset X_\Sigma$ is just
\beq{induced_class}
\alpha\ \in\ H^\ast_\sT(X)\rt{r}H^\ast_\sT(X^{\sT-\ss})\ \cong\ H^*(X\div\;\sT)\ \ni\ \alpha\_0,
\eeq
where $r$ denotes restriction. More generally, recall from Proposition \ref{pro:fixed_locus} the components $F_p$ of the fixed locus $(X_\Sigma)^\sT$, where $p\in\mu(X_\Sigma^\sT) =\partial_{0}(\Sigma \cap \Delta)$, and their description $F_p=\mu^{-1}(\tau_p) \div\,\wsigma$. We see that $\alpha\_\Sigma|\_{F_p}$ is
\beq{alphap}
\alpha\ \in\ H^*_\sT(X)\rt{r}H^*_\sT\Big(\mu^{-1}(\tau_p)^{\wsigma\!-\ss}\Big)\,\cong\,H^*_{\sT/\,\wsigma\!}(F_p)\To H^*_{\sT}(F_p)\ \ni\ \alpha_p,
\eeq
where again $r$ is restriction and the final arrow is pullback along $\sT\to\sT/\,\wsigma$.

The virtual localisation formula \cite{GraberPandharipande} for $\sT \curvearrowright(X_\Sigma,\EE_{X_\Sigma},\phi_{X_\Sigma})$ now reads
\beq{eq:general_loc_cut}
    \int_{[X_\Sigma]^{\vir}} \alpha\_\Sigma \= \sum_{p \in \partial_{0}(\Sigma \cap \Delta)} \int_{[F_p]^{\vir}} \frac{\alpha_p}{e^\sT\(N^{\vir}_{F_p/X_\Sigma}\)}\,.
\eeq
We describe $N^{\vir}_{F_p/X_\Sigma}$ in Lemma \ref{lem:normal_bundles} below. As in Remark \ref{rem:classification_fix_loci}, and as illustrated in Figure \ref{fig:classification_fix}, we consider separately the 3 types of fixed loci:
\begin{enumerate}
\item \textcolor{darkgreen}{the special fixed locus} $F_0=X\div\;\sT$,
\item \textcolor{blue}{the old fixed loci} where $p\in\mathring\Sigma\cap\partial_0\Delta$, and
\item \textcolor{red}{the new fixed loci} where $p\in\mathring\sigma\cap\partial_{r-\dim\sigma}\Delta$ for $0<\dim\sigma<r$.
\end{enumerate}	

\begin{proposition}\label{pro:special_old_contributions}
The contribution of (1) \textcolor{darkgreen}{the special fixed locus} to \eqref{eq:general_loc_cut} is
\beq{eq:special_contribution}
         \int_{[F_0]^{\vir}}\frac{\alpha\_\Sigma|\_{F_0}}{e^\sT\(N^{\vir}_{F_0/X_\Sigma}\)} \= \int_{[X\div\;\sT]^{\vir}}\frac{\alpha\_0}{\prod_{j=1}^r \bigl(c_1(N_j)-\psi_j\bigr)}\,,
\eeq
    where $N_j := X^{\ss} \times\_\sT \Psi_j\in \Pic(X\div\;\sT)$. The contribution of (2) \textcolor{blue}{the old fixed locus} to \eqref{eq:general_loc_cut} is
\beq{old}
      \sum_{p \in \mathring\Sigma\cap\partial_0\Delta} \int_{[F_p]^{\vir}} \frac{\alpha\_\Sigma|\_{F_p}}{e^\sT\(N^{\vir}_{F_p/X_\Sigma}\)} \= \frac{1}{\vert \wSigma\vert}\sum_{\substack{F\subseteq X^\sT\\ \mu(F) \in \Sigma}}
        \int_{[F]^{\vir}} \frac{\alpha \vert_{F} }{e^\sT(N^{\vir}_{F/X})}\,,
\eeq
where on the right hand side the virtual class on $F$ is induced from $F\subseteq X^\sT\into X$.
\end{proposition}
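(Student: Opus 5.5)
The proof splits into the two contributions, and in both cases the aim is to identify $F_p$, its virtual class and its virtual normal bundle inside the cut $X_\Sigma$ with data on $X$ or on $X\div\;\sT$.

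\emph{The special fixed locus.} Here $F_0=X_{\{0\}}=(X\times\{0\})\div\,\dT\cong X\div\;\sT$, a closed substack of $X_\Sigma$. Near $F_0$ the cut is locally
$$
(X^{\ss}\times\Sigma_\C)/\dT,
$$
the total space of the bundle $X^{\ss}\times_\sT\Sigma_\C=\bigoplus_j N_j$ over $X\div\;\sT$. This is an open substack of $X_\Sigma$ because $X^{\ss}\times\Sigma_\C\subseteq(X\times\Sigma_\C)^{\ss}$ by Lemma \ref{lem:stratification_stable_locus}.

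On this open set the obstruction theory \eqref{eq:morph_triangles_cut} is the pullback of $\EE_{X\div\sT}$ from Lemma \ref{lem:inducingPotOnQuotient}, direct sum the relative cotangent bundle of the vector bundle. So the virtual class restricts to the pullback of $[X\div\;\sT]^{\vir}$.

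Restricting the obstruction theory to the zero section splits it into a fixed part and a moving part:
\begin{itemize}
\item the fixed part is $\EE_{X\div\sT}$, so $[F_0]^{\vir}=[X\div\;\sT]^{\vir}$;
\item the moving part is $\bigoplus_j N_j^\vee$, so $N^{\vir}_{F_0/X_\Sigma}=\bigoplus_j N_j$.
\end{itemize}
The residual $\sT$ acts on the $\Psi_j$ direction; since $\dT$ has been quotiented, the residual action is the first factor acting trivially on $\Sigma_\C$ while the diagonal identification twists it. I will check the sign conventions so that $e^\sT(N_j)=c_1(N_j)-\psi_j$. Finally $\alpha\_\Sigma|_{F_0}=\alpha\_0$ by \eqref{induced_class}, and \eqref{eq:special_contribution} follows. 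The step to be careful about is precisely this equivariant weight bookkeeping for the residual action.

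\emph{The old fixed loci.} Take $p\in\mathring\Sigma\cap\partial_0\Delta$. Then $F_p$ lies in the open stratum
$$
X_\Sigma^\circ\cong X^{\Sigma-\ss}/\,\wSigma,
$$
and by Lemma \ref{lem:restriction_open_stratum} the obstruction theory there is the one descended from $(\EE,\phi)$ along the finite group quotient. By Proposition \ref{pro:fixed_locus}, $F_p$ is the quotient by the finite group $\wSigma$ of the union of components $F\subseteq X^\sT$ with $\mu(F)=p$; these lie in $X^{\Sigma-\ss}$.

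For a finite quotient $q\colon X^{\Sigma-\ss}\to X^{\Sigma-\ss}/\,\wSigma$, virtual classes satisfy $q_*[X^{\Sigma-\ss}]^{\vir}=\frac{1}{|\wSigma|}\,$-compatibility. More precisely, we have $q^*[F_p]^{\vir}=\sum_{\mu(F)=p}[F]^{\vir}$ by \cite[Proposition 5.10]{BehrendFantechi} (étale pullback), and integrals over the quotient equal $\frac{1}{|\wSigma|}$ times integrals upstairs. The fixed/moving decomposition also commutes with $q^*$, so $q^*N^{\vir}_{F_p/X_\Sigma}=N^{\vir}_{F/X}$ and $q^*\alpha_p=\alpha|_F$ by \eqref{alphap}.

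Summing over $p$ gives \eqref{old}. Fixed components $F$ with $\mu(F)\in\partial\Sigma$ do not occur by transversality, so the condition $\mu(F)\in\Sigma$ agrees with $\mu(F)\in\mathring\Sigma$.

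The main obstacle I expect is the $\frac{1}{|\wSigma|}$ factor. If $\wSigma$ acts non-freely, the quotient is a stack and the degree of $q$ is still $|\wSigma|$, including the part of $\wSigma$ acting trivially through stabilisers. I will verify this using the stacky degree convention, i.e.\ $X^{\Sigma-\ss}\to X^{\Sigma-\ss}/\,\wSigma$ is a $\wSigma$-torsor, hence finite étale of degree $|\wSigma|$.
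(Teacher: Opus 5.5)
Your proposal is correct and is essentially the paper's argument. The paper reads off both formulae from the splitting $\EE_{X_\Sigma}|_{F_p}\cong\EE_{X\div\,\wsigma}|_{F_p}\oplus\bigoplus_{j\,:\,\psi_j\notin\sigma}N_j^\vee|_{F_p}$ of Lemma~\ref{lem:normal_bundles}, with $\sigma=\{0\}$ for the special locus and $\sigma=\Sigma$ for the old loci, and for the latter it uses $F_p\cong F/\,\wSigma$ and $[F]^{\vir}\mapsto|\wSigma|\cdot[F_p]^{\vir}$ exactly as you do.

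The sign you defer is the one-line computation $t\cdot[x,z]=[t\cdot x,z]=[x,t^{-1}\cdot z]$ in $X^{\ss}\times_\sT\Sigma_\C$, which gives $N_j$ weight $-\psi_j$ and hence $e^\sT(N_j)=c_1(N_j)-\psi_j$. Also, away from the zero section the triangle relating $\EE_{X_\Sigma}$, the pullback of $\EE_{X\div\sT}$ and $\bigoplus_jN_j^\vee$ need not split, because the map $\cO\otimes\Sigma_\C^\vee\to\cO\otimes\ft_\C^\vee$ is nonzero there; so ``direct sum'' should read ``extension''. This is harmless, since the triangle does split on $F_0$, the only place you use the splitting.
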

In order to eventually remove the (residues of the) contributions from (3) \textcolor{red}{the new fixed loci} to \eqref{eq:general_loc_cut} we will use the following result about all fixed loci $F_p$ for $p \in \mu(X^\sT_\Sigma)$.

\begin{proposition}\label{pro:weights_normal_bundle}
Write $F_p \cong\mu^{-1}(\tau_p)\div\,\wsigma$ as in Proposition \ref{pro:fixed_locus}\;\eqref{cond:3}. The rational function
\beq{ratint}
        \int_{[F_p]^{\vir}} \frac{\alpha\_\Sigma|\_{F_p}}{e^\sT(N^{\vir}_{F_p/X_\Sigma})}\,:\,\ft_\Q
        \xymatrix{\ar@{-->}[r]& \ \Q}
\eeq
    has poles only on hyperplanes of the form $\lbrace\gamma = 0\rbrace$, where $\gamma \in \ft_\Q^\vee$ is either
    \begin{enumerate}
        \item[(i)] in $\rspan(\sigma)$, or
        \item[(ii)] on a 1-dimensional edge of the simplicial cone $\Sigma \cap \tau_p$.     \end{enumerate} 
\end{proposition}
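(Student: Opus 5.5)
We prove the statement by computing the $\sT$-weights of the virtual normal bundle. The poles of \eqref{ratint} lie only on hyperplanes $\{\gamma=0\}$ with $\gamma$ a $\sT$-weight of the moving part of $N^{\vir}_{F_p/X_\Sigma}$. So it suffices to show that every such nonzero weight lies in $\rspan(\sigma)$ or on an edge of $\Sigma\cap\tau_p$. The class $\alpha_\Sigma|_{F_p}$ is polynomial in $\ft$ and contributes no poles. Because $F_p$ is a compact Deligne--Mumford stack on which $\sT$ acts through the finite-type torus $\sT/\,\wsigma$ (up to finite groups), the integral is a rational function. Its denominator is a product of the linear forms given by the weights of $\big(\EE_{X_\Sigma}|_{F_p}^\vee\big)^{\mathrm{mov}}$ in $K$-theory, taken after splitting into eigenbundles.

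\emph{Step 1: computing the moving part.} Work upstairs on the lift $\mu^{-1}(\tau_p)^{\sigma-\ss}\times\sigma^\circ_\C\subset (X\times\Sigma_\C)^{\ss}$, using the triangle \eqref{eq:morph_triangles_cut}. There $\EE_{X_\Sigma}^\vee$ has $K$-class
\[
\EE^\vee|\;+\;\Sigma_\C\;-\;\ft_\C .
\]
The residual $\sT$ acts on these terms, twisted by the identification of $\sT$ with $\dT$ along the orbit. Concretely, a point of $F_p$ is fixed by a subgroup $\sH$ complementary to $\wsigma$, as in the proof of Proposition \ref{pro:fixed_locus}. The $\sT$-action on the fibre is the one induced by the isogeny $\sH\times\wsigma\to\sT$, acting through $\sH$. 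Now split the three terms:
\begin{itemize}
\item $\Sigma_\C=\sigma_\C\oplus\Sigma_\C/\sigma_\C$. The summand $\sigma_\C$ cancels against part of $\ft_\C$, namely the quotient $\ft_\C/\mathrm{Lie}\,\wsigma$.
\item The remaining coordinate lines $\Psi_j$, $j\notin I$, have $\sT$-weight $\psi_j$ corrected by an element of $\rspan(\sigma)$. This is precisely the projection of $\psi_j$ to $\tau_p$ along $\rspan(\sigma)$, so it lies on an edge of $\Sigma\cap\tau_p$ by Proposition \ref{pro:fixed_locus}\eqref{cond:fix}.
\item $\EE^\vee|_{\mu^{-1}(\tau_p)}$ splits further as described in Steps 2 and 3.
\end{itemize}

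\emph{Step 2: the $\sT_{\!\tau_p^\perp}$-weights.} Split $\EE^\vee|_{\mu^{-1}(\tau_p)}$ into weight spaces for $\sT_{\!\tau_p^\perp}$. The $\sT_{\!\tau_p^\perp}$-fixed part is the obstruction theory of $\mu^{-1}(\tau_p)$. After the quotient by $\wsigma$, its $\sT$-weights become characters of $\sT$ trivial on $\sH$, up to torsion, so they lie in $\rspan(\sigma)$. These are poles of type (i) (or no poles at all).

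\emph{Step 3: the nonfixed part.} The genuinely hard part is the $\sT_{\!\tau_p^\perp}$-nonfixed part, the virtual normal directions $N^{\vir}_{\mu^{-1}(\tau_p)/X}$. We must show that its weights, after the $\rspan(\sigma)$-correction, are either zero or lie along edges of $\Sigma\cap\tau_p$. Since $\tau_p$ and $\rspan(\sigma)$ are transverse and complementary, projecting along $\rspan(\sigma)$ identifies $\ft^\vee_\Q/\rspan(\sigma)$ with the direction space of $\tau_p$. So we need: every weight $\gamma$ of $N^{\vir}_{\mu^{-1}(\tau_p)/X}$ restricted to $\sT_{\!\tau_p^\perp}$ is nonzero, and its image in $\ft^\vee_\Q/\rspan(\sigma)$ lies on a ray of the cone. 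This is far from automatic in the virtual setting, because obstruction weights need not point along $\mu(X)$.

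\emph{Step 4: the main obstacle.} Step 3 is where the difficulty lies. For smooth $X$ one would argue with the local moment-map picture, but here $\mu^{-1}(\tau_p)$ lies in a neighbourhood where $\partial_{r-\dim\sigma}\Delta$ is the affine space $\tau_p$. I would try to show directly that the moving weights, computed on the lift, are combinations $\gamma=\sum_{j\notin I}a_j\psi_j+(\text{element of }\rspan\sigma)$. Passing to $F_p$, the factor is a function on $\ft$ vanishing on $\{\gamma=0\}$. Here I would use the freedom in the choice of fixed point: on $F_p$ the $\sT$-action is pulled back along $\sT\to\sT/\,\wsigma$, and $\sT/\,\wsigma$ acts on the normal directions only via the characters of $\Sigma_\C/\sigma_\C$. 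This is because the moving parts of $\EE^\vee$ in $\mu^{-1}(\tau_p)$-normal directions are killed in $X_\Sigma$ near $F_p$, being absorbed by the extra $\Psi_j$ coordinates. I expect this bookkeeping of which weights survive the descent to be the main obstacle. If it fails in general, the fallback is to allow an extra pole set in $\rspan(\sigma)$, which the statement already permits.
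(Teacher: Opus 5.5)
Your Step 1 is the right decomposition, and it is what the paper does. Dualising \eqref{eq:morph_triangles_cut} and splitting off the lines $\Psi_j$, $j\notin I$, gives Lemma \ref{lem:normal_bundles}: $N^{\vir}_{F_p/X_\Sigma}\cong N^{\vir}_{F_p/X_\sigma}\oplus\bigoplus_{j:\psi_j\notin\sigma}N_j|_{F_p}$. Your weight computation for the $N_j$ is correct up to sign (it is the projection of $-\psi_j$, which does not matter for poles).

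\textbf{The gap.} The remaining summand is the one you call the main obstacle and leave unproved. The missing idea is that it needs no geometry at all. By Remark \ref{rem:action_factorisation} applied to the face $\sigma$, the group $\wsigma$ acts trivially on the cut $X_\sigma$ together with its descended obstruction theory. Indeed $(s,s^{-1})\in\sT\times\dT$ acts trivially on $X\times\sigma_\C$ and on $\EE\boxplus\cO\otimes\sigma_\C^\vee\to\cO\otimes\ft_\C^\vee$. Hence every weight of $N^{\vir}_{F_p/X_\sigma}$ is a rational character of $\sT$ vanishing on $\wsigma$, i.e.\ it lies in $\rspan(\sigma)$.

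Concretely, at $x\in\mu^{-1}(\tau_p)$ write $t=ab$ with $a\in\sT_{\!\tau_p^\perp}$ and $b\in\wsigma$. After compensating by $b^{-1}\in\dT$, $t$ acts on $\EE|_x$ through $a$ alone. So a $\sT_{\!\tau_p^\perp}$-weight $\chi$ of $\EE|_x$ becomes the residual weight $\chi\oplus0\in(\tau_p^\perp)^\vee\oplus(\sigma^\perp)^\vee$, which is an element of $\rspan(\sigma)$. The whole $\sT_{\!\tau_p^\perp}$-moving part of $\EE$ therefore gives poles of type (i) immediately.

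\textbf{Specific errors in your Steps 2--4.}
\begin{itemize}
\item The Step 3 target is false. The images of these weights in $\ft^\vee_\Q/\rspan(\sigma)$ are zero, not on rays of $\Sigma\cap\tau_p$. Also, $N^{\vir}_{\mu^{-1}(\tau_p)/X}$ has only $\sT_{\!\tau_p^\perp}$-weights, not $\sT$-weights, since $\mu^{-1}(\tau_p)$ is not $\sT$-fixed.
\item The Step 4 claim that these directions are ``killed\ldots absorbed by the extra $\Psi_j$ coordinates'' is wrong. They survive in $N^{\vir}_{F_p/X_\sigma}$. They genuinely can point out of $\Delta$ (the upward green arrows of Figure \ref{fig:weights}), which is exactly why (i) is in the statement.
\item Your ``fallback'' is therefore the actual answer, but you give no argument for it.
\item The justification in Step 2 is misstated. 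Characters trivial on $\sH\approx\sT_{\!\tau_p^\perp}$ span $\tau_p-p$, not $\rspan(\sigma)$. By the computation above, the $\sT_{\!\tau_p^\perp}$-fixed part of $\EE$ has residual weight zero and contributes no poles.
\end{itemize}
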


\begin{figure}[h]
    \centering
    \includegraphics[width=0.9\textwidth, keepaspectratio]{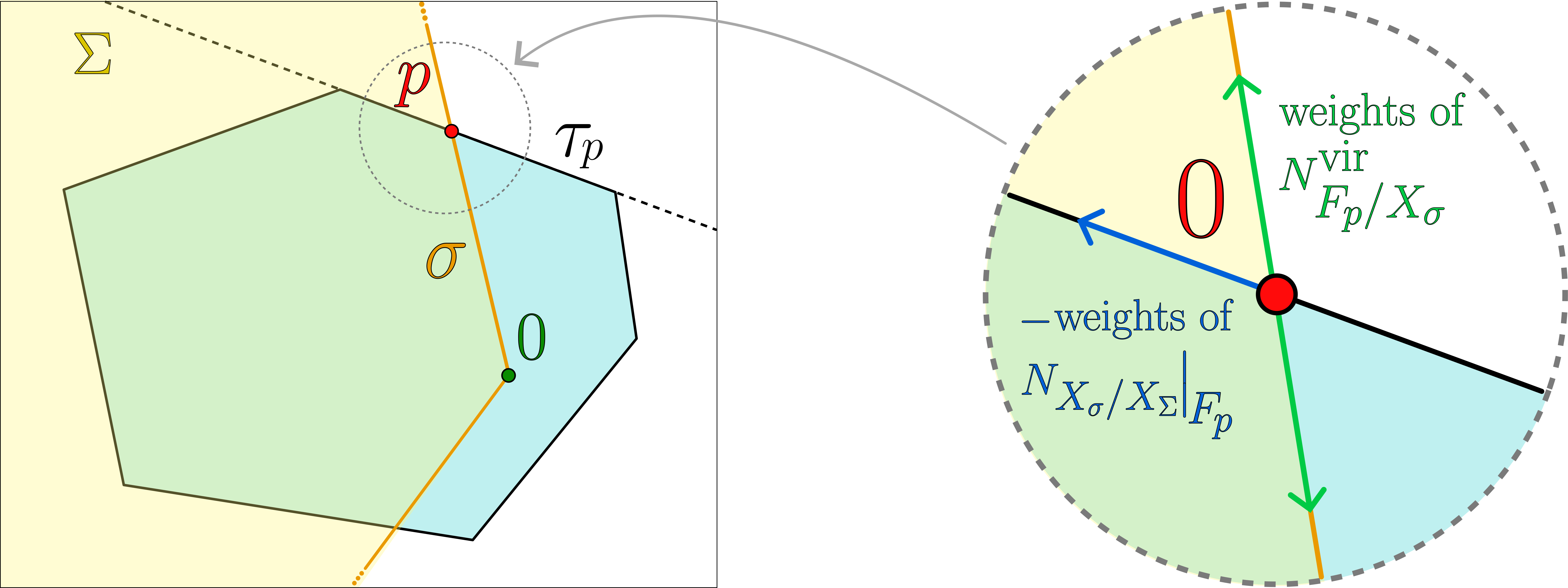}
    \caption{Minus the weights of $N^{\vir}_{F_p/X_\Sigma}=N^{\vir}_{F_p/X_\sigma}\oplus N_{X_\sigma/X_\Sigma}$ described in terms of $\Sigma$ and $\tau_p$. On the right we picture the tangent space to $\ft^\vee_\Q$ at $p$. The $\sT$-weights of $N^{\vir}_{F_p/X_\sigma}$ (in green) belong to $\rspan(\sigma)$, while minus the weights of $N_{X_\sigma/X_\Sigma}\vert_{F_p}$ (in blue) are the generators of the simplicial cone $\Sigma \cap \tau_p$.}
    \label{fig:weights}
\end{figure}

\begin{remark}
    If $X$ is irreducible, minus the weights of the $\sT$ action on the classical normal bundle $N_{F_p/X_\Sigma}$ lie along each direction of the edges of $\Delta \cap \Sigma$ emanating from $p$ \cite[Theorem 2]{Brion}. In particular, in Figure \ref{fig:weights}, there would be no green arrow pointing upwards, out of $\Delta$ (and those pointing downwards would all be along 1-dimensional rays of $\sigma\cap\Delta$ emanating from $p$). In our \emph{virtual} setting Proposition \ref{pro:weights_normal_bundle} is the best we can do. The rest of this Section is devoted to proving Propositions \ref{pro:special_old_contributions} and \ref{pro:weights_normal_bundle}.

\end{remark}
\subsection{Virtual normal bundles}
Let $p \in \mu(X_\Sigma^\sT)$ and consider the corresponding fixed locus $F_p \cong \mu^{-1}(\tau_p)\div\,\wsigma \subseteq X_\sigma\subseteq X_\Sigma$ of Proposition \ref{pro:fixed_locus}\;\eqref{cond:3}. We describe its virtual normal bundle in terms of some special line bundles. Consider the action
$$
\sT \times \dsigma\,\acts\,X^{\sigma-\ss}, \qquad (t,s) \cdot x\ :=\ (ts) \cdot x.
$$
For $\psi \in \ft^\vee_\Z$ let $\psi^{\diag}$ denote the character of $\sT \times \dsigma$ arising from the second factor only,
\begin{align*}
    \psi^{\diag}(t,s)\ :=\ \psi(s),
\end{align*}
with corresponding $(\sT \times \dsigma)$-representation $\Psi^\diag$.
Then the generators $\psi_j,\ 1\le j\le k$ of the cone $\Sigma = \cone(\psi_1, \dots, \psi_k)$ induce $\sT$-equivariant line bundles
\begin{equation}\label{eq:Nj_bundle}
    N_j\ :=\ X^{\sigma-\ss} \times\_{\dsigma}\Psi^\diag_j\ \in\  \Pic^\sT\!\!\(X^{\sigma-\ss}/\,\dsigma\)\ \stackrel{\eqref{eq:algcutstrata}}\cong\ \Pic^{\sT\!}(X^\circ_\sigma).
\end{equation}

\begin{lemma}\label{lem:normal_bundles}
Give $X_\sigma$ its perfect obstruction theory from Corollary \ref{cor:obs_cut}. Then 
\begin{eqnarray}
\EE_{X_\Sigma}|_{F_p} &\cong& \EE_{X\div\,\wsigma}\!\big|_{F_p}\ \oplus\ \bigoplus\nolimits_{j \,:\, \psi_j \notin \sigma} N_j^\vee\big|_{F_p}, \label{Es} \\
N^{\vir}_{F_p/X_\Sigma} &\cong& N^{\vir}_{F_p/X_\sigma}\ \oplus\ \bigoplus\nolimits_{j \,:\, \psi_j \notin \sigma} N_j\big|_{F_p}. \label{Ns}
\end{eqnarray}
The $\sT$-weights of the first summand lie in $\rspan(\sigma)\subset\ft^\vee_\Q$. In the second summand, the $\sT$-weight of $N_j \vert_{F_p}$ is the (possibly rational\footnote{Since the $N_j$ are orbi-line bundles that do not necessarily descend to the coarse moduli scheme, we get rational weights in general.}) projection of $-\psi_j$ to $\tau_p$ along $\rspan(\sigma)$, i.e. minus a rational generator of a 1-dimensional edge of $\Sigma\cap \tau_p$ emanating from $p$.
\end{lemma}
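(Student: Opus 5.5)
I will compute everything $\dT$-equivariantly on the open substack $X^{\sigma-\ss}\times\sigma^\circ_\C\times\bigoplus_{j:\psi_j\notin\sigma}\Psi_j$ of $(X\times\Sigma_\C)^{\ss}$, which is a neighbourhood of the preimage of $F_p$. By Lemma \ref{lem:stratification_stable_locus} its quotient by $\dT$ is an open neighbourhood of $X_\sigma^\circ$ in $X_\Sigma$. I first quotient by the subgroup $\wsigma$ of $\dT$, acting diagonally on $X^{\sigma-\ss}$ and trivially on $\sigma_\C$. I then use the residual $(\sT/\wsigma)_{\diag}\cong\sigma_\C^\circ$ to slice, exactly as in the proof of Lemma \ref{lem:polyhedron_strata}. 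This identifies the neighbourhood with the total space of $\bigoplus_{j:\psi_j\notin\sigma}N_j$ over $X^{\sigma-\ss}/\wsigma$. Here $N_j$ is the associated bundle \eqref{eq:Nj_bundle}, since the coordinate $z_j$ on $\Psi_j$ carries the $\dsigma$-character $\psi_j^{\diag}$. Its zero section is $X_\sigma^\circ$, so the normal bundle $N_{X_\sigma/X_\Sigma}|_{X^\circ_\sigma}$ is $\bigoplus N_j$.

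For \eqref{Es} I restrict the triangle \eqref{eq:morph_triangles_cut} to this locus. As in Lemma \ref{lem:restriction_open_stratum}, I divide out by the summand $\sigma_\C^\vee\xrightarrow{\sim}\ker(\ft_\C^\vee\to\op{Lie}(\wsigma)^\vee)$. The remaining $\Omega$ of the fibre directions $\bigoplus_{j\notin\sigma}\Psi_j^\vee$ splits off as a direct summand, because $\EE\boxplus\Omega_{\Sigma_\C}$ is a direct sum and the map to $\cO\otimes\ft^\vee_\C$ restricted to these summands vanishes along $z_j=0$. What is left is \eqref{morphism_of_triangles} for $\wsigma$, i.e. $\EE_{X\div\wsigma}$. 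Restricting to $F_p$ gives \eqref{Es}. Dualising and taking moving parts then gives \eqref{Ns}: the moving part of $\EE_{X\div\wsigma}^\vee|_{F_p}$ is $N^{\vir}_{F_p/X_\sigma}$. Here I use that $X_\sigma^\circ$ is open in $X_\sigma$, and that Corollary \ref{cor:obs_cut} for $\sigma$, together with Lemma \ref{lem:restriction_open_stratum} applied to $\sigma$, identifies $\EE_{X_\sigma}|_{X_\sigma^\circ}$ with $\EE_{X\div\wsigma}$. The summands $N_j|_{F_p}$ are entirely moving, as the weight computation below shows.

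For the weights, $F_p\cong\mu^{-1}(\tau_p)\div\wsigma$ carries an action of $\sT$ that factors through $\sT/\wsigma$. Every weight of a $\sT/\wsigma$-equivariant sheaf is a character trivial on $\wsigma$, hence lies in the annihilator of $\sigma^\perp$, i.e. in $\rspan(\sigma)$. This covers the first summand. The only subtlety is that for orbifolds the weights are rational, but since $\wsigma$ acts trivially they still lie in $\rspan(\sigma)\otimes\Q$.

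For $N_j|_{F_p}$, I regard $F_p$ as the quotient of $\mu^{-1}(\tau_p)$, on which the subgroup $\sT_{\!\tau_p^\perp}$ acts trivially. Combining $\sT_{\!\tau_p^\perp}$ with $\wsigma$, which is transverse to it by Proposition \ref{pro:fixed_locus}\,(2), gives a finite-index subgroup of $\sT$. On the $\sT_{\!\tau_p^\perp}$ factor the fibre of $N_j$ has weight $\psi_j|_{\sT_{\!\tau_p^\perp}}$ up to the sign convention fixed by Notation\;\ref{sigma_rep}, where $z_j$ has weight $-\psi_j$. On $\wsigma$ the weight is trivial, since the residual $\sT$-action comes from quotienting the diagonal. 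So the weight $w$ of $N_j|_{F_p}$ is the unique rational character that vanishes on $\op{Lie}\wsigma=\sigma^\perp$, i.e. $w\in\rspan(\sigma)$, and agrees with $-\psi_j$ on $\tau_p^\perp$, i.e. $w+\psi_j\in\tau_p-p$. This is precisely the decomposition $-\psi_j=w+(\text{vector in }\tau_p-p)$, so $w$ is the projection of $-\psi_j$ to $\tau_p$ along $\rspan(\sigma)$ after translating to $p$. By Proposition \ref{pro:fixed_locus}\,(2) these projections generate the edges of $\Sigma\cap\tau_p$. The main obstacle I expect is fixing signs and the orientation of the direct sum $\rspan(\sigma)\oplus(\tau_p-p)$ correctly, and carrying out the splitting of the triangle rigorously in the derived category.
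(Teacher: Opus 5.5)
Your derivation of \eqref{Es} and \eqref{Ns} matches the paper. You compare \eqref{eq:morph_triangles_cut} for $\Sigma$ and for $\sigma$ on $X^\circ_\sigma$, where the $dz_j$ summands split off, and then apply Lemma \ref{lem:restriction_open_stratum} to $\sigma$. Your argument for the weights of $N^{\vir}_{F_p/X_\sigma}$ is also the paper's.

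The genuine error is in the weight of $N_j|_{F_p}$: you have interchanged the roles of $\sT_{\!\tau_p^\perp}$ and $\wsigma$. The residual $\sT$ acts on $N_j=X^{\sigma-\ss}\times_{\wsigma}\Psi_j$ through the $X$ factor only, $t\cdot[x,z]=[t\cdot x,z]$.
\begin{itemize}
\item An element $a\in\sT_{\!\tau_p^\perp}$ fixes $x\in\mu^{-1}(\tau_p)$, so it acts \emph{trivially} on the fibre.
\item For $b\in\wsigma$, the defining equivalence of the associated bundle gives $[b\cdot x,z]=[x,\psi_j(b)^{-1}z]$, so $\wsigma$ acts with weight $-\psi_j$.
\end{itemize}
Quotienting by the diagonal is exactly what transfers the $\wsigma$-action from $X$ onto the fibre coordinate. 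So $\wsigma$ acts trivially on $X_\sigma$, but not on its normal directions $z_j$ in $X_\Sigma$; only $\wSigma$ acts trivially on $X_\Sigma$. Hence your principle that weights of $\sT/\wsigma$-equivariant sheaves vanish on $\wsigma$ does not extend to $N_j$.

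The correct conclusion is that $w$ vanishes on $\tau_p^\perp$, i.e.\ $w\in\tau_p-p$. Also $w+\psi_j$ vanishes on $\sigma^\perp$, i.e.\ $w+\psi_j\in\rspan(\sigma)$. So $-\psi_j=w+(\text{vector in }\rspan\sigma)$, and $w$ is the projection of $-\psi_j$ to $\tau_p-p$ along $\rspan(\sigma)$. This is the paper's computation $(a,b)\cdot[x,z]=[x,\psi_j(b^{-1})z]$.

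With your assignments you get $w\in\rspan(\sigma)$ and $w+\psi_j\in\tau_p-p$. That makes $w$ the projection of $-\psi_j$ to $\rspan(\sigma)$ along $\tau_p-p$. Your last sentence then asserts the other projection, which does not follow: in $-\psi_j=w+v$ with $v\in\tau_p-p$, the $\tau_p$-component is $v$, not $w$.

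The case $p=0$, $\sigma=\{0\}$ shows your recipe cannot be right. There $\wsigma=\sT$ and $\sT_{\!\tau_p^\perp}=\{1\}$, so your recipe gives $w=0$. But the weight must be $-\psi_j$ to produce the factors $c_1(N_j)-\psi_j$ in \eqref{eq:special_contribution}.

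The same error undermines your claim that the $N_j|_{F_p}$ are entirely moving; under your weight this already fails at $p=0$. With the correct $w$ it holds, because $\psi_j\notin\rspan(\sigma)$ by the linear independence of the generators of $\Sigma$.
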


%

\begin{proof}
    Apply the diagram \eqref{eq:morph_triangles_cut} to both $\Sigma$ and $\sigma$ to describe and compare the obstruction theories on $X_\Sigma$ and $X_\sigma$. Writing $\Sigma_\C=\sigma\_\C\oplus\bigoplus_{j:\psi_j \notin \sigma}\Psi^{\;\diag}_j$ by Notation\;\ref{sigma_rep}, this gives
\beq{EEE}
        \EE_{X_\Sigma} \big|_{X^\circ_\sigma}\ \cong\ \EE_{X_\sigma}\big|_{X^\circ_\sigma}\ \oplus\ \bigoplus\nolimits_{j \,:\, \psi_j \notin \sigma}\,N^\vee_j.
\eeq 
Substituting in the isomorphism from Lemma \ref{lem:restriction_open_stratum},
$$
\EE_{X_\sigma}\big|_{X^\circ_\sigma}\ \cong\ \EE_{X\div\,\wsigma}\!\big|_{X^\circ_\sigma}
$$
then restricting to $F_p$ gives \eqref{Es}. Restricting \eqref{EEE} to $F_p$, dualising and taking $\sT$-moving parts gives \eqref{Ns}.

The weights of $N^{\vir}_{F_p/X_\sigma}$ lie in $\rspan\sigma$ because $X_\sigma$ is entirely fixed by $\wSigma$ by Remark \ref{rem:action_factorisation}.

Finally, given $x \in \mu^{-1}(\tau_p)$ over $[x]\in\mu^{-1}(\tau_p)\div\,\wsigma\cong F_p$, we want to compute the weight in $\ft^\vee_\Q$ of the $\sT$ action on $N_j \vert_{[x]}$. Since the isogeny $\sT_{\!\tau_p^\perp}\times \wsigma \rightarrow \sT$  induces an isomorphism
\beq{eq:isogeny_iso}
\ft^\vee_\Q\,\rt{\,\sim\,}\,\(\tau_p^\perp\)^\vee\,\oplus\,\(\sigma^\perp\)^\vee
\eeq
on rational characters, it suffices to compute the weight of $\sT_{\!\tau_p^\perp}\times \wsigma$ acting on $N_j \vert_{[x]}$. Given $(a,b)\in\sT_{\!\tau_p^\perp}\times \wsigma$ acting on $[x,z]\in\eqref{eq:Nj_bundle}$,
$$
(a,b)\cdot[x,z]\=[ab\cdot x,z]\=[a\cdot x,b^{-1}\cdot z]\=[x,\psi_j(b^{-1})z]\=-\psi_j(b)[x,z],
$$
so the weight is $0\oplus-\psi_j \vert_{\wsigma}\!$. Inside $\ft^\vee_\Q$ the inverse of \eqref{eq:isogeny_iso} identifies
    $$
    \(\tau_p^\perp\)^\vee\ \cong\ \rspan(\sigma) \qquad \text{and} \qquad \(\sigma^\perp\)^\vee\ \cong\ \tau_p-p,
    $$
so the weight is the projection of $-\psi_j$ along $\rspan(\sigma)$ to the tangent space to $\tau_p$. And by Proposition \ref{pro:fixed_locus}\eqref{cond:fix} this is minus the direction of a 1-dimensional edge of the cone $\Sigma \cap \tau_p$.
\end{proof}

This result, applied to $p=0,\,\sigma=\{0\}$, gives the first formula \eqref{eq:special_contribution} of Proposition \ref{pro:special_old_contributions}: the fixed part of \eqref{Es} determines the virtual cycle on the fixed locus $X\div\,\sT$ as being the one from the made from descent down the quotient as in Lemma \ref{lem:inducingPotOnQuotient}, while \eqref{Ns} identifies the virtual normal bundle.

Similarly, to get the second formula \eqref{old} of Proposition \ref{pro:special_old_contributions}, we apply Lemma \ref{lem:normal_bundles} to $p\in\mu(X^\sT),\,\sigma=\Sigma$ to see the obstruction theory and virtual normal bundles in $X_\Sigma$ are the same as those in $X$ modulo the action of the finite group $\wSigma$. We then combine this with the isomorphism $F_p\cong F/\,\wSigma$ inducing
$$
A_*(F)\,\cong\,A_*(F/\,\wSigma)\,=\,A_*(F_p), \qquad [F]^{\vir}\,\Mapsto\,\big|\wSigma\!\big|\cdot[F_p]^{\vir}
$$
by Lemma \ref{lem:restriction_open_stratum}, and, by \eqref{alphap},
$$
H^*(F)\,\cong\,H^*(F/\,\wSigma)\,=\,H^*(F_p), \qquad \alpha\vert_F\,\Mapsto\,\alpha\_\Sigma|\_{F_p}\,.
$$
Finally applying Lemma \ref{lem:normal_bundles} to any $p\in\mu(X_\Sigma^\sT)$ gives Proposition \ref{pro:weights_normal_bundle} since the denominator in the integrand \eqref{ratint} is a product of weights of the virtual normal bundle.

\section{Jeffrey--Kirwan residues}\label{section:JK_res}

If we write the contribution \eqref{eq:special_contribution} of the special fixed locus $X \div\;\sT$ to the localisation formula \eqref{eq:general_loc_cut} as \begin{align}\label{eq:origin_contribution}
    \int_{[X\div\;\sT]^{\vir}}\frac{\alpha\_0}{\prod_{j=1}^r\(c_1(N_j)-\psi_j\)} \= \sum_{\ell_1, \dots, \ell_r = 0}^{\infty} \frac{(-1)^r}{\psi_1^{\ell_1+1}\cdots \psi_r^{\ell_r+1}}\int_{[X\div\;\sT]^{\vir}} \alpha\_0 \prod_{j=1}^r c_1(N_j)^{\ell_j},
\end{align}
we note that
\begin{align*}
    \int_{[X\div\;\sT]^{\vir}} \alpha\_0 \,\text{ is the coefficient of }\, (-1)^r\prod_{j=1}^r \frac{1}{\psi_j} \,\text{ in \eqref{eq:origin_contribution}}.
\end{align*}
So, naively speaking, we would like to do something like ``\emph{take the coefficient of $\prod_{j=1}^r\frac1{\psi_j}$ in the localisation formula \eqref{eq:general_loc_cut}}'' to extract this coefficient and deduce a formula for it. While we cannot quite do this because the contributions of the other fixed loci in \eqref{eq:general_loc_cut} have more complicated denominators, in this Section we review residue operators (depending on a choice of cone $\Sigma\subset\ft^\vee_\Q$) that \emph{do} extract this coefficient from \eqref{eq:general_loc_cut}. This residue kills both the left hand side of \eqref{eq:general_loc_cut} and\,---\,for sufficiently wide $\Sigma$\,---\,the contribution of the new fixed loci to the right hand side, as we will show in Section \ref{section:JK_loc}. This will give the virtual Jeffrey--Kirwan localisation formula \eqref{vJK1}. We mainly follow \cite{BrionVergne} but there exist different approaches; see \cite[Equation 29]{SV} for instance. \medskip

    Let $\ft_\Z$ be a free abelian group of rank $r$ and $\ft_\Q=\ft_\Z\otimes\Q$  the associated $r$-dimensional $\Q$-vector space. A finite spanning subset $\Gamma \subset \ft_\Q^\vee$  induces two hyperplane arrangements
    \begin{equation}\label{eq:hyperplanes}
        \cH\,:=\,\bigcup\nolimits_{\gamma \in \Gamma}\{\gamma=0\}\,\subset\,\ft_\Q, \hspace{1cm} \cH^\vee\,:=\,\bigcup\nolimits_{\{\gamma_1, \dots, \gamma_{r-1}\}\subset\Gamma\,} \rspan(\gamma_1, \dots, \gamma_{r-1})\,\subset\,\ft^\vee_\Q.
    \end{equation}
Similarly we have $\cH_\C\subset\ft_\C$. On its complement $\ft_\C\setminus\cH_\C$ the rings of regular functions and Laurent series  are respectively 
    \begin{equation}\label{eq:R_gamma}
        R_\Gamma\,:=\,\Sym(\ft_\C^\vee)\bigl[\Gamma^{-1}\bigr] \qquad \text{and} \qquad \widehat{R}_\Gamma\,:=\,\Big(\prod\nolimits_{d\geq 0} \Sym^d\ft_\C^\vee\Big) \bigl[\Gamma^{-1}\bigr].
    \end{equation}

    \begin{remark}
        In our application, $\Gamma\subset\ft^\vee_\Q$ will be the set of all weights of $N^{\vir}_{X^\sT\;/\;X}$  or  $N^{\vir}_{X_\Sigma^\sT\;/\;X_\Sigma}$, so $\cH$ will be the locus of poles of $1/e^\sT(N^{\vir}):\ft_\Q\dashrightarrow H^*(X^\sT)$ or $H^*\(X_\Sigma^\sT\)$. Thus $1/e^\sT(N^{\vir})$ will be in $R_\Gamma\otimes H^*(X^\sT)$ or $R_\Gamma\otimes H^*(X_\Sigma^\sT)$.
    \end{remark}

    In the one dimensional case $\ft_\Q \cong \Q$ the functions  \eqref{eq:R_gamma} are $R_\Gamma=\C[z^{\pm1}],\,\widehat R_\Gamma=\C(\!(z)\!)$ and there is a unique residue operation up to sign\footnote{The two residues correspond to the two possible orientations on the loop around $0 \in \C$ along which we integrate $\frac1{2\pi i}f(z)dz$ to take the residue of $f$. Flipping orientations corresponds to taking the residue at $\infty \in \PP^1$ instead of $0 \in \PP^1$. The choice is equivalent to a choice of one of the two chambers of $\ft_\Q \setminus \cH = \Q\setminus \lbrace 0 \rbrace$, which in turn determines a unique chamber of $\ft_\Q^\vee \setminus \cH^\vee$ pairing positively with it.} --- namely the one singling out the coefficient of $\pm z^{-1}$.
    In higher dimensions it turns out there are, up to taking linear combinations, finitely many residues on $R_\Gamma,\,\widehat{R}_\Gamma$ determined by a choice of a pair of chambers in $\ft_\Q \setminus \cH$ and $\ft_\Q^\vee \setminus \cH^\vee$. We now describe these residues, following \cite{BrionVergne}.
    
    \subsection{Definition of the JK residue}
The \textit{Jeffrey--Kirwan (JK) residue} associated to a choice of two elements $\eta \in \ft_\Q \setminus \cH$ and $\xi \in \ft_\Q^\vee \setminus \cH^\vee$ is a linear functional
$$
        \JK^\eta_\xi : \widehat{R}_\Gamma \To \C,
$$
    defined as follows. Firstly, we project $\widehat{R}_\Gamma$ to the degree $r$ functions $(R_\Gamma)_r$ with respect to the weight 1 scaling $\C^*$ action on $\ft_\C$ (so $\Sym^d(\ft^\vee_\C)$ has degree $-d$). Secondly, since $\Gamma$ spans $\ft^\vee_\Q$, every $f \in(R_\Gamma)_r$ can be written as a linear combination of functions of the form
    \begin{align}\label{eq:generating_fraction}
        \frac{1}{\prod_{i=1}^r \gamma_i} \quad \text{ where } \rspan(\gamma_1, \dots, \gamma_r)\,=\,\ft_\Q^\vee,\text{ and}
    \end{align}  
    \begin{align}\label{eq:nongenerating_fraction}
        \frac{q}{\prod_{i=1}^k \gamma_i} \quad \text{where }  \rspan(\gamma_1, \dots, \gamma_k)\,\neq\,\ft_\Q^\vee\,\text{ and }q \in \Sym^{k-r}(\ft_\C^\vee).
    \end{align}
In both \eqref{eq:generating_fraction} and \eqref{eq:nongenerating_fraction}   each $\gamma_i$ is in $\Gamma \cup -\Gamma$ and, without loss of generality (but very importantly), we choose it to satisfy $\langle \gamma_i, \eta\rangle >0$. Wedging together any basis for $\ft^\vee_\Z$ defines the same element $\pm\;o\in\Wedge^r\ft_\Q^\vee$ up to sign. Then the JK residue of \eqref{eq:generating_fraction} is\footnote{Really it is defined as a certain inverse Laplace transform \cite[Section 5]{BrionVergne}, but for simplicity we take the result \eqref{eq:def_JK_r} of this transform as the definition.}
\beq{eq:def_JK_r}
        \JK^\eta_\xi \left( \frac{1}{\prod_{i=1}^r \gamma_i} \right) \= 
\left\{\!\!\begin{array}{cl}
\left|\frac o{\gamma_1 \wedge \dots \wedge \gamma_r}\right| & \xi\,\in\, \mathrm{Cone}(\gamma_1, \dots, \gamma_r), \\
0 & \text{otherwise,}
\end{array}\right.       
\eeq
    while the JK residue of \eqref{eq:nongenerating_fraction} is zero. As a definition this is overdetermined, as there are many linear relations among the functions \eqref{eq:generating_fraction}. But Brion and Vergne prove it is consistent:

 \begin{proposition}\label{JKpm}
        The expression \eqref{eq:def_JK_r} determines a well defined linear functional
$$
        \JK^\eta_\xi: \widehat R_\Gamma\To(R_\Gamma)_r\To\C.
$$
It depends on $\eta$ and $\xi$ only through the chambers of $\ft_\Q \setminus \cH$ and $\ft_\Q^\vee \setminus \cH^\vee$ they lie in. It does not depend on $\Gamma$, in the sense that for $f \in \widehat{R}_{\Gamma_1} \cap \widehat{R}_{\Gamma_2}$ and $\eta \notin \cH_1 \cup \cH_2$, $\xi \notin \cH_1^\vee \cup \cH_2^\vee$, the residue $\JK^\eta_\xi(f)$ is the same when computed with respect to $\Gamma_1$ or $\Gamma_2$. Finally, $\JK^{-\eta}_{-\xi}=(-1)^r\JK^\eta_\xi$.
  \end{proposition}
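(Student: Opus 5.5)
This is the Brion--Vergne consistency theorem, and I would prove it via the Laplace-transform description rather than combinatorially. The plan is to realise $\JK^\eta_\xi$ on $(R_\Gamma)_r$ as a manifestly well defined operation, and then check that it agrees with \eqref{eq:def_JK_r} on the generating fractions \eqref{eq:generating_fraction} and vanishes on \eqref{eq:nongenerating_fraction}. The projection $\widehat R_\Gamma\to(R_\Gamma)_r$ is clearly well defined: the scaling action is diagonalisable with the graded pieces $(R_\Gamma)_d$, and localisation at homogeneous elements preserves the grading. So everything reduces to degree $-r$ rational functions.

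\textbf{Realising the functional.} Fix $\eta$ in a chamber of $\ft_\Q\setminus\cH$. After flipping signs, every $\gamma\in\Gamma$ satisfies $\langle\gamma,\eta\rangle>0$. For each such product $f=1/\prod\gamma_i^{n_i}$, define its inverse Laplace transform $\mathcal L^{-1}f$ as the distribution on $\ft^\vee_\R$ supported in $\cone(\gamma_i)$ whose Laplace transform $\int e^{-\langle\lambda,x\rangle}g(\lambda)\,d\lambda$ equals $f(x)$ for $x$ in the open chamber of $\eta$. Uniqueness of the Laplace transform on that open set makes $\mathcal L^{-1}$ a well defined linear map on $R_\Gamma$; this is exactly where the choice of chamber of $\eta$ enters.

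\textbf{Evaluating on the two types of fractions.} For $f\in(R_\Gamma)_r$, $\mathcal L^{-1}f$ is homogeneous of degree $0$, i.e. locally constant on the complement of the walls $\cH^\vee$, which are the images of $(r-1)$-spans. Set $\JK^\eta_\xi(f):=(\mathcal L^{-1}f)(\xi)$, normalised by the lattice volume $o$. A direct computation gives $\mathcal L^{-1}(1/\prod_{i=1}^r\gamma_i)=|o/\gamma_1\wedge\dots\wedge\gamma_r|\cdot\mathbf 1_{\cone(\gamma_i)}$, which recovers \eqref{eq:def_JK_r}. For \eqref{eq:nongenerating_fraction}, $\mathcal L^{-1}$ is supported on the proper subspace $\rspan(\gamma_i)\subset\cH^\vee$, so its value at $\xi\notin\cH^\vee$ is zero.

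\textbf{Remaining properties.} Chamber dependence holds because the value at $\xi$ is constant on chambers of $\ft^\vee\setminus\cH^\vee$, and $\mathcal L^{-1}$ only depends on the chamber of $\eta$. Independence of $\Gamma$ also follows: for $f\in R_{\Gamma_1}\cap R_{\Gamma_2}$ and $\eta$ avoiding both arrangements, the Laplace transform characterisation is intrinsic to $f$ and the open chamber containing $\eta$, while $\xi$ avoiding both walls makes the point value meaningful in each description. For the sign rule, replacing $\eta$ by $-\eta$ flips every $\gamma_i$ to $-\gamma_i$, which introduces $(-1)^r$ in degree $-r$ and replaces $\cone(\gamma_i)$ by $-\cone(\gamma_i)$, a cone containing $-\xi$ exactly when the original contains $\xi$.

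\textbf{Main obstacle.} The hardest step is making the Laplace-transform uniqueness rigorous for distributions supported in cones; that uniqueness is what underwrites all the linear relations among the fractions \eqref{eq:generating_fraction}. As a fallback, I would give a purely algebraic proof. Using the relation $\gamma_0=\sum a_i\gamma_i$, one reduces to a basis of $(R_\Gamma)_r$ indexed by ``no broken circuit'' bases, and shows the residue is consistent on each circuit relation, interpreting it as an identity of cone indicator functions off $\cH^\vee$.
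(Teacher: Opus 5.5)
Your proposal is correct, and it takes a genuinely different route from the paper. The paper proves nothing from scratch. It identifies $\JK^\eta_\xi$ with a composite of two ingredients from Brion--Vergne: the projection $(R_\Gamma)_r\cong S_\Delta\oplus NG_\Delta[-r]\to S_\Delta$ supplied by their direct-sum theorem \cite[Theorem 1]{BrionVergne}, followed by their Laplace-transform residue $\Res_{\gamma,\delta}$. It then reads off $\JK^{-\eta}_{-\xi}=(-1)^r\JK^\eta_\xi$ from \eqref{eq:def_JK_r} by replacing each $\gamma_i$ by $-\gamma_i$, exactly as you do.

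You instead build one functional $f\mapsto(\mathcal L^{-1}f)(\xi)$ on all of $(R_\Gamma)_r$ and check that it takes the prescribed values on \eqref{eq:generating_fraction} and \eqref{eq:nongenerating_fraction}. Every element of $(R_\Gamma)_r$ is a combination of such fractions (the partial-fraction fact asserted just before the proposition). So every linear relation among the fractions is automatically respected, and you never need the splitting $S_\Delta\oplus NG_\Delta$ itself: the inverse transforms of nongenerating fractions live on $\cH^\vee$ and are invisible at $\xi$.

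Your route gives a self-contained proof in which chamber dependence and independence of $\Gamma$ are transparent. The cost is an analytic input: injectivity of the Laplace transform on distributions supported in a salient cone, via holomorphy on the tube over the chamber of $\eta$ plus Fourier injectivity. The paper outsources that input by citation. Its route is shorter and comes with Brion--Vergne's intrinsic description of $S_\Delta$ and $NG_\Delta$, though that description is not needed here.

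Three small repairs are needed.
\begin{itemize}
\item Homogeneity of degree $0$ does not by itself make $\mathcal L^{-1}f$ locally constant off $\cH^\vee$, since any function of the angle is homogeneous. Get local constancy instead from the spanning statement together with your two explicit computations. Off $\cH^\vee$, $\mathcal L^{-1}f$ is then a combination of constant multiples of indicators of open simplicial cones whose walls lie in $\cH^\vee$.
\item The inclusion $\rspan(\gamma_1,\dots,\gamma_k)\subseteq\cH^\vee$ for a proper span needs a word of justification. Because $\Gamma$ spans $\ft^\vee_\Q$, you can extend a basis of that span by elements of $\Gamma$ to $r-1$ elements of $\Gamma$.
\item For independence of $\Gamma$, the chambers of $\eta$ in $\ft_\Q\setminus\cH_1$ and $\ft_\Q\setminus\cH_2$ differ, so ``intrinsic to the chamber'' needs one more sentence. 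Both inverse transforms are supported in the cone generated by the $\eta$-normalised elements of $\Gamma_1\cup\Gamma_2$. This cone is still salient, since $\eta$ is positive on its generators, and the interior of its dual is the intersection of the two chambers. Both transforms have Laplace transform $f$ there, so uniqueness makes them equal.
\end{itemize}
With these repairs the no-broken-circuit fallback is unnecessary.
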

        
    \begin{proof}
        We just state the dictionary between our notation and that of \cite{BrionVergne}. Their $\Delta\subset V$ is our $\Gamma\subset\ft_\Q^\vee$. They denote by $\delta\subset V^\vee\setminus\cH$ and $\gamma\subset V\setminus\cH^\vee$ the chambers which contain $\eta$ and $\xi$. Then $\JK^\eta_\xi$ is well defined because it can be written in terms of their operator $\mathrm{Res}_{\gamma, \delta}$ (defined by a certain inverse Laplace transform, and depending on a choice of orientation on $\ft_\Q$) as the composition
        $$
     \widehat R_\Gamma\To(R_\Gamma)_r\ \cong\ S_\Delta\oplus NG_\Delta[-r]\To S_\Delta\rt{\,\mathrm{Res}_{\gamma, \delta}\,} \wedge^r V^\vee\rt{\,\pm\;o\,}\C.
        $$
Here $S_\Delta\subset(R_\Gamma)_r$ is the span of the functions \eqref{eq:generating_fraction} (which they characterise beautifully as those which tend to zero along every straight line in $\ft_\C\setminus \cH$) while $NG_\Delta[-r]$ is the span of the functions \eqref{eq:nongenerating_fraction} (which they characterise as those killed by some constant coefficient differential operator). The direct sum decomposition is \cite[Theorem 1]{BrionVergne}, which in turn defines the projection to $S_\Delta$.

We choose the sign to make $\pm \;o$ positively oriented with respect to their chosen orientation on $\ft_\Q$. (Flipping the orientation changes the sign of both of the last two arrows, leaving $\JK^\eta_\xi$ unchanged.) Finally, $\JK^{-\eta}_{-\xi}=(-1)^r\JK^\eta_\xi$ follows from \eqref{eq:def_JK_r} on replacing all $\gamma_i$ by $-\gamma_i$.
    \end{proof}
    
    For later use we record the following immediate consequence of \eqref{eq:def_JK_r}.
    
    \begin{proposition}\label{pro:vanishing_property}
        Let $f \in \widehat{R}_\Gamma$ be regular away from $\bigcup_{i=1}^\ell \lbrace \gamma_i =0 \rbrace \subset \ft_\C$, and choose the sign of each $\gamma_i \in \pm \Gamma$ so that $\langle \gamma_i, \eta \rangle >0$. If $\xi\not\in\cone(\gamma_1, \dots, \gamma_\ell)$, then $\JK^\eta_\xi(f) = 0$. In particular, if $\langle \xi, \eta \rangle <0$ we have $\JK^\eta_\xi=0$.
    \end{proposition}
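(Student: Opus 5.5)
The plan is to reduce to the generating fractions of \eqref{eq:generating_fraction} and \eqref{eq:nongenerating_fraction}, applying the defining formula \eqref{eq:def_JK_r} to each. Since $\JK^\eta_\xi$ first projects $\widehat R_\Gamma$ to its degree $r$ part, we may replace $f$ by its degree $r$ component $f_r$. This is still regular away from $\bigcup_{i=1}^\ell\{\gamma_i=0\}$, because the scaling action preserves that union of hyperplanes. The set $\Gamma'=\{\gamma_1,\dots,\gamma_\ell\}$ may not span $\ft^\vee_\Q$. In that case we enlarge it by a few further elements chosen generically, so that $\xi$ and $\eta$ still avoid the new arrangements. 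Proposition \ref{JKpm} says the residue does not depend on which $\Gamma$ is used, so we compute with $\Gamma'$ (enlarged if needed) rather than with $\Gamma$.

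Next we write $f_r\in (R_{\Gamma'})_r$ as a linear combination of fractions $1/\prod_{i\in I}\gamma_i$ (with $\{\gamma_i\}_{i\in I}$ a basis) and of fractions of type \eqref{eq:nongenerating_fraction}, using only the $\gamma_i$ that actually occur as poles of $f$. This uses the standard partial fraction argument of Brion--Vergne. A monomial $q/\prod\gamma_i^{m_i}$ whose $\gamma_i$ are linearly dependent can be reduced via a linear relation among them, so eventually every denominator has linearly independent factors. If such a term has fewer than $r$ distinct factors, it is of type \eqref{eq:nongenerating_fraction}. If it has exactly $r$, the numerator can be expanded in those $\gamma_i$, and the degree $r$ terms with nonzero residue are exactly $1/\prod\gamma_i$. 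The point to check is that this reduction never introduces linear forms other than the $\pm\gamma_i$ dividing the original denominator, so no new poles appear.

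With this decomposition the conclusion is immediate. Terms of type \eqref{eq:nongenerating_fraction} have zero residue. A term $1/\prod_{i\in I}\gamma_i$ has nonzero residue only if $\xi\in\cone(\gamma_i:i\in I)\subseteq\cone(\gamma_1,\dots,\gamma_\ell)$, because we normalised the signs so that $\langle\gamma_i,\eta\rangle>0$, which matches the sign convention in \eqref{eq:def_JK_r}. So if $\xi$ lies outside $\cone(\gamma_1,\dots,\gamma_\ell)$, every term vanishes. For the last claim, suppose $\langle\xi,\eta\rangle<0$. Every element of $\cone(\gamma_1,\dots,\gamma_\ell)$ pairs nonnegatively with $\eta$, so $\xi$ is not in this cone, and the same argument applies with all of $\Gamma$ (sign-normalised) for any $f\in\widehat R_\Gamma$.

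The main obstacle is the partial-fraction step: we must make sure the decomposition into \eqref{eq:generating_fraction} and \eqref{eq:nongenerating_fraction} uses only the poles of $f$. We would handle it by induction on the number of linear factors in the denominator, at each step eliminating one factor via a linear relation $\sum c_i\gamma_i=0$.
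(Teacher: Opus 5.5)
Your argument is correct and is exactly what the paper means by calling this an immediate consequence of \eqref{eq:def_JK_r}: you write the degree $r$ part of $f$ as a combination of fractions of types \eqref{eq:generating_fraction} and \eqref{eq:nongenerating_fraction} whose denominators involve only the $\gamma_i$, and read off the vanishing from the definition. Your partial-fraction reduction achieves this decomposition, provided the induction is run on the number of distinct factors and then on the total exponent of the dependent ones. Enlarging $\{\gamma_i\}$ and appealing to independence of $\Gamma$ is harmless but unnecessary, because by Proposition \ref{JKpm} the functional $\JK^\eta_\xi$ is already well defined on $(R_\Gamma)_r$, so any such decomposition with denominators in $\pm\Gamma$ computes it.
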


The classical residue vanishes on functions of one variable which have no poles. There are analogous results for JK residues; we give three very similar variants.

    \begin{proposition}\label{pro:equality_of_residues}
    Given finitely many rational functions $f_i\in R_\Gamma$ and $\xi_i\in\ft^\vee_\Q\setminus\cH^\vee$, suppose the sum $f=\sum_if_ie^{\xi_i}\in\widehat R_\Gamma$ has no poles, i.e. it lies in $\prod_{d\ge0}\Sym^d\ft^\vee_\C$. Then
\beq{1}
    \sum\nolimits_i\JK^\eta_{\xi_i}\!\(f_ie^{\xi_i}\)\=\sum\nolimits_i\JK^{-\eta}_{\xi_i}\!\(f_ie^{\xi_i}\).
\eeq
Similarly if $f_s=\sum_if_ie^{s\xi_i}\in\widehat R_\Gamma$ has no poles for any $s\in\Q$, then
\beq{2}
    \sum\nolimits_i\JK^\eta_{\xi_i}(f_i)\=\sum\nolimits_i\JK^{-\eta}_{\xi_i}(f_i).
\eeq
If $f=\sum_if_ie^{\xi_i}\in\widehat R_\Gamma$ has no poles but we drop the $\xi_i\not\in\cH^\vee$ condition, choose $\eps\in\ft^\vee_\Q\setminus\cH^\vee$ small enough that the segments $\{\xi_i+s\eps\}_{s\in(0,1]}\subset\ft^\vee_\Q\setminus\cH^\vee$ instead. Then \eqref{1} is replaced by
\beq{3}
   \sum\nolimits_i\JK^\eta_{\xi_i+\eps}\!\(f_ie^{\xi_i}\)\=\sum\nolimits_i\JK^{-\eta}_{\xi_i+\eps}\!\(f_ie^{\xi_i}\).
\eeq
         \end{proposition}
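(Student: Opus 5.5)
The approach is to interpret each JK residue as the value of a piecewise polynomial function obtained by an inverse Laplace transform, following \cite[Section 5]{BrionVergne}, and to show that the choices $\eta$ and $-\eta$ correspond to two different ways of regularising the transform of the same function. Concretely, for $g\in R_\Gamma$ and $\xi\notin\cH^\vee$ we would use the formula $\JK^\eta_\xi(g\,e^{\xi})$: expanding $e^\xi=\sum_d\xi^d/d!$ and projecting to degree $r$, this is the value at $\xi$ of the locally polynomial function $\Phi^\eta_g$ on $\ft^\vee_\Q\setminus\cH^\vee$. This function is determined by \eqref{eq:def_JK_r} on the generators, together with its analogues for higher powers.

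First we compute generators. On $1/\prod\gamma_i$ with $\langle\gamma_i,\eta\rangle>0$, $\Phi^\eta$ is supported on $\cone(\gamma_1,\dots,\gamma_r)$. Rewriting $1/\prod\gamma_i=(-1)^r/\prod(-\gamma_i)$ shows that $\Phi^{-\eta}$ is $(-1)^r$ times the same polynomial supported on $-\cone(\gamma_i)$. Hence $\Phi^\eta_g-\Phi^{-\eta}_g$ is a \emph{global polynomial} on $\ft^\vee_\Q$ (the fundamental solution differences agree on both cones). For the functions \eqref{eq:nongenerating_fraction} the same statement holds with both sides zero.

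Now consider translations. The contribution $\JK^\eta_{\xi_i}(f_ie^{\xi_i})$ equals $\Phi^\eta_{f_i}(\xi_i)$, which by the translation rule for Laplace transforms is $\Psi^\eta_i(0)$, where $\Psi^\eta_i:=\Phi^\eta_{f_i}(\cdot+\xi_i)$ is the $\eta$-regularised inverse transform of $f_ie^{\xi_i}$. Summing over $i$, $\sum_i\Psi^\eta_i$ and $\sum_i\Psi^{-\eta}_i$ are two inverse Laplace transforms (as distributions, supported in $\eta$-positive, respectively $\eta$-negative, translated cones) of the same function $f=\sum_if_ie^{\xi_i}$. Their difference is the inverse Fourier transform of $f$ itself. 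Because $f$ has no poles, this difference is a combination of derivatives of the delta function at the origin of degree-$r$ projection. Its value at the generic point $0\notin\cH^\vee$ (after the shift) therefore vanishes. This gives \eqref{1}.

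The reductions are then formal. For \eqref{2}, apply \eqref{1} to $f_s$ for each $s>0$. Since $s\xi_i$ stays in the chamber of $\xi_i$, both sides of \eqref{1} for $f_s$ are polynomials in $s$ (the degree-$r$ part of $f_ie^{s\xi_i}$ is $\sum_ds^d\xi_i^d f_i^{(r-d)}/d!$). Comparing the $s^0$ coefficients, i.e. setting $s=0$, gives \eqref{2}. For \eqref{3}, the functions $\Psi^{\pm\eta}_i$ are locally polynomial, and the argument above shows that their sum difference vanishes identically near $0$ in every chamber adjacent to the origin. Evaluating at $\eps$ with $\xi_i+s\eps$ generic, and using that $\JK^{\pm\eta}_{\xi_i+\eps}$ computes the limit of $\Psi^{\pm\eta}_i$ from the direction $\eps$, gives \eqref{3}.

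The main obstacle is making the distributional step rigorous using only the algebraic definition \eqref{eq:def_JK_r}. To do this, I would first try to verify directly on generators that $\Phi^\eta_g-\Phi^{-\eta}_g$ is polynomial with the correct translation behaviour. I would then reduce the general case, by partial fractions, to showing that a polynomial relation $\sum f_ie^{\xi_i}\in\prod\Sym^d$ forces cancellation of these polynomial differences at the origin.
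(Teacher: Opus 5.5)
Your reductions of \eqref{2} and \eqref{3} to \eqref{1} are fine; they are the paper's, phrased via polynomiality in $s$ and one-sided limits along $s\eps$. But the argument for \eqref{1} itself, which carries all the content, has a genuine gap: both of its key claims are false.

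\textbf{First claim.} $\Phi^\eta_g-\Phi^{-\eta}_g$ is \emph{not} a global polynomial once $r\ge2$. Take $g=1/(\gamma_1\gamma_2)$ in rank $2$ and set $C=\cone(\gamma_1,\gamma_2)$. Then $\Phi^\eta_g=c\,\mathbf{1}_C$ and $\Phi^{-\eta}_g=c\,\mathbf{1}_{-C}$. In coordinates $x_1,x_2$ on $\ft^\vee_\Q$ with respect to the basis $\gamma_1,\gamma_2$, the difference is $c\,(\mathbf{1}_{x_1>0}+\mathbf{1}_{x_2>0}-1)$. It is annihilated by $\partial_{x_1}\partial_{x_2}$, as any difference of two fundamental solutions must be, but it is not a polynomial. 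Only in rank $1$ is such a difference forced to be polynomial: there it is the residue at $0$ of $g(z)e^{xz}$. So the plan in your last paragraph cannot work: it checks polynomiality on generators and then cancels polynomial differences by partial fractions.

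\textbf{Second claim.} ``No poles'' does not mean ``polynomial''. Here $f$ is an entire function of exponential type, e.g.\ $(e^{az}-e^{bz})/z$ in rank $1$. Its inverse Fourier--Laplace transform is a compactly supported distribution in the convex hull of the $-\xi_i$; in this example it is the indicator function of the interval with endpoints $-a,-b$. It is not a combination of derivatives of $\delta$ at the origin. Moreover, a distribution supported at $0$ would have no value at $0$, so your conclusion would not follow even from your own claim.

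\textbf{What actually happens.} The difference of the two sums is \emph{zero}. Each $\Psi^{\pm\eta}_i$ is supported in $-\xi_i+C_{\pm\eta}$, where $C_{\pm\eta}$ is the pointed cone spanned by the $(\pm\eta)$-normalised weights. Hence $\sum_i\Psi^\eta_i$ and $\sum_i\Psi^{-\eta}_i$ each have Laplace transform $f$ on a tube domain, around $\eta$ and $-\eta$ respectively. Uniqueness of Laplace transforms plus Paley--Wiener--Schwartz then shows that both equal the compactly supported inverse transform of $f$. Equivalently, you may move the Bromwich contour from $\eta+i\ft_\R$ to $-\eta+i\ft_\R$, which Cauchy's theorem allows because $f$ is entire.

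You still need $0$ to avoid the singular supports of all the shifted terms, including the distributions on proper subspaces coming from the non-generating fractions \eqref{eq:nongenerating_fraction}. This is where the hypothesis $\xi_i\notin\cH^\vee$ is used.

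This analytic statement is exactly \cite[Lemma 3.3]{JeffreyKogan}. The paper does not reprove it: it identifies $\res^\eta$ with Jeffrey--Kirwan's $\res^\Lambda$ via \cite[Proposition 3.2]{JK_quantisation} and quotes that lemma.
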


\begin{proof}
Consider the linear subspace $E_\Gamma \subset \widehat{R}_\Gamma$ spanned by functions of the form $f e^{\xi}$ for $f \in R_\Gamma$ and $\xi \in \ft^\vee_\Q \setminus \cH^\vee$. On $E_\Gamma$ we package together the residues $\JK^\eta_\gamma$ for varying $\xi$ and fixed $\eta$, defining 
        \begin{equation}\label{eq:global_residue}
        \res^\eta \,:\, E_\Gamma \To \C \quad\text{by}\quad  \res^\eta(fe^\xi)\ :=\ \JK^\eta_\xi(f e^\xi)
    \end{equation}
on generators and extending by linearity. 

       Let $\Lambda$ denote the chamber of $\ft_\Q\setminus\cH$ to which $\eta$ belongs. By comparing the definition \eqref{eq:def_JK_r} of $\JK^\eta_\xi$ to Jeffrey-Kirwan's characterisation \cite[Proposition 3.2]{JK_quantisation} of their operator $\res^\Lambda$ we find it is essentially the same as $\res^\eta$,
$$
    \res^\eta(f) \= i^r \res^\Lambda\!\(\tilde{f}\;\),\ \text{ where }\ \tilde{f}(z)\,:= \,f(iz).
$$       
When $\tilde f$ has no poles, $\res^\Lambda(\tilde f)=\res^{-\Lambda}(\tilde f)$ by \cite[Lemma 3.3]{JeffreyKogan}, so $\res^\eta(f)=\res^{-\eta}(f)$. This proves \eqref{1}. To deduce \eqref{2} we apply \eqref{1} to $f_s$ to give
$$
    \sum\nolimits_i\JK^\eta_{s\xi_i}\!\(f_ie^{s\xi_i}\)\=\sum\nolimits_i\JK^{-\eta}_{s\xi_i}\!\(f_ie^{s\xi_i}\).
$$
Since $\JK^\eta_\xi$ depends only on the chamber of $\ft_\Q^\vee \setminus \cH^\vee$ that $\xi$ lies in we may replace each $\JK^{\pm\eta}_{s\xi_i}$ by $\JK^{\pm\eta}_{\xi_i}$. Then taking $\lim_{s\to0^+}$ gives \eqref{2}.

Finally in case \eqref{3} we apply \eqref{1} to $fe^{s\eps}$ to give
$$
    \sum\nolimits_i\JK^\eta_{\xi_i+s\eps}\!\(f_ie^{\xi_i+s\eps}\)\=\sum\nolimits_i\JK^{-\eta}_{\xi_i+s\eps}\!\(f_ie^{\xi_i+s\eps}\).
$$
Replacing each $\JK^{\pm\eta}_{\xi_i+s\eps}$ by $\JK^{\pm\eta}_{\xi_i+\eps}$ then taking $\lim_{s\to0^+}$ gives \eqref{3}.
    \end{proof}
    

    \section{Jeffrey--Kirwan localisation}\label{section:JK_loc}
 We are finally ready to prove the abelian virtual JK localisation formula in the perfect obstruction theory setting. We follow the philosophy of \cite{JeffreyKogan} with some simplifications (from Lemma \ref{lem:key_property_oLambda}) and many complications. So we fix the standard data of Definition \ref{data},
$$
\sT\,\acts\,\(X,L,\EE,\phi\),\ \,\alpha\,\in\,H^*_\sT(X).
$$
In place of the auxiliary data $\Sigma\subset\ft^\vee_\Q$ of Definition \ref{data}\footnote{Later the $\Sigma$ of Definition \ref{data} will be a cone approximating the half space $\{\eta\ge0\}$.} we choose $\eta\in\ft_\Q\setminus\cH$, where $\cH \subset \ft_\Q$ is the hyperplane arrangement \eqref{eq:hyperplanes} defined by the set $\Gamma \subset \ft_\Q^\vee$ of $\sT$-weights of $N^{\vir}_{X^\sT/X}$. For simplicity we make the following assumptions (which will be weakened below),
\begin{itemize}
\item[(a)] $\langle\mu(F), \eta\rangle\ne0$ for all components $F\subseteq X^\sT$ and $\mu(F)\in\ft^\vee_\Q\setminus\cH^\vee$ if $\langle\mu(F), \eta\rangle>0$,
\item[(b)] $X$ is $\eta$-semiprojective (Definition \ref{etasp}).
\end{itemize}

    \begin{figure}[h]
    \centering
    \includegraphics[width=.4\textwidth, keepaspectratio]{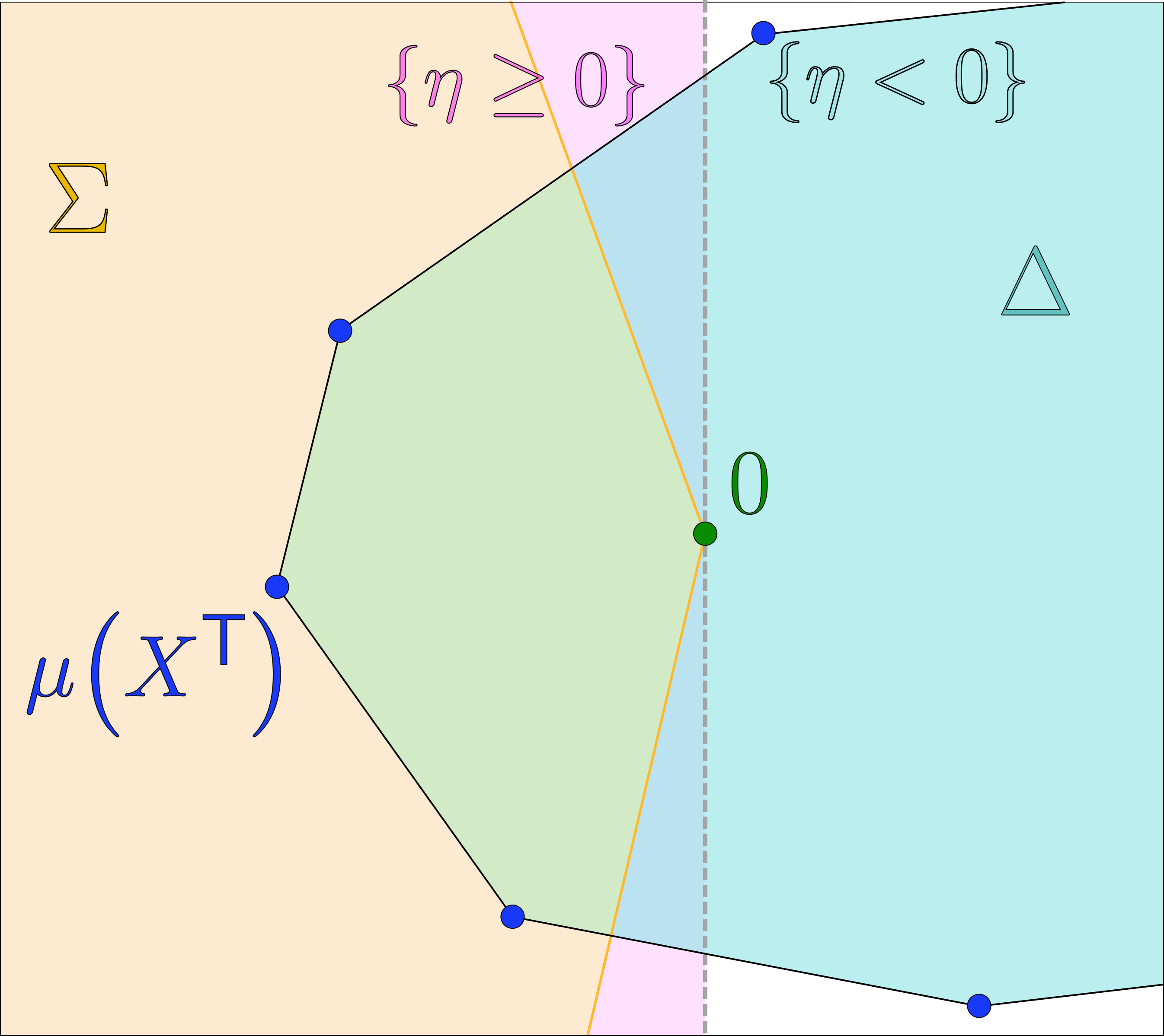}
    \caption{An $\eta \in \ft_\Q$ such that $\lbrace \eta\geq 0 \rbrace \cap \Delta$ is bounded and $0 \notin \eta(\mu(X^\sT))$. The $\eta$-wide cone $\Sigma$ approximates the half-space $\lbrace\eta>0\rbrace$ as far as $\mu(X^\sT)$ is concerned.}
    \label{fig:eta}
    \end{figure}  
    
    \begin{theorem}[Abelian virtual JK localisation, first version]\label{th:abelian_JK}
For $\eta$ satisfying {\rm (a),\,(b),}
\beq{simples}
            \int_{[X\div\;\sT]^{\vir}} \alpha\_0e^{c_1(L_0)}\ = \sum_{\substack{F\subseteq X^\sT\\\
            \langle \mu(F), \eta \rangle >0}} \JK^{\eta}_{\mu(F)}\int_{[F]^{\vir}} \frac{\alpha e^{c_1^\sT(L)}\vert_F}{e^\sT(N^{\vir}_{F/X})}\,.
\eeq
    \end{theorem}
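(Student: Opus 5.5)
Choose a full-dimensional simplicial cone $\Sigma=\cone(\psi_1,\dots,\psi_r)$ with $\Sigma\setminus\{0\}\subset\{\eta>0\}$. Make it \emph{$\eta$-wide}: it contains every $\mu(F)$ with $\langle\mu(F),\eta\rangle>0$ in its interior, and it is transverse to $\Delta$. This is possible by (a), since there are finitely many $\mu(F)$ and transversality is generic. By (b) and Proposition \ref{pro:eta_proj_compact}, $X\div\;\sT$ and $X_\Sigma$ are projective, so the standard and auxiliary data of Definition \ref{data} hold.

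Apply \eqref{eq:general_loc_cut} to the class $\alpha e^{c_1^\sT(L)}$, whose induced class on $X_\Sigma$ is $\alpha_\Sigma e^{c_1^\sT(L_\Sigma)}$. Expanding the exponential, each homogeneous piece is a polynomial in $\ft$ of degree $\deg-\vd$. The left hand side is therefore an entire function on $\ft_\C$ with no poles. On the right, restricting $e^{c_1^\sT(L_\Sigma)}$ to $F_p$ gives $e^{p}e^{c_1(L_\Sigma|_{F_p})}$, because $\sT$ acts on $L_\Sigma|_{F_p}$ with weight $p=\mu(F_p)$. So the right hand side has the form $\sum_p f_p e^{p}$ with $f_p$ rational with poles described by Proposition \ref{pro:weights_normal_bundle}.

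Apply the identity \eqref{3} of Proposition \ref{pro:equality_of_residues}, with $\xi_p=p$ perturbed by a small $\eps$ in the interior of $\Sigma$, to get $\sum_p\JK^\eta_{p+\eps}(f_pe^p)=\sum_p\JK^{-\eta}_{p+\eps}(f_pe^p)$. Now evaluate the three types of fixed loci on each side.

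\textbf{Special locus.} Expand \eqref{eq:special_contribution} as in \eqref{eq:origin_contribution}, now including $e^{c_1(L_0)}$. The denominators involve only $\psi_j$, oriented positively against $\eta$, and $\eps\in\cone(\psi_j)$. So $\JK^\eta_\eps$ extracts $(-1)^r|o/\psi_1\wedge\dots\wedge\psi_r|\int\alpha_0e^{c_1(L_0)}$. Since $|\wSigma|=|o/\psi_1\wedge\dots\wedge\psi_r|$, this is $(-1)^r$ times $|\wSigma|\int_{[X\div\sT]^{\vir}}\alpha_0e^{c_1(L_0)}$. By contrast $\JK^{-\eta}_\eps=0$ here, because $\eps\notin-\Sigma$ (Proposition \ref{pro:vanishing_property}).

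\textbf{Old loci.} By \eqref{old} these give $\frac1{|\wSigma|}\sum\int_F\cdots$. For $-\eta$ they contribute $\JK^{-\eta}_{\mu(F)}$, which by Proposition \ref{JKpm} equals $(-1)^r\JK^\eta_{-\mu(F)}$. For $\eta$ they contribute $\JK^\eta_{\mu(F)}$, but this is killed by Proposition \ref{pro:vanishing_property} as long as the relevant weights are handled. Rather than fight this, I would instead choose to use only one side: first show that every $\JK^{-\eta}_{p+\eps}$ term vanishes except the old loci and every $\JK^\eta$ term vanishes except the special one. The heuristic is Proposition \ref{pro:vanishing_property} together with the sign $\langle p,\eta\rangle$ and the pole structure. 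With this, equating the two sides and cancelling $(-1)^r$ and the $|\wSigma|$ factors yields \eqref{simples}. There, $\JK^{\eta}_{\mu(F)}$ with $\mu(F)\notin\cH^\vee$ requires no $\eps$, by assumption (a).

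\textbf{New loci.} The main obstacle is proving that the new fixed loci ($0<\dim\sigma<r$) contribute zero to both residues. By Proposition \ref{pro:weights_normal_bundle}, the poles of $f_p$ are along $\gamma\in\rspan(\sigma)$ or along edges of $\Sigma\cap\tau_p$. Following Jeffrey--Kogan, I would show that for sufficiently wide $\Sigma$ the point $p+\eps$ does not lie in the cone spanned by the $\eta$-positively-oriented versions of these $\gamma$, for either $\pm\eta$. The reasoning runs as follows. The edge directions of $\Sigma\cap\tau_p$ are minus the projections of the $\psi_j$, $\psi_j\notin\sigma$. As $\Sigma$ widens towards the half-space these directions become nearly $\eta$-orthogonal, so their sign relative to $\eta$ is controlled. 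The vector $p+\eps$ has a component along $\tau_p$ pointing into $\Sigma$, which cannot be produced by the $\rspan(\sigma)$ directions together with those edges. This cone-membership check, done face by face and uniformly in the possibly non-generic virtual weights, is where I expect the real work; Proposition \ref{pro:vanishing_property} then kills these terms.
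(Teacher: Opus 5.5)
There is a genuine gap, and it starts with the sign of $\eps$. You take $\eps\in\mathring\Sigma$, but the argument needs $\eps\in-\mathring\Sigma$.

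\textbf{Why $\eps\in\mathring\Sigma$ fails.} With your choice every $p\in\mu(X_\Sigma^\sT)\subset\Sigma$ has $\langle p+\eps,\eta\rangle>0$. So the entire $\JK^{-\eta}$ side of \eqref{3} vanishes, and you are left with (special $+$ old $+$ new) $=0$ under $\JK^\eta$.
\begin{itemize}
\item The special locus gives $(-1)^r|\wSigma|^{-1}\int_{[X\div\sT]^{\vir}}\alpha_0e^{c_1(L_0)}$. Note that $|o/\psi_1\wedge\dots\wedge\psi_r|=|\wSigma|^{-1}$, not $|\wSigma|$.
\item The old loci give $|\wSigma|^{-1}\sum_F\JK^\eta_{\mu(F)}(\cdots)$.
\end{itemize}
Hence for $r$ even the new loci \emph{cannot} contribute zero.

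Concretely, take $X=\C^2$ with the standard $(\C^*)^2$ action. Let $L$ be trivial twisted by the character $(1,1)$, so $\mu(0)=(1,1)$, and take $\eta=(1,1)$ and $\Sigma=\cone\big((2,-1),(-1,2)\big)$. The new point $p=(1,-\tfrac12)$ has $F_p=B(\Z/2)$, with normal weights $(1,-\tfrac12)$ and $(0,-\tfrac32)$. For $\eps=s(1,1)$ we get $p+\eps=(1+s)(1,-\tfrac12)+\tfrac{3s}{2}(0,1)$, which lies inside the cone of the $\eta$-normalised weights. Hence $\JK^\eta_{p+\eps}(e^pI_p)=-\tfrac13$, and likewise at $(-\tfrac12,1)$. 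These exactly cancel the $\tfrac13+\tfrac13$ from the special and old loci.

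Your assignment of which loci survive on which side is also reversed. Old loci have $\langle\mu(F),\eta\rangle>0$, so $\JK^{-\eta}$ kills them and $\JK^\eta$ keeps them.

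\textbf{What the correct choice gives.} With $\eps\in-\mathring\Sigma$, as in the paper, the $\JK^{-\eta}$ side keeps only the special locus:
$$\JK^{-\eta}_\eps(I_0)=(-1)^r\JK^\eta_{-\eps}(I_0)=|\wSigma|^{-1}\int_{[X\div\sT]^{\vir}}\alpha_0e^{c_1(L_0)}.$$
For a new point $p\in\mathring\sigma$ one has $p+\eps\in\rspan(\sigma)-\sum_ja_j\nu_j$ with all $a_j>0$, where the $\nu_j$ generate $\Sigma\cap\tau_p$ at $p$.

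\textbf{The missing idea.} This is Lemma~\ref{lem:key_property_oLambda}: some $\nu_j$ has $\langle\nu_j,\eta\rangle>0$. After $\eta$-normalisation, $p+\eps$ then has a strictly negative $\overline\nu_j$-coefficient modulo $\rspan(\sigma)$. So it lies outside the cone of normalised weights, and Proposition~\ref{pro:vanishing_property} kills it. The same computation kills $p=0$ under $\JK^\eta$.

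Your heuristic that widening $\Sigma$ makes the edges ``nearly $\eta$-orthogonal'' gives no control of their signs. What $\eta$-wideness really provides is that every $\eta$-positive $\mu(F)$ lies in $\Sigma$. The lemma uses this as follows:
\begin{itemize}
\item Take a component $Y\subseteq\mu^{-1}(\tau_p)$ with $p\in\mu(Y)$.
\item The vertices of $\mu(Y)$ lie in $\mu(Y^\sT)$. This uses $H^0(\cO_Y)^\sT=\C$, which follows from projectivity of $Y_\Sigma$.
\item So $\eta$ is maximised on the compact polytope $\mu(Y)\cap\{\eta\ge0\}$ at a vertex $v\in\mu(Y^\sT)$ with $\langle v,\eta\rangle>0$. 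Hence $v\in\mathring\Sigma$ by $\eta$-wideness.
\item Since $v-p$ lies in the cone spanned by the edges of $\Sigma\cap\mu(Y)$ at $p$, some edge must be $\eta$-increasing.
\end{itemize}

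Finally, before invoking \eqref{3} you also need to perturb $\eta$ off the larger arrangement $\cH_\Sigma$ formed by the weights on the cut, not just off $\cH$.
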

    We can relax (b) to $(X,L)$ being weakly $\eta$-semiprojective (Definition \ref{weaksemi}) to make it look more like the assumption of Definition \ref{data}. And we can drop (a) completely if we perturb by a small character $\eps \in \ft^\vee_\Q$ to ensure $\mu(F) \notin \cH^\vee$. The upshot is a more general version of the abelian JK localisation theorem.

    \begin{theorem}[Abelian virtual JK localisation]\label{th:abelian_JK_general}
Assume $(X,L)$ is weakly $\eta$-semiprojective and let $\epsilon \in\ft^\vee_\Q$ be sufficiently small that the segments $\{\mu(F) + s\epsilon\}_{s \in(0,1],\,F\subseteq X^\sT}$ are disjoint from $\cH^\vee$. Then
\beq{JKab}
            \int_{[X\div\;\sT]^{\vir}} \alpha\_0\ = \sum_{\substack{F\subseteq X^\sT\\\
            \langle \mu(F), \eta \rangle >0}} \JK^{\eta}_{\mu(F)+\epsilon}\int_{[F]^{\vir}} \frac{\alpha\vert_{F}}{e^\sT(N^{\vir}_{F/X})}\,.
\eeq
    \end{theorem}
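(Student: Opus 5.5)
The plan is to deduce Theorem \ref{th:abelian_JK_general} from the first version, Theorem \ref{th:abelian_JK}, in three reductions: remove the factor $e^{c_1(L)}$, remove assumption (a) by perturbing the linearisation, and weaken (b). Since the first version is only stated above, I also sketch how I would prove it, because that is where the real work lies.

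\emph{Proof of Theorem \ref{th:abelian_JK}.} Choose a full dimensional simplicial cone $\Sigma$ transverse to $\Delta$ with $\Sigma\setminus\{0\}\subset\{\eta>0\}$. Make it $\eta$-wide: it should contain every $\mu(F)$ with $\langle\mu(F),\eta\rangle>0$ in its interior, and be close enough to the half-space that the relevant residues behave as if $\Sigma$ were $\{\eta\ge0\}$. By Proposition \ref{pro:eta_proj_compact} the cut $X_\Sigma$ is projective, so we may apply \eqref{eq:general_loc_cut} to $\alpha\_\Sigma e^{c_1^\sT(L_\Sigma)}$.

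Apply the operator $\res^\eta=\sum\JK^\eta_{\mu(\cdot)}$ of \eqref{eq:global_residue} to both sides.
\begin{itemize}
\item The left hand side is polynomial in $\ft$, i.e.\ it has no poles. By Proposition \ref{pro:equality_of_residues} its $\res^\eta$ equals its $\res^{-\eta}$, and the latter vanishes by Proposition \ref{pro:vanishing_property}, since every $\mu(F_p)\in\Sigma$ pairs nonnegatively with $\eta$.
\item By \eqref{eq:special_contribution} and the expansion \eqref{eq:origin_contribution}, the special locus contributes $\pm\int_{[X\div\;\sT]^{\vir}}\alpha\_0e^{c_1(L_0)}$.
\item By \eqref{old}, the old loci give the right hand side of \eqref{simples}. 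The factor $1/|\wSigma|$ should be absorbed by the normalisation $|o/\gamma_1\wedge\dots\wedge\gamma_r|$ in \eqref{eq:def_JK_r}.
\end{itemize}

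\emph{Main obstacle.} The hard step is showing that each new fixed locus $F_p$, with $p\in\mathring\sigma$ and $0<\dim\sigma<r$, has zero residue. By Proposition \ref{pro:weights_normal_bundle}, its poles lie along $\rspan(\sigma)$ or along the edges of $\Sigma\cap\tau_p$. I would try to show that, for $\Sigma$ wide enough, $p$ lies outside the cone spanned by these pole directions, after choosing signs so that each pairs positively with $\eta$. Proposition \ref{pro:vanishing_property} then kills the contribution. If a direct cone argument fails, I would fall back on a Jeffrey--Kogan style iterated residue: first take the residue in the $\rspan(\sigma)$ variables, then show the remaining function has no poles in the complementary directions.

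\emph{Removing $e^{c_1(L)}$.} Replace $L$ by $L^n$ for $n\in\Z_{>0}$. This changes neither $X\div\;\sT$ nor the obstruction theory, and it replaces $\mu(F)$ by $n\mu(F)$, which lies in the same chamber, so $\JK^\eta_{n\mu(F)}=\JK^\eta_{\mu(F)}$. Both sides of \eqref{simples} for $L^n$ are then polynomials in $n$. On the left this is clear. On the right, $\JK$ factors through the degree $r$ projection, and expanding $e^{n c_1^\sT(L)|_F}$ gives a polynomial in $n$. Since the two polynomials agree for infinitely many $n$, they agree identically, and setting $n=0$ gives the formula without exponentials. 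This is essentially \eqref{2} of Proposition \ref{pro:equality_of_residues}.

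\emph{Removing (a) and weakening (b).} Twist the linearisation by a small rational character $\eps$, passing to a power of $L$ so that the twist is integral. For $\eps$ small this does not change the stable locus, and hence does not change $X\div\;\sT$, because stability equals semistability and the semistability conditions vary in finitely many chambers. It shifts every $\mu(F)$ to $\mu(F)+\eps$, so (a) holds, and the homogeneity argument above turns $\JK^\eta_{\mu(F)+\eps}$ into the stated operator. For (b), I would check that the proof of Theorem \ref{th:abelian_JK} uses only weak $\eta$-semiprojectivity, which by Proposition \ref{pro:eta_proj_compact} is exactly what makes $X\div\;\sT$ and $X_\Sigma$ projective. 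I would also check that (b) can be perturbed to $\eta'$ via the openness statement of Proposition \ref{weak2} without changing any of the residues involved.
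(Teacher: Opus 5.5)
\paragraph{A sign error in the residue argument on the cut.} Your closing reductions match the paper. The polynomiality-in-$n$ argument for removing $e^{c_1(L)}$ is its $e^{sc_1^\sT(L)}$, $s\to0$ trick. Twisting $L$ by a small character is how it restores $\mu(F)\notin\cH^\vee$. Weak $\eta$-semiprojectivity is only used to make $X\div\sT$ and $X_\Sigma$ projective.

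The gap is in your sketch of Theorem~\ref{th:abelian_JK}, where the residue bookkeeping on the cut is wrong.
\begin{enumerate}
\item $\sum_p\JK^\eta_{\mu(F_p)}$ is not even defined. $\mu(F_0)=0$ always lies in the dual arrangement $\cH^\vee_\Sigma$ of the weights of $N^{\vir}_{X_\Sigma^\sT/X_\Sigma}$, and new points $p\in\mathring\sigma$ can too. So you must perturb by some $\eps$ and use \eqref{3}.
\item The $\JK^{-\eta}$ side does not vanish. Proposition~\ref{pro:vanishing_property} kills $\JK^{-\eta}_\xi$ only if $\langle\xi,\eta\rangle>0$ \emph{strictly}, and this fails at the special point. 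With the paper's choice $\eps\in-\mathring\Sigma$, the $\JK^{-\eta}$ side is \emph{exactly} the special term $|\wSigma|^{-1}\int_{[X\div\sT]^{\vir}}\alpha_0e^{c_1(L_0)}$. On the $\JK^\eta$ side the special and new terms vanish and only the old loci survive.
\end{enumerate}
So the special and old loci sit on opposite sides of $\sum_p\JK^\eta_{p+\eps}(e^pI_p)=\sum_p\JK^{-\eta}_{p+\eps}(e^pI_p)$. They do not sit together on one side of an identity with $0$.

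\paragraph{Why your arrangement fails, and what is missing.} Your arrangement has the pole-free side equal to $0$ and the special term appearing under $\JK^\eta$. This forces $\eps\in\mathring\Sigma$, and then the new loci cannot be discarded. The cone argument for them rests on Lemma~\ref{lem:key_property_oLambda}, which gives an $\eta$-\emph{positive} edge $\nu$ of $\Sigma\cap\tau_p$ at $p$; for $r=2$ it is the only edge. That produces a negative coefficient of $\nu$ in $p+\eps$ only when $\eps\in-\mathring\Sigma$.

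Comparing with the $\eps\in-\mathring\Sigma$ computation via \eqref{3} shows what happens for $\eps\in\mathring\Sigma$: the new loci contribute $-(1+(-1)^r)|\wSigma|^{-1}\int_{[X\div\sT]^{\vir}}\alpha_0e^{c_1(L_0)}$. So for even $r$, discarding them gives the formula with the wrong sign.

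The missing ideas are the perturbation into $-\mathring\Sigma$ and Lemma~\ref{lem:key_property_oLambda}. That lemma is proved by maximising $\eta$ on the compact polytope $\mu(Y)\cap\{\eta\ge0\}$: the maximum is attained at a vertex in $\mu(Y^\sT)$, which lies in $\mathring\Sigma$ by $\eta$-wideness.

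\paragraph{A smaller omission.} You do not handle components with $\langle\mu(F),\eta\rangle=0$. Twisting by $\eps$ either leaves them violating (a) or adds them to the sum. The paper perturbs $\eta$ within its chamber of $\ft_\Q\setminus\cH$ and kills these terms with Proposition~\ref{pro:vanishing_property}. This works because $\mu(F)\neq0$ then lies outside the closed cone of $\eta$-normalised weights.
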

    
We can recover Theorem \ref{th:abelian_JK} from this by replacing $\alpha$ by $\alpha e^{c_1^\sT(L)}$.
The rest of this Section will be devoted to proving Theorem \ref{th:abelian_JK}. Then Theorem \ref{th:abelian_JK_general} will follow by some modifications and perturbations.

\subsection{The $\eta$-wide cone}\label{wide}
We begin by forming an algebraic cut $(X_\Sigma,L_\Sigma)$. Choose a full-dimensional simplicial cone $\Sigma\subset\{\eta\ge0\}\subset\ft^\vee_\Q$ satisfying the following conditions (illustrated in Figure \ref{fig:eta}). Note that since we are working over $\Q$, the generic cone satisfying the first two (open) conditions will satisfy the third.
\begin{itemize}
\item $\eta\big|_{\Sigma \setminus \{0\}}\,>\,0$,
\item $\Sigma$ is \emph{$\eta$-wide}: it satisfies $\langle \mu(F), \eta \rangle>0\iff\mu(F) \in\Sigma$ for every component $F\subseteq X^\sT\!$,
\item $\Sigma$ is transverse to $\Delta$ as in Definition \ref{def:transversal}.
\end{itemize}\smallskip

Let $p$ be a point of $\mu(X_\Sigma^\sT) \setminus \mu(X^\sT)$\,---\,so one of the red or green points in Figure \ref{fig:classification_fix}. Recall from Proposition \ref{pro:fixed_locus}
        that $p=\sigma\cap\partial^{\sm}_{r-\dim\sigma\;}\Delta=\sigma\cap\tau_p$ for some face $\sigma\subset\Sigma$ and the affine space $\tau_p$ of Definition \ref{def:tau}. Locally, about $p$, the intersection $\Sigma \cap \tau_p$ is a simplicial cone in $\tau_p$. The following crucial property of $\eta$-wide cones\,---\,illustrated in Figure \ref{fig:maximum}\,---\,is the reason we use them.

    \begin{lemma}\label{lem:key_property_oLambda}
At least one of the generators $\nu$ of $\Sigma \cap \tau_p$ emanating from $p$ satisfies $\langle\nu, \eta\rangle>0$. 
    \end{lemma}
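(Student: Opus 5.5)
The plan is to compare $p$ with a point of $\mu(X^\sT)$ that maximises $\eta$, working inside a single polyhedron of $\partial_{r-\dim\sigma}\Delta$ through $p$. Then $\langle\,\cdot\,,\eta\rangle$ must increase along some edge of $\Sigma\cap\tau_p$ leaving $p$.

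By Proposition \ref{pro:fixed_locus}, $p\in\mathring\sigma\cap\partial^{\sm}_{r-\dim\sigma}\Delta$ for a proper face $\sigma\subsetneq\Sigma$; possibly $\sigma=\{0\}$, $p=0$. Choose a connected component $Y\subseteq\mu^{-1}(\tau_p)$ (Definition \ref{def:mu_tau}) with $p\in\mu(Y)$. By Lemma \ref{lem:codimension} and Lemma \ref{lem:trivial_act_from_polyhedron}, each irreducible component of $Y$ has moment polyhedron contained in $\tau_p$. By Lemma \ref{lem:interior_polytope}, $p$ lies in its relative interior. Fix one such component, which I still call $Y$, so $P:=\mu(Y)$ is a genuine polyhedron (Lemma \ref{lem:polyhedron_structure}). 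By Proposition \ref{pro:fixed_locus}\eqref{cond:fix}, near $p$ the set $\Sigma\cap\tau_p$ is the simplicial cone $p+\cone(\nu_1,\dots,\nu_m)$. Here the $\nu_j$ are the projections of the $\psi_j\notin\sigma$ to $\tau_p$ along $\rspan(\sigma)$. Since $P\cap\Sigma$ is convex and contained in $\tau_p$, every point $q\in P\cap\Sigma$ satisfies $q-p\in\cone(\nu_1,\dots,\nu_m)$.

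Next I produce a point $q\in\mu(X^\sT)\cap P\cap\Sigma$ with $\langle q,\eta\rangle>\langle p,\eta\rangle$. By (b) and Proposition \ref{pro:alg_char_eta}$(1'')$, together with Lemma \ref{lem:polyhedron_structure}, $\{\eta\ge0\}\cap\Delta$ is bounded. Since $\langle p,\eta\rangle\ge0$, the linear function $\eta$ is bounded above on $P$ and attains its maximum on a bounded face of $P$. Applying the vertex statement of Remarks \ref{rem:polyhedron} to $Y$ (via Lemma \ref{lem:polyhedron_structure} and \cite[Theorem 1]{Brion}, using that $\widetilde\eta$ acts on $H^0(\cO_Y)$ with weights of one sign), the vertices of that face are of the form $\mu(F)$ with $F\subseteq X^\sT\cap Y$. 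Take such a vertex $q=\mu(F)$. Then $\langle q,\eta\rangle\ge\langle p,\eta\rangle\ge0$, so by (a) $\langle q,\eta\rangle>0$, and $\eta$-wideness gives $q\in\Sigma$. Because $p\notin\mu(X^\sT)$, $q\ne p$. If $\langle q,\eta\rangle>\langle p,\eta\rangle$, then writing $q-p=\sum a_j\nu_j$ with $a_j\ge0$ forces $\langle\nu_j,\eta\rangle>0$ for some $j$, as required.

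The main obstacle is the equality case $\langle q,\eta\rangle=\langle p,\eta\rangle$, which can only happen when $p\ne0$. When $p=0$ we have $\langle q,\eta\rangle>0=\langle p,\eta\rangle$ directly. To exclude equality I will use the freedom in choosing $\Sigma$, as in the remark that a generic cone satisfying the open conditions suffices. The vectors $\nu_j$ depend on finitely many data: the faces $\sigma$ and the finitely many affine spaces $\tau$ of the strata of $\Delta$. Hence requiring $\langle\nu_j,\eta\rangle\ne0$ for all of them is a further open dense condition on the generators of $\Sigma$. Under it, $\langle q-p,\eta\rangle=0$ with $q-p\ne0$ in $\cone(\nu_j)$ is impossible unless some $\nu_j$ has positive pairing, since a nonzero nonnegative combination of vectors all pairing negatively pairs negatively. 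Failing that genericity argument, I would instead use that $p$ lies in the relative interior of the maximising face, which is spanned by fixed-point images. I would then perturb $q$ among that face's vertices, or replace $\eta$ by a small generic perturbation, which changes neither chambers nor the hypotheses.
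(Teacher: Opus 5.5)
Your main line is the paper's own proof. You pick an irreducible $Y$ with $p$ in the relative interior of $P=\mu(Y)$, maximise $\eta$ on $P$ at a vertex $q\in\mu(Y^\sT)$, and use (a) and $\eta$-wideness to get $q\in\Sigma$. You then read off a good $\nu_j$ from $q-p\in\cone(\nu_1,\dots,\nu_m)$. (You check the hypotheses of Lemma \ref{lem:polyhedron_structure} for $Y$ through (b), since $Y$ is a closed invariant substack of an $\eta$-semiprojective $X$. The paper instead gets $H^0(\cO_Y)^\sT=\C$ from projectivity of $Y_\Sigma$. Your route is fine under (b).)

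The gap is the equality case $\langle q,\eta\rangle=\langle p,\eta\rangle$, and your genericity fix cannot work. Equality means $\eta$ attains its maximum on $P$ at the relative interior point $p$. So $\eta$ is constant on $P$, hence on $\tau_p$, which $P$ spans. Then $\langle\nu_j,\eta\rangle=0$ for every $j$, however $\Sigma$ is chosen, because the $\nu_j$ are tangent to $\tau_p$. So the condition ``$\langle\nu_j,\eta\rangle\ne0$'' is not open dense: it is unattainable exactly for the $\tau$ on which $\eta$ is constant. For every other $\tau$ equality is already impossible, so genericity never does any work. It would in any case only prove the lemma for generic $\Sigma$, not for every $\Sigma$ satisfying the conditions of Section \ref{wide}. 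Your fallbacks do not help either. All vertices of the maximising face have the same $\eta$-value, so moving $q$ among them gives no strict inequality. Perturbing $\eta$ proves a statement about a different $\eta$, and if the equality case really occurred the lemma would simply be false for the original $\eta$.

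What is needed is a proof that equality never happens, and you already have the ingredients. In the equality case the maximising face is all of $P$. So $P$ is bounded (you showed the maximising face is) and is the convex hull of its vertices. Each vertex is some $\mu(F)$ with $\langle\mu(F),\eta\rangle=\langle p,\eta\rangle\ge0$. This is $>0$ by (a), so $\mu(F)\in\Sigma$ by $\eta$-wideness. Thus $P\subseteq\Sigma\cap\tau_p\subseteq p+\cone(\nu_1,\dots,\nu_m)$. This is a pointed cone with apex $p$, since the $\nu_j$ are linearly independent. But $p$ is a relative interior point of $P$ and $q\in P\setminus\{p\}$, so $p-\epsilon(q-p)\in P$ for small $\epsilon>0$. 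That puts both $q-p\ne0$ and $-(q-p)$ in the cone, a contradiction. Hence $\langle q,\eta\rangle>\langle p,\eta\rangle$ always, and your argument then finishes as written. This is also the step that the paper's phrase ``$\eta$ increases along the interval from $p$ to $v$'' tacitly relies on.
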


    \begin{proof}
Note $\tau_p$ is positive dimensional because $p\in\mu(X_\Sigma^\sT)$ is \emph{not} in $\mu(X^\sT)$. There is an irreducible component $Y \subseteq \mu^{-1}(\tau_p)_{\red}$ such that $p \in \mu(Y)$. By transversality $p$ is a point of $\partial^{\sm}_{\dim\tau_p\,}\mu(Y)$, so by 
Lemma \ref{lem:interior_polytope} it is in the interior of 
\begin{figure}[h]
        \centering
        \includegraphics[width=.6\textwidth, keepaspectratio]{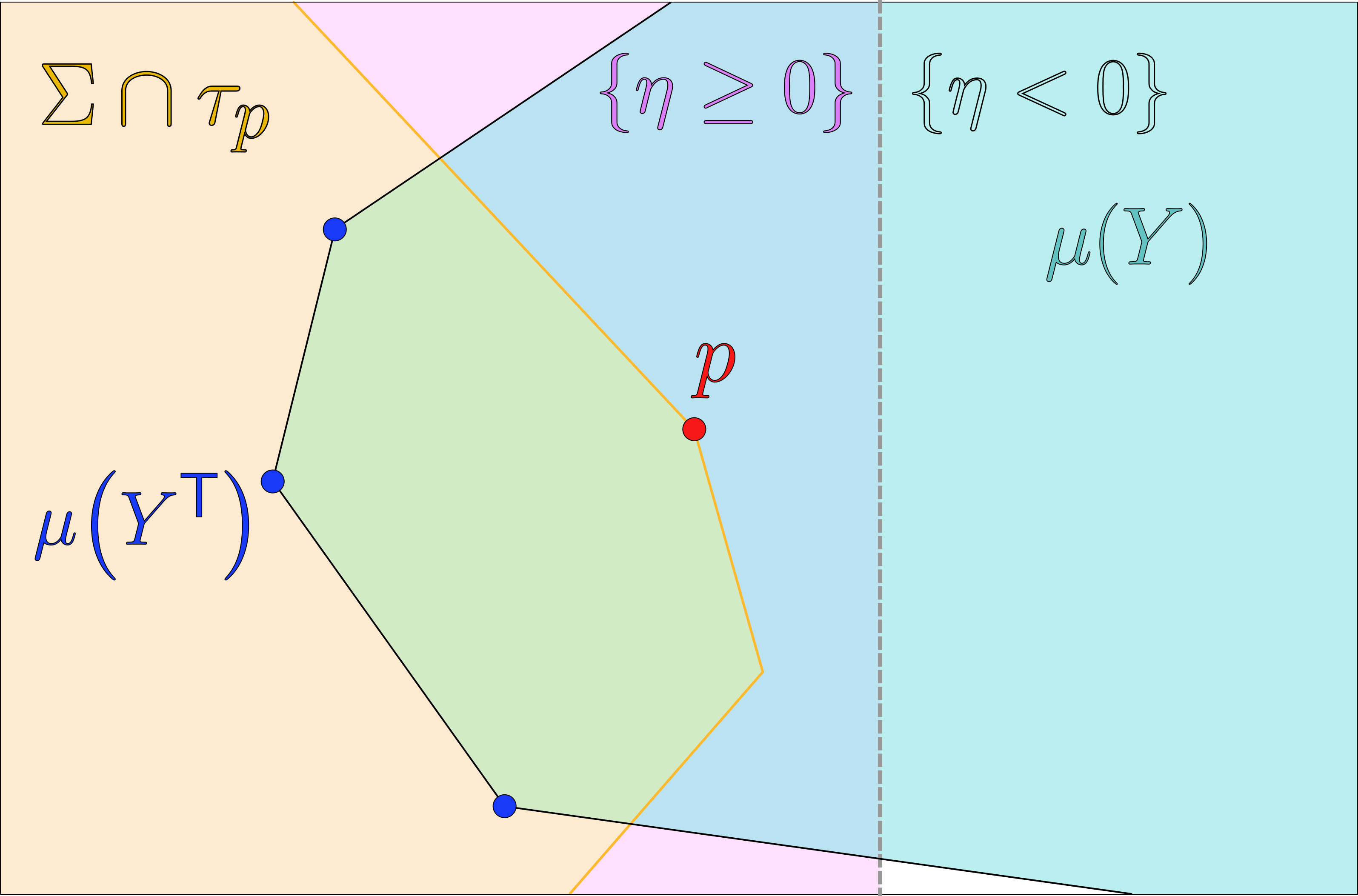}
        \caption{
        This picture is in the affine subspace $\tau_p \subseteq \ft^\vee_\Q$.
        The intersection $\Sigma \cap \mu(Y)\cap\tau_p$ lies in $\lbrace \eta \geq 0\rbrace$, as does at least one of its generating directions at $p$  (but not, in this example, the other).}
        \label{fig:maximum}
        \end{figure}    
$\mu(Y)$. Now $\mu(Y)$ is a full dimensional polyhedron in $\tau_p$ whose vertices lie in $\mu(Y^\sT)$ by Lemma \ref{lem:polyhedron_structure} (which applies because we claim $H^0(\cO_Y)^\sT=\C$: it is contained in $\(H^0(\cO_Y)\otimes\Sym\Sigma^\vee_\C\)^\dT$ and this is $\C$ by Proposition \ref{pro:proj_criterion}\;(6) because $\emptyset\ne Y_\Sigma\subset X_\Sigma$ is projective).
    
     In particular $\mu(Y)\cap\{\eta\ge0\}$ is a (compact!) polytope, on which (the restriction of)  $\eta$ attains its maximum at a vertex $v \in \mu(Y^\sT)$ (it is one of the blue points in Figure \ref{fig:maximum}). And $v \in \mathring\Sigma$ because $\Sigma$ is $\eta$-wide.
     
     But $p$ (the red point in Figure \ref{fig:maximum}) is a vertex of the different polytope $\Sigma \cap \tau_p$. So $p$ and $v$ are distinct vertices of $\Sigma \cap \mu(Y)$ and $\eta$ increases along the interval from $p$ to $v$. Since the direction of this interval is in the convex hull of the directions of the rays of $\Sigma \cap \mu(Y)$ emanating from $p$, this shows $\eta$ must increase along at least one of those rays.
    \end{proof}

So by Theorem \ref{thm:structure_cut}\;(a) the algebraic cut $(X_\Sigma,L_\Sigma)$ is a Deligne-Mumford stack. By our assumption of $\eta$-semiprojectivity and Proposition \ref{pro:eta_proj_compact} it is projective. By Corollary \ref{cor:obs_cut} it has a $\sT$-equivariant perfect obstruction theory $(\EE_\Sigma,\phi_\Sigma)$.

So we may apply the localisation formula \eqref{eq:general_loc_cut} to $\alpha\_\Sigma e^{c_1^\sT(L_\Sigma)}$ on $X_\Sigma$.
Using $c_1^\sT(L_\Sigma)|_{F_p}=p+c_1(L_\Sigma)|_{F_p}$ we obtain
\beq{4}
\int_{[X_\Sigma]^{\vir}} \alpha\_\Sigma e^{c_1^\sT(L_\Sigma)}\=\sum_{p \in \mu(X_\Sigma^\sT)=\partial_{0}(\Sigma \cap \Delta)} e^p I_p\;,
\eeq
where the $I_p$ are the rational functions described in Propositions \ref{pro:special_old_contributions} and \ref{pro:weights_normal_bundle},
$$
        I_{p}\= \int_{[F_p]^{\vir}} \frac{\alpha\_\Sigma \,e^{c_1(L_\Sigma)}\vert_{F_p}}{e^\sT\(N^{\vir}_{F_p/X_\Sigma}\)} \,:\, \ft_\Q \xymatrix{\ar@{-->}[r]& \ \Q.}
$$

\subsection{Residues}
We will take JK residues of \eqref{4} to prove the JK formula \eqref{simples}. Set $\Gamma_\Sigma\subset\ft^\vee_\Q$ to be the set of weights of $\sT\acts N^{\vir}_{X_\Sigma^\sT/X_\Sigma}$, defining new hyperplane arrangements $\cH_\Sigma\subset\ft_\Q$ and $\cH^\vee_\Sigma\subset\ft^\vee_\Q$ as in \eqref{eq:hyperplanes}. Since $\mu(X^\sT_\Sigma)$ may now intersect $\cH^\vee_\Sigma$ we choose an auxiliary $\eps\in-\mathring\Sigma\setminus\cH_\Sigma^\vee$ in minus the interior of $\Sigma$ to perturb by.  By taking it sufficiently small and also picking a small perturbation of $\eta$ if necessary\,---\,while keeping $\Sigma$ fixed\,---\,we may assume
\begin{enumerate}
\item[(i)] $\eta\not\in\cH_\Sigma$ and $\eps\in-\mathring\Sigma\setminus\cH_\Sigma^\vee$,
\item[(ii)] $\{\mu(F)+s\eps\}_{s\in(0,1]}$ lies in $\{\eta>0\}\setminus\cH_\Sigma^\vee$ for all $F\subseteq X_\Sigma^\sT$ except $F_0$,
\item[(iii)] $\Sigma$ remains $\eta$-wide,
\item[(iv)] the open condition (a) continues to hold, and
\item[(v)] Lemma \ref{lem:key_property_oLambda} continues to hold.
\end{enumerate}
Since \eqref{4} has no poles, equation \eqref{3} of Proposition \ref{pro:equality_of_residues} then gives,
  \beq{finally}
    \sum\nolimits_{p\in\mu(X_\Sigma^\sT)}\JK^\eta_{p+\eps}(e^pI_p)\=\sum\nolimits_{p\in\mu(X_\Sigma^\sT)}\JK^{-\eta}_{p+\eps}(e^pI_p).
   \eeq

    In the next two Propositions, we find that the two sides of \eqref{finally} compute the two sides of the JK formula \eqref{simples} up to a factor $\big|\wSigma\!\big|^{-1}$.
    
    \begin{proposition}
The left hand side of \eqref{finally} is
        \begin{align*}
\frac{1}{\vert \wSigma \vert}\sum_{\substack{F\subseteq X^\sT\\\
            \mu(F) \in \Sigma}} \JK^{\eta}_{\mu(F)}\left(e^{\mu(F)}\int_{[F]^{\vir}} \frac{\alpha e^{c_1(L)}\vert_{F}}{e^\sT(N^{\vir}_{F/X})}\right).
        \end{align*}
    \end{proposition}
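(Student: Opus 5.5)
We split the left hand side of \eqref{finally} according to the trichotomy of Remark \ref{rem:classification_fix_loci}: the special point $p=0$, the old points $p\in\mathring\Sigma\cap\mu(X^\sT)$, and the new points $p\in\mathring\sigma\cap\tau_p$ with $0<\dim\sigma<r$. The claim is that only the old points contribute, and that their contribution is the displayed sum.

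For the special point $p=0$ we have $F_0=X\div\;\sT$. The residue is $\JK^\eta_{\eps}(I_0)$ with $\eps\in-\mathring\Sigma$. By \eqref{eq:special_contribution}, $I_0$ has poles only along $\{\psi_j=0\}$, and each $\psi_j$ pairs positively with $\eta$. So Proposition \ref{pro:vanishing_property} applies: since $\eps\notin\Sigma=\cone(\psi_1,\dots,\psi_r)$ (indeed $\langle\eps,\eta\rangle<0$), this residue vanishes.

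For the old points, Proposition \ref{pro:special_old_contributions} \eqref{old} identifies $\sum e^pI_p$ with $|\wSigma|^{-1}\sum_{\mu(F)\in\Sigma}e^{\mu(F)}\int_{[F]^{\vir}}\alpha e^{c_1(L)}|_F/e^\sT(N^{\vir}_{F/X})$. Here we use that $c_1(L_\Sigma)|_{F_p}$ pulls back to $c_1(L)|_F$ and $p=\mu(F)$. By assumption (a), $\mu(F)\notin\cH^\vee$, so by (ii) the point $\mu(F)+\eps$ lies in the same chamber as $\mu(F)$ for small $\eps$. Replacing $\JK^\eta_{\mu(F)+\eps}$ by $\JK^\eta_{\mu(F)}$ uses Proposition \ref{JKpm}, which says the residue depends only on the chamber; its independence of $\Gamma$ (weights of $X$ versus $X_\Sigma$) justifies comparing the two arrangements. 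By $\eta$-wideness, the old $F$ with $\mu(F)\in\Sigma$ are exactly those with $\langle\mu(F),\eta\rangle>0$.

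The main obstacle is the new points, which must be shown to contribute zero. By Proposition \ref{pro:weights_normal_bundle} and Lemma \ref{lem:normal_bundles}, the poles of $I_p$ lie on hyperplanes $\gamma=0$ with $\gamma\in\rspan(\sigma)$, or with $\gamma$ equal to minus an edge generator $\nu_j$ of $\Sigma\cap\tau_p$. Each such $\gamma$ is signed so that $\langle\gamma,\eta\rangle>0$. By Lemma \ref{lem:key_property_oLambda}, some generator $\nu$ has $\langle\nu,\eta\rangle>0$, so its pole weight $-\nu$ gets flipped to $+\nu$. I will then show that $p+\eps\notin\cone(\gamma_i)$ for every basis $\{\gamma_i\}$ chosen from these signed weights, and conclude by Proposition \ref{pro:vanishing_property}. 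To see this, decompose $\ft^\vee_\Q=\rspan(\sigma)\oplus(\tau_p-p)$. The point $p$ lies in $\rspan\sigma$, and the projection of $p+\eps$ to the $\tau_p-p$ factor is the projection of $\eps$. Since $\eps\in-\mathring\Sigma$, this projection is a strictly negative combination of all the $\nu_j$. A basis must contain exactly $\dim\tau_p$ vectors from the $\nu$-directions, so it contains each $\nu_j$ up to the sign dictated by $\eta$. Cone membership in that factor would then force a nonnegative coefficient on $+\nu$, contradicting negativity. Getting the projection compatibility between $\rspan(\sigma)$-weights and the $\nu_j$ correct, including the possibility that the $\rspan(\sigma)$ pole directions interact, is where care is needed. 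Having killed the special and new terms, only the old terms remain, giving the stated formula.
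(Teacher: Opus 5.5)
Your proposal is correct and is essentially the paper's proof. The old terms are identified via \eqref{old}, with $\JK^\eta_{\mu(F)+\eps}=\JK^\eta_{\mu(F)}$ by (a) and (ii). Every other term is killed by projecting $p+\eps$ along $\rspan(\sigma)$ onto $\tau_p-p$ and combining Lemma \ref{lem:key_property_oLambda} with Proposition \ref{pro:vanishing_property}. The only difference is that the paper treats $p=0$ as the case $\sigma=\{0\}$ of this argument rather than separately.

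The worry you flag at the end is unfounded. The weights in $\rspan(\sigma)$ are annihilated exactly by that projection, so your argument rules out every nonnegative combination of the normalised weights, not just combinations from a basis. That is precisely what Proposition \ref{pro:vanishing_property} requires.
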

    \begin{proof}
For $p\in\Sigma\cap\partial_0\Delta$\,---\,an old (blue) fixed locus in Figure \ref{fig:classification_fix}\,---\,the integral $I_p$ is evaluated in \eqref{old}. Its poles lie in $\cH\not\ni p$ by (a). So, by (ii), $\JK^\eta_{p+\eps}=\JK^\eta_p$ 
when applied to $e^pI_p$, giving precisely the formula above.

So we are left with 
showing the vanishing of the contribution of any point $p\in \mu(X_\Sigma^\sT) \setminus \mu(X^\sT)$\,---\,i.e. any red point $p$ or the green point $p=0$.

    Write the fixed locus corresponding to $p$ as the cut $F_p \cong (\mu^{-1}(\tau_p))_\sigma$ as in Proposition \ref{pro:fixed_locus}\;\eqref{cond:3}, and note that $\sigma \neq \Sigma$ and $\rspan(\sigma)$ is complementary to the tangent space of $\tau_p$ at $p$.
        
By Proposition \ref{pro:weights_normal_bundle} the weights of $N^{\vir}_{F_p/X_\Sigma}$ are either
        \begin{enumerate}
        \item $-\nu_1, \dots, -\nu_{r-d}$, where $\nu_1, \dots, \nu_{r-d}$ are generators of the simplicial cone which, locally around $p$, is $\Sigma \cap \tau_p$, or
        \item in $\mathrm{span}(\sigma)$.
    \end{enumerate}
Since $p\in\sigma\subset\mathrm{span}(\sigma)$ and $\eps\in-\mathring\Sigma$,
$$
p+\eps\ \in\ \mathrm{span}(\sigma)\ -\ \sum\nolimits_{j=1}^{r-d}a_j\nu_j\ \text{ for some }\,a_j\,\in\,\Q_{>0}.
$$
    Define $\overline{\nu}_j := \pm \nu_j$, with the sign chosen so that $\langle\overline{\nu}_j, \eta \rangle >0$. That sign is positive for at least one $j$ by Lemma \ref{lem:key_property_oLambda}, so
$$
p+\eps\ \in\ \mathrm{span}(\sigma)\ +\ \sum\nolimits_{j=1}^{r-d}b_j\overline\nu_j\ \text{ with at least one }\,b_j\,<\,0.
$$
The $b_j$ are unique since generators of $\mathrm{span}(\sigma)$ plus the generators $\nu_i$ together form a $\Q$-basis of $\ft^\vee_\Q$ by Proposition \ref{pro:fixed_locus}\;\eqref{cond:fix}. So if we denote by $\gamma_i$ the weights of $N^{\vir}_{F_p/X_\Sigma}$ and by $\overline\gamma_i=\pm\gamma_i$ the normalised weights chosen so that $\langle\overline\gamma_i,\eta\rangle>0$, then we have shown that $p+\eps$ cannot be written as a positive linear combination of the $\overline\gamma_i$. Thus by Proposition \ref{pro:vanishing_property},
\begin{equation*}
\JK^\eta_{p+\eps}\!\left(e^p\int_{[F_p]^{\vir}} \frac{\alpha\_\Sigma \,e^{c_1(L_\Sigma)}\vert_{F_p}}{e^\sT\(N^{\vir}_{F_p/X_\Sigma}\)}\right)\,=\ 0.\qedhere
\end{equation*}
    \end{proof}

    \begin{proposition}
The right hand side of \eqref{finally} is
$$
\frac1{\vert \wSigma \vert} \int_{[X\div\;\sT]^{\vir}} \alpha\_0 e^{c_1(L_0)}.
$$
\end{proposition}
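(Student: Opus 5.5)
We evaluate each term $\JK^{-\eta}_{p+\eps}(e^pI_p)$ on the right hand side of \eqref{finally} separately. The plan is to show that every $p\ne0$ contributes zero, and that $p=0$ contributes the residue of the special fixed locus, computed from \eqref{eq:special_contribution}.

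\emph{Terms with $p\neq0$.} By Proposition \ref{pro:vanishing_property}, $\JK^{-\eta}_\xi$ vanishes whenever $\langle\xi,-\eta\rangle<0$, that is, whenever $\langle\xi,\eta\rangle>0$. By (ii), every $p\in\mu(X_\Sigma^\sT)\setminus\{0\}$ satisfies $\langle p+\eps,\eta\rangle>0$. This holds both for old and for new fixed loci, since $p\in\Sigma\setminus\{0\}$ and $\eta|_{\Sigma\setminus\{0\}}>0$. Hence all these terms vanish, and only $p=0$, i.e.\ $F_0=X\div\;\sT$, survives.

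\emph{The term $p=0$.} Here $e^p=1$. By \eqref{eq:special_contribution} applied to $\alpha\_\Sigma e^{c_1(L_\Sigma)}$, whose restriction to $F_0$ is $\alpha\_0e^{c_1(L_0)}$, we get
$$
I_0\=\int_{[X\div\;\sT]^{\vir}}\frac{\alpha\_0e^{c_1(L_0)}}{\prod_{j=1}^r\(c_1(N_j)-\psi_j\)}.
$$
Expand this as in \eqref{eq:origin_contribution}. The term $\int\alpha\_0e^{c_1(L_0)}\prod_jc_1(N_j)^{\ell_j}$ multiplies $(-1)^r/\prod_j\psi_j^{\ell_j+1}$. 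The only poles are along the hyperplanes $\psi_j=0$, so $\JK^{-\eta}_{\eps}$ is computed using the generators $-\psi_j$, which satisfy $\langle-\psi_j,-\eta\rangle>0$ because $\psi_j\in\Sigma\setminus\{0\}$. Since $\eps\in-\mathring\Sigma=\mathring\cone(-\psi_1,\dots,-\psi_r)$, the point $\eps$ lies in the relevant cone. Only the degree $r$ part of the expansion contributes. After projecting to degree $r$, this is the single term $\ell_j=0$ for all $j$, because every other monomial $\prod_j\psi_j^{-\ell_j-1}$ has total degree $r+\sum_j\ell_j\ne r$. Note that in the formal grading the cohomology classes $c_1(N_j)$ are constants. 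One should check that integrating over the virtual class only keeps the matching cohomological degree, which makes the expansion finite and harmless.

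The surviving term is $(-1)^r\big/\prod_j\psi_j=1\big/\prod_j(-\psi_j)$. By \eqref{eq:def_JK_r} its residue is
$$
\left|\frac{o}{(-\psi_1)\wedge\dots\wedge(-\psi_r)}\right|\=\frac1{|\,\ft_\Z^\vee/\langle\psi_1,\dots,\psi_r\rangle_\Z|}\=\frac1{|\wSigma|},
$$
because $\wSigma=\bigcap_j\ker\psi_j$ is Cartier dual to $\ft^\vee_\Z/\langle\psi_j\rangle$. Multiplying by the coefficient $\int_{[X\div\;\sT]^{\vir}}\alpha\_0e^{c_1(L_0)}$ gives the claimed formula.

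\emph{Main obstacle.} The delicate step is the bookkeeping at $p=0$. We must confirm the orientation of the generators relative to $-\eta$ and make sure $\eps$ is in the correct chamber; we have $\eps\notin\cH_\Sigma^\vee$ by (i), and the walls through $0$ are the spans of $r-1$ of the $\psi_j$. We must also verify that the other terms, of the form $c_1(N_j)^{\ell}/\psi^{\ell+1}$ with a polynomial numerator in cohomology, contribute nothing. This is the degree projection in the definition of $\JK$: the $e^{c_1(L_0)}$ factor is a cohomology class, not a function on $\ft$, so it does not shift degrees. Finally, the lattice index computation identifying the residue with $|\wSigma|^{-1}$ needs to be checked.
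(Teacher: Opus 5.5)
Your proposal is correct and follows the paper's proof. The $p\ne0$ terms vanish by (ii) and Proposition~\ref{pro:vanishing_property}, and at $p=0$ only the $\ell_1=\dots=\ell_r=0$ term of the expansion of $I_0$ contributes, with residue $|\wSigma|^{-1}$ coming from $\psi_1\wedge\dots\wedge\psi_r=\pm|\wSigma|\,o$. The only cosmetic difference is that you evaluate $\JK^{-\eta}_{\eps}$ directly with the $(-\eta)$-normalised generators $-\psi_j$ and $\eps\in\cone(-\psi_1,\dots,-\psi_r)$, whereas the paper first rewrites it as $(-1)^r\JK^{\eta}_{-\eps}$ using Proposition~\ref{JKpm} and then uses $-\eps\in\Sigma$.
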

    \begin{proof}
        For  $p\in\mu(X_\Sigma^\sT)\setminus\{0\}\subset\Sigma \setminus \lbrace 0 \rbrace$ we have $\langle \eta,p+\eps \rangle>0$ by (ii). Therefore  $\JK^{-\eta}_{p+\eps}(e^pI_p)=0$ by Proposition \ref{pro:vanishing_property}.
        
        For $p=0$ its contribution $I_0$ to the localisation formula \eqref{4} was computed in \eqref{eq:special_contribution},\,\eqref{eq:origin_contribution},
               \begin{align*}
            I_0 \= \sum_{\ell_1, \dots, \ell_r = 0}^{\infty} \frac{(-1)^r}{\psi_1^{\ell_1+1}\cdots \psi_r^{\ell_r+1}}\int_{[X\div\;\sT]^{\vir}} \alpha\_0 e^{c_1(L_0)}\prod_{j=1}^r c_1(N_j)^{\ell_j}.
        \end{align*}
The identity $\JK^{-\eta}_{\eps}=(-1)^r\JK^\eta_{-\eps}$ from Proposition \ref{JKpm} now gives
\beqa
            \JK_{\eps}^{-\eta}(I_0) &=& (-1)^r\JK_{-\eps}^{\eta}\left(\frac{(-1)^r}{\prod_{j=1}^r \psi_j}\int_{[X\div\;\sT]^{\vir}} \alpha\_0 e^{c_1(L_0)}\right)\\ &=&  \frac{1}{\big|\wSigma\big|}\int_{[X\div\;\sT]^{\vir}} \alpha\_0 e^{c_1(L_0)}\,
\eeqa
by the definition \eqref{eq:def_JK_r} of $\JK^\eta_{-\eps}$, using that the generators $\psi_i$ of the cone $\Sigma$ are integral with $\psi_1\wedge\dots\wedge\psi_r=\pm\big| \wSigma\!\big|\;o$, and $-\eps\in\Sigma$.
    \end{proof}

So we have proved Theorem \ref{th:abelian_JK}, but with $\JK^\eta_{\mu(F)}$ replaced by $\JK^\eta_{\mu(F)+\eps}$ in the formula \eqref{simples}. However, by (a) and (ii), $\mu(F)$ and $\mu(F)+\eps$ are in the same chamber with respect to the (dual) hyperplane arrangement defined by the weights of $N^{\vir}_{F/X}$ for those components $F\subseteq X^T$ with $\langle\eta,\mu(F)\rangle>0$. So Proposition \ref{pro:vanishing_property} finishes the proof. \medskip

Then replacing $e^{c_1^\sT(L)}$ by $e^{sc_1^\sT(L)}$ for $s\ge0$ and taking $\lim_{s\to0}$\,---\,or replacing our use of \eqref{3} in the above proof by the simpler \eqref{2}\,---\,we obtain
\beq{semisimple}
            \int_{[X\div\;\sT]^{\vir}} \alpha\_0\ = \sum_{\substack{F\subseteq X^\sT\\\
            \langle \mu(F), \eta \rangle >0}} \JK^{\eta}_{\mu(F)}\int_{[F]^{\vir}} \frac{\alpha\vert_F}{e^\sT(N^{\vir}_{F/X})}\,.
\eeq

\subsection*{Weakening $\eta$-semiprojectivity}
Notice we only used $\eta$-semiprojectivity to deduce the projectivity of $X_\Sigma$ from Theorem \ref{thm:structure_cut}\;(c). If instead we assume only that $(X,L)$ is weakly $\eta$-semiprojective then 
$\Sigma \cap \Delta \subset \lbrace \eta\geq 0 \rbrace \cap \Delta$ is bounded and the composition
$$
    H^0\bigl(X,\cO_X\bigr)^\sT \To H^0\bigl(X^{\lbrace \eta\ge0 \rbrace -\ss},\,\cO_{X^{\lbrace \eta\ge0 \rbrace -\ss}}\bigr)^\sT \To H^0\bigl(X^{\Sigma -\ss},\,\cO_{X^{\Sigma -\ss}}\bigr)^\sT
$$
has finite rank. Thus Theorem \ref{thm:structure_cut}\;(c) still applies. 

\subsection*{Weakening (a)}
So we have proved \eqref{semisimple} assuming only weak $\eta$-semiprojectivity and (a). Next we assume weak $\eta$-semiprojectivity and the following weakening of (a),
\begin{itemize}
\item[(a$'$)] $\langle\mu(F), \eta\rangle\ne0$ for all components $F\subseteq X^\sT$.
\end{itemize}
That is, we drop the assumption that $\mu(F)\not\in\cH^\vee$ if $\langle\mu(F), \eta\rangle>0$ from (a). We then restore it by replacing $(X,L)$ by $(X,L\otimes\epsilon)$ for some character\footnote{We should really take an integral power $(L\otimes\eps)^N$; since $\JK^\eta_{N\xi}=\JK^\eta_{\xi}$ this changes nothing in \eqref{compl}.} $\epsilon\in\ft^\vee_\Q\setminus\cH^\vee$, thus replacing each $\mu(F)$ by $\mu(F)+\eps$. For $\eps$ sufficiently small this preserves
\begin{itemize}
\item the open condition (a$'$)\,---\,thereby promoting it to (a),
\item the indexing set $\{F\subseteq X^\sT:\langle\mu(F),\eta\rangle>0\}$ of the sum in \eqref{semisimple},
\item the open $\sT$-GIT chamber containing $L$, so the GIT quotients $X\div\,\sT$ formed using $L$ or $L\otimes\epsilon$ are the same,
\item weak $\eta$-semiprojectivity by the openness proved in Proposition \ref{weak2}.
\end{itemize}
Thus (a) and weak $\eta$-semiprojectivity hold for $(X,L\otimes\eps)$ and \eqref{semisimple} gives
\beq{compl}
            \int_{[X\div\;\sT]^{\vir}} \alpha\_0\ = \sum_{\substack{F\subseteq X^\sT\\\
            \langle \mu(F), \eta \rangle >0}} \JK^{\eta}_{\mu(F)+\eps}\int_{[F]^{\vir}} \frac{\alpha\vert_F}{e^\sT(N^{\vir}_{F/X})}\,.
\eeq

\subsection*{Dropping (a$'$)}
Finally we drop the assumption (a$'$) to prove the same formula (and so Theorem \ref{th:abelian_JK_general}) assuming only weak $\eta$-semiprojectivity.
Any components $F\subseteq X^\sT$ which violate (a$'$)\,---\,i.e. those with $\langle\mu(F),\eta\rangle=0$\,---\,we call ``bad". We perturb $\eta$ to $\eta'$ in the same chamber of $\ft^\vee_\Q\setminus\cH$ so that $\langle\mu(F),\eta'\rangle\not=0$ for all bad $F$
while preserving weak $\eta$-semiprojectivity by the openness of Proposition \ref{weak2}. Thus (a$'$) now holds, giving a perturbed formula \eqref{compl} with $\eta$ replaced by $\eta'$.

Since $\JK^\eta_\xi$ is locally constant in $\eta$ the only change to the original formula \eqref{compl} is the appearance of new terms on the right hand side from bad $F$ for which $\langle\mu(F),\eta'\rangle>0$ after perturbation; we claim these contribute zero.

Indeed let $\gamma_i\in\ft^\vee_\Q$ be all the weights of $N^{\vir}_{X^\sT/X}$ defining the hyperplane arrangement $\cH$, and let $\overline\gamma_i=\pm\gamma_i$ be the $\eta$-normalised weights with sign fixed so that $\langle\eta,\overline\gamma_i\rangle>0$. Then since $\eta,\,\eta'$ lie in the same chamber of $\ft^\vee_\Q\setminus\cH$, the $\overline\gamma_i$ are also $\eta'$-normalised. Since bad $F$ have $\langle\mu(F),\eta\rangle=0$ their $\mu(F)$ are disjoint from the cone generated by the $\overline\gamma_i$, so the same is true of $\mu(F)+\eps$ for sufficiently small $\eps$. Therefore
$$
\JK^{\eta'}_{\mu(F)+\eps}\int_{[F]^{\vir}} \frac{\alpha\vert_F}{e^\sT(N^{\vir}_{F/X})}\=0\,\text{ for bad $F$ and }\,|\eps|\,\ll\,1
$$
by Proposition \ref{pro:vanishing_property}. This proves \eqref{compl} with $\JK^\eta$ replaced by $\JK^{\eta'}$ and $|\eps|\ll1$. Deforming back to $\JK^\eta$ and the original $\eps$ without changing chambers proves Theorem \ref{th:abelian_JK_general}.
     
\section{Nonabelian Jeffrey--Kirwan localisation}\label{section:NA_JK_loc}
In this Section we fix an action $\sG\acts(X,L,\EE,\phi)$ on a polarised projective-over-affine Deligne-Mumford stack with perfect obstruction theory. Let $\sT\subset \sG$ denote a maximal torus, $W$ be the Weyl group and let $\cH\subset\ft_\Q$ and $\cH^\vee\subset\ft_\Q^\vee$ be the hyperplane arrangements \eqref{eq:hyperplanes} defined by the weights of $\sT \curvearrowright N^{\vir}_{X^\sT/X}$. As in Theorem
\ref{th:abelian_JK_general} choose $\eta \in \ft_\Q \setminus \cH$ such that
        \begin{itemize}
        \item $(X,L)$ is weakly $\eta$-semiprojective (Definition \ref{weaksemi}) and
                \item $\sT$-semistable points of $X$ are $\sT$-stable.
    \end{itemize}

\noindent We will show these conditions imply that $\sG$-semistable points are also $\sG$-stable and

\begin{lemma}\label{check}
$X\div\;\sT$ and $X\div\;\sG$ are projective Deligne-Mumford stacks.
\end{lemma}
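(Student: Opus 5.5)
First I would show that $\sG$-semistable points are $\sG$-stable. Because $\sT\subset\sG$, every $\sG$-invariant section is $\sT$-invariant, so $X^{\sG-\ss}\subseteq X^{\sT-\ss}$, and points of $X^{\sT-\ss}$ have finite $\sT$-stabilisers by hypothesis. Suppose $x\in X^{\sG-\ss}$ had infinite stabiliser $\sG_x$. Semistability of $x$ means $\sG\cdot x$ avoids the zero locus of some invariant section $s$, so the closed orbit $Z$ in $\overline{\sG\cdot x}\cap X^{\sG-\ss}$ is also semistable, and its stabiliser is reductive by Matsushima. If that stabiliser is infinite it contains a nontrivial torus, which is conjugate into $\sT$. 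Conjugating the point, we obtain a semistable point with a positive-dimensional $\sT$-stabiliser, a contradiction. So every closed orbit in $X^{\sG-\ss}$ has finite stabiliser. The remaining case, a nonclosed orbit in $X^{\sG-\ss}$, would have a closed orbit of strictly smaller dimension in its closure, so that orbit has a positive-dimensional stabiliser, which we have just ruled out. Hence all $\sG$-orbits in $X^{\sG-\ss}$ are closed with finite stabilisers, and Kirwan's Remark 8.3 gives stability. In particular $X\div\sT$ and $X\div\sG$ are Deligne--Mumford stacks.

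Projectivity of $X\div\sT$ comes from Proposition \ref{pro:eta_proj_compact}. That result needs a simplicial cone $\Sigma$ with $\Sigma\setminus\{0\}\subset\{\eta>0\}$ transverse to $\Delta$, and such a cone exists generically. Alternatively one can use the first step of its proof: the restriction $H^0(\cO_X)^\sT\to H^0(\cO_{X^{\sT-\ss}})^\sT$ factors through condition (4) of Proposition \ref{weak2}, since $X^{\sT-\ss}\subseteq X^{\{\eta\ge0\}-\ss}$ and $\sT$-invariants are $\widetilde\eta$-invariants. This map therefore has finite rank, and Proposition \ref{pro:proj_criterion} gives projectivity of $X\div\sT$.

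For $X\div\sG$ I would apply Proposition \ref{pro:proj_criterion} to $\sG$ as well. The required fact is that $H^0(\cO_X)^\sG\to H^0(\cO_{X^{\sG-\ss}})^\sG$ has finite-dimensional image. This map factors as
$$H^0(\cO_X)^\sG\subseteq H^0(\cO_X)^\sT\To H^0(\cO_{X^{\sT-\ss}})^\sT\To H^0(\cO_{X^{\sG-\ss}}),$$
where the last arrow is restriction along the open inclusion $X^{\sG-\ss}\subseteq X^{\sT-\ss}$. The middle arrow has finite rank by the previous paragraph, so the composite does too. If Proposition \ref{pro:proj_criterion} is only stated for tori, I would instead use the dominant morphism $X^{\sG-\ss}/\sT\to X\div\sG$: it is a $\sG/\sT$-bundle, and it factors through the open substack $X^{\sG-\ss}/\sT$ of the projective $X\div\sT$. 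Then $H^0(\cO_{X\div\sG})$ is finite-dimensional, and $X\div\sG$ is proper and quasi-projective, hence projective.

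I expect the main obstacle to be the first step, namely transferring finiteness of stabilisers from $\sT$ to $\sG$ on Deligne--Mumford stacks that may be nonreduced. The reduction to closed orbits and reductive stabilisers should be done on the coarse space or on $X_{\red}$, where Matsushima and Kirwan's arguments apply directly. Projectivity is then formal given Proposition \ref{pro:proj_criterion}.
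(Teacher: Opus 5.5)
Your proof is correct and follows the paper's route. For stability, you use Matsushima and conjugate a torus of the reductive stabiliser of a polystable point into $\sT$, then pass from polystable to all semistable points. For projectivity, you apply Proposition \ref{pro:proj_criterion}, using that the $\sG$-restriction map factors through the $\sT$-restriction map, which has finite rank by weak $\eta$-semiprojectivity.

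Your fallback paragraph is unnecessary, because Proposition \ref{pro:proj_criterion} is stated for any reductive $\sG$. As written it would also not work: functions on an open substack of the projective $X\div\sT$ need not form a finite-dimensional space, and you never show that $X\div\sG$ is proper.
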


    
\noindent
Fix $\alpha \in H^\ast_\sG(X)$. Via $H^\ast_\sG(X)\to H^\ast_\sT(X)$ it gives $\alpha^\sT\in H^\ast_\sT(X)$ and, by \eqref{induced_class}, $\alpha\_0\in H^\ast(X\div\sG)$.

    \begin{theorem}[Virtual JK localisation]\label{th:JK}
Choose $\epsilon \in \ft^\vee_\Q$ sufficiently small that the segments $\{\mu(F) + s\epsilon\}_{s \in (0,1],\,F\subseteq X^\sT}$ are disjoint from $\cH^\vee$. Then
$$
            \int_{[X\div \sG]^{\vir}} \alpha\_0 \= \frac{1}{\vert W \vert}\sum_{\substack{F\subseteq X^\sT\\\
            \langle \mu(F), \eta \rangle >0}} \JK^{\eta}_{\mu(F)+\eps}\left(e^\sT(\fg_\C/\ft_\C) \cdot\int_{[F]^{\vir}} \frac{\alpha^\sT\vert_{F}}{e^\sT(N^{\vir}_{F/X})}\right).
$$
        Here $\sT\acts\fg_\C/\ft_\C$ is the adjoint action so $e^\sT(\fg_\C/\ft_\C)=\prod\epsilon_i$ where $\epsilon_i \in \ft_\Z^\vee$ are the roots of $\sG$.
\end{theorem}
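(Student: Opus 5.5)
The plan is to reduce Theorem \ref{th:JK} to the abelian Theorem \ref{th:abelian_JK_general}, following Martin and Maddock, through the diagram \eqref{eq:abelianisation_diagram}
$$
X\div\sG \xleftarrow{\;p\;} X^{\sG-\ss}/\;\sT \xrightarrow{\;i\;} X\div\;\sT.
$$

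\textbf{Step 1: semistability and projectivity.} First I prove that $\sG$-semistable points are $\sG$-stable and establish Lemma \ref{check}. The inclusion $X^{\sG-\ss}\subseteq X^{\sT-\ss}$ holds because $\sG$-invariant sections are $\sT$-invariant. The Hilbert--Mumford criterion, applied to $\sT$ and its $W$-conjugates, shows that a $\sG$-semistable point has finite stabiliser. Projectivity of $X\div\sG$ follows from Proposition \ref{pro:proj_criterion}, since $H^0(\cO)^\sG\subseteq H^0(\cO)^\sT$ restricts with finite rank. Projectivity of $X\div\;\sT$ follows from Proposition \ref{pro:eta_proj_compact}.

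\textbf{Step 2: the Martin-type identity.} The key formula to establish is
$$
\int_{[X\div\sG]^{\vir}}\alpha\_0 \= \frac1{|W|}\int_{[X\div\;\sT]^{\vir}}\alpha^\sT_0\cdot e\big(\op{Ad}\big),
$$
where $\op{Ad}=X^{\sT-\ss}\times_\sT\fg_\C/\ft_\C$ is the bundle whose $\sT$-equivariant Euler class is $e^\sT(\fg_\C/\ft_\C)$. I argue as follows.
\begin{itemize}
\item $p$ factors as $X^{\sG-\ss}/\;\sT\to X^{\sG-\ss}/\sN\to X\div\sG$. The second map has fibre $\sG/\sN$, which has Euler characteristic $1$. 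The first map is a degree $|W|$ cover. Together these give $p_*\big(e(\op{Ad})\cdot p^*\beta\big)=|W|\beta$ in the virtual setting, via Proposition \ref{pro:compatibility_of_obs}.
\item Concretely, $\Omega_p=\op{Ad}^\vee$. Using $p^*[X\div\sG]^{\vir}=i^*[X\div\;\sT]^{\vir}$, I compactify the fibres $\sG/\sT$ by $\sG/\sB$ as Maddock does; equivalently, I use that integration of $e(T_{\sG/\sT})$ over $\sG/\sB$ gives $|W|$.
\item The remaining piece is to show that the unstable part $(X\div\;\sT)\setminus X^{\sG-\ss}/\;\sT$ contributes zero once multiplied by $e(\op{Ad})$. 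For this I follow Maddock: on the $\sT$-quotient, $\op{Ad}=\bigoplus_{\text{roots}}L_{\eps_i}$. On the unstable locus there is a positive-root subbundle giving a section of $\bigoplus_{\eps>0}L_\eps$ (from the infinitesimal $\mathfrak b$-action) whose zero locus misses the unstable locus.
\end{itemize}

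\textbf{Step 3: apply the abelian formula.} Finally I apply Theorem \ref{th:abelian_JK_general} to the class $\alpha^\sT\cdot e^\sT(\fg_\C/\ft_\C)\in H^*_\sT(X)$. Its restriction to $X\div\;\sT$ is $\alpha^\sT_0\,e(\op{Ad})$, and it is polynomial, so it does not change $\cH$. Pulling it out of $\int_{[F]^{\vir}}$, since it is a polynomial on $\ft$, gives exactly the stated formula.

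The main obstacle is the vanishing of the unstable contribution in Step 2 in the virtual, possibly non-reduced setting. The section-of-$\op{Ad}$ argument needs to be phrased through virtual pullbacks/refined Euler classes (localised Gysin) on $[X\div\;\sT]^{\vir}$. I would first try to construct the refined class via a Kirwan/Hesselink stratification of the $\sG$-unstable but $\sT$-stable locus, showing that the relevant Euler class is supported on $X^{\sG-\ss}/\;\sT$.
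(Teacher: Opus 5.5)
Your architecture matches the paper's. You compare the virtual classes via Proposition \ref{pro:compatibility_of_obs}, establish $\int_{[X\div\sG]^{\vir}}\alpha_0=\frac{1}{|W|}\int_{[X\div\sT]^{\vir}}e(\op{Ad})\,\alpha^\sT_0$, and then apply Theorem \ref{th:abelian_JK_general} to $e^\sT(\fg_\C/\ft_\C)\,\alpha^\sT$; your Step 3 is exactly the paper's last step. The gap is Step 2, which is the entire content of the reduction, and as written it does not go through:
\begin{itemize}
\item The map $p$ has affine fibres $\sG/\sT$ and is not proper, so $p_*(e(\op{Ad})\cdot p^*\beta)$ is not defined on Chow groups. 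Your Euler characteristic count is a topological fact and yields no Chow identity along a non-proper map.
\item $\sG/\mathsf B$ is not a compactification of $\sG/\sT$; it is the base of the affine bundle $\sG/\sT\to\sG/\mathsf B$.
\item The decisive vanishing on $(X\div\sT)\setminus(X^{\sG-\ss}/\sT)$ is only something you ``would try''. You do not explain where a section of $\bigoplus_{\eps>0}L_\eps$ comes from, since the infinitesimal action of $\mathfrak b$ gives a bundle map into the tangent sheaf rather than a section. Nor do you explain why its zeros would avoid the unstable locus.
\end{itemize}

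You have also misdiagnosed the obstacle. Nothing in the Martin step needs to be virtual: any argument of the kind you envisage (e.g.\ a localised top Chern class) acts on every Chow class, virtual or not. The missing idea, which is how the paper argues, is that Martin's theorem in Maddock's algebraic form (Theorem \ref{martin}) holds for an \emph{arbitrary} $\gamma\in A_*(X\div\sG)$ and an \emph{arbitrary} lift $\widetilde\gamma\in A_*(X\div\sT)$ with $i^*\widetilde\gamma=p^*\gamma$. So independence of the lift, i.e.\ the vanishing of unstable contributions, is built in. One simply takes $\gamma=\alpha_0\cap[X\div\sG]^{\vir}$ and $\widetilde\gamma=\alpha^\sT_0\cap[X\div\sT]^{\vir}$, which are compatible by Proposition \ref{pro:compatibility_of_obs}.

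The actual work is extending Maddock's theorem from projective varieties to projective-over-affine Deligne--Mumford stacks. One passes to coarse moduli spaces (rational coefficients), then to reduced structures, then to irreducible components (splitting $\gamma$ and its lift, and using lift-independence on each component). Finally one reaches the projective case via Lemma \ref{lem:equivariant_compactification}. There, weak $\eta$-semiprojectivity and $H^0(\cO_X)^\sT=\C$ give an embedding $X\Into\PP^m\times W$ with $(\Sym^{>0}W^\vee)^\sT=0$. Hence completing $W$ to $\PP(W\oplus\C)$ with $\cO(1,k)$, $k\gg0$, changes neither the $\sT$- nor the $\sG$-semistable locus. Your proposal never addresses the noncompactness of $X$, which is precisely where Maddock's hypotheses fail.

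A smaller point in Step 1: the $W$-conjugates of $\sT$ are $\sT$ itself. What is needed is that a maximal torus of the stabiliser of a polystable point, which is reductive by Matsushima, can be conjugated into $\sT$, as in the proof of Lemma \ref{check}.
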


As in the abelian case we can replace $\alpha$ by $\alpha e^{c_1^\sG(L)}$ throughout. We will deduce Theorem \ref{th:JK} from Theorem \ref{th:abelian_JK_general} using the diagram \eqref{eq:abelianisation_diagram},
\beq{diagram}
\begin{tikzcd}[row sep=18pt]
X^{\sG-\ss}/\;\sT\ \ar[r, hook, "i"]\ar[d, "p"]& X\div\;\sT \\
X\div\sG.\!
\end{tikzcd}
\eeq
Note $p$ is smooth (but not proper!) and $i$ an open embedding. Since both are flat we can  pull back Chow homology classes as well as cohomology classes. 
The following result of Martin \cite{Martin} and Maddock \cite{Maddock}, extended from projective varieties $X$ to projective-over-affine Deligne-Mumford stacks $X$, expresses integrals over $X\div\sG$ in terms of integrals over $X\div\;\sT$.

\begin{theorem}\label{martin}
If $\;\widetilde{\gamma} \in A_\ast(X\div\;\sT)$ lifts $\gamma \in A_\ast(X\div\sG)$ in the sense that $i^*\widetilde\gamma=p^*\gamma$ then 
\beq{eq:martin}
        \deg(\gamma) \= \frac{1}{\vert W \vert} \deg\bigl( e(\op{Ad}) \cap \widetilde{\gamma}\bigr),\ \text{ where }\ \op{Ad}\,:=\,X^{\sT-\ss} \times\_\sT (\fg_\C/\ft_\C).
\eeq
Here $\sT\acts(\fg_\C/\ft_\C)$ via the adjoint action and $\deg$ denotes the pushforward to a point.
\end{theorem}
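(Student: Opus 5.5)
The plan is Martin's argument in Maddock's algebraic form. We compare $X\div\;\sT$ with the flag bundle over $X\div\sG$ and push the Euler class of $\op{Ad}$ along the fibres.

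First we set up the geometry. Let $U:=X^{\sG-\ss}/\;\sT\subseteq X\div\;\sT$ be the open image of $i$. Then $p\colon U\to X\div\sG$ factors as
$$
U\To X^{\sG-\ss}\times_\sG(\sG/\sB)\To X\div\sG,
$$
where $\sB\supset\sT$ is a Borel subgroup. The first map is an affine bundle with fibre $\sB/\sT$, hence induces an isomorphism on Chow groups. The second map $q$ is the flag bundle with fibre $\sG/\sB$, which is smooth and proper. On the flag bundle, $\op{Ad}|_U$ corresponds to the relative tangent bundle $T_q$ plus its dual, up to the identification $\fg/\ft\cong\fn\oplus\fn^-$. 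The standard computation is then
$$
q_*\(e(\fn^-\text{-part})\cap q^*\gamma\)=|W|\,\gamma,\qquad \deg\(e(\fn^-)\)=\chi(\sG/\sB)=|W|,
$$
so that $q_*(e(\op{Ad})\cap q^*\gamma)=(-1)^N|W|\gamma$ with the signs fixed by the roots. I would check the sign against the convention $e^\sT(\fg/\ft)=\prod\eps_i$; the statement has no sign, so the orientations should cancel.

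The main obstacle is that $p$ is not proper and $\widetilde\gamma$ is only determined on $U$. We must show that $\deg(e(\op{Ad})\cap\widetilde\gamma)$ depends only on $i^*\widetilde\gamma$. By the localisation sequence $A_*(Z)\to A_*(X\div\;\sT)\to A_*(U)\to0$ with $Z$ the unstable complement, it is enough to show that $e(\op{Ad})$ kills classes supported on $Z=(X^{\sT-\ss}\setminus X^{\sG-\ss})/\;\sT$. Following Maddock, I would stratify $Z$ using the Kirwan--Hesselink stratification of $X^{\sT-\ss}$ for the $\sG$-action. Each point of $Z$ is destabilised by a $\sG$-cocharacter conjugate into $\sT$ that is not already $\sT$-destabilising. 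On each stratum, the bundle $\op{Ad}$ then acquires a nowhere vanishing section, or a trivial summand. This is exactly where the finite generation assumptions enter: the Hilbert--Mumford argument needs the projective-over-affine hypothesis plus weak $\eta$-semiprojectivity to guarantee limits exist. Concretely, for $x$ unstable for $\sG$ but stable for $\sT$, a root vector $v\in\fg_{\eps}$ with $\exp(v)$ moving $x$ across the $\sT$-GIT wall gives a nonzero equivariant section of the root line $X^{\sT-\ss}\times_\sT\fg_{\eps}$ over that stratum. That forces $e(\op{Ad})\cap[\,\cdot\,]=0$ for cycles there, by induction over strata.

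Finally, I would combine the two steps. Taking any lift $\widetilde\gamma$, the class $e(\op{Ad})\cap\widetilde\gamma$ pushed to a point agrees with the same class for the canonical lift. That canonical lift is the closure of $p^*\gamma$, and its degree is computed on the flag bundle. There the degree equals $\deg(q_*(e(\op{Ad})\cap q^*\gamma))=|W|\deg\gamma$, since the part of $Z$ lying in the closure contributes nothing by the previous step. Projectivity of $X\div\;\sT$ and $X\div\sG$, from Lemma \ref{check}, makes all the degrees well defined.
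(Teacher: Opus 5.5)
Your two-step shape is reasonable: show the degree is independent of the lift, then evaluate it on one lift. But the evaluation step, which is the real content of Martin's theorem, is missing, and the formula you write for it is wrong. The flag bundle $q\colon X^{\sG-\ss}\times_{\sG}\sG/\mathsf B\to X\div\sG$ has relative dimension $N$, the number of positive roots, whereas $\op{Ad}$ has rank $2N$. So $q_*(e(T_q)\cap q^*\gamma)=|W|\gamma$ absorbs only half of $e(\op{Ad})=\prod\eps_i$. Your identity $q_*(e(\op{Ad})\cap q^*\gamma)=(-1)^N|W|\gamma$ is off by $N$ in dimension.

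The other $N$ roots have to be accounted for by how $U=X^{\sG-\ss}/\sT$ sits inside its compactification $X\div\sT$, and nothing in the proposal does this. $\deg$ cannot be computed on the noncompact $U$. The only map to the flag bundle is the affine bundle $U\to X^{\sG-\ss}/\mathsf B$, which lives on $U$ and gives no way to evaluate $\deg(e(\op{Ad})\cap\overline{p^*\gamma})$. Your vanishing step shows only that this number is independent of the lift. It does not split it into a ``$U$ part'' computed on the flag bundle plus a ``$Z$ part''.

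In Martin's symplectic proof the missing $N$ roots are Poincar\'e dual to the compact submanifold $\mu_\sG^{-1}(0)/\sT_\R$, a copy of the flag bundle inside $X\div\sT$. Algebraically there is in general no such subvariety. Already for $\sG=SL_2$ the affine bundle $\sG/\sT=(\PP^1\times\PP^1)\setminus\mathrm{diagonal}\to\PP^1=\sG/\mathsf B$ has no section, since every endomorphism of $\PP^1$ has a fixed point. Supplying an algebraic substitute is exactly what Maddock's argument does, and that is the step you skip.

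The vanishing step is also only a sketch. Choosing a root vector $v\in\fg_\eps$ point by point does not define an algebraic section of $X^{\sT-\ss}\times_{\sT}\fg_\eps$, and the root varies with the point. An induction over strata would work like this: kill one factor on the open stratum, so the class comes from the boundary, then kill another factor there. That needs the factors used on different strata to be distinct roots, since each root occurs only once in $e(\op{Ad})$, and you do not arrange this.

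For comparison, the paper does not reprove the theorem. It reduces to Maddock's result for projective varieties in four steps:
\begin{enumerate}
\item pass to coarse moduli spaces, using rational coefficients;
\item pass to reduced structures;
\item pass to irreducible components, where independence of the lift is itself taken from Maddock on each component;
\item compactify $X\subset\PP^m\times W\subset\PP^m\times\PP(W\oplus\C)$ with $\cO(1,k)$, $k\gg0$, via Lemma \ref{lem:equivariant_compactification}.
\end{enumerate}
The last step is where projectivity of $X\div\sT$ and weak $\eta$-semiprojectivity are used. They give $(\Sym^{>0}W^\vee)^\sT=0$, so the added boundary is unstable and the $\sT$- and $\sG$-quotients are unchanged. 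They are not used to produce Hilbert--Mumford limits. To close the gap, either make these reductions and cite Maddock, or give a genuine algebraic replacement for the Poincar\'e-duality step.
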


We combine this with Theorem \ref{th:abelian_JK_general} to prove Theorem \ref{th:JK} as follows. By Proposition \ref{pro:compatibility_of_obs},
		\begin{align*}
			p^\ast [X\div\sG]^{\vir} \= i^\ast [X\div\;\sT]^{\vir}\ \text{ in }\ A_\ast(X^{\sG-\ss}/\;\sT).
		\end{align*}
		 Choose $\alpha \in H^\ast_\sG(X)$ inducing $\alpha^\sT\in H^\ast_\sT(X)$ and, by \eqref{induced_class}, $\alpha\_0\in H^\ast(X\div\sG)$ and $\alpha_0^\sT \in H^\ast(X\div\;\sT)$. Since these satisfy $p^*\alpha\_0= i^*\alpha_0^\sT$,
$$
i^*\(\alpha_0^\sT)\cap[X\div\;\sT]^{\vir}\)\=p^*\(\alpha_0\cap[X\div\sG]^{\vir}\),
$$
so we can apply Theorem \ref{martin} to deduce the required formula,
$$
\int_{[X\div \sG]^{\vir}}\!\alpha\_0\ \stackrel{\eqref{eq:martin}}=\ \frac1{|W|}\!\int_{[X\div \sT]^{\vir}}\!\! e(\op{Ad})\cup\alpha^\sT_0\ \stackrel{\eqref{JKab}}=\ \frac1{|W|}\hspace{-3mm}\sum_{\substack{F\subseteq X^\sT\\\
            \langle \mu(F), \eta \rangle >0}}\hspace{-3mm} \JK^{\eta}_{\mu(F)+\eps}\int_{[F]^{\vir}}\!\! \frac{e^\sT(\fg_\C/\ft_\C)\alpha^\sT\vert_{F}}{e^\sT(N^{\vir}_{F/X})}\,.
$$
So after proving Lemma \ref{check} the rest of this Section will be devoted to showing we can deduce Theorem \ref{martin} from Maddock's proof of it for projective varieties.

\subsection*{Proof of Lemma \ref{check}}
To show $\sG$-semistable points are stable we will show the stabiliser group $\sG_x\subseteq\sG$ of any $\sG$-polystable point $x$ is finite. By Matsushima $\sG_x$ is reductive, so it is sufficient to show its maximal torus $\sT(\sG_x)$ is trivial. We may assume $\sT(\sG_x)$ lies in the maximal torus $\sT\subset\sG$ on replacing $x$ by some $g\cdot x$, because this conjugates $\sG_x$ to $\sG_{gx}=g\sG_xg^{-1}$. So $\sT(\sG_x)\subseteq\sT_x$, which is finite. Thus both $X\div\,\sT$ and $X\div\sG$ are Deligne-Mumford stacks.

By Proposition \ref{pro:proj_criterion} weak $\eta$-semiprojectivity implies that $X\div\;\sT$ is projective and the restriction map $r$ in the diagram
$$
\begin{tikzcd}[column sep=20pt]
    H^0(X,\cO_X)^\sG \arrow[rr, "R"]\arrow[d, hook'] && H^0(X^{L,\sG-\ss},\cO_{X^{L,\sG-\ss}})^\sG\arrow[d, hook', shorten <= -.6mm]\\
    H^0(X,\cO_X)^\sT \arrow[r, "r"] & H^0(X^{L,\sT-\ss},\cO_{X^{L,\sT-\ss}})^\sT \arrow[r] & H^0(X^{L,\sG-\ss},\cO_{X^{L,\sG-\ss}})^\sT
\end{tikzcd}
$$
has finite rank. Thus $R$ does too, so $X\div\;\sG$ is also projective by Proposition \ref{pro:proj_criterion}.

\subsection*{From stacks to schemes}
Since we always work with rational coefficients, pullback along the projection  $\pi:X\to|X|$ from $X$ to its coarse moduli space induces isomorphisms   
$$
\pi^*\,:\,H^*(|X|)\rt\sim H^*(X)\ \text{ and }\ \pi_*\,:\,A_*(X)\rt\sim A_*(|X|)
$$
such that $\langle \pi^*a,\gamma\rangle=\langle a,\pi_*\gamma\rangle$. Moreover the diagram \eqref{diagram} maps commutatively to the corresponding diagram of coarse moduli spaces,
$$
\begin{tikzcd}[row sep=18pt]
{|X^{\sG-\ss}/\;\sT|\ }\ar[r, hook, "i"]\ar[d, "p", shorten <= -1mm]& {|X\div\;\sT|} \\
{|X\div\sG|.\!}
\end{tikzcd}
$$
Thus if $\widetilde\gamma$ lifts $\gamma$ in the sense that $p^*\gamma=i^*\widetilde\gamma$ then the same is true of the classes $\pi_*\widetilde\gamma,\,\pi_*\gamma$ on the coarse moduli spaces. So to prove Theorem \ref{martin} for projective-over-affine Deligne-Mumford stacks it is sufficient to prove it for projective-over-affine schemes. 

\subsection*{From schemes to varieties} Chow groups are insensitive to nonreduced structure, so it is sufficient to prove Theorem \ref{martin} for \emph{reduced} projective-over-affine schemes.

\subsection*{Irreducibility} We next show that to prove Theorem \ref{martin}  it is is sufficient to assume $X$ is irreducible. So write $X=\bigcup_jX_j$ as a finite union of irreducible projective-over-affine varieties, and assume Theorem \ref{martin} holds for each of them.

Write (non-canonically) $\gamma = \sum_{j=1}^n \gamma_j$ for $\gamma_j\in A_\ast(X_j\div \sG)$. To begin with choose the lifts $\overline{p^*\gamma_j}\in A_*(X\div\;\sT)$. By \eqref{eq:martin} for each of these,
$$
        \deg(\gamma) \= \sum_{j=1}^n \deg(\gamma_j) \= \frac{1}{\vert W\vert}\sum_{j=1}^n \deg\bigl(e(\op{Ad}) \cap \overline{p^*\gamma_j}\,\bigr) \= \frac{1}{\vert W\vert}\deg\Bigl(e(\op{Ad}) \cap \sum_{j=1}^n \overline{p^*\gamma_j} \Bigr).
$$
This is what we wanted to prove if the given lift $\widetilde\gamma$ is $\sum_j \overline{p^*\gamma_j}$. But even if not, their difference has $i^*=0$, so can be taken to lie in the complement of $X^{\sG-\ss}/\;\sT\subset X\div\;\sT$.
Writing it (non-canonically) as $\sum_j\beta_j$ with $i^*\beta_j=0$, another application of Theorem \ref{martin} to $X_j$ gives
\begin{equation*}
\deg\(e(\op{Ad})\cap\widetilde\gamma\)-\deg\left(e(\op{Ad})\cap\sum\nolimits_j \overline{p^*\gamma_j}\right)\=\sum\nolimits_j \deg\(e(\op{Ad})\cap\beta_j\)=0.
\end{equation*}

\subsection*{From projective-over-affine to projective}
So now consider an irreducible polarised projective-over-affine $\sG$-variety $(X,L)$ such that $X\div\;\sT$ is projective. By Proposition \ref{pro:proj_criterion}\;(6) this means  $H^0(X,\cO_X)^\sT \cong \C$. Since we also assume $(X,L)$ is weakly $\eta$-semiprojective we have a 1-parameter subgroup $\wt\eta:\C^*\into\sT$ all of whose nonzero weights on $H^0(\cO_X)$ are strictly negative. We will show how to use this to equivariantly complete $(X,L)$ to a projective variety $\(\;\overline X,\overline L\;\)$ with the same (semi)stable points and GIT quotients to which we can apply Maddock's proof of Theorem \ref{martin} to prove it for $(X,L)$.

Since $H^0(\cO_X)$ is finitely generated it contains a finite dimensional subspace $W^\vee$ which generates. We can take it to be preserved by $\sG$ by \cite[Proposition 4.6]{mukai2003}. We can also assume it does not contain the constants $\C$ by splitting them off if necessary. Therefore $(W^\vee)^\sT\subseteq H^0(X,\cO_X)^\sT \cong \C$ is zero. Since its $\wt\eta$-weights are all strictly negative we see that more is true: $(\Sym^{>0}W^\vee)^\sT=0$.

So now replacing $L$ by a power if necessary, there exists an equivariant embedding $(X,L) \hookrightarrow (\PP^m \times W, \cO(1))$, where $\sG$ acts linearly on both factors and $(\Sym W^\vee)^\sT=(\Sym^0W^\vee)^\sT=\C$ . Letting $\overline W:=\PP(W\oplus\C)$ be its projective completion we get, for any $k>0$, a $\sG$-equivariant embedding
$$
\(\;\overline X,\overline L\;\)\ \Into\ \(\PP^m \times \overline W, \cO(1,k)\).
$$
For $k\gg0$, by Lemma \ref{lem:equivariant_compactification} below,
$$
\overline X^{\;\overline L,\sT-\ss}\=\overline X\cap(\PP^m \times \PP^n)^{\cO(1,k),\sT-\ss}\=\overline X\cap(\PP^m \times \AA^n)^{\cO(1),\sT-\ss}\=X^{L,\sT-\ss}
$$
for $k\gg0$, where the first and last equalities hold by Remark \ref{rem:ss_on_closed}. Since the same holds for the $\sG$-semistable locus, Theorem \ref{martin} for $(X,L)$ follows from Theorem \ref{martin} for $\(\overline X,\overline L\)$.

\begin{lemma}\label{lem:equivariant_compactification}
    Let $\sG$ be a reductive group with representations $V,\,W$ such that 
$(\Sym W^\vee)^\sG \cong \Sym^0 W^\vee\cong\C$. Then for $k\gg0$ the projective completion $W\subset\overline W:=\PP(W\oplus\C)$ induces
$$
\(\PP(V)\times W\)^{\cO(1)-\ss}\ \cong\ 
\(\PP(V)\times\overline W\;\)^{\cO(1,k)-\ss}.
$$
In particular, $\(\PP(V)\times W\)\div\sG\cong\(\PP(V)\times\overline W\;\)\div\sG$.
\end{lemma}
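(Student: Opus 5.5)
We compare invariant sections on both sides degree by degree and then use the characterisation of semistability by nonvanishing of invariant sections.

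\textbf{Sections.} A section of $\cO(\ell,\ell k)$ on $\PP(V)\times\overline W$ is a tensor $s\otimes g$ with $s\in\Sym^\ell V^\vee$ and $g$ homogeneous of degree $\ell k$ in $W^\vee\oplus\C w_0$, where $w_0$ is the coordinate cutting out the hyperplane at infinity. Dehomogenising by $w_0$ identifies
$$H^0\(\cO(\ell,\ell k)\)\ \cong\ \Sym^\ell V^\vee\otimes\Sym^{\le \ell k}W^\vee\ \subseteq\ H^0\(\PP(V)\times W,\cO(\ell)\),$$
equivariantly. So $\sG$-invariant sections upstairs are exactly those invariants of $\Sym^\ell V^\vee\otimes\Sym W^\vee$ whose $W$-degree is at most $\ell k$.

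\textbf{The key finiteness step (main obstacle).} We must show that for $k\gg0$, independent of $\ell$, every invariant in $\Sym^\ell V^\vee\otimes\Sym W^\vee$ with $\ell>0$ is, up to radicals, generated by invariants of $W$-degree $\le\ell k$. The ring $R=(\Sym V^\vee\otimes\Sym W^\vee)^\sG$ is finitely generated by Hilbert's theorem. Choose bihomogeneous generators $f_i$ of bidegree $(a_i,b_i)$.

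The hypothesis $(\Sym W^\vee)^\sG=\C$ says the only generators with $a_i=0$ are constants. Hence every nonconstant generator has $a_i\ge1$. Set $k:=\max_i b_i/a_i$. Each monomial $\prod f_i^{m_i}$ then has bidegree $(\sum m_ia_i,\sum m_ib_i)$ with $\sum m_ib_i\le k\sum m_ia_i$. Consequently every element of $R$ of $V$-degree $\ell$ lies in $W$-degree $\le\ell k$.

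This gives the equality of graded rings
$$\bigoplus_\ell H^0\(\cO(\ell,\ell k)\)^\sG\ =\ \bigoplus_\ell H^0\(\PP(V)\times W,\cO(\ell)\)^\sG.$$
It also gives equality of the quotients, since $\mathrm{Proj}$ of this ring is the GIT quotient on both sides.

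\textbf{Semistable loci.} A point of $\PP(V)\times W$ is semistable in the affine model if some invariant $f$ of positive $V$-degree is nonzero there. By the previous step, $f$ extends to an invariant section of $\cO(\ell,\ell k)$, nonzero at the same point, since $w_0\ne0$ on $W$. Hence $\(\PP(V)\times W\)^{\ss}\subseteq\(\PP(V)\times\overline W\)^{\ss}$.

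Conversely, take a semistable point at infinity, where $w_0=0$. Some invariant $s\otimes g$ is nonzero there, with $g$ of degree exactly $\ell k$ in the $W^\vee$ variables. Its dehomogenisation is an invariant of $V$-degree $\ell$ and $W$-degree exactly $\ell k$, so it must be a combination of monomials $\prod f_i^{m_i}$ attaining the maximal slope.

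To rule this out, enlarge $k$ slightly: replace $k$ by any $k>\max_i b_i/a_i$. Then every invariant of $V$-degree $\ell$ has $W$-degree strictly less than $\ell k$. Every invariant section upstairs is therefore divisible by $w_0$ and vanishes on the boundary, so no point at infinity is semistable.

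This shows the semistable loci coincide for all $k\gg0$ (any $k>\max b_i/a_i$). The isomorphism of quotient stacks $\(\PP(V)\times W\)\div\sG\cong\(\PP(V)\times\overline W\)\div\sG$ follows, since both are $X^{\ss}/\sG$ for the same open set.
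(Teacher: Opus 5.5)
Your proof is correct and is essentially the paper's argument. The paper also picks bihomogeneous generators $f_i$ of $(\Sym V^\vee\otimes\Sym W^\vee)^\sG$ of bidegree $(d_i,e_i)$, uses the hypothesis to get $d_i>0$, and fixes $k$ with $kd_i>e_i$ for all $i$. The invariant sections $z^{kd_i-e_i}f_i$ of $\cO(d_i,kd_i)$ then restrict onto the generators $f_i$, giving the isomorphism of invariant rings, and they generate the ring upstairs while vanishing on $\{z=0\}$; this is your slope bound phrased via generators, with the strict inequality $k>\max_i b_i/a_i$ you settled on being exactly the condition used.
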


\begin{proof}
    Consider the $\sG$-invariant coordinate ring
    \begin{align*}
        R\ :=\ \bigoplus\nolimits_{d=0}^\infty H^0\bigl(\cO_{\PP(V)\times W}(d)\bigr)^\sG\ \cong\ (\Sym V^\vee \otimes \Sym W^\vee)^\sG,
    \end{align*}
    bigraded by the scalar $\C^\ast$ actions on $V$ and $W$. It is finitely generated since $\sG$ is reductive so we may choose homogeneous generators $f_1, \dots, f_\ell \in R$. Since $(\Sym W^\vee)^\sG \cong \C$ their bidegrees $(d_i,e_i)$ have $d_i>0$ so we may fix $k\gg0$ with $kd_i>e_i$ for every $i$. Set
    \begin{align*}
        \overline R\ :=\ \bigoplus\nolimits_{d=0}^\infty H^0\bigl( \cO_{\PP(V)\times\overline W}(d,kd)) \bigr)^\sG\ \cong\ \bigoplus\nolimits_{d=0}^\infty \Bigl(\Sym^d V^\vee \otimes \bigoplus\nolimits_{e=0}^{kd} z^{kd-e}\!\cdot\Sym^e W^\vee \Bigr)^\sG,
    \end{align*}
where $z$ is the homogeneous coordinate dual to $\C\subset W\oplus\C$. The restriction map\,---\,the obvious injection $\overline R\into R$\,---\,maps
\beq{gens}
z^{kd_i-e_i} f_i\Mapsto f_i
\eeq
onto the generators $f_1, \dots, f_\ell$. Hence it is an isomorphism and the two GIT quotients are isomorphic. Moreover it follows that the $z^{kd_i-e_i} f_i$ of \eqref{gens} generate $\overline R$\;, but they all vanish on $\lbrace z=0\rbrace = \PP(V)\times\(\;\overline W\setminus W\)$ which is therefore in the strictly unstable locus.
\end{proof}

\section{\texorpdfstring{$(-2)$}{(-2)}-shifted symplectic structures on quotients}
\label{sec:derived_symp_red}

A slight strengthening of the existence of a perfect obstruction theory on a Deligne-Mumford stack $X$ is the existence of a quasi-smooth derived structure $\sX$ which induces it. That is, $X=t_0(\sX)$ should be the truncation of $\sX$ and the perfect obstruction theory should be the map $\EE:=\LL_{\sX}|_X\to\LL_X$ induced by the inclusion $X\subset\sX$. We did not need such a derived enhancement in the last Section because Behrend-Fantechi virtual cycles depend only on the perfect obstruction theory.

The construction of the virtual cycles of \cite{BorisovJoyce,OhThomas}, however, does require a (slight) derived enhancement of a symmetric 3-term obstruction theory\,---\,a \emph{$(-2)$-shifted symplectic structure} $\sX$ on $X=t_0(\sX)$ whose restriction to $X$ induces the symmetric obstruction theory. We briefly survey what we need of this theory.

\begin{remark} In this Section and the next we assume all our Deligne-Mumford stacks are quasi-projective \emph{in the strong sense of} \cite{Kr2} so the resolution property holds. This is the generality in which Kiem and Park \cite{KP} show the virtual cycle \cite{OhThomas} can be defined. Conveniently, if $(X,L)$ is quasi-projective in this strong sense and $\sG$ acts with all semistable points being stable then $X\div\sG$ is also quasi-projective by \cite[Proposition 5.1]{Kr2}. The applications we have in mind are all $(-2)$-shifted symplectic derived \emph{schemes} anyway; the prototypical example being the derived moduli scheme $\sX$ of stable sheaves on a Calabi-Yau fourfold \cite{ParkYou}.
\end{remark}

    \subsection{Shifted symplectic structures}
For a derived Artin stack $\sX$ with cotangent complex $\LL_\sX$ and dual $\TT_\sX$, let $\widehat{\Wedge\udot}\;\LL_\sX$ denote its (Hodge completed) de Rham complex
$$
\Big(\prod\nolimits_{i\ge0}\Wedge^i\;\LL_\sX[-i],\,d_{\mathrm{dR}}\Big)\=\
\lim_{\stackrel\longleftarrow n}\Big(\cO_\sX \xrightarrow{d_{\mathrm{dR}}} 
\LL_\sX \xrightarrow{d_{\mathrm{dR}}} \dots \xrightarrow{d_{\mathrm{dR}}} \Wedge^{n} \;\LL_\sX \Big),
$$
defined using the direct product (not direct sum). Similarly we denote its truncations by
$$
\widehat{\Wedge^{\ge p}}\;\LL_\sX\,=\,\Big(\prod\nolimits_{i\ge p}\Wedge^i\;\LL_\sX[-i],\,d_{\mathrm{dR}}\Big) \quad\text{ and }\quad \Wedge^{<p}\;\LL_\sX\,=\,\Big(\bigoplus\nolimits_{i<p}\Wedge^i\;\LL_\sX[-i],\,d_{\mathrm{dR}}\Big).\vspace{-2mm}
$$
These fit in the obvious exact triangle $\widehat{\Wedge^{\ge p}}\;\LL_\sX\to\widehat{\Wedge\udot}\;\LL_\sX\to\Wedge^{<p}\;\LL_\sX$, inducing
\beq{dRtri}
\dots\to\HH^k\Big(\widehat{\Wedge^{\ge p}}\;\LL_\sX\Big)\To H^k_{\mathrm{dR}}(\sX)\To\HH^k\Big(\Wedge^{<p}\;\LL_\sX\Big)\To\HH^{k+1}\Big(\widehat{\Wedge^{\ge p}}\;\LL_\sX\Big)\to\dots.
\eeq
A \emph{$(-n)$-shifted closed $p$-form} is defined to be an element 
$$
\omega\ \in\ \HH^{p-n}\Big(\widehat{\Wedge^{\geq p}}\;\LL_\sX\Big).
$$
It is called \emph{exact} if its de Rham class $[\omega]\in H^{p-n}_{\mathrm{dR}}(\sX)$ vanishes, i.e. $\omega$ lifts to $\HH^{p-n-1}\(\Wedge^{<p}\;\LL_\sX\)$ by \eqref{dRtri} (so there is a primitive $\phi\in\HH^{p-n-1}\(\Wedge^{<p}\;\LL_\sX\)$ with $d_{\mathrm{dR}}\phi=\omega$ in $\widehat{\Wedge\udot}\;\LL_\sX$).
Via $\widehat{\Wedge^{\geq p}}\;\LL_\sX \to \Wedge^p \LL_\sX[-p]$ the closed form $\omega$ induces a \emph{shifted $p$-form} $\omega_0 \in\HH^{-n}(\Wedge^{p}\;\LL_\sX)$.
A $(-n)$-shifted closed $2$-form $\omega$ is called \emph{symplectic} if
\beq{nss}
\omega_0 : \TT_\sX[n] \longrightarrow \LL_\sX\ \text{ is an isomorphism.}
\eeq
\subsection{\texorpdfstring{$(-2)$}{-2}-shifted symplectic structures and symmetric obstruction theories}
Suppose $(\sX,\omega)$ is a $(-n)$-shifted symplectic derived Deligne-Mumford stack with the resolution property. Thus $\LL_\sX\vert_X$ admits a  locally free resolution supported in degrees $(-\infty,0]$. Then dualising, shifting by $[n]$ and using the isomorphism \eqref{nss} gives a quasi-isomorphic complex of locally frees supported in $[-n,\infty)$. Therefore $n\ge0$ and the complex may be truncated to a quasi-isomorphic complex $\EE\cong\LL_\sX\vert_X$ of locally free sheaves supported in degrees $[-n,0]$. When $n=2$ we may furthermore assume, by \cite[Proposition 4.1]{OhThomas} or \cite[Proposition 7.3]{KP}, that $\EE$ is self-dual, with the isomorphism \eqref{nss} represented by a genuine map of complexes
\begin{equation}\label{diag:dueldot}
    \begin{tikzcd}[row sep=tiny]
        \EE^\vee[2] \arrow[dd, "\omega_0"']& & (T^\vee)^\vee \arrow[r, "(a^\vee)^\vee"] \arrow[dd, equal] & E^\vee \arrow[dd, "{\rotatebox{90}{$\sim$}}"', "q"] \arrow[r, "a^\vee"] & T^\vee \arrow[dd, equal, xshift=-3pt]\\  &= & & & \\
        \EE & & T \arrow[r, "a"]& E \arrow[r, "a^\vee"]& T^\vee.\!\!
    \end{tikzcd}
\end{equation}
Here $q\colon E^\vee \xrightarrow{\sim} E$ is a nondegenerate quadratic form on $E$ and $a^\vee$ is the adjoint of the map $a$ with respect to this isomorphism. This self-dual complex 
\begin{equation}\label{eq:3term:pot}
    \EE\=\LL_\sX|_X\To\LL_X
\end{equation}
provides the symmetric 3-term obstruction theory used to produce a virtual cycle when $X$ is a scheme \cite{OhThomas} or a Deligne-Mumford stack \cite{KP}.

\subsection{Shifted Lagrangians}
Suppose $(\sX,\omega)$ is $(-n)$-shifted symplectic. An isotropic structure on a morphism $f \colon \sL \to \sX$ is a class
\beq{isotrop}
\eta\ \in\ \HH^{2-n}\Big(\mathrm{Cocone}\Big(\widehat{\Wedge^{\geq 2}}\;\LL_\sX \to f_*\,\widehat{\Wedge^{\geq 2}}\;\LL_\sL\Big) \Big)\ \text{ lifting }\ \omega \in \HH^{2-n}\Big(\widehat{\Wedge^{\geq 2}}\;\LL_\sX\Big).
\eeq
By adjunction and the morphism of exact triangles
$$
\xymatrix@R=14pt{
\text{Cocone} \ar[r] \ar[d] &  f^*\widehat{\Lambda^{\geq 2}}\;\mathbb{L}_\sX \ar[r] \ar[d] & \widehat{\Lambda^{\geq 2}}\;\mathbb{L}_\sL \ar[d] \\
\mathbb{L}_f \otimes \mathbb{L}_\sL[-3] \ar[r]& f^* \mathbb{L}_\sX \otimes \mathbb{L}_\sL[-2] \ar[r] & \mathbb{L}_\sL \otimes \mathbb{L}_\sL[-2]
}
$$
we obtain an element $\overline\eta\in\HH^{-n-1}\(\LL_f\otimes\LL_\sL\)$.
We say $(f,\eta)$ is \emph{shifted Lagrangian} if 
$$
\overline\eta\,\colon\TT_\sL \To\LL_f[-n-1]\ \text{ is an isomorphism.}
$$

\subsection{Shifted symplectic reduction}\label{subsection:sympl_reduction}
Suppose a reductive group $\sG$ acts on a $(-n)$-shifted symplectic derived stack $(\sX,\omega)$, preserving $\omega$ in the strong sense\footnote{This is a shifted analogue of an exact or hamiltonian action, which Park calls 0-locked.} of \cite[Section 3.3]{Park}. That is, the induced morphism $\sX/\sG \to B\sG$
carries a relative $(-n)$-shifted symplectic structure $\omega_{\mathrm{rel}}$ such that $\sigma^*(\sX/\sG,\omega_{\mathrm{rel}}) = (\sX,\omega)$
via the Cartesian diagram
\beq{equivariant}
\begin{tikzcd}
\sX \ar[r]\ar[d]\arrow[dr,phantom,"\square"] & \mathrm{pt} \ar[d,"\sigma"] \\
\sX/\sG \ar[r] & B\sG.\!\!
\end{tikzcd}
\eeq
From now on assume $n>0$. This implies the de Rham cohomology class of the $(-n)$-shifted 2-form $\omega$ vanishes\,---\,this goes back to \cite[Corollary 5.3]{Toen} and a careful proof is given in \cite[Proposition 3.2]{KPS}.  So $\omega$ is exact; in fact \cite[Proposition 3.2]{KPS} shows it is in the image of a unique primitive in $\HH^{1-n}(\Wedge^{<2}\;\LL_\sX)$ if $n\ge2$ and a canonical primitive if $n=1$. Similarly $\omega_{\mathrm{rel}}$ is also exact because the reductivity of $\sG$ gives
$$
H^{2-n}_{\mathrm{dR}}\bigl((\sX/\sG)/B\sG\bigr)\ \cong\ H^{2-n}_{\mathrm{dR}}(\sX)^{\sG}
$$
so the vanishing of the de Rham class of $\omega$ gives the vanishing of the relative de Rham class of $\omega_{\mathrm{rel}}$.
The primitives define a moment map for the action of $\sG$  \cite[Section 3.3]{Park}: a $\sG$-equivariant morphism
\begin{eqnarray}
\mu &\!\!:\!& \sX \To \fg^\vee[-n] \label{mu} \\
\text{inducing}\quad \mu/\sG &\!\!:\!& \sX/\sG \To \fg^\vee[-n]/\sG \= T^*[1-n]B\sG \label{muG}
\end{eqnarray}
endowed with a Lagrangian structure $\eta$ \eqref{isotrop} together with an equivalence of $(-n)$-shifted symplectic derived stacks
$$
\sX\ \cong\ \sX/\sG \times\_{\fg^\vee[-n]/\sG} \fg^\vee[-n].
$$
The right hand side uses the $(1-n)$-shifted symplectic structure on $T^*[1-n]B\sG=\fg^\vee[-n]/\sG$ to induce\,---\,by \cite[Theorem 2.9]{PTVV}\,---\,a $(-n)$-shifted symplectic structure on the Lagrangian intersection (fibre product) of \eqref{muG} and the Lagrangian morphism $\fg^\vee[-n] \to \fg^\vee[-n]/\sG$.
By \cite[Definition 3.3.1]{Park}, the shifted symplectic reduction $(\sX/\sG)_{\sr}$ of \cite{calaque2015lagrangian, safronov2016quasi,Park}\footnote{We use the suffix sr for (shifted) symplectic reduction, reserving the suffix red for the underlying reduced structure on a scheme or Deligne-Mumford stack.}  is then the Lagrangian intersection of \eqref{muG} and $0:B\sG\to\fg^\vee[-n]/\sG$,
\beq{Xred}
(\sX/\sG)_{\sr}\ :=\ \sX/\sG \times\_{\fg^\vee[-n]/\sG}B\sG.
\eeq
Again this is naturally $(-n)$-shifted symplectic by \cite[Theorem 2.9]{PTVV}.

    \begin{lemma}\label{reduction_and_truncation}
            The classical truncation of the shifted symplectic reduction $(\sX/\sG)_{\sr}$ is the stack quotient of the truncation $X:=t_0(\sX)$ of $\sX$:
            \begin{align*}
                t_0((\sX/\sG)_{\sr})\ \cong\ X/\sG.
            \end{align*}
        \end{lemma}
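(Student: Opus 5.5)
The plan is to compute the truncation of the fibre product \eqref{Xred} in stages, using that classical truncation commutes with homotopy fibre products of derived stacks, $t_0(\mathsf A\times_{\mathsf C}\mathsf B)\cong t_0\mathsf A\times_{t_0\mathsf C}t_0\mathsf B$. This does \emph{not} apply directly, though, since the relevant map is not given by an affine chart. So I would first pass to a smooth atlas. Pull \eqref{Xred} back along the smooth surjection $\mathrm{pt}\to B\sG$, i.e. use the Cartesian square \eqref{equivariant}. This identifies the pullback of $(\sX/\sG)_{\sr}\to B\sG$ with
$$
\sX\times_{\fg^\vee[-n]}\{0\},
$$
the derived zero locus of the moment map \eqref{mu}. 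This is compatible with the $\sG$ actions, so
$$
(\sX/\sG)_{\sr}\cong\big(\sX\times_{\fg^\vee[-n]}0\big)/\sG .
$$
Since truncation commutes with quotients by smooth group actions (it can be checked on the smooth atlas), it suffices to show $t_0\big(\sX\times_{\fg^\vee[-n]}0\big)\cong X$ $\sG$-equivariantly.

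The key point is that $n>0$. The shifted vector space $\fg^\vee[-n]=\Spec\Sym(\fg[n])$ has truncation $\mathrm{pt}$, since its functions are concentrated in strictly negative cohomological degrees below zero and $t_0$ only sees $H^0$. Concretely, $\fg^\vee[-n]$ is the derived self-intersection of the origin iterated, and its underlying classical stack is a point. Hence, locally on an affine derived chart $\Spec A\to\sX$, the fibre product is $\Spec\big(A\otimes^{\mathbb L}_{\Sym(\fg[n])}\C\big)$. Its $H^0$ is $H^0(A)\otimes_{\C}\C=H^0(A)$, because $H^0(\Sym(\fg[n]))=\C$ and the Tor spectral sequence only adds classes in negative degrees. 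Gluing these local computations gives $t_0(\sX\times_{\fg^\vee[-n]}0)=t_0(\sX)=X$ equivariantly. Descending along the atlas then gives $t_0((\sX/\sG)_{\sr})\cong X/\sG$.

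The main obstacle is the first reduction. We must make rigorous that truncation commutes with the stacky quotient and with fibre products over the non-classical stacks $\fg^\vee[-n]/\sG$ and $B\sG$. I would handle this by checking everything after base change to the smooth cover $\sX\to\sX/\sG$, where all the stacks involved become derived affine schemes or $\fg^\vee[-n]$. The degree bookkeeping in the Tor computation is then the only real input, and it is exactly where $n>0$ is used.
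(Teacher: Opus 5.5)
Your argument is correct, but it takes a longer route than the paper's, and the reason you give for the detour is mistaken. The paper's proof is a single application of the fact (To\"en--Vezzosi) that $t_0$ commutes with \emph{all} homotopy limits and colimits of derived stacks. There is no affineness condition on the maps involved. So one may truncate each term of \eqref{Xred} directly. We have $t_0(\sX/\sG)=X/\sG$ and $t_0(B\sG)=B\sG$. Because $n>0$, we also have $t_0(\fg^\vee[-n]/\sG)=t_0(\fg^\vee[-n])/\sG=B\sG$. Hence $t_0((\sX/\sG)_{\sr})\cong X/\sG\times_{B\sG}B\sG=X/\sG$.

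You instead base change along $\mathrm{pt}\to B\sG$ to the derived zero locus $\sZ=\sX\times_{\fg^\vee[-n]}0$. (The paper also uses the identification $(\sX/\sG)_{\sr}=\sZ/\sG$, in its next lemma.) You then check $t_0(\sZ)\cong X$ by the local computation $\pi_0\big(A\otimes^{\mathbb L}_{\Sym(\fg[n])}\C\big)=\pi_0(A)\otimes_{\C}\C=\pi_0(A)$. This is valid. It has the merit of showing exactly where $n>0$ enters, namely $H^0(\Sym(\fg[n]))=\C$. One small correction: the constants sit in degree $0$, so ``concentrated in strictly negative degrees'' should read ``concentrated in degrees $\le 0$ with $H^0=\C$''.

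However, your route does not actually avoid the general fact. Your step ``truncation commutes with quotients'' is an instance of $t_0$ commuting with colimits. Your step ``gluing the local computations'' requires identifying $t_0(\Spec A\times_{\sX}\sZ)$ with $t_0(\Spec A)\times_X t_0(\sZ)$, which is an instance of $t_0$ commuting with fibre products. So what you call the main obstacle is exactly the standard input that, once granted, makes the lemma a three-line computation.
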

        \begin{proof}
            Since the truncation functor commutes with homotopy limits and colimits by \cite[Definition 2.2.4.3 and below]{Toen_homotopical}, we have 
            \begin{align*}
                t_0((\sX/\sG)_{\sr}) &\ \cong\ t_0(\sX/\sG) \times_{t_0(\fg^\vee[-n]/\sG)} t_0(B\sG)\\
               &\ \cong\ t_0(\sX/\sG) \times_{B\sG}B\sG\\
               &\ \cong\ t_0(\sX)/\sG\\
               &\ \cong\ X/\sG. \qedhere
            \end{align*}
        \end{proof}

The case of interest to us is $n=2$. Then the above gives a $(-2)$-shifted symplectic structure on $(\sX/\sG)_{\sr}$ and so,  by \eqref{diag:dueldot}, a 3-term symmetric obstruction theory $\LL_{(\sX/\sG)_{\sr}}\big|_{X/\sG}\to\LL_{X/\sG}$ on $X/\sG=t_0((\sX/\sG)_{\sr})$ whenever this is a scheme or Deligne-Mumford stack. 

        Finally we describe the restriction of the cotangent complex of the derived symplectic reduction to its truncation $t_0((\sX/\sG)_{\sr})=X/\sG$.
        
        \begin{lemma}\label{obstruction_theory_quotient_cy4}
            In $D(X)^\sG$ we have the exact triangle
$$
             \pi^*\(\LL_{(\sX/\sG)_{\sr}}|_{X/\sG}\) \To \cone\Big(\fg\otimes\cO_X[n] \to \LL_{\sX}|_X \Big) \To \fg^\vee\otimes\cO_X.
$$
        \end{lemma}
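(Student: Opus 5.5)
We will compute $\LL_{(\sX/\sG)_{\sr}}$ directly from the fibre product description \eqref{Xred}, working $\sG$-equivariantly on $\sX$. The plan is to pull everything back along the smooth atlas $\pi\colon\sX\to\sX/\sG$ (restricted to $X\to X/\sG$), use the base change property of cotangent complexes for the Cartesian square \eqref{Xred}, and then use the standard exact triangle for $\sX\to\sX/\sG$.

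\textbf{Step 1: base change.} The square \eqref{Xred} is homotopy Cartesian. Its right hand vertical map is $0\colon B\sG\to\fg^\vee[-n]/\sG$, whose relative cotangent complex is the pullback of the cotangent complex of the zero section $0\colon\mathrm{pt}\to\fg^\vee[-n]$. This is $\fg[n+1]$ (the conormal $\fg[n]$ shifted by one), carrying the coadjoint-dual $\sG$ action. By base change, the relative cotangent complex of $(\sX/\sG)_{\sr}\to\sX/\sG$ is $\fg\otimes\cO[n+1]$. This gives the exact triangle
$$
\LL_{\sX/\sG}\big|\To\LL_{(\sX/\sG)_{\sr}}\To\fg\otimes\cO[n+1].
$$
Equivalently, $\LL_{(\sX/\sG)_{\sr}}$ is the cone of a map $\fg\otimes\cO[n]\to\LL_{\sX/\sG}|$. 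That map is the derivative of the moment map \eqref{muG} composed with the pullback of the zero section, i.e. $d\mu^\vee$.

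\textbf{Step 2: pull back to $X$.} Restrict to the truncation $X/\sG$ using Lemma \ref{reduction_and_truncation}, then pull back by $\pi$. We have the triangle $\pi^*\LL_{\sX/\sG}\to\LL_\sX\to\fg^\vee\otimes\cO$, coming from the action. Combining this with Step 1 by the octahedral axiom gives
$$
\pi^*\LL_{(\sX/\sG)_{\sr}}|_{X/\sG}\To\cone\big(\fg\otimes\cO_X[n]\to\LL_\sX|_X\big)\To\fg^\vee\otimes\cO_X.
$$
Here the middle map uses the composite $\fg\otimes\cO[n]\to\pi^*\LL_{\sX/\sG}\to\LL_\sX$, which is the pullback of $d\mu$ under the equivalence $\sX\cong\sX/\sG\times_{\fg^\vee[-n]/\sG}\fg^\vee[-n]$. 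We must check that this composite, followed by $\LL_\sX\to\fg^\vee\otimes\cO$, is nullhomotopic, so that the octahedron applies. It is automatically zero for degree reasons when $n>0$, since $\mathrm{Hom}(\fg\otimes\cO[n],\fg^\vee\otimes\cO)=0$ for locally free sheaves and $n>0$. All constructions are $\sG$-equivariant, so the triangle lives in $D(X)^\sG$.

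\textbf{Main obstacle.} The main difficulty is identifying the map in Step 1 and its compatibility with the infinitesimal action map $\LL_\sX\to\fg^\vee\otimes\cO$. This requires careful bookkeeping of shifts and duals in the Lagrangian intersection, together with the commuting of pullback along $\pi$ with base change. We would handle it by first computing on $\sX$ via the Cartesian square \eqref{equivariant} and the identification of $\sX$ as a fibre product, and then descending.
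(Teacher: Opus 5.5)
Your argument is correct, but it runs in the opposite order to the paper's. The paper first takes the derived zero locus $\sZ=\sX\times_{\fg^\vee[-n]}0$ on $\sX$ itself, so that $(\sX/\sG)_{\sr}=\sZ/\sG$. The triangle $\fg[n]\otimes\cO_\sZ\to\LL_\sX|_\sZ\to\LL_\sZ$ then exhibits $\LL_\sZ$ directly as the cone of $(D\mu)^*$. Substituting this into the quotient triangle $\pi^*\LL_{(\sX/\sG)_{\sr}}\to\LL_\sZ\to\fg^\vee\otimes\cO_\sZ$ gives the lemma at once, with no octahedral axiom, and the map $\fg\otimes\cO[n]\to\LL_\sX$ is $(D\mu)^*$ by construction.

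You instead base change along $0\colon B\sG\to\fg^\vee[-n]/\sG$ in the defining square \eqref{Xred}. This works straight from the definition and does not pass through $\sZ/\sG$. The price is that the two cones sit in different positions, so you need the octahedral axiom. You also need a naturality check that the composite $\fg\otimes\cO[n]\to\pi^*\LL_{\sX/\sG}\to\LL_\sX$ is $(D\mu)^*$. That check does go through. The connecting map for $0\colon B\sG\to\fg^\vee[-n]/\sG$, pulled back to a point and composed into $\LL_{\fg^\vee[-n]}|_0=\fg[n]$, is the identity. The square formed by $\mu$ and $\mu/\sG$ then transports this to $(D\mu)^*$.

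One correction: the nullhomotopy you set out to verify is not needed. The octahedral axiom applies to any composable pair. Moreover, the composite you worry about vanishes automatically, because $\pi^*\LL_{\sX/\sG}\to\LL_\sX\to\fg^\vee\otimes\cO$ are consecutive maps of a triangle. Your degree argument is also only valid after restricting to the classical truncation $X$. On the derived stack, $\Hom(\cO[n],\cO)=H^{-n}(\cO)$ need not vanish.
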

        \begin{proof}
Consider the derived zero locus of the moment map $\mu$ \eqref{mu},
            \begin{align*}
                \sZ\ :=\ \sX \times\_{\fg^\vee[-n]}0,
            \end{align*}
so that $(\sX/\sG)_{\sr}=\sZ/\sG$. These descriptions give the exact triangles of cotangent complexes
$$
\fg[n]\otimes\cO_{\sZ} \rt{(D\mu)^*|_{\sZ}} \LL_{\sX}|_{\sZ} \To \LL_\sZ \quad\text{ and }\quad\pi^*\LL_{(\sX/\sG)_{\sr}} \To \LL_{\sZ} \To \fg^\vee\otimes\cO_{\sZ},
$$
where we use $\pi$ for both the projection $\sZ\to(\sX/\sG)_{\sr}$ and its truncation $X\to X/\sG$. Consider the first as an expression for $\LL_\sZ$; substituting it into the second and
pulling back to $X=t_0(\sZ)$ gives the claimed exact triangle.
        \end{proof}

Though we do not need it, it is true \cite{Pa2} that $(D\mu)^*$ is dual to the derivative of the $\sG$ action $\LL_{\sX}\to\fg^\vee\otimes\cO_{\sX}$, under the shifted pairing on $\LL_{\sX}$ given by the shifted $2$-form $\omega_0$. (In the classical unshifted setting of $n=0$ this is the defining property of the moment map.)

So Lemma \ref{obstruction_theory_quotient_cy4} shows clearly why we take the shifted symplectic reduction. Taking only the quotient by $\sG$ replaces $\LL_\sX$ by the cocone of $\LL_\sX\to\fg^\vee\otimes\cO$, destroying its shifted symmetry. Taking the zeros of the shifted moment map does not alter the classical geometry but it changes the derived structure, removing the dual $(D\mu)^*\colon\fg\otimes\cO_X[n]\to\LL_\sX$ term and restoring the symmetry to give something $(-n)$-shifted symplectic.

\subsection{Descending virtual classes to quotients}\label{subsection:descending_cy4}

We now discuss the application of this derived-geometric machinery to intersection theory.

Assume an action $\sG\acts(X,L)$ on a polarised projective-over-affine scheme\,---\,such that $L$-semistable points are $L$-stable\,---\,lifts to a $(-2)$-shifted symplectic derived enhancement
$$
\sG\,\acts\,(\sX, \omega) \ \text{ with }\ t_0(\sX)\ \cong\ X,
$$
preserving $\omega$ in the strong sense of \eqref{equivariant}.
Suppose we also have an \textit{orientation} \cite[Equation 59]{OhThomas} for the induced symmetric obstruction theory $\EE:=\LL_{\sX}|_X\to\LL_X$ of \eqref{eq:3term:pot}, namely an isomorphism $o : \cO_X \rightarrow \det(\EE)$ such that the composition
$$
\cO_X \xrightarrow{\ o \otimes o\ } \det(\EE)^{\otimes 2} \xrightarrow{\ \id \otimes \det\omega_0^{\vee}\ } \det(\EE)\otimes \det(\EE)^\vee\ \cong\ \cO_X \ \text{ equals } \ (-1)^{\binom{\vd}{2}}\id_{\cO_X},
$$
where $\vd X:= \rank \EE$ is the virtual dimension of $X$. Since this is a $\Z/2\Z$ choice at each point of $X$ it is automatically preserved by the action of a connected group $\sG$; if $\sG$ is not connected we must add the assumption that it preserves $o$. Then these data induce
\begin{itemize}
\item a $(-2)$-shifted symplectic reduction $(\sX^{\ss}/\sG)_{\sr}$ by \eqref{Xred}, \item with truncation $X^{\ss}/\sG=X\div\sG$ by Lemma \ref{reduction_and_truncation},
\item a $3$-term symmetric obstruction theory $\EE_{X\div \sG}=\LL_{(\sX^{\ss}/\sG)_{\sr}}\big|_{X\div\sG}\to\LL_{X\div\sG}$ sitting in the exact triangle of Lemma \ref{obstruction_theory_quotient_cy4},
\beq{pullback_obs_reduction}
    \pi^\ast \EE_{X \div\sG} \To \cone\Big(\fg_\C \otimes\cO[2] \to \EE|_{X^{\ss}} \Big) \To \fg_\C^\vee\otimes\cO_{X^{\ss}},
\eeq
\item and an orientation $o\_{X\div \sG}$ on $\EE_{X \div \sG}$ induced by $o$ via\footnote{We are using the convention of \cite[Equation 63]{OhThomas}, with $o\_{V^\vee}\in\det(V^\vee\oplus V[2])$ the orientation which makes the first summand $V^\vee$ a \emph{positive} maximal isotropic. Under the trivialisation $\det(V^\vee\oplus V[2])\cong\C$ described in \cite[Equation 8]{OhThomas}, $o\_{V^\vee}$ corresponds to $(-i)^{\dim V}\in\C$.\label{ofoot}}
\vspace{-1mm}
\begin{multline}\label{or-sr}
\qquad o\ \in\ \det\EE\,\stackrel{\eqref{pullback_obs_reduction}}\cong\,\pi^*\det(\EE_{X\div\sG}) \otimes\det(\fg_\C^\vee\oplus\fg\_\C[2]) \\
\cong\ \pi^*\det(\EE_{X\div\sG})\otimes\cO_{X^{\ss}}\ \ni\ \pi^*\!o\_{X\div \sG}\otimes o\_{\fg^\vee_\C}.\qquad
\end{multline}

\end{itemize}
By \cite[Theorem 4.6]{OhThomas}, \cite[Definition 7.6]{KP} we therefore get virtual fundamental classes
$$
[X]^{\vir}\,\in\,A_{\frac12\!\vd X}^\sG(X) \quad\text{ and }\quad
[X\div\sG]^{\vir}\,\in\,A_{\frac12\!\vd X-\dim\sG}(X\div\sG).
$$
If $\sG=\sG_1\times\sG_2$ then the $(-2)$-shifted symplectic structure, symmetric obstruction theory, orientation and virtual cycle on $\sX\div\sG_1$ are all $\sG_2$-invariant; in particular $[X\div\sG_1]^{\vir}\in A^{\sG_2}_{\frac12\!\vd X-\dim\sG_1}(X\div\sG_1)$. And if $\sT$ is a reductive subgroup of $\sG$, such as its maximal torus, then $\sT$-semistable points are $\sT$-stable and $X\div\;\sT$ is a Deligne-Mumford stack, so we can compare $[X\div\sG]^{\vir}$ and $[X\div\sT]^{\vir}$ just as in Proposition \ref{pro:compatibility_of_obs}.

\begin{proposition}\label{pro:compatibility_cy4}
    Let $X$ be a derived Deligne-Mumford stack with $\sG$-equivariant  $(-2)$-shifted symplectic structure such that $\sT$-semistable points are $\sT$-stable. Then via the diagram $X\div\sG\xleftarrow{\ p\ }X^{\sG-\ss}/\,\sT\stackrel i{\Into}X\div\,\sT$ of \eqref{eq:abelianisation_diagram},
    $$
			p^\ast [X\div\sG]^{\vir} \= i^\ast [X\div\,\sT]^{\vir}\ \text{ in }\ A_{\frac12\!\vd X-\dim\sT}(X^{\sG-\ss}/\,\sT).
$$
	\end{proposition}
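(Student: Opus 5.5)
We follow the proof of Proposition \ref{pro:compatibility_of_obs}, replacing the Behrend--Fantechi functoriality \cite[Proposition 5.10]{BehrendFantechi} by its analogues for the virtual cycles of \cite{OhThomas,KP}. The argument has three steps.

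The first step is the open immersion $i$. The $(-2)$-shifted symplectic reduction commutes with restriction to $\sT$-invariant open substacks. So $i^*[X\div\,\sT]^{\vir}=[X^{\sG-\ss}/\,\sT]^{\vir}$, where the right hand side is built from $(\sX^{\sG-\ss}/\,\sT)_{\sr}$ with its induced orientation. This uses the fact that the Oh--Thomas / Kiem--Park cycle is compatible with flat pullback along open embeddings: the isotropic cone, the square root Euler class and the orientation all restrict.

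The second step, and the heart of the proof, is to compare $p^*[X\div\sG]^{\vir}$ with $[X^{\sG-\ss}/\,\sT]^{\vir}$ along the smooth map $p$. Here $p$ has relative cotangent bundle $\op{Ad}^\vee=X^{\sG-\ss}\times_\sT(\fg/\ft)^\vee$. I would form a morphism of exact triangles as in Proposition \ref{pro:compatibility_of_obs}, now using Lemma \ref{obstruction_theory_quotient_cy4} for both $\sG$ and $\sT$. On $X^{\sG-\ss}$ the cone $\cone\(\fg\otimes\cO[2]\to\EE\)$ maps to $\cone\(\ft\otimes\cO[2]\to\EE\)$, and the quotient $\fg^\vee\to\ft^\vee$ has kernel $(\fg/\ft)^\vee$. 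This should yield
$$
\EE_{X^{\sG-\ss}/\sT}\ \simeq\ p^*\EE_{X\div\sG}\ \oplus\ \op{Ad}^\vee\oplus\op{Ad}[2]
$$
up to filtration. Compatibility with the symmetric forms and orientations then follows from \eqref{or-sr}, because the difference of orientations is $o_{(\fg/\ft)^\vee}$. In words, $(\sX/\sT)_{\sr}\to(\sX/\sG)_{\sr}$ is (after restriction to $X^{\sG-\ss}$) a derived analogue of "symplectic reduction in stages". Indeed $(\sX/\sT)_{\sr}$ restricted to $X^{\sG-\ss}$ is the Lagrangian intersection of $(\sX^{\sG-\ss}/\sT)$ with $B\sT$ over $\ft^\vee[-2]/\sT$. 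This should factor through the $(-2)$-shifted cotangent-type bundle $T^*[-2]$ of the smooth fibration $p$, with fibre $\sG/\sT$ over $(\sX/\sG)_{\sr}$, giving $(\sX/\sT)_{\sr}|\simeq T^*[-2]_{p}$ over $(\sX/\sG)_{\sr}$ with $\op{Ad}^\vee$ as a positive maximal isotropic.

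The third step uses the Oh--Thomas functoriality for a $(-2)$-shifted cotangent bundle of a smooth morphism, namely \cite[Theorem 4.6 / Prop.\ on smooth pullback]{OhThomas} and \cite{KP}. It says that the virtual cycle of $T^*[-2]$ of a smooth map equals the smooth pullback of the base virtual cycle, provided the fibre isotropic $\op{Ad}^\vee$ is oriented positively. This gives $[X^{\sG-\ss}/\sT]^{\vir}=p^*[X\div\sG]^{\vir}$, and the degree check $\tfrac12\vd X-\dim\sT=(\tfrac12\vd X-\dim\sG)+\dim(\sG/\sT)$ is consistent.

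The main obstacle is the second step: establishing the reduction-in-stages equivalence at the derived level, compatibly with orientations. I would try first to do it only at the level needed, namely the self-dual three-term complexes \eqref{diag:dueldot} together with the orientation comparison via \eqref{or-sr}. I would then invoke \cite[Proposition 7.? (smooth pullback)]{KP}, which needs only an isotropic subbundle splitting of the symmetric obstruction theory, and would avoid constructing the full derived equivalence.
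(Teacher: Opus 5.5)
Your first step, for the open embedding $i$, is correct and matches the paper: the cone and orientation restrict along the flat map $i$, and $\sqrt0_E^{\,!}$ commutes with flat pullback. The gap is in your second and third steps, which contain all the content of the proposition and are not actually carried out. You only say the derived equivalence $(\sX^{\sG-\ss}/\sT)_{\sr}\simeq T^*[-2]_p$ over $(\sX/\sG)_{\sr}$ ``should'' hold. That is a reduction-in-stages statement which you neither prove nor can quote. The fallback is too vague to check: you do not identify the smooth-pullback theorem you want from Oh--Thomas or Kiem--Park, so you never state, let alone verify, the compatibility between $\EE_{X^{\sG-\ss}/\sT}$, $p^*\EE_{X\div\sG}$ and their cones that such a theorem would need. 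Also, the displayed relation $\EE_{X^{\sG-\ss}/\sT}\simeq p^*\EE_{X\div\sG}\oplus\op{Ad}^\vee\oplus\op{Ad}[2]$ holds only up to extension, and no isotropic reduction of the quadratic bundle is involved.

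The missing idea is to compare the two cycles on the common smooth cover $X^{\sG-\ss}$ using explicit standard forms. This makes both the derived equivalence and any new functoriality theorem unnecessary.
\begin{itemize}
\item Take a $\sG$-equivariant standard form $T\to E\to T^\vee$ as in \eqref{diag:dueldot} for $\EE|_{X^{\sG-\ss}}$. Since stabilisers are finite, the composition $T^\vee\to\EE\to\LL_{X^{\sG-\ss}}\to\Omega_{X^{\sG-\ss}}\to\fg_\C^\vee\otimes\cO$ is onto. Write $(T/\fg)^\vee$ for its kernel.
\item The triangle \eqref{pullback_obs_reduction} then shows that $T/\fg\to E\to(T/\fg)^\vee$ is a $\sG$-equivariant standard form for $\pi^*\EE_{X\div\sG}$. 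Likewise $T/\ft\to E\to(T/\ft)^\vee$ is one for the pullback of $\EE_{X^{\sG-\ss}/\sT}$. So the $\op{Ad}$ and $\op{Ad}^\vee$ terms only change the outer terms, and one quadratic bundle $E$ serves all three obstruction theories.
\item Behrend--Fantechi's Proposition 3.14, applied to the smooth quotient maps, identifies the isotropic cones of the stupid truncations: the pullbacks of $C_{X\div\sG}$ and $C_{X^{\sG-\ss}/\sT}$ are both $C_{X^{\sG-\ss}}\subseteq E$.
\item The orientations on $E$ agree by \eqref{or-sr} and $o_{\fg^\vee_\C}=o_{\ft^\vee_\C}\otimes o_{(\fg_\C/\ft_\C)^\vee}$, as you observed.
\end{itemize}
Hence $[X\div\sG]^{\vir}$ and $[X^{\sG-\ss}/\sT]^{\vir}$ are both $\sqrt0_E^{\,!}[C_{X^{\sG-\ss}}]$, computed $\sG$-equivariantly and $\sT$-equivariantly respectively. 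Restricting the equivariance from $\sG$ to $\sT$ is exactly $p^*$, and it commutes with $\sqrt0_E^{\,!}$, so $p^*[X\div\sG]^{\vir}=[X^{\sG-\ss}/\sT]^{\vir}$. Combined with your first step this proves the proposition. The ``smooth pullback for $T^*[-2]$ of a smooth map'' you hoped to cite is precisely this elementary argument, and it needs to be given rather than invoked.
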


    \begin{proof}
This proved by equating both classes with $[X^{\sG-\ss}/\,\sT]^{\vir}$ just as in Proposition \ref{pro:compatibility_of_obs}.

To do this we briefly review the construction of the virtual cycle \cite[Section 4.2]{OhThomas}. We write the 3-term symmetric obstruction theory $\EE_{X\div\,\sT}:=\LL_{(\sX^{\ss}/\,\sT)_{\mathrm{sr}}}|_{X\div\,\sT}$ in the standard form $T\to E\to T^\vee$ of \eqref{diag:dueldot}, where $E\cong E^\vee$ carries a nondegenerate quadratic form and orientation induced by $o\_{X\div\,\sT}$. The stupid truncation $E\to T^\vee$ maps to $\LL_{X\div\,\sT}$ giving a (stupid) perfect obstruction theory. Therefore \cite[Section 4]{BehrendFantechi} defines a cone $C_{X\div\,\sT}\subset E^\vee\cong E$ by taking the fibre product of the intrinsic normal cone in $E^\vee/T$ with $E^\vee$. By \cite[Proposition 4.3]{OhThomas} $C_{X\div\,\sT}\subset E$ is \emph{isotropic} so we may apply the square-root Gysin class $\surd0_E^{\;!}$ of \cite[Definition 3.3]{OhThomas} to it, giving
$$
[X\div\,\sT]^{\vir}\=\sqrt0_E^{\,!}[C_{X\div\,\sT}]\ \in\ A_{\frac12\vd X-\dim\sT}\(X\div\,\sT\).
$$
Pulling back $\EE_{X\div\,\sT}$ and $o\_{X\div\,\sT}$ by the flat map $i:X^{\sG-\ss}/\,\sT\into X\div\,\sT$, the intrinsic normal cone also pulls back by \cite[Proposition 3.14]{BehrendFantechi}, so $C_{X^{\sG-\ss}/\,\sT}$ is $i^*C_{X\div\,\sT}\subset i^*E$. Since $\surd0_E^{\;!}$ commutes with flat pullback (indeed it defines a bivariant class \cite[Theorem 4.6]{KP}),
\beq{p=i}
\big[X^{\sG-\ss}/\,\sT\big]^{\vir}\=\sqrt0_{\;i^*\!E}^{\,!}\big[i^*C_{X\div\,\sT}\big]\=
i^*\sqrt0_E^{\,!}[C_{X\div\,\sT}]\=i^*[X\div\,\sT]^{\vir}.
\eeq

To prove a similar result for $p:X^{\sG-\ss}/\,\sT\to X\div\sG$ we recycle the notation $T\to E\to T^\vee$ to now denote a standard form for the 3-term symmetric obstruction theory $\EE_{X^{\sG-\ss}}\to\LL_{X^{\sG-\ss}}$, where $T$ and $E$ are $\sG$-equivariant bundles on $X^{\sG-\ss}$. The composition
$$
T^\vee\To\EE_{X^{\sG-\ss}}\To\LL_{X^{\sG-\ss}}\To\Omega_{X^{\sG-\ss}}\To\fg^\vee\otimes\cO_{X^{\sG-\ss}}
$$
is onto; denote its kernel by $(T/\fg)^\vee$. Then $T/\fg\to E\to(T/\fg)^\vee$ is a $\sG$-equivariant standard form for $\pi^*\EE_{X^{\sG-\ss}/\sG}$. It therefore descends to $X^{\sG-\ss}/\sG$; let $\underline E$ denote the descent of $E\cong\pi^*\underline E$. Truncating and taking cones as above, on both $X^{\sG-\ss}$ and $X^{\sG-\ss}/\sG$, gives isotropic cones 
$$
C_{X^{\sG-\ss}}\ \subseteq\ E\ \text{ and }\ C_{X^{\sG-\ss}/\sG}\ \subseteq\ \underline E
$$
such that $\pi^*$ of the latter is the former, again by \cite[Proposition 3.14]{BehrendFantechi}. Replacing $\sG$ by $\sT$ and descending down $\rho:X^{\sG-\ss}\to X^{\sG-\ss}/\,\sT$ similarly gives a cone $C_{X^{\sG-\ss}/\,\sT}$ with pullback $C_{X^{\sG-\ss}}$. In sum we get $\sG$-equivariant isomorphisms
$$
\rho^*C_{X^{\sG-\ss}/\sT}\ \cong\ C_{X^{\sG-\ss}}\ \cong\ \pi^*C_{X^{\sG-\ss}/\sG}\ \subseteq\ E.
$$
By the compatibility \eqref{or-sr} of the orientations we may again use that $\surd0_E^{\;!}$ commutes with flat pullback to deduce isomorphisms of $\sG$-equivariant cycle classes
$$
\rho^*\big[C_{X^{\sG-\ss}/\sT}\big]^{\vir}\ \cong\ \pi^*\big[C_{X^{\sG-\ss}/\sG}\big]^{\vir}\ \in A_{\frac12\!\vd X}(X^{\sG-\ss}).
$$
Quotienting by $\sT$ gives
$$
\big[C_{X^{\sG-\ss}/\sT}\big]^{\vir}\ \cong\ p^*\big[C_{X^{\sG-\ss}/\sG}\big]^{\vir}\ \in A_{\frac12\!\vd X-\dim\sT}(X^{\sG-\ss}/\,\sT).
$$
Combined with \eqref{p=i} this gives the required result.
    \end{proof}
    
    \subsection{Oriented $(-2)$-shifted symplectic structure on the cut}
   Suppose we have
   \begin{itemize}
   \item a rank $r$ torus $\sT$ acting on
   \item a polarised projective-over-affine Deligne-Mumford stack $(X,L)$ such that
   \item $L$-semistable points are $L$-stable and
   \item a $(-2)$-shifted symplectic enhancement $(\sX,\omega)$ of $X=t_0(\sX)$, preserved by $\sT$ in the strong sense of \eqref{equivariant},
   \item an orientation $o\in\Gamma(\det\EE)$ on the symmetric obstruction theory $\EE:=\LL_{\sX}|_X\to\LL_X$.
   \end{itemize}
   
Choose a simplicial cone $\Sigma = \cone(\psi_1, \dots, \psi_k) \subset \ft^\vee_\Q$\,---\,not necessarily of full dimension $r$\,---\,transverse to $\Delta=\Delta^\sT(X,L)$. It has its own $(-2)$-shifted symplectic enhancement
$$
\sSigma\ :=\ T^*[-2]\Sigma_\C\=\Sigma_\C\times\Sigma_\C^\vee[-2]\=\Sigma_\C\times\Spec\Sym\!\(\Sigma_\C[2]\)
$$
with $(-2)$-shifted symplectic structure $\omega\_\sSigma=d_{\mathrm{dR}}\xi$, where $\xi$ is the tautological shifted 1-form on the shifted cotangent bundle. The $\sT$ action on $\Sigma_\C$ induces one on $\sSigma$ preserving $\omega\_{\sSigma}$. Thus
$$
\sT \times \dT\ \acts\ \(\sX \times \sSigma,\, \omega + \omega\_{\sSigma}\)
$$
and we can take the shifted symplectic reduction \eqref{Xred} by $\dT$ to get a new $(-2)$-shifted symplectic Deligne-Mumford stack
$$
\sT\ \acts\ (\sX_\Sigma,\omega\_\Sigma).
$$
By Lemma \ref{obstruction_theory_quotient_cy4} the resulting symmetric obstruction theory $\EE_{X_\Sigma}=\LL_{\sX_{\sSigma}}\big|_{X_\Sigma}\to\LL_{X_\Sigma}$ is the (descent to $X_\Sigma$ of the) following morphism of complexes on $(X\times \Sigma_\C)^{\ss}$:
\begin{equation}\label{eq:cy4_triangles_cut}
	\begin{tikzcd}[row sep=0em, column sep=1.6em]
        \cone\Bigl(&[-2.5em]\cO \otimes \ft_\C[2] \arrow[r] \arrow[dd, equal] & \mathbb E\,\boxplus\,\cO \otimes\!\bigl(\Sigma_\C^\vee \oplus \Sigma_\C[2]\bigr) \Bigr) \arrow[r] \arrow[dd, shorten >=-4pt, shorten <=-4pt] & \cO \otimes \ft_\C^\vee \arrow[dd, equal] &  & \pi^*\mathbb{E}_{X_\Sigma} \arrow[dd] \\ & & & &\simeq & \\
    \cone\Bigl(& \cO \otimes \ft_\C[2] \arrow[r] & \mathbb{L}_X\,\boxplus\,\cO \otimes\!\bigl(\Sigma_\C^\vee \oplus \Sigma_\C[2]\bigr)\Bigr) \arrow[r] & \cO \otimes \ft_\C^\vee &  & \pi^*\mathbb{L}_{X_\Sigma},
    \end{tikzcd}
    \end{equation}
    where $\LL_X \rightarrow \cO \otimes \ft_\C^\vee$ and $\cO \otimes \Sigma^\vee_\C \rightarrow \cO \otimes \ft_\C^\vee$ are induced from the $\sT$ actions on $X$ and $\Sigma_\C$ respectively. Taking determinants in \eqref{eq:cy4_triangles_cut} and using the convention of Footnote \ref{ofoot} defines an orientation $o'$ on $\pi^*\EE_{X_\Sigma}$ such that
\begin{multline} \label{o4}
o\otimes o\_{\Sigma^\vee_\C}\ \in\ \det\EE\otimes\det(\Sigma_\C^\vee \oplus \Sigma_\C[2])\ \cong\ \det\EE\otimes\cO_{(X\times \Sigma_\C)^{\ss}}\stackrel{\eqref{eq:cy4_triangles_cut}}\cong \\
\pi^*\det\EE_{X_\Sigma}\otimes\cO_{(X\times \Sigma_\C)^{\ss}}\ \cong\ \pi^*\det\EE_{X_\Sigma}\otimes\det(\ft_\C^\vee \oplus \ft_\C[2])\ \ni\ o'\otimes o\_{\ft^\vee_\C}.
\end{multline}
Since $\sT\times\dT$ is connected it preserves $o'$, which therefore descends to define a $\sT$-equivariant orientation $o\_\Sigma$ on $\EE_{X_\Sigma}$ with $\pi^*o\_\Sigma=o'$. All told then, we get an oriented $(-2)$-shifted symplectic enhancement
\beq{Xsig}
\sT\,\acts\,(\sX_\Sigma,\,\omega\_\Sigma,\,o\_\Sigma)\ \text{ with $\sT$-equivariant virtual cycle }\ [X_\Sigma]^{\vir}\,\in\,A^\sT_{\frac{1}{2}\vd(X)-c}(X_\Sigma)
\eeq
by \cite[Theorem 4.6]{OhThomas}, where $c:=\codim\!\(\Sigma\subset\ft^\vee_\Q\)$.

\begin{proposition}\label{restriction_open_stratum_cy4}
The following two oriented $(-2)$-shifted symplectic enhancements of the big open stratum $X_\Sigma^\circ \cong X^{\Sigma-\ss}/\,\wSigma\subset X_\Sigma$ are the same,
    \begin{enumerate}
        \item the restriction of the $(-2)$-shifted symplectic structure $(\sX_\Sigma,\omega\_\Sigma,o\_\Sigma)$ \eqref{Xsig} obtained as the $(-2)$-shifted symplectic reduction $\((\sX\times\sSigma)^{\ss}/\,\dT\)_{\sr\,}$,
        \item the $(-2)$-shifted symplectic reduction $\(\sX^{\Sigma-\ss}/\,\wSigma\)_{\sr\,}$ oriented by the quotient orientation $o\_{X^{\Sigma-\ss}\div\,\wSigma}$ of \eqref{or-sr}.
    \end{enumerate}
\end{proposition}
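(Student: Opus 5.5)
The plan is to prove this as a derived, shifted-symplectic analogue of Lemma \ref{lem:restriction_open_stratum}. I will do the shifted symplectic reduction of $(\sX\times\sSigma)^{\ss}$ by $\dT$ \emph{in stages}, restricted over the open stratum. By Lemma \ref{lem:stratification_stable_locus}, the preimage of $X_\Sigma^\circ$ is $X^{\Sigma-\ss}\times\Sigma_\C^\circ$. Over it, $\sSigma$ restricts to $T^*[-2]\Sigma^\circ_\C$.

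First I reduce by the subgroup $(\wSigma)_{\diag}\subset\dT$. This subgroup acts trivially on $\Sigma_\C$, so its moment map on $\sSigma$ is zero. Reduction therefore commutes with the product and gives
$$\big(\sX^{\Sigma-\ss}/\,\wSigma\big)_{\sr}\times T^*[-2]\Sigma_\C^\circ,$$
which still carries the residual action of $\sH:=\sT/\,\wSigma$. I then reduce by $\sH_{\diag}$. It acts on $\Sigma_\C^\circ\cong\sH$ simply transitively, up to the finite-group subtleties already absorbed in the first stage. So the second factor is $T^*[-2]\sH$. The standard fact I will use is that for any $\sH$-space $\mathsf Y$ with shifted moment map, $(\mathsf Y\times T^*[-2]\sH)/\!\!/\sH\simeq\mathsf Y$ as $(-2)$-shifted symplectic stacks. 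The proof is to solve the moment map equation on the $T^*[-2]\sH$ factor, which identifies the reduction with the graph. Alternatively, one can quote the reduction-in-stages results of \cite{safronov2016quasi,Park} for the normal subgroup $\wSigma\triangleleft\sT$, together with the identification $T^*[-2]\sH/\!\!/\sH\simeq\mathrm{pt}$.

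At the level of truncations and cotangent complexes, this recovers exactly the quasi-isomorphism in the proof of Lemma \ref{lem:restriction_open_stratum}. The subcomplex $\Sigma^\vee_\C\oplus\Sigma_\C[2]\cong\ker(\ft^\vee_\C\to\fs^\vee_\C)\oplus\ker(\ft^\vee_\C\to\fs^\vee_\C)^\vee[2]$ cancels against the corresponding parts of $\ft^\vee_\C$ and $\ft_\C[2]$ in \eqref{eq:cy4_triangles_cut}. What remains is \eqref{pullback_obs_reduction} for the group $\wSigma$. I will check this as a consistency test.

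For orientations, I compare \eqref{o4} with \eqref{or-sr} for $\wSigma$. The short exact sequence $0\to\Sigma^\vee_\C\to\ft^\vee_\C\to\fs^\vee_\C\to0$ splits the quadratic space $\ft^\vee_\C\oplus\ft_\C[2]$ orthogonally as $(\Sigma^\vee_\C\oplus\Sigma_\C[2])\oplus(\fs^\vee_\C\oplus\fs_\C[2])$, with positive isotropics compatible. Under the convention of Footnote \ref{ofoot}, this gives $o_{\ft^\vee_\C}=o_{\Sigma^\vee_\C}\otimes o_{\fs^\vee_\C}$: the factors $(-i)^{\dim}$ multiply. Substituting into \eqref{o4} then yields $o=\pi^*o_\Sigma\otimes o_{\fs^\vee_\C}$, which is \eqref{or-sr} for $\wSigma$. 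So the two orientations agree, possibly up to a reordering sign $(-1)^{\dim\Sigma\cdot\dim\fs}$, which I must verify is trivial or is absorbed in the convention.

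I expect the main obstacle to be the second stage. I need a genuine equivalence of shifted symplectic structures, not just of obstruction theories, for $(\mathsf Y\times T^*[-2]\sH)/\!\!/\sH\simeq\mathsf Y$. This must also be compatible with the moment map data after the first stage. I would first try to derive it formally from the Lagrangian-intersection description \eqref{Xred} and the composition of Lagrangian correspondences. If that fails, I would cite reduction in stages from \cite{safronov2016quasi} directly.
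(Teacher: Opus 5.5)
Your proposal is correct and is essentially the paper's proof. You reduce first by $\wSigma$, which acts trivially on $\Sigma_\C$ and so only affects the $\sX$ factor. You then reduce by $\sT/\wSigma$, which acts simply transitively on $\Sigma_\C^\circ$, via $\mathsf{\Sigma}^\circ_\C\cong T^*[-2](\sT/\wSigma)$, and identify the derived zero locus of the moment map with the graph of $-\mu$. Orientations are matched through $o_{\ft^\vee_\C}=o_{\Sigma^\vee_\C}\otimes o_{\fs^\vee_\C}$.

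The one step you leave implicit is the untwisting isomorphism $(x,t)\mapsto(t\cdot x,t)$. The paper uses it to turn the diagonal action on the graph into multiplication on the group factor alone, so the quotient is visibly the first-stage reduction. The paper likewise does not discuss the reordering sign you flag in the orientation product.
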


\begin{proof}
For clarity we begin by handling the easiest case: where $k=r$ and the generators $(\psi_i)_{i=1}^r$ of $\Sigma = \cone(\psi_1, \dots, \psi_r)$ form a $\Z$-basis for $\ft^\vee_\Z$, so that $\wSigma=\{1\}$. Picking generators $1\in\Psi_i$ in each of the summands of $\Sigma_\C=\bigoplus_{i=1}^r\Psi_i$ and acting by $\sT$ gives isomorphisms
\beq{TSig}
\sT\ \cong\ \Sigma_\C^0 \quad\Longrightarrow\quad T^*[-2]\sT\ \cong\ \mathsf{\Sigma}^\circ_\C.
\eeq
(Explicitly\,---\,over the point $t\in\sT$ and $\(\psi_1(t)1,\dots,\psi_r(t)1\)\in\Sigma^\circ_\C$ and identifying $T^*[-2]\sT\cong\sT\times\ft^\vee_\C[-2]$ and $\mathsf{\Sigma}^\circ_\C\cong\Sigma^\circ_\C\times\Sigma^\vee_\C[-2]$ in the usual way\,---\,this isomorphism identifies the dual basis $z_i\in\Sigma^\vee_\C$ of \ref{sigma_rep} with $\psi_i(t)\cdot\psi_i\in\ft^\vee_\C$.)

The moment map for the action of $\sT$ on $T^*[-2]\sT\cong\sT\times\ft^\vee_\C[-2]$ is just projection to the second factor, so the moment map for
$$
\sT\,\acts\,\sX^{\Sigma-\ss}\times\mathsf{\Sigma}^\circ_\C\ \cong\ \sX^{\Sigma-\ss}\times\sT\times\ft^\vee_\C[-2]
$$
is $\mu=\pi_X^*\mu_{\sX}+\pi\_{\ft^\vee_\C[-2]}$. Its zero locus is therefore a copy of $\sX^{\Sigma-\ss}\times\sT$: the section of $\sX^{\Sigma-\ss}\times\sT\times\ft^\vee_\C[-2]\to\sX^{\Sigma-\ss}\times\sT$ given by the graph of $-\pi_X^*\mu\_X$,
\beq{graph}
\begin{tikzcd}[column sep=7em]
\sZ(\mu)\=\sZ\Big(\pi_X^*\mu\_X+\pi\_{\ft^\vee_\C[-2]}\Big)\ \cong\ \sX^{\Sigma-\ss}\times\sT\, \ar[r,hook,"{(\id_{\sX\times\sT},\,-\pi_X^*\mu\_X)}"]& \,\sX^{\Sigma-\ss}\times\sT\times\ft^\vee_\C[-2].
\end{tikzcd}
\eeq
We precompose with the equivariant isomorphism
\beq{TdT}
\sT\,\acts\(X^{\Sigma-\ss}\times\sT\)\,\rt{\ \sim\ }\,\(X^{\Sigma-\ss}\times\sT\)\,\curvearrowleft\,\dT, \qquad (x,t)\,\Mapsto\,(t\cdot x,\,t),
\eeq
where, on the left, $\sT$ acts trivially on $\sX^{\Sigma-\ss}$ and by multiplication on $\sT$. Thus we get an isomorphism from $\sT\acts\(X^{\Sigma-\ss}\times\sT\)$
to $\dT\acts\sZ(\mu)$ and
\beq{lastt}
\((\sX^{\Sigma-\ss}\times\mathsf{\Sigma}^\circ_\C)/\,\sT\)_{\sr}\ \cong\ \sZ(\mu)/\,\dT\ \cong\ (\sX^{\Sigma-\ss}\times\sT)/\,\sT\ \cong\ \sX^{\Sigma-\ss}
\eeq
preserving the $(-2)$-shifted symplectic structure. \medskip

In the general case we modify the reasoning as follows. Let $\sT':=\sT/\,\wSigma$ be the quotient which acts effectively on $\Sigma_\C$. Thus \eqref{TSig} becomes
$$
\sT'\ \cong\ \Sigma_\C^0 \quad\Longrightarrow\quad T^*[-2]\sT'\ \cong\ \mathsf{\Sigma}^\circ_\C.
$$
The moment map for the $\sT'$ action on $T^*[-2]\sT'\cong\sT'\times(\ft'_\C)^\vee[-2]$ is projection to the second factor, so to get the moment map for the $\sT$ action we compose this with the inclusion $\iota$,
\beq{lie}
0\To(\ft'_\C)^\vee\rt\iota\ft^\vee_\C\rt p(\Sigma^\perp)^\vee\To0,
\eeq
where $\Sigma^\perp\subset\ft_\C$, the annihilator of $\Sigma_\Q\subset\ft^\vee_\C$, is the Lie algebra of $\wSigma$.
Therefore the moment map for $\sX^{\Sigma-\ss}\times\mathsf{\Sigma}^\circ_\C\,\cong\,\sX^{\Sigma-\ss}\times\sT'\times(\ft'_\C)^\vee[-2]$ is
$$
\mu\=\pi_X^*\mu\_X+\iota\circ\pi\_{(\ft'_\C)^\vee[-2]}.
$$
We pass to its derived zero locus $\sZ(\mu)$ in two steps. We first take the zeros of the \emph{projection} $p(\mu)=\pi_X^*\mu^\perp_X$ \eqref{lie}\,---\,where $\mu_X^\perp:X\to(\Sigma^\perp)^\vee$ is the $\wSigma$-moment map for $\sX$\,---\,then on that we lift $\mu|\_{\sZ(p(\mu))}=\iota\circ\mu'$ via \eqref{lie} and pass to $\sZ(\mu')\cong\sZ(\mu)$. Similarly we quotient by $\sT$ in two steps: first by $\wSigma$ then by $\sT'$. The first step gives
$$
\(\sZ(\mu_X^\perp)/\,\wSigma\)\times\sT'\times(\ft'_\C)^\vee[-2]\=(\sX^{\Sigma-\ss}/\,\wSigma)\_{\sr}\times\sT'\times(\ft'_\C)^\vee[-2].
$$
On this we then repeat the steps \eqref{graph}, \eqref{TdT}, \eqref{lastt} with $\sT'\,,\mu'$ replacing $\sT,\,\mu$. Just as in \eqref{lastt} this removes the second and third factors, leaving us with $(\sX^{\Sigma-\ss}/\,\wSigma)\_{\sr}$.
\medskip

The orientations match up because of the compatibility of the convention of Footnote \ref{ofoot} with taking products, giving $o\_{\ft^\vee_\C}=o\_{\Sigma_\C^\vee}\otimes o\_{(\Sigma^\perp)^\vee}$. Therefore, suppressing some pullback maps, $o\_\Sigma=o\otimes o\_{\Sigma_\C^\vee}\otimes o_{\ft^\vee_\C}^{-1}$ is the same as $o\_{X^{\Sigma-\ss}\div\,\wSigma}=
o\otimes o_{(\Sigma^\perp)^\vee}^{-1}$.
\end{proof}    

    \section{Jeffrey--Kirwan localisation for CY4 virtual cycles}    \label{sec:JK_cy4}

We are finally ready to state and prove the $(-2)$-shifted symplectic version of JK localisation. The set up is
\begin{itemize}
\item a reductive group $\sG\acts(X,L)$ acting on a polarised strongly projective-over-affine Deligne-Mumford stack,
\item with a lift $\sG\acts(\sX,\omega,o)$ to an oriented $(-2)$-shifted symplectic enhancement in the strong sense of \eqref{equivariant}, preserving the orientation $o$ (this is automatic if $\sG$ is connected), 
    \item such that $\sT$-semistable points are $\sT$-stable.
\end{itemize}	
We choose an $\eta \in \ft_\Q$ such that $(X,L)$ is weakly $\eta$-semiprojective (Definition \ref{weaksemi}).
By the openness of Proposition \ref{weak2} we may perturb it to avoid the hyperplane arrangement $\cH\subset\ft_\Q$ defined by the weights of $\sT\acts N^{\vir}_{X^\sT/X}$ (these are described in Proposition \ref{cy4_normal_bundles} below). So $\eta\in\ft_\Q\setminus\cH$ and we let $\cH^\vee\subset\ft_\Q^\vee$ be the dual hyperplane arrangement \eqref{eq:hyperplanes}.

By Lemma \ref{check} these conditions imply that $\sG$-semistable points are $\sG$-stable and $X\div\,\sT$, $X\div\sG$ are (strongly) projective Deligne-Mumford stacks.

Fix $\alpha \in H^\ast_\sG(X)$ inducing $\alpha^\sT\in H^\ast_\sT(X)$ via $H^\ast_\sG(X)\to H^\ast_\sT(X)$ and $\alpha\_0\in H^\ast(X\div\sG)$ by \eqref{induced_class}.

    \begin{theorem}[CY4 virtual JK localisation]\label{th:JK_cy4}
Assuming the above choose $\epsilon \in \ft^\vee_\Q$ sufficiently small that the segments $\{\mu(F) + s\epsilon\}_{s \in (0,1],\,F\subseteq X^\sT}$ are disjoint from $\cH^\vee$. Then
$$
            \int_{[X\div \sG]^{\vir}} \alpha\_0 \= \frac{1}{\vert W \vert}\sum_{\substack{F \subseteq X^\sT\\\
            \langle \mu(F), \eta \rangle >0}} \JK^{\eta}_{\mu(F)+\eps}\left(e^\sT(\fg_\C/\ft_\C) \cdot\int_{[F]^{\vir}} \frac{\alpha^\sT\vert_{F}}{\sqrt e^\sT\!\(N^{\vir}_{F/X}\)}\right).
$$
        Here $\sT\acts\fg_\C/\ft_\C$ is the adjoint action so $e^\sT(\fg_\C/\ft_\C)=\prod\epsilon_i$ where $\epsilon_i \in \ft_\Z^\vee$ are the roots of $\sG$. The orientations on $F\subseteq X^\sT$ and $N^{\vir}_{F/X}$ are discussed below.
\end{theorem}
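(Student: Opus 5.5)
The plan is to copy the perfect obstruction theory proof, swapping the Graber--Pandharipande localisation formula for the square-root localisation formula of \cite{OhThomas}. The Behrend--Fantechi inputs are replaced by their $(-2)$-shifted analogues from Section \ref{sec:derived_symp_red}. There are three steps:
\begin{itemize}
\item[(i)] prove the abelian case $\sG=\sT$ by localising on the algebraic cut $X_\Sigma$;
\item[(ii)] compare $[X\div\sG]^{\vir}$ with $[X\div\,\sT]^{\vir}$ using Proposition \ref{pro:compatibility_cy4};
\item[(iii)] apply Martin's Theorem \ref{martin}, which is purely Chow-theoretic and so applies unchanged.
\end{itemize}
Given the abelian formula, (ii) and (iii) are verbatim the argument after Theorem \ref{th:JK}. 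One caveat: $e^\sT(\fg_\C/\ft_\C)$ enters through $e(\op{Ad})$ in \eqref{eq:martin}, so it is a genuine Euler class, not a square root. This is why only the denominator changes.

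For the abelian case I will fix an $\eta$-wide, transverse, full-dimensional simplicial cone $\Sigma$ as in Section \ref{wide}. The cut then carries the oriented $(-2)$-shifted symplectic structure $(\sX_\Sigma,\omega\_\Sigma,o\_\Sigma)$ of \eqref{Xsig}, and it is projective by Proposition \ref{pro:eta_proj_compact}. The localisation formula of \cite{OhThomas} (or \cite{KP} for stacks) applied to $\alpha\_\Sigma e^{c_1^\sT(L_\Sigma)}$ produces the analogue of \eqref{4}, with contributions $e^pI_p$ and
\[
I_p\=\int_{[F_p]^{\vir}}\frac{\alpha\_\Sigma e^{c_1(L_\Sigma)}|_{F_p}}{\sqrt e^\sT\(N^{\vir}_{F_p/X_\Sigma}\)}.
\]

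I then need shifted analogues of Lemma \ref{lem:normal_bundles} and Propositions \ref{pro:special_old_contributions}, \ref{pro:weights_normal_bundle}. Restricting \eqref{eq:cy4_triangles_cut} to the stratum $X_\sigma^\circ$ splits $\EE_{X_\Sigma}$ as $\EE_{X_\sigma}$ plus the self-dual pieces $N_j^\vee\oplus N_j[2]$ for $\psi_j\notin\sigma$. Hence
\[
N^{\vir}_{F_p/X_\Sigma}\=N^{\vir}_{F_p/X_\sigma}\oplus\bigoplus\nolimits_{j\,:\,\psi_j\notin\sigma}\(N_j\oplus N_j^\vee\),
\]
and the $\sqrt e$ of each pair $N_j\oplus N_j^\vee$ is $\pm e(N_j)$. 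For the old loci, Proposition \ref{restriction_open_stratum_cy4} identifies the oriented structure on $X^\circ_\Sigma$ with $X^{\Sigma-\ss}/\,\wSigma$, which gives $I_p$ as $|\wSigma|^{-1}$ times the integral over $F\subseteq X^\sT$. For $F_0=X\div\,\sT$ the same proposition, applied to the stratum $\sigma=\{0\}$, identifies the structure with the shifted symplectic reduction. So $I_0$ equals \eqref{eq:special_contribution} up to a global sign, and this sign must be checked. The pole analysis of Proposition \ref{pro:weights_normal_bundle} is unaffected, since $\sqrt e$ has the same linear factors as $e$, up to halving multiplicities. Lemma \ref{lem:key_property_oLambda} and the residue argument \eqref{finally} then go through verbatim: the left side gives the $F$-contributions, the right side gives $|\wSigma|^{-1}\int_{[X\div\,\sT]^{\vir}}$, and the perturbations removing assumptions (a), (a$'$) are likewise unchanged.

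The main obstacle is the bookkeeping of orientations and signs. The orientation on $F\subseteq X^\sT$, the choice of square root for $N^{\vir}_{F/X}$, and the sign in $\sqrt e(N_j\oplus N_j^\vee)=\pm e(N_j)$ must all combine so that
\begin{itemize}
\item the special contribution is exactly $\prod_j(c_1(N_j)-\psi_j)^{-1}$, with no stray factor $(-1)^r$, and
\item the old contributions carry orientations compatible with those on $X$.
\end{itemize}
I would first fix the convention of Footnote \ref{ofoot}. I would then check in the rank-one model $T^*[-2]\C$, reduced by $\C^*$, that $o\_{\Sigma^\vee_\C}\otimes o_{\ft^\vee_\C}^{-1}$ induces the sign making $N_j$ (rather than $N_j^\vee$) the positive isotropic half. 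If a sign $(-1)^r$ survives, I would absorb it into the statement's orientation on $N^{\vir}_{F/X}$, which the statement leaves to be specified below.
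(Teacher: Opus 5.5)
This is the paper's own argument. The abelian case is localisation on the oriented $(-2)$-shifted cut \eqref{Xsig}, using Propositions \ref{restriction_open_stratum_cy4} and \ref{cy4_normal_bundles} and the residue argument of Section \ref{section:JK_loc}. The general case then follows from Proposition \ref{pro:compatibility_cy4} and Theorem \ref{martin}, with $e^\sT(\fg_\C/\ft_\C)$ entering as a genuine Euler class.

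The sign you leave open is not in doubt, and the paper settles it in one line. Combine \eqref{o4} with the multiplicativity $o_{\Sigma^\vee_\C}=o_{\sigma^\vee_\C}\otimes o_{\oplus_jN_j^\vee}$ of the convention of Footnote \ref{ofoot}. This gives $o_\Sigma|_{F_p}=o_\sigma|_{F_p}\otimes o_{\oplus_jN_j^\vee}|_{F_p}$. Hence $\bigoplus_jN_j$ is the positive half of $N^{\vir}_{F_0/X_\Sigma}$, and \cite[Equation 22]{OhThomas} gives $\sqrt e^\sT(N^{\vir}_{F_0/X_\Sigma})=\prod_j(c_1(N_j)-\psi_j)$ exactly, with no sign.

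Your fallback of absorbing a stray $(-1)^r$ into the orientation of $N^{\vir}_{F/X}$ would not have been available. Under the convention of Remark \ref{orsi}, reorienting $N^{\vir}_{F/X}$ also flips $[F]^{\vir}$. So each term on the right-hand side is determined by $o$ alone, while the left-hand side is fixed by $o_{X\div\sG}$ from \eqref{or-sr}. There is no remaining freedom in which to hide a sign.
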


As before we can replace $\alpha$ by $\alpha e^{c_1^\sG(L)}$ to get a more familiar formula.

\begin{remark} \textbf{Orientations and signs.} \label{orsi}
It is shown in \cite[Section 7]{OhThomas} that $N^{\vir}_{F/X_\Sigma}$ is orientable. Our convention here is to pick any choice of orientation $o\_{N^{\vir}_{F/X_\Sigma}}$,\vspace{-1mm} then to fix the orientation on $\EE_{F}$ such that $o\_{\EE_X}\big|_{F}=o\_{N^{\vir}_{F/X_\Sigma}}\otimes o\_{\EE_{F}}$. \vspace{-1mm} This then fixes the signs of both $\sqrt e^\sT$ and $[F]^{\vir}$ in the localisation formula. Changing the orientation on $N^{\vir}_{F/X_\Sigma}$ changes both $\sqrt e^\sT$ and $[F]^{\vir}$ by a sign which therefore cancels in the above formula.

So this convention does \emph{not} follow \cite[Section 7]{OhThomas}, where a specific choice is made by splitting the $\sT$-weights of $N^{\vir}_{F/X_\Sigma}$ into two disjoint subsets $S\sqcup(-S)\subset\ft^\vee_\Q$ and orienting $N^{\vir}_{F/X_\Sigma}=N^S\oplus N^{-S}$ so that $N^S$ defines a \emph{positive} maximal isotropic as in \cite[Equation 63]{OhThomas}. (In fact \cite[Section 7]{OhThomas} uses $\sT=\C^*$ and takes $S$ to be the positive weights.) Doing that here would introduce annoying signs into Proposition \ref{cy4_normal_bundles}. So even for $\sT=\C^*$ what we call $[F]^{\vir}$ here may differ from $[F]^{\vir}$ in \cite{OhThomas} by a sign.
\end{remark}

The proof follows the perfect obstruction theory working very closely, again beginning with the abelian case $\sG=\sT$. As explained after Theorem \ref{th:abelian_JK_general} we can (and do) assume that $\langle \mu(F), \eta \rangle\ne0$ for components $F\subseteq X^\sT$.
As in Section \ref{wide} we may choose a simplicial cone $\Sigma = \cone(\psi_1, \dots, \psi_r)\subset\ft^\vee_\Q$ of full dimension such that
\begin{itemize}
\item $\eta\big|_{\Sigma \setminus \{0\}}\,>\,0$,
\item $\Sigma$ is \emph{$\eta$-wide}: it satisfies $\langle \mu(F), \eta \rangle>0\iff\mu(F) \in\Sigma$ for every component $F\subseteq X^\sT$,
\item $\eps$ is in minus the interior $-\mathring\Sigma$ of $\Sigma$, and
\item $\Sigma$ is transverse to $\Delta=\Delta^\sT(X,L)$ in the sense of Definition \ref{def:transversal}.
\end{itemize}\smallskip
Thus weak $\eta$-semiprojectivity and Proposition \ref{lem:polyhedron_structure} show $\Delta \cap \Sigma\subseteq\{\eta\geq 0\}\cap \Delta$ is bounded and
$\rk\big[H^0(\cO_X)^\sT\to H^0(\cO_{X^{\Sigma-\ss}})^\sT\big]<\infty$ because the restriction map factors through $H^0\(\cO_{X^{\lbrace\eta\ge0\rbrace-\ss}}\)^\sT$. Thus the algebraic cut $X_\Sigma$ is a strongly projective Deligne-Mumford stack by Theorem \ref{thm:structure_cut} with a $\sT$-equivariant oriented $(-2)$-shifted symplectic enhancement $\sT\acts(\sX_\Sigma,\omega\_\Sigma,o\_\Sigma)$ by \eqref{Xsig}.

By \eqref{classes_on_cut} the class $\alpha\in H^*_\sT(X)$ induces $\alpha\_\Sigma\in H^*_\sT(X_\Sigma)$ to which we apply the virtual localisation formula of \cite[Theorem 7.1]{OhThomas}. With $\alpha_p$ defined as in \eqref{alphap} this gives
\beq{OTloc}
\int_{[X_\Sigma]^{\vir}} \alpha\_\Sigma \= \sum_{p \in \mu(X_\Sigma^\sT)} \int_{[F_p]^{\vir}} \frac{\alpha_p}{\sqrt e^\sT\!\(N^{\vir}_{F_p/X_\Sigma}\)}
\eeq
using the orientation conventions of Remark \ref{orsi}.
To evaluate \eqref{OTloc} we describe the virtual normal bundles to the fixed loci. For $p \in \mu(X_\Sigma^\sT)$ describe the corresponding fixed locus as $F_p \cong \mu^{-1}(\tau_p)\div\,\wsigma\subseteq X_\sigma\subseteq X_\Sigma$ as in Proposition \ref{pro:fixed_locus}\;\eqref{cond:3}. Recall the $\sT$-equivariant line bundles of \eqref{eq:Nj_bundle},
$$
    N_j\ :=\ X^{\sigma-\ss} \times\_{\dsigma}\Psi^\diag_j\ \in\  \Pic^\sT\!\!\(X^{\sigma-\ss}/\,\dsigma\)\ \stackrel{\eqref{eq:algcutstrata}}\cong\ \Pic^{\sT\!}(X^\circ_\sigma).
$$
Using the $(-2)$-shifted symplectic structures on $X_\Sigma$ and $X_\sigma$ from \eqref{Xsig}, and their induced symmetric obstruction theories $\EE_{X_\Sigma},\,\EE_{X_\sigma}$, we have the following analogue of Lemma \ref{lem:normal_bundles}.

\begin{proposition}\label{cy4_normal_bundles} We have isomorphisms
\begin{eqnarray} \label{E4}
\EE_{X_\Sigma}|_{F_p} &\cong& \EE_{X\div\,\wsigma}\!\big|_{F_p}\ \oplus\ \bigoplus\nolimits_{j \,:\, \psi_j \notin \sigma}\(N_j^\vee\oplus N_j[2]\)\big|_{F_p}, \\
\label{N4} N^{\vir}_{F_p/X_\Sigma} &\cong& N^{\vir}_{F_p/X_\sigma}\ \oplus\ \bigoplus\nolimits_{j \,:\, \psi_j \notin \sigma}\(N_j\oplus N_j^\vee[2]\)\big|_{F_p}.
\end{eqnarray}
In \eqref{E4} we have $o\_\Sigma|\_{F_p}=o\_\sigma|\_{F_p}\otimes o\_{\oplus_jN_j^\vee}|\_{F_p}$ in the conventions of Footnote \ref{ofoot}.
The $\sT$-weights of the first summand lie in $\rspan(\sigma)\subset\ft^\vee_\Q$. In the second summand, the $\sT$-weight of $N_j \vert_{F_p}$ is the (possibly rational) projection of $-\psi_j$ to $\tau_p$ along $\rspan(\sigma)$, i.e. minus a rational generator of a 1-dimensional edge of $\Sigma\cap \tau_p$ emanating from $p$.
\end{proposition}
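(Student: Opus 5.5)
The plan is to mimic the proof of Lemma \ref{lem:normal_bundles}, replacing the Behrend--Fantechi diagram \eqref{eq:morph_triangles_cut} by its shifted symplectic version \eqref{eq:cy4_triangles_cut}. I will apply \eqref{eq:cy4_triangles_cut} to both $\Sigma$ and its face $\sigma$, using the splitting of $\sT$-representations $\Sigma_\C=\sigma\_\C\oplus\bigoplus_{j:\psi_j\notin\sigma}\Psi_j$ of Notation\;\ref{sigma_rep}. The $\ft_\C[2]$ and $\ft_\C^\vee$ terms are common to both diagrams. Moreover, over the stratum $X^\circ_\sigma$ the coordinates $z_j$ with $\psi_j\notin\sigma$ vanish, so the summand $\cO\otimes(\Psi_j^\vee\oplus\Psi_j[2])$ of $\cO\otimes(\Sigma_\C^\vee\oplus\Sigma_\C[2])$ descends under the diagonal $\dsigma$-quotient to the bundle $N_j^\vee\oplus N_j[2]$ of \eqref{eq:Nj_bundle}.

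This should give
$$
\EE_{X_\Sigma}\big|_{X^\circ_\sigma}\ \cong\ \EE_{X_\sigma}\big|_{X^\circ_\sigma}\oplus\bigoplus\nolimits_{j:\psi_j\notin\sigma}\(N_j^\vee\oplus N_j[2]\).
$$
The one point to check is that these extra summands are split off and do not interact with the maps to $\cO\otimes\ft^\vee_\C$ or from $\cO\otimes\ft_\C[2]$. Along $\{z_j=0\}$, the derivative of the $\sT$ action on the $z_j$-coordinate vanishes. Dually, the moment map component $z_j\cdot\zeta_j$ (with $\zeta_j$ the shifted fibre coordinate) has vanishing derivative there. So both maps restricted to these summands are zero on $X^\circ_\sigma$, and the summands split off as a direct sum.

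Next I would substitute Proposition \ref{restriction_open_stratum_cy4}, applied to the cone $\sigma$ in place of $\Sigma$, to identify $\EE_{X_\sigma}|_{X^\circ_\sigma}$ with $\EE_{X\div\,\wsigma}$ together with its quotient orientation. Restricting to $F_p$ then gives \eqref{E4}. Dualising and shifting, and taking the $\sT$-moving parts, gives \eqref{N4}. Here $N_j|_{F_p}$ has nonzero weight by the weight computation below, so the whole $N_j\oplus N_j^\vee[2]$ is moving.

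For orientations, I would take determinants in the two versions of \eqref{eq:cy4_triangles_cut} and compare them via \eqref{o4}. The product compatibility of the Footnote \ref{ofoot} convention gives $o\_{\Sigma_\C^\vee}=o\_{\sigma_\C^\vee}\otimes o\_{\oplus_j\Psi_j^\vee}$, while the $o\_{\ft^\vee_\C}$ factors cancel. This yields $o\_\Sigma|_{F_p}=o\_\sigma|_{F_p}\otimes o\_{\oplus_jN_j^\vee}|_{F_p}$.

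The weight statements I would copy verbatim from the proof of Lemma \ref{lem:normal_bundles}. The weights of the first summand lie in $\rspan(\sigma)$ because $X_\sigma$ is fixed by $\widetilde\sT_{\sigma^\perp}$. The weight of $N_j|_{F_p}$ comes from the isogeny $\sT_{\!\tau_p^\perp}\times\wsigma\to\sT$ and the computation $(a,b)\cdot[x,z]=-\psi_j(b)[x,z]$, combined with Proposition \ref{pro:fixed_locus}\eqref{cond:fix}.

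I expect the main obstacle to be the orientation bookkeeping, in particular sign conventions in the determinant identifications, together with the claim that the splitting is compatible with the symmetric (self-dual) structure.
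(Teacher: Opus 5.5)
Your proposal is correct and follows the paper's proof essentially step for step. You apply \eqref{eq:cy4_triangles_cut} to both $\Sigma$ and $\sigma$ to split off $\bigoplus_j(N_j^\vee\oplus N_j[2])$ over $X^\circ_\sigma$, identify $\EE_{X_\sigma}|_{X^\circ_\sigma}$ with $\EE_{X\div\,\wsigma}$, compare orientations via \eqref{o4}, dualise and take moving parts, and reuse the weight computation of Lemma \ref{lem:normal_bundles}. Two of your details are slightly more careful than the paper's text: your explicit check that the extra summands split off (the action and moment-map derivatives vanish along $z_j=0$), and your citation of Proposition \ref{restriction_open_stratum_cy4} in place of the perfect-obstruction-theory Lemma \ref{lem:restriction_open_stratum}.
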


\begin{proof}
    Apply the diagram \eqref{eq:cy4_triangles_cut} to both $\Sigma$ and $\sigma$ to describe and compare the obstruction theories on $X_\Sigma$ and $X_\sigma$. Writing $\Sigma_\C=\sigma\_\C\oplus\bigoplus_{j:\psi_j \notin \sigma}\Psi^{\;\diag}_j$ by Notation\;\ref{sigma_rep}, this gives
\beq{EEEE}
\EE_{X_\Sigma} \big|_{X^\circ_\sigma}\ \cong\ \EE_{X_\sigma}\big|_{X^\circ_\sigma}\ \oplus\ \bigoplus\nolimits_{j \,:\, \psi_j \notin \sigma} \bigl(N_j^\vee \oplus N_j[2]\bigr).
\eeq
Substituting in the isomorphism from Lemma \ref{lem:restriction_open_stratum},
$$
\EE_{X_\sigma}\big|_{X^\circ_\sigma}\ \cong\ \EE_{X\div\,\wsigma}\!\big|_{X^\circ_\sigma}
$$
then restricting to $F_p$ gives \eqref{E4}. The orientations on the diagram  \eqref{eq:cy4_triangles_cut} were described in \eqref{o4}, giving
$$
o\_{\Sigma}\otimes o_{\Sigma^\vee_\C}^{-1} \= o\_{\sigma}\otimes o_{\sigma^\vee_\C}^{-1}\ \so\ o\_{\Sigma}\=o\_{\sigma}\otimes o\_{\oplus_jN_j^\vee}
$$
on $F_p$, because $o\_{\Sigma^\vee_\C} \= o\_{\sigma^\vee_\C}\otimes o\_{\oplus_jN_j^\vee}$.

Dualising \eqref{EEEE}, restricting to $F_p$ and taking $\sT$-moving parts gives \eqref{N4}.

The weights of $N^{\vir}_{F_p/X_\sigma}$ lie in $\rspan\sigma$ because $X_\sigma$ is entirely fixed by $\wSigma$ by Remark \ref{rem:action_factorisation}.
And the $\sT$-weight of $N_j \vert_{F_p}$ was already calculated in Lemma \ref{lem:normal_bundles}.
\end{proof}

This allows us to describe contributions of the fixed loci to \eqref{OTloc}. As usual we consider separately the 3 types:
\begin{enumerate}
\item \textcolor{darkgreen}{the special fixed locus} $F_0=X\div\;\sT$,
\item \textcolor{blue}{the old fixed loci} where $p\in\mathring\Sigma\cap\partial_0\Delta$, and
\item \textcolor{red}{the new fixed loci} where $p\in\mathring\sigma\cap\partial_{r-\dim\sigma}\Delta$ for $0<\dim\sigma<r$.
\end{enumerate}	

\begin{proposition}\label{pro:cy4_special_old_contributions}
The contribution of (1) \textcolor{darkgreen}{the special fixed locus} to \eqref{OTloc} is
\beq{eq:special_contribution_cy4}
     \int_{[F_0]^{\vir}}\frac{\alpha\_\Sigma|\_{F_0}}{\sqrt e^\sT\!\(N^{\vir}_{F_0/X_\Sigma}\)} \= \int_{[X\div\;\sT]^{\vir}}\frac{\alpha\_0}{\prod_{j=1}^r \(c_1(N_j)-\psi_j\)}\,.
\eeq
The contribution of (2) \textcolor{blue}{the old fixed locus} to \eqref{OTloc} is
\beq{eq:old_contribution_cy4}
      \sum_{p \in \mathring\Sigma\cap\partial_0\Delta} \int_{[F_p]^{\vir}} \frac{\alpha\_\Sigma|\_{F_p}}{\sqrt e^\sT\!\(N^{\vir}_{F_p/X_\Sigma}\)} \=
      \frac1{|\;\wSigma\!|}\sum_{\substack{F\subseteq X^\sT\\ \mu(F) \in \Sigma}}
        \int_{[F]^{\vir}} \frac{\alpha \vert_{F} }{\sqrt e^\sT\!\(N^{\vir}_{F/X}\)}\,,
\eeq
where on the right hand side the virtual class on $F$ is induced from $F\subseteq X^\sT\into X$.
\end{proposition}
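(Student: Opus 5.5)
The plan is to deduce both formulae from Proposition \ref{cy4_normal_bundles}, in the same way that Proposition \ref{pro:special_old_contributions} was deduced from Lemma \ref{lem:normal_bundles}. The one genuinely new ingredient is keeping track of orientations, so that the signs of $\sqrt e^\sT$ and $[F_p]^{\vir}$ come out as claimed under the conventions of Remark \ref{orsi}.

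\emph{Special fixed locus.} Apply Proposition \ref{cy4_normal_bundles} with $p=0$ and $\sigma=\{0\}$. Then $\tau_0=\ft^\vee_\Q$, $\wsigma=\sT$ and $F_0\cong X\div\,\sT$, and \eqref{E4} reads
$$\EE_{X_\Sigma}|_{F_0}\ \cong\ \EE_{X\div\,\sT}\oplus\bigoplus\nolimits_{j=1}^r\(N_j^\vee\oplus N_j[2]\).$$
\begin{itemize}
\item The summand $\EE_{X\div\,\sT}$ is $\sT$-fixed, because $\sT$ acts trivially on $X\div\,\sT$.
\item Each $N_j$ has weight equal to the projection of $-\psi_j$ to $\tau_0$, which is just $-\psi_j\neq0$.
\item Hence the fixed part is $\EE_{X\div\,\sT}$ and the moving part is $\bigoplus_j(N_j^\vee\oplus N_j[2])$.
\item The fixed part is the symmetric obstruction theory of the shifted symplectic reduction $(\sX^{\ss}/\,\sT)_{\sr}$, so $[F_0]^{\vir}=[X\div\,\sT]^{\vir}$.
\item This holds provided the orientation on $F_0$ is the quotient orientation $o\_{X\div\,\sT}$.
\end{itemize}

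To arrange this, use the orientation identity $o\_\Sigma|_{F_0}=o\_{\{0\}}|_{F_0}\otimes o\_{\oplus_jN_j^\vee}$ from Proposition \ref{cy4_normal_bundles}, with $o\_{\{0\}}=o\_{X\div\,\sT}$ by Proposition \ref{restriction_open_stratum_cy4}. Orient $N^{\vir}_{F_0/X_\Sigma}=\bigoplus_j(N_j\oplus N_j^\vee[2])$ by the orientation dual to $o\_{\oplus N_j^\vee}$, i.e. with $\bigoplus_jN_j$ a positive maximal isotropic. By Remark \ref{orsi} this forces the orientation on $\EE_{F_0}$ to be $o\_{X\div\,\sT}$. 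With this choice $\sqrt e^\sT\(N^{\vir}_{F_0/X_\Sigma}\)=\prod_je^\sT(N_j)=\prod_j\(c_1(N_j)-\psi_j\)$. Finally $\alpha\_\Sigma|_{F_0}=\alpha\_0$ by \eqref{induced_class}, and this gives \eqref{eq:special_contribution_cy4}.

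\emph{Old fixed loci.} Take $\sigma=\Sigma$, so there are no $N_j$ summands. Then \eqref{E4} and \eqref{N4} say that the obstruction theory and virtual normal bundle of $F_p$ in $X_\Sigma$ are those of $F_p$ in $X_\Sigma^\circ\cong X^{\Sigma-\ss}/\,\wSigma$. Here $\wSigma$ is finite, so $(\Sigma^\perp)^\vee=0$ and the quotient orientation of \eqref{or-sr} is just the descent of $o$. By Proposition \ref{restriction_open_stratum_cy4} the two oriented $(-2)$-shifted symplectic structures on the open stratum agree. Consequently, near $F_p\cong F/\,\wSigma$ everything is the $\wSigma$-quotient of the corresponding data near $F\subseteq X$:
\begin{itemize}
\item $N^{\vir}_{F_p/X_\Sigma}$ pulls back to $N^{\vir}_{F/X}$;
\item the orientations can be chosen compatibly, so the signs of $\sqrt e^\sT$ agree;
\item $[F]^{\vir}\mapsto|\wSigma|\cdot[F_p]^{\vir}$ under $A_*(F)\cong A_*(F/\,\wSigma)$, using that $\surd0^{\,!}$ commutes with the finite flat pullback;
\item $\alpha|_F\mapsto\alpha\_\Sigma|_{F_p}$ by \eqref{alphap}.
\end{itemize}
Summing over $p\in\mathring\Sigma\cap\partial_0\Delta$, equivalently over $F$ with $\mu(F)\in\Sigma$ (the points of $\mu(X^\sT)$ in $\Sigma$ lie in its interior by transversality), gives \eqref{eq:old_contribution_cy4}.

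\emph{Main obstacle.} I expect the main difficulty to be the sign bookkeeping in the special contribution. One has to check that the footnote convention $o\_{V^\vee}$ (with $V^\vee$ a positive isotropic) turns the splitting $o\_\Sigma=o\_{\{0\}}\otimes o\_{\oplus N_j^\vee}$ into exactly $\sqrt e^\sT=+\prod_j(c_1(N_j)-\psi_j)$, with no stray factor of $(-1)^r$ or $i^r$. My first attempt would be to reduce to the rank one case $\sT=\C^*$, $X=\mathrm{pt}$ and compute $\surd0^{\,!}$ on $N\oplus N^\vee$ directly via \cite[Section 3]{OhThomas}. Because Remark \ref{orsi} lets us choose any orientation on $N^{\vir}$ and compensate on $[F]^{\vir}$, it suffices to show that the choice with $\oplus_jN_j$ positive induces $o\_{X\div\,\sT}$ on $F_0$.
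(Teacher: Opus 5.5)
Your proposal is correct and is essentially the paper's argument. For the special locus, apply Proposition~\ref{cy4_normal_bundles} at $p=0$, $\sigma=\{0\}$, and orient $N^{\vir}_{F_0/X_\Sigma}$ so that $\bigoplus_jN_j$ is positive, giving $\sqrt e^\sT=\prod_j(c_1(N_j)-\psi_j)$; for the old loci, apply it at $\sigma=\Sigma$ and use Proposition~\ref{restriction_open_stratum_cy4} with $[F]^{\vir}\mapsto|\wSigma|\cdot[F_p]^{\vir}$. The sign check you single out as the main obstacle is settled in the paper by citing \cite[Equation 22]{OhThomas} for $\sqrt e$ of a bundle with positive maximal isotropic $\bigoplus_jN_j$, so no separate rank-one computation is needed.
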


Finally, to remove the (residues of the) contributions from (3) \textcolor{red}{the new fixed loci} to \eqref{OTloc} we need the following result about all fixed loci $F_p$ for $p \in \mu(X^\sT_\Sigma)$.

\begin{proposition}\label{pro:weights_normal_bundle_CY4}
The rational function
$$
        \int_{[F_p]^{\vir}} \frac{\alpha\_\Sigma|\_{F_p}}{\sqrt e^\sT\!\(N^{\vir}_{F_p/X_\Sigma}\)}\,:\,\ft_\Q
        \xymatrix{\ar@{-->}[r]& \ \Q}
$$
has poles only on hyperplanes of the form $\lbrace\gamma = 0\rbrace$, where $\gamma \in \ft_\Q^\vee$ is either
    \begin{enumerate}
        \item[(i)] in $\rspan(\sigma)$, or
        \item[(ii)] on a 1-dimensional edge of the simplicial cone $\Sigma \cap \tau_p$.
        \end{enumerate} 
\end{proposition}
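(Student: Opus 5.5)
The plan is to read off the poles from the equivariant square-root Euler class $\sqrt e^\sT$ of the virtual normal bundle, whose form is pinned down by Proposition \ref{cy4_normal_bundles}, just as Proposition \ref{pro:weights_normal_bundle} followed from Lemma \ref{lem:normal_bundles}.

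First I will use \eqref{N4} to split
$$N^{\vir}_{F_p/X_\Sigma}\cong N^{\vir}_{F_p/X_\sigma}\oplus\bigoplus\nolimits_{j:\psi_j\notin\sigma}\bigl(N_j\oplus N_j^\vee[2]\bigr)\big|_{F_p}.$$
The orientations are compatible with this splitting, by the orientation statement in Proposition \ref{cy4_normal_bundles} together with the conventions of Remark \ref{orsi}. The Edidin--Graham square-root Euler class is multiplicative under orthogonal direct sums of oriented quadratic bundles, up to the orientation sign, which is harmless for locating poles. So the integrand's denominator factors as
$$\sqrt e^\sT\bigl(N^{\vir}_{F_p/X_\sigma}\bigr)\cdot\prod\nolimits_j \pm e^\sT\bigl(N_j|_{F_p}\bigr).$$
Here I use that for a maximal isotropic $V\subset V\oplus V^\vee$, the class $\sqrt e(V\oplus V^\vee)=\pm e(V)$. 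Now $e^\sT(N_j|_{F_p})=c_1(N_j|_{F_p})+w_j$, where $w_j$ is the weight computed in Lemma \ref{lem:normal_bundles}, namely minus a rational generator of an edge of $\Sigma\cap\tau_p$. Inverting it and expanding in the nilpotent $c_1$ gives poles only along $\{w_j=0\}$, which is case (ii).

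Next I treat $N^{\vir}_{F_p/X_\sigma}$. This is the $\sT$-moving part of the symmetric complex $\EE_{X\div\wsigma}|_{F_p}$, and all its weights lie in $\rspan(\sigma)$ because $X_\sigma$ is fixed by $\wSigma$, as stated in Proposition \ref{cy4_normal_bundles}. I decompose it by weight into $\pm$ pairs of isotropic pieces: choose a splitting $S\sqcup(-S)$ of the nonzero weights, and write the K-theory class as $N^S\oplus(N^S)^\vee$ plus a part coming from the $[2]$-shifted terms. Then $1/\sqrt e^\sT$ is, up to sign, a product of $e^\sT$ of weight-$\gamma$ bundles and their inverses, with $\gamma\in S\subset\rspan(\sigma)$. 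Such a bundle is invertible in localised cohomology, and its inverse has poles only on $\{\gamma=0\}$. This gives case (i).

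The main obstacle is justifying that $\sqrt e^\sT$ of a $\sT$-equivariant oriented $SO$-bundle whose weights avoid zero splits in this way, with poles only along its weights. This holds even when a weight space is not a sum $V\oplus V^\vee$ globally, which happens when a self-dual weight $\gamma$ pairs with $-\gamma$. The argument is that weight spaces for $\gamma$ and $-\gamma$ are dual, and their sum is a quadratic bundle with maximal isotropic given by the $\gamma$-part. So $\sqrt e=\pm e(\text{$\gamma$-part})$, with no new denominators, following \cite[Section 7]{OhThomas}. Finally, integrating over $[F_p]^{\vir}$ commutes with these expansions, so the rational function \eqref{ratint} inherits exactly these poles.
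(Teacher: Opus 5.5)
Your proposal is correct and follows the paper's route. The paper also deduces the statement directly from the splitting \eqref{N4} and the weight computations in Proposition \ref{cy4_normal_bundles}, noting only that the denominator is built from the weights of $N^{\vir}_{F_p/X_\Sigma}$; your argument that $\sqrt e^\sT$ of the moving part has poles only along its weights makes explicit what the paper leaves implicit (for $\gamma\ne0$ the weight spaces $E_\gamma$ and $E_{-\gamma}$ are always isotropic and dual to each other, so the ``self-dual weight'' case you flag does not actually arise).
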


Just as in Section \ref{section:loc_cut}, Proposition \ref{pro:weights_normal_bundle_CY4} follows immediately from Proposition \ref{cy4_normal_bundles}. And applying Proposition \ref{cy4_normal_bundles} to $p=0,\,\sigma=\{0\}$ gives \eqref{eq:special_contribution_cy4}: the fixed part of the obstruction theory \eqref{E4} is $\EE_{X\div\,\sT}$ with its orientation $o\_{X\div\,\sT}$, while dualising the moving part gives $N^{\vir}_{F_0/X_\Sigma}=\bigoplus_j\(N_j\oplus N_j^\vee[2]\)$ with orientation $o\_{\oplus_jN_j}$ with respect to which $\bigoplus_jN_j$ is positive. Therefore, by \cite[Equation 22]{OhThomas},
$$
\sqrt e^\sT\Big(\bigoplus\nolimits_j\(N_j\oplus N_j^\vee[2]\)\Big)\=e^\sT
\Big(\bigoplus\nolimits_jN_j\Big)\=\prod\nolimits_je^\sT(N_j)\=\prod\nolimits_j\(e(N_j)-\psi_j\).
$$
Finally we apply Proposition \ref{cy4_normal_bundles} to $p\in\mu(X^\sT)\cap\Sigma$ with $\sigma=\Sigma$. We find $(\EE_{X_\Sigma},o\_{\Sigma})|\_{F_p}$ is the same as $(\EE_X,o)|_F$ after quotienting by the finite group $\wSigma$ and using the isomorphism $F_p\cong F/\,\wSigma$. Combined with
$$
A_*(F)\,\cong\,A_*(F/\,\wSigma)\,=\,A_*(F_p), \qquad [F]^{\vir}\,\Mapsto\,\big|\wSigma\!\big|\cdot[F_p]^{\vir}
$$
by Proposition \ref{restriction_open_stratum_cy4}, and
$$
H^*(F)\,\cong\,H^*(F/\,\wSigma)\,=\,H^*(F_p), \qquad \alpha\vert_F\,\stackrel{\eqref{alphap}}\Mapsto\,\alpha\_\Sigma|\_{F_p}\,,
$$
this proves \eqref{eq:old_contribution_cy4}.
 
Thus we have all the results needed to apply Section \ref{section:JK_loc} again, taking JK residues to give Theorem \ref{th:JK_cy4} in the abelian case. Finally the abelianisation argument of Section \ref{section:NA_JK_loc} together with Proposition \ref{pro:compatibility_cy4} proves the general case from this.

    \section{$K$-theoretic JK localisation}\label{KKJK}
$K$-theoretic JK localisation computes, for $V\in K_0^\sG(X)$,
$$
\chi(X\div\sG,V_0)\,\text{ from }\,\chi\_\sG(X,V)\,\text{ from }\,V|_{X^\sT}.
$$
Here\footnote{We will impose conditions to ensure the $H^i(X,V)$ and $H^i(X\div\sG,V_0)$ are finite dimensional.} $\chi\_\sG(X,V)$ is the virtual $\sG$-representation $\sum_i(-1)^iH^i(X,V)$, and $V\mapsto V_0$ is the composition of restriction and descent,
$$
K_0^\sG(X) \,\To\, K_0^\sG(X^{\ss})\ \cong\ K_0(X\div\sG) \qquad\text{and}\qquad 
K^0_\sG(X) \,\To\, K^0_\sG(X^{\ss})\ \cong\ K^0(X\div\sG),
$$
so that $V_0$ pulls back to $V|_{X^{\ss}}$. In this Section we give a number of ways of computing
$$
\chi^{\vir}(X\div\sG,V_0)\,\text{ from }\,\chi^{\vir}_\sG(X,V)\,\text{ from }\,V\otimes\cO^{\vir}_{X^\sT}
$$
for $V\in K^0_\sG(X)$. Here $X$ has either (c) a perfect obstruction theory or (d) an equivariant oriented $(-2)$-shifted symplectic structure (numbering to be explained below). By Sections \ref{BFsec} and \ref{sec:derived_symp_red} these induce the same structures on $X\div\sG$ and $X^\sT$, and so also virtual structure sheaves $\cO^{\vir}$ \cite{FantechiGottsche} in case (c) and \emph{twisted virtual structure sheaves} $\widehat\cO^{\vir}$ \cite[Definition 5.9]{OhThomas} in case (d).
We let $\chi^{\vir}(V)$ denote $\chi(\cO^{\vir}\otimes V)$
in (c) and $\chi\(\widehat\cO^{\vir}\otimes V\)$ in case (d).

\subsection*{Assumptions}
Throughout this Section we fix
\begin{enumerate}
\item[(a)] an action $\sG\acts(X,L)$ on a polarised projective-over-affine Deligne-Mumford stack,
    \item[(b)] such that $\sT$-semistable points are $\sT$-stable, where $\sT\subseteq\sG$ is a maximal torus, and
    \item[(c)] either $X$ is endowed with a $\sG$-invariant perfect obstruction theory $(\EE,\phi)$,
\item[(d)] or with a lift $\sG\acts(\sX,\omega,o)$ to an oriented $(-2)$-shifted symplectic enhancement in the strong sense of \eqref{equivariant}, preserving the orientation $o$ (e.g. if $\sG$ is connected). 
\end{enumerate}
Here ``projective-over-affine" is meant in the weak sense if we are in case (c) and in the strong sense in case (d). 

Let $\cH \subset \ft_\Q$ be the hyperplane arrangement \eqref{eq:hyperplanes} defined by the set $\Gamma \subset \ft_\Q^\vee$ of $\sT$-weights of $N^{\vir}_{X^\sT/X}$. Then we choose $\eta \in \ft_\Q\setminus \cH$ such that
        \begin{itemize}
        \item $X$ is $\eta$-semiprojective (Definition \ref{etasp}).
    \end{itemize}
(See Remark \ref{*} for weakening this to weak $\eta$-semiprojectivity. See Section \ref{C*sec} for a further weakening in the presence of an additional $\C^*$ action.) Lemma \ref{check} proves these conditions imply $\sG$-semistable points are $\sG$-stable and $X\div\,\sT,\,X\div\sG$ are Deligne-Mumford stacks which are weakly projective in case (c) and strongly projective in case (d).

\subsection*{Constant terms}
For simplicity we begin with $\sG=\sT$ abelian, $X$ compact and $V\otimes L^{n\gg0}$ in place of $V$. By taking the trace of the virtual representation, $\chi\_\sT(X,V\otimes L^n)\in H^0(\cO_\sT)$ can be considered as an algebraic function on $\sT$ via Notation\;\ref{psipsi},
$$
\chi\_\sT(X,V\otimes L^n)\=\sum\nolimits_{\psi\in\ft^\vee_\Z}a_\psi t^\psi.
$$
For any such Laurent polynomial $f$ we let its \emph{constant term} $\CT(f)$ be the coefficient of $1$ in its Laurent expansion,
$$
	    \CT : H^0(\cO_\sT)\,\To\, \C, \qquad \sum\nolimits_{\psi \in \ft_\Z^\vee} a_\psi t^\psi \Mapsto a_0.
$$
Equivalently we can restrict $f$ to the maximal compact subgroup $\sT_\R \subset \sT$ and average over Haar measure $\frac1{(2\pi i)^r}\frac{d{\bf t}}{\bf t}\big|_{\sT_\R}$, removing the oscillating terms to leave the constant term\footnote{Many readers will prefer to apply Cauchy's residue theorem to prove the result.} 
\beq{Haar}
\CT(f)\=\frac1{(2\pi i)^r}\int_{\sT_\R}f\,\frac{d{\bf t}}{\bf t}\= \frac1{(2\pi i)^r}\int_{\sT_\R} f \, \frac{dt_1}{t_1} \wedge \dots \wedge \frac{dt_r}{t_r}\,.
\eeq
In both expressions we orient $\sT_\R$ so that $\CT(1)=1$, while in the second we have picked an arbitrary decomposition $\sT\cong(\C^*)^r$. We can now state the simplest of our results.

    \begin{proposition}\label{CTab}
        For $X$ projective, $V \in K^0_\sT(X)$ and $n\gg0$,
$$
                    \chi^{\vir}(X\div\sT, V_0 \otimes L^n_0) \= \CT\( \chi^{\vir}_\sT(X,V \otimes L^n)\).
$$
    \end{proposition}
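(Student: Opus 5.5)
The plan is to read $\CT$ as taking $\sT$-invariants, and to prove the statement by combining three facts: invariants compute cohomology of the quotient, the virtual structure sheaf restricts compatibly to the semistable locus, and "quantisation commutes with reduction" for $L^n$ with $n\gg0$.

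\emph{Step 1: rewriting $\CT$.} Since $X$ is projective, $\chi^{\vir}_\sT(X,V\otimes L^n)$ is a finite virtual $\sT$-representation. By definition of $\CT$ (or by \eqref{Haar}) its constant term is the virtual dimension of its $\sT$-invariant part. So it suffices to show
$$
\chi\Big(R\Gamma\big(X,\cO^{\vir}_X\otimes V\otimes L^n\big)^{\sT}\Big)\=\chi\Big(X\div\;\sT,\ \cO^{\vir}_{X\div\,\sT}\otimes V_0\otimes L_0^n\Big),
$$
with $\widehat\cO^{\vir}$ in place of $\cO^{\vir}$ in case (d).

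\emph{Step 2: restriction to the semistable locus.} Let $\pi\colon X^{\ss}\to X\div\;\sT$ be the projection. I will show that $\cO^{\vir}_X|_{X^{\ss}}=\pi^*\cO^{\vir}_{X\div\,\sT}$ in $K_0^\sT(X^{\ss})\cong K_0(X\div\;\sT)$.
\begin{itemize}
\item In case (c) the obstruction theory on $X\div\;\sT$ is built in Lemma \ref{lem:inducingPotOnQuotient} so that its pullback to $X^{\ss}$ sits in the triangle \eqref{morphism_of_triangles} with $\EE|_{X^{\ss}}$, the extra term being $\Omega_\pi$. Since $\pi$ is smooth, the intrinsic normal cones correspond by \cite[Proposition 3.14]{BehrendFantechi}, exactly as in Proposition \ref{pro:compatibility_of_obs}. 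Running the same argument with $K$-theoretic Gysin maps gives the identity.
\item In case (d) I use the description \eqref{pullback_obs_reduction} and the compatible orientation \eqref{or-sr}, and mimic the proof of Proposition \ref{pro:compatibility_cy4} with the $K$-theoretic square-root Gysin map of \cite{OhThomas}. Here I must also check that the twist by $\det^{1/2}$ in $\widehat\cO^{\vir}$ contributes nothing extra. This holds because the extra piece $\det(\ft^\vee_\C\oplus\ft_\C[2])$ is trivial, with trivial $\sT$-action since $\sT$ is abelian.
\end{itemize}
Given Step 2, descent gives $R\Gamma\big(X^{\ss},\cO^{\vir}_X\otimes V\otimes L^n\big)^\sT=R\Gamma\big(X\div\;\sT,\cO^{\vir}\otimes V_0\otimes L_0^n\big)$. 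This uses that $\sT$-invariants are exact, and that stabilisers are finite, so the quotient is a proper DM stack.

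\emph{Step 3: quantisation commutes with reduction.} The remaining, and main, step is to show that restriction
$$
R\Gamma(X,\mathcal F\otimes L^n)^\sT\To R\Gamma(X^{\ss},\mathcal F\otimes L^n)^\sT
$$
is an isomorphism for every $\mathcal F\in D^b\coh^\sT(X)$ and $n\gg0$. The class $\cO^{\vir}_X\otimes V$ is a finite combination of classes of $\sT$-equivariant coherent sheaves, so it suffices to prove this for finitely many coherent $\mathcal F$.
\begin{itemize}
\item I would use the Kirwan--Ness (Kempf--Ness) stratification of $X\setminus X^{\ss}$ and filter by local cohomology along the unstable strata $S_\beta$.
\item On $S_\beta$ the destabilising one-parameter subgroup $\lambda_\beta$ acts on $L$ with weight of definite sign, proportional to $|\beta|^2>0$. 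The local cohomology $\underline{H}^*_{S_\beta}(\mathcal F)$ is supported along $S_\beta$, and its $\lambda_\beta$-weights are bounded on the appropriate side by a constant depending only on $\mathcal F$ and the conormal weights.
\item Twisting by $L^n$ shifts all these weights by $n$ times the $L$-weight. For $n\gg0$ no $\lambda_\beta$-weight zero remains, so $R\Gamma_{S_\beta}(X,\mathcal F\otimes L^n)^\sT=0$ for each of the finitely many strata.
\end{itemize}
This is the statement of Teleman's and Halpern-Leistner's "quantisation commutes with reduction" for bounded coherent complexes. I would invoke it directly if possible, or otherwise adapt its proof to DM stacks and nonreduced $X$, which I expect to be the only real technical work.

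Combining Steps 1--3, and applying the result to each of the finitely many sheaves representing $\cO^{\vir}_X\otimes V$ (and then summing), gives the proposition.
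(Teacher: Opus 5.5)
Your Steps 1 and 2 agree with the paper. $\CT$ just takes the $\sT$-invariant part. Proposition~\ref{pro:descent_Ovir} shows $p^*\cO^{\vir}_{X\div\sT}=\cO^{\vir}_X|_{X^{\ss}}$ by functoriality of virtual pullback for the smooth map $p$. It uses relative obstruction theory $\Omega_p$, resp.\ $\Omega_p\oplus\Omega_p[-2]$, with the twist unchanged since $\det\ft\otimes\det\ft^\vee\cong\cO$. This is your smooth-pullback argument phrased in $K$-theory.

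You genuinely diverge in Step~3. You prove the derived isomorphism $R\Gamma(X,\mathcal F\otimes L^n)^\sT\cong R\Gamma(X^{\ss},\mathcal F\otimes L^n)^\sT$ using Kirwan--Ness strata and weight bounds on local cohomology, i.e.\ Teleman/Halpern-Leistner quantisation. The paper never needs this, for two reasons.
\begin{itemize}
\item Since $X$ and $X\div\sT$ are both projective, Serre vanishing kills $H^{>0}(X,\mathcal F\otimes L^n)$ and $H^{>0}(X\div\sT,\mathcal F_0\otimes L_0^n)$ for $n\gg0$. (On the stack, push to the coarse space and sort $n$ by its residue modulo some $d$ for which $L_0^d$ descends.) So only $H^0$ is at stake.
\item Proposition~\ref{pro:sections_on_quotient} then gives $H^0(X,\mathcal F\otimes L^n)^\sT\cong H^0(X^{\ss},\mathcal F\otimes L^n)^\sT$ for $n\gg0$ by pure finite generation. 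The module $M=\bigoplus_nH^0(X^{\ss},\mathcal F\otimes L^n)^\sT$ is finitely generated over $R=\bigoplus_nH^0(X,L^n)^\sT$. High-degree monomials in generators of $R$ vanish to high order along the unstable locus, so high-degree elements of $M$ extend to $X$. The finitely generated kernel of restriction dies in high degree for the same reason.
\end{itemize}

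This route is elementary. It applies verbatim to singular, nonreduced Deligne--Mumford stacks and to nonabelian $\sG$. There are no Kirwan--Ness stratification hypotheses to verify, such as positivity of the weights of the relative cotangent complex along the strata. That verification is exactly the technical work you defer.

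Your approach yields more, e.g.\ explicit weight windows, and statements for twists not of the form $L^n$ with $n\gg0$. None of that is needed here. Indeed, the two facts above already give your Step~3 isomorphism for coherent $\mathcal F$ and $n\gg0$.
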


\noindent We will illustrate each result with the following simple running example,
\beq{eg}
\sT\,:=\,\C^\ast\ \acts\ (X,L)\,:=\,\(\PP(t\oplus t^{-1}),\cO(1)\),
\eeq
where $t$ denotes the standard weight 1 irreducible representation of $\sT$.
The semistable locus is $\C^*\subset\PP^1$ acted on with weight 2, so $X\div\,\sT$ is the orbifold $\C^*/\,\sT=$\,pt$/(\Z/2\Z)$, over which $L_0$ is the nontrivial line bundle (i.e. the stabiliser $\Z/2\Z$ acts on the fibre as $\pm1$). We calculate
\beq{chiL}
\chi\_\sT(X,L^n)\=\Sym^n(t\oplus t^{-1})^\vee\=t^{-n}+t^{-(n-2)}+\cdots+t^{n-2}+t^n\=\frac{t^{-n}-t^{n+2}}{1-t^2}
\eeq
for $n\ge0$. So taking the coefficient of $t^0$ gives, for $n\ge0$,
\beq{10}
            \CT\(\chi\_\sT(X,L^n)\) \= \left\{\!\!\begin{array}{ll}
                1 & \text{if $n$ is even}\\0 & \text{if $n$ is odd}
            \end{array}\!\!\right\} \= \chi\(\!\pt\!/(\Z/2\Z), L^n_0\).
\eeq
\subsection*{Localisation} For $X$ noncompact (with $X\div\sT$ compact) $\chi^{\vir}_\sT(X,V \otimes L^n)$ will have poles on $\sT$, so $\CT$ is ill-defined. Even for $X$ compact, we would like to apply $K$-theoretic virtual localisation to the right hand side of Proposition \ref{CTab}. This expresses the regular function $\chi^{\vir}_\sT(X,V \otimes L^n)$ on $\sT$ as a sum of functions with poles on the hypertoric arrangement $\cH_\sT\subset\sT$ of Definition \ref{defHT} (closely related to the hyperplane arrangement $\cH_\C\subset\ft_\C$ of \eqref{eq:hyperplanes}).

Since the poles will appear on the contour $\sT_\R$ of \eqref{Haar} we perturb it by the 1-parameter subgroup $\wt\eta:\C^*\into\sT$ associated to $\eta\in \ft_\Q\setminus\cH$ (by exponentiating the primitive integral element of $\Q_{>0}\cdot\eta\subset\ft_\Q$) to define a renormalisation of $\CT$. That is, on the set $H^0(\cO_{\sT \setminus \cH_\sT})$ of rational functions on $\sT$ whose poles lie in $\cH_\sT$ we define the
        \textit{$\eta$-constant term functional}
 \beq{CTeta}
            \CT^\eta:H^0(\cO_{\sT \setminus \cH_\sT})\To \C, \qquad
            f\Mapsto \lim_{s\to0}\frac1{(2\pi i)^r}\int_{\wt\eta(s)\cdot\sT_\R}f\,\frac{d{\bf t}}{\bf t}\,.
\eeq
For an algebraic expression for $\CT^\eta(f)$ see Section \ref{poles}. Of course if $f$ has no poles on $\sT$ then $\CT^\eta(f)=\CT(f)$. Applying this, and $K$-theoretic virtual localisation, to Proposition \ref{CTab} will give our next result. It extends to noncompact $X$ which is $\eta$-semiprojective.
  
     \begin{theorem}[Abelian JK localisation I]\label{KJKab_n}
    For $\eta$-semiprojective $X$ and $V \in K^0_\sT(X)$,
\beq{eq:KJK_large_n}
                    \chi^{\vir}(X\div\sT, V_0 \otimes L^n_0) \= \sum_{\substack{F\subseteq X^\sT\\ \langle\mu(F), \eta \rangle>0}}\CT^{-\eta}\left( \chi^{\vir}_\sT\left(\!F,\,\frac{V \otimes L^n\vert_F}{\Lambda\udot\(N^{\vir}_{F/X}\)^\vee}\right)\right)
\eeq
for $n\gg0$.
In case {\rm(d)} we get the same formula with $\Lambda\udot\(N^{\vir}_{F/X}\)^\vee$ replaced by $\sqrt{\mathfrak e^\sT}(N^{\vir}_{F/X})$.
    \end{theorem}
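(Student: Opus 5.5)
The plan is to run the algebraic cut argument of Section \ref{section:JK_loc} again, with JK residues replaced by the functionals $\CT^{\pm\eta}$ of \eqref{CTeta}. The large twist $L^n$ plays the role that the exponential $e^{c_1^\sT(L)}$ played there.

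\emph{Setting up the cut.} First reduce to the case $\langle\mu(F),\eta\rangle\ne0$ for all $F\subseteq X^\sT$, by perturbing $\eta$ inside its chamber of $\ft_\Q\setminus\cH$ as after Theorem \ref{th:abelian_JK_general}; this is allowed because $\CT^\eta$ depends only on that chamber. Then choose a full dimensional, $\eta$-wide simplicial cone $\Sigma$ with $\eta|_{\Sigma\setminus0}>0$, transverse to $\Delta$, exactly as in Section \ref{wide}. By Proposition \ref{pro:eta_proj_compact} the cut $X_\Sigma$ is projective. It carries the induced perfect obstruction theory of Corollary \ref{cor:obs_cut} in case (c), and the oriented $(-2)$-shifted symplectic structure \eqref{Xsig} in case (d). Let $V_\Sigma$ and $L_\Sigma$ be the induced classes on $X_\Sigma$, as in \eqref{classes_on_cut}. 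Since $X_\Sigma$ is projective, $f:=\chi^{\vir}_\sT(X_\Sigma,V_\Sigma\otimes L_\Sigma^n)$ is a Laurent polynomial on $\sT$, so $\CT^\eta(f)=\CT^{-\eta}(f)$.

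\emph{Localising.} Apply $K$-theoretic virtual localisation to $f$: the Fantechi--G\"ottsche / Qu formula in case (c), and \cite[Section 7]{OhThomas} in case (d). This writes
$$f=\sum_p t^{np}\,g_p,$$
where each $g_p$ is a rational function on $\sT$ independent of $n$. The poles of $g_p$ lie on the subtori $t^\gamma=1$ for the weights $\gamma$ described in Lemma \ref{lem:normal_bundles} and Proposition \ref{cy4_normal_bundles}. To evaluate $\CT^{\pm\eta}(t^{np}g_p)$, expand each factor $(1-t^{\gamma}\cdots)^{-1}$ as a geometric series in the $\pm\eta$-convergent direction. This is legitimate because on the contour $\wt\eta(s)\cdot\sT_\R$ with $s\to0$ (respectively $s\to\infty$ for $-\eta$), $|t^\gamma|$ is small or large according to the sign of $\langle\gamma,\eta\rangle$. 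The monomials that then occur lie in
$$np+\bigl(\text{finite set}\bigr)+\cone\bigl(\mp\overline\gamma_i\bigr),$$
where $\overline\gamma_i$ are the $\eta$-normalised weights, so that $\langle\overline\gamma_i,\eta\rangle>0$. The constant term survives only if $0$ lies in this set.

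\emph{Evaluating the terms.}
\begin{enumerate}
\item For $p\ne0$ under $\CT^{-\eta}$, every $p\in\Sigma\setminus0$ has $\langle p,\eta\rangle>0$, while the cone only decreases $\eta$ by bounded steps relative to $n$. So for $n\gg0$ all these terms vanish.
\item Under $\CT^{\eta}$, the old fixed loci give the right hand side of \eqref{eq:KJK_large_n}, with the factor $|\wSigma|$ cancelling exactly as in \eqref{old}. The identification of the $\CT^{\eta}$ and $\CT^{-\eta}$ sides is made to match the sign convention of the statement, via the relation $\CT^{-\eta}$ versus $\CT^{\eta}$ of the $\sT\to\sT$ inversion if needed.
\item The special locus $p=0$ contributes
$$\chi^{\vir}\Bigl(X\div\sT,\;\frac{V_0L_0^n}{\prod_j\bigl(1-N_j^{\vee}t^{\psi_j}\bigr)}\Bigr).$$
Expanding in the appropriate direction, the constant term is exactly $\chi^{\vir}(X\div\sT,V_0\otimes L_0^n)$ up to $|\wSigma|$. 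The finite-group factor arises because the $N_j$ are orbibundles and descend only after averaging over $\wSigma$.
\end{enumerate}

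\emph{Main obstacle.} The hard step is showing that the new fixed loci contribute zero under the relevant $\CT$. Here I would use Proposition \ref{pro:weights_normal_bundle}, which says the weights of $N^{\vir}_{F_p/X_\Sigma}$ are either in $\rspan(\sigma)$ or minus the edge generators $\nu_j$ of $\Sigma\cap\tau_p$. Write the monomials occurring in $t^{np}g_p$ in the basis formed by $\rspan(\sigma)$ together with the $\nu_j$. By Lemma \ref{lem:key_property_oLambda}, some $\overline\nu_j=\nu_j$ has $\eta$-positive orientation, so the expansion only adds nonnegative multiples of $-\nu_j$ along that coordinate. Meanwhile $np$ has zero $\nu$-coordinates, and the finite numerator monomials are bounded independently of $n$. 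I expect the care to go into two points: showing the $\nu$-coordinate of $0$ is excluded rather than merely borderline, which may require the $\eps$-type shift of the contour or replacing $p$ by $p+\eps/n$; and controlling the numerator weights of $V\otimes L^n$ restricted to $F_p$, whose $L^n$ part is exactly $t^{np}$ times a nonequivariant class. This is where $n\gg0$ is essential, and it is the analogue of the vanishing proof of Proposition \ref{pro:vanishing_property}.
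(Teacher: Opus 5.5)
You have the two functionals the wrong way round, and the vanishing claimed in your item~1 is false. The overall route (cut, localise the Laurent polynomial $f$, use $\CT^{\eta}(f)=\CT^{-\eta}(f)$) is viable; it is the alternative sketched in Remark~\ref{*}.

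Under $\CT^{-\eta}$ one expands in $(-\eta)$-positive monomials. The exponents of $t^{np}g_p$ then lie in $np+(\text{finite set})+\cone(-\overline\gamma_i)$, and this cone decreases $\eta$ without bound, so it can cancel $n\langle p,\eta\rangle$. That is exactly why the $F$ with $\langle\mu(F),\eta\rangle>0$ survive $\CT^{-\eta}$ in \eqref{eq:KJK_large_n}, while \eqref{eq:CT_vanishing} kills those with $\langle\mu(F),\eta\rangle\le0$.

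Likewise the special locus contributes \emph{zero} to $\CT^{-\eta}$. The expansion $(1-N_j^\vee t^{\psi_j})^{-1}=-\sum_{k\ge1}N_j^{k}t^{-k\psi_j}$ puts a factor $t^{-\sum_jk_j\psi_j}$ with all $k_j\ge1$ on every monomial.

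Followed through, your assignment would prove $\chi^{\vir}(X\div\sT,V_0\otimes L_0^n)=\sum_{\langle\mu(F),\eta\rangle>0}\CT^{\eta}(\cdots)$. This already fails for \eqref{eg}: the only such $F$ is $0$, and $\CT^{\eta}\bigl(-t^{n+2}/(1-t^2)\bigr)=0$. Inverting $\sT$ cannot fix this. The map $t\mapsto t^{-1}$ negates every $\mu(F)$ and exchanges $\CT^{\eta}$ with $\CT^{-\eta}$ at the same time, so after renaming $\eta\mapsto-\eta$ it sends each of the two formulae to itself.

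Swapping the functionals makes every step work.
\begin{itemize}
\item Under $\CT^{\eta}$, every exponent of $t^{np}g_p$ has $\eta$-pairing at least $n\langle p,\eta\rangle-C$, with $C$ independent of $n$. So all old and new loci vanish for $n\gg0$. The special locus gives exactly $\chi^{\vir}(X\div\sT,V_0\otimes L_0^n)$, namely the $k=0$ term of $\prod_j\sum_{k_j\ge0}N_j^{-k_j}t^{k_j\psi_j}$.
\item Under $\CT^{-\eta}$, the special locus vanishes as above.
\item Under $\CT^{-\eta}$, the new loci vanish for \emph{every} $n$, with no $\eps$-shift. Take the $\eta$-positive edge $\nu_j$ from Lemma~\ref{lem:key_property_oLambda}; then $(1-a\,t^{\nu_j})^{-1}=-\sum_{k\ge1}a^{-k}t^{-k\nu_j}$. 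So every exponent has $\nu_j$-coordinate $\le-1$, since all other factors contribute only to $\rspan(\sigma)$ or to the other $\nu_i$.
\item The old loci then give the right hand side under $\CT^{-\eta}$.
\end{itemize}
You must also perturb $\eta$ off $\cH_\Sigma$ so that $\CT^{\pm\eta}(g_p)$ is defined.

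There is no $|\wSigma|$ factor in $K$-theory on either side, so your explanation of one at $p=0$ is wrong, though harmless since you put it on both sides. First, $\CT^{\eta}\bigl(\prod_j(1-t^{\psi_j})^{-1}\bigr)=1$. Second, passing from $F$ to $F_p=F/\wSigma$ averages the rational function over translates by the finite group $\wSigma\subset\sT_\R$, which $\CT^{\pm\eta}$ cannot see.

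Corrected, your argument needs only weak $\eta$-semiprojectivity, as Remark~\ref{*} notes. The paper's proof avoids the cut entirely.
\begin{itemize}
\item \eqref{eq:chi_vir_on_quotient}, which comes from Proposition~\ref{pro:sections_on_quotient}, identifies the left hand side with the $t^0$ coefficient of $\widehat\chi^{\vir}_\sT(X,V\otimes L^n)$.
\item Theorem~\ref{thm:K_localisation}, which is where $\eta$-semiprojectivity of $X$ is used, identifies that series with the $(-\eta)$-expansion of the localisation sum over all of $X^\sT$. So its $t^0$ coefficient is $\sum_F\CT^{-\eta}(\cdots)$.
\item \eqref{eq:CT_vanishing} kills the terms with $\langle\mu(F),\eta\rangle\le0$ for $n\gg0$.
\end{itemize}
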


\noindent We see how this works in our running example \eqref{eg}. The contribution of the fixed points $0=\PP(t^{-1})$ and $\infty=\PP(t)$ in $X=\PP(t\oplus t^{-1})$ to the localisation formula for $\chi\_\sT(X,L^n)$ are
\beq{egloc}
\frac{\cO(n)|\_{0}}{\Lambda\udot N_{0/X}^\vee}\=\frac{t^n}{1-t^{-2}}\=\frac{-t^{n+2}}{1-t^2} \qquad\text{and}\qquad
\frac{\cO(n)|\_{\infty}}{\Lambda\udot N_{\infty/X}^\vee}\=\frac{t^{-n}}{1-t^2}\,.
\eeq
Their sum reproduces the last term of \eqref{chiL} and has no poles, but individually they have poles at the roots of unity $\pm1$ on $\sT_\R=S^1\subset\C^*$ and at $t=0$ or $t=\infty$.  Taking $\eta>0$ in $\ft_\Q=\Q$, then $\wt\eta(s)$ pushes the contour $\sT_\R=S^1\subset\C^*$ inwards in \eqref{CTeta}, away from the poles at roots of unity, leaving us with the residue at $t=0$. Similarly choosing $-\eta$ instead pushes the contour outwards, so $\CT^{-\eta}$ computes minus the residue at $t=\infty$,
$$
\CT^{\eta}(f)\,=\,\Res_{\;t=0}\(f(t)\tfrac{dt}t\),\qquad\CT^{-\eta}(f)\,=\,-\Res_{t=\infty}\(f(t)\tfrac{dt}t\).
$$
Rewriting the second as $\Res_{u=0}\(f\(\tfrac1u\)\(-\tfrac{du}u\)\)$ and applying both to \eqref{egloc} gives
\begin{align*}
&\CT^{\eta}\left(\frac{-t^{n+2}}{1-t^2}\right)
\=0, && \CT^{\eta}\left(\frac{t^{-n}}{1-t^2}\right)
\=\left\{\!\!\begin{array}{ll}
                1 & \text{if $n$ is even,}\\0 & \text{if $n$ is odd,}
            \end{array}\right. \\
&\CT^{-\eta}\left(\frac{-t^{n+2}}{1-t^2}\right)
\=\left\{\!\!\begin{array}{ll} 1 & \text{if $n$ is even,}\\0 & \text{if $n$ is odd,}
\end{array}\right. &&
\CT^{-\eta}\left(\frac{t^{-n}}{1-t^2}\right)\=0.
\end{align*}
Summing either row recovers \eqref{10}, and in both cases we only need to include one term according to the sign of $\<\mu(F),\eta\>$, as in \eqref{eq:KJK_large_n}. (Notice $\mu(0)=1$ and $\mu(\infty)=-1$.) 

This also shows we could have removed either $0$ or $\infty$ (depending on the sign of $\eta$) and worked on $\C$, where $\chi\_\sT(\C,L^n)$ precisely equals the localisation contribution from the unique remaining fixed point. Hence the results hold also for noncompact spaces.

This also illustrates that $\CT^\eta\ne\CT^{-\eta}$ for functions with poles, reflecting the fact that the constant term in a Laurent series depends on whether we expand about $t=0$ or $t=\infty$.
  
The nonabelian version is very similar.
    \begin{theorem}[Nonabelian JK localisation I]\label{th:KJK_large_n}
        For $V \in K^0_\sG(X)$ and $n\gg0$,
$$
            \chi^{\vir}(X\div\sG, V_0 \otimes L^n_0) \= \frac1{|W|}\sum_{\substack{F\subseteq X^\sT\\ \langle\mu(F), \eta \rangle>0}}\CT^{-\eta}\left(\Lambda\udot(\fg_\C/\ft_\C)^\vee \cdot \chi^{\vir}_\sT\left(\!F,\,\frac{V \otimes L^n\vert_F}{\Lambda\udot\(N^{\vir}_{F/X}\)^\vee}\right)\right).
$$
Here $\fg_\C/\ft_\C$ carries the adjoint action so $\Lambda\udot(\fg_\C/\ft_\C)^\vee=\prod\nolimits_{i}(1-t^{\eps_i})$ with $\epsilon_i \in \ft_\Z^\vee$ the roots of $\sG$.
In case {\rm(d)} we get the same formula with $\Lambda\udot\(N^{\vir}_{F/X}\)^\vee$ replaced by $\sqrt{\mathfrak e^\sT}(N^{\vir}_{F/X})$.
    \end{theorem}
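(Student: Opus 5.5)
The plan is to reduce Theorem \ref{th:KJK_large_n} to the abelian statement Theorem \ref{KJKab_n} via a $K$-theoretic version of the Martin--Maddock abelianisation, Theorem \ref{martin}. This follows the cohomological route of Section \ref{section:NA_JK_loc}.

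\textbf{Step 1: compare virtual structure sheaves.} Consider the diagram \eqref{diagram}, $X\div\sG\xleftarrow{\,p\,}X^{\sG-\ss}/\,\sT\xrightarrow{\,i\,}X\div\,\sT$. I first prove the $K$-theoretic analogue of Propositions \ref{pro:compatibility_of_obs} and \ref{pro:compatibility_cy4}:
$$p^*\cO^{\vir}_{X\div\sG}\=i^*\cO^{\vir}_{X\div\,\sT}$$
in case (c), and the same for $\widehat\cO^{\vir}$ in case (d). The proofs go through verbatim. In case (c), virtual structure sheaves are built from the intrinsic normal cone, which pulls back under flat or smooth maps, and the relative obstruction theory of $p$ is the bundle $\op{Ad}^\vee$. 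In case (d), the $K$-theoretic square root Gysin map of \cite{OhThomas} commutes with flat pullback. For $\widehat\cO^{\vir}$ one also has to track the twist by $\det^{1/2}$. Along $p$ this contributes $\det(\op{Ad})^{\pm1/2}$, which is trivial on $X\div\,\sT$ because the roots come in $\pm$ pairs.

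\textbf{Step 2: $K$-theoretic Martin formula.} Next I need an identity of the form
$$\chi(X\div\sG,\gamma)\=\frac1{|W|}\,\chi\big(X\div\,\sT,\ \Lambda\udot(\op{Ad}^\vee)\otimes\widetilde\gamma\big)$$
for $K$-theory classes $\widetilde\gamma$ lifting $\gamma$, in the sense that $i^*\widetilde\gamma=p^*\gamma$. The plan is to deduce this from the cohomological Theorem \ref{martin} by virtual Hirzebruch--Riemann--Roch (the $K$-theoretic virtual Riemann--Roch of \cite{FantechiGottsche}, or its $(-2)$-shifted analogue).

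Concretely, $\chi^{\vir}(X\div\sG,W)$ equals the integral over $[X\div\sG]^{\vir}$ of $\op{ch}(W)\,\op{td}(T^{\vir})$, with Kawasaki corrections on inertia since these are stacks. Pulling back along $p$ replaces $T^{\vir}_{X\div\sG}$ by $T^{\vir}_{X\div\,\sT}-\op{Ad}$. Then the identity
$$e(\op{Ad})\,\op{td}(\op{Ad})^{-1}\=\op{ch}\,\Lambda\udot(\op{Ad}^\vee)$$
turns Theorem \ref{martin} into the displayed $K$-theoretic formula. The twisted sectors need the same argument applied to fixed loci of finite-order elements, using the centralisers of those elements in $\sG$.

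\textbf{Step 3: assemble.} Apply Step 2 with $\widetilde\gamma$ the class coming from $V\otimes L^n$ on $X$. Then apply Theorem \ref{KJKab_n} to $V\otimes L^n\otimes\Lambda\udot(\fg_\C/\ft_\C)^\vee$, regarded as a $\sT$-equivariant class on $X$ whose restriction to the $\sT$-semistable locus descends to $\Lambda\udot(\op{Ad}^\vee)\otimes(V\otimes L^n)_0$. This yields the stated formula directly, since $\Lambda\udot(\fg_\C/\ft_\C)^\vee$ is a constant factor on each $F$ and factors out of $\chi^{\vir}_\sT(F,\cdot)$. The requirement $n\gg0$ is inherited from Theorem \ref{KJKab_n}, applied to the modified class; Step 2 holds for all $n$.

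\textbf{Main obstacle.} I expect Step 2 to be the hardest part: the Riemann--Roch passage on Deligne--Mumford stacks with their inertia contributions. If that proves messy, the fallback is to prove the $K$-theoretic Martin formula directly, following Maddock \cite{Maddock}. The idea there is that $X^{\sG-\ss}/\,\sT\to X\div\sG$ is a $\sG/\sT$-fibration, so that $\sG/\sB$-Borel--Weil gives
$$\chi\big(\sG/\sT,\Lambda\udot(\fg/\ft)^\vee\big)\=|W|.$$
The remaining issue is that $p$ is not proper. This would be handled by extending over the compactification $X^{\sG-\ss}\times_\sT(\ldots)$ exactly as Maddock does, and checking that the unstable boundary contributes nothing after multiplying by $\Lambda\udot(\op{Ad}^\vee)$.
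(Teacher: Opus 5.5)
\textbf{Verdict: there is a genuine gap in Step 2.} Your Steps 1 and 3 are sound, and Step 3 is exactly how the paper finishes: apply Theorem \ref{KJKab_n} to the $\sT$-equivariant class $\Lambda\udot(\fg_\C/\ft_\C)^\vee\otimes V$ and pull out $\prod_i(1-t^{\eps_i})$. Step 2, however, carries all the content and is only sketched.

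Your identity $e(\op{Ad})\op{td}(\op{Ad})^{-1}=\op{ch}\Lambda\udot\op{Ad}^\vee$ is right. But Theorem \ref{martin} is a statement about Chow classes, so virtual Hirzebruch--Riemann--Roch converts it only into the untwisted-sector term of Kawasaki's formula. On the stacks $X\div\sG$ and $X\div\sT$ the twisted sectors are essential, not a technicality. Already in the running example \eqref{eg}, $X\div\sT=\pt/(\Z/2\Z)$, and the twisted sector supplies the term $\frac{(-1)^n}2$ of $\chi(L_0^n)$.

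To handle the twisted sectors you would need, for each finite-order $g$:
\begin{itemize}
\item a Martin formula for the possibly disconnected centraliser $Z_\sG(g)$ acting on $X^g$;
\item the identification $(X^g)^{Z_\sG(g)-\ss}=X^g\cap X^{\sG-\ss}$;
\item a twisted-sector version of Proposition \ref{pro:compatibility_of_obs};
\item bookkeeping matching $\sG$-conjugacy classes with $W$-orbits in $\sT$, and the eigenvalue-twisted Chern character of $\Lambda\udot\op{Ad}^\vee$ with the extra normal directions $\fg_\C/\mathfrak z_g$, where $\mathfrak z_g$ is the Lie algebra of $Z_\sG(g)$.
\end{itemize}
In case (d) you would also need a virtual Kawasaki--Riemann--Roch theorem for $\widehat\cO^{\vir}$ on Deligne--Mumford stacks, including square-root classes on twisted sectors. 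Neither the paper nor your proposal supplies this. Your fallback also does not work as written: $\sG/\sT$ is affine, so $\chi(\sG/\sT,\cdot)$ is not a finite Euler characteristic.

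\textbf{The missing idea} is that for $n\gg0$ you need neither Riemann--Roch nor the diagram \eqref{diagram}. By Proposition \ref{pro:sections_on_quotient}, Serre vanishing and Proposition \ref{pro:descent_Ovir}, one has $\chi^{\vir}(X\div\sG,V_0\otimes L_0^n)=\chi^{\vir}_\sG(X,V\otimes L^n)^\sG$, and likewise for $\sT$; this is \eqref{eq:chi_vir_on_quotient}. Both $\cO^{\vir}_{X\div\sG}$ and $\cO^{\vir}_{X\div\sT}$ (or their twisted versions) descend from the same $\cO^{\vir}_X$, and this replaces your Step 1.

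The $K$-theoretic Martin formula, Theorem \ref{th:martin_K}, then reduces to the Weyl integration formula \eqref{eq:G_T_inv}, applied to each $\sG$-module $H=H^k(X,V\otimes\cO^{\vir}_X\otimes L^n)$. This is legitimate even though $H$ may be infinite dimensional, because its $\sT$-multiplicities are finite by Lemma \ref{pro:chi_well_def}. Since everything is computed on $X$ itself, stabilisers and twisted sectors never appear, and case (d) costs nothing extra. This is also why the theorem is stated only for $n\gg0$: that restriction comes from Theorem \ref{th:martin_K} as well as from Theorem \ref{KJKab_n}.
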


	\subsection*{Virtual $K$-theoretic JK localisation} Just as we could drop the $e^{c_1^\sT(L)}$ term in the cohomological JK localisation formula, so we would like to set $n=0$ in the above results. To do this we use the \emph{toric JK residue} $\TJK^\eta_\xi$, constructed from $\JK_\xi^\eta$ in Definition \ref{def:TJK} below. It is defined more generally than $\CT^{-\eta}$ but equals it on a class of functions,
$$
        \TJK^\eta_\xi(t^{n\psi}f)\=\CT^{-\eta}(t^{n\psi} f)
$$
for $\psi\in\ft^\vee_\Z\setminus\cH$ in the same chamber as $\xi$ and $n\gg0$. The result is then the following.

    \begin{theorem}\label{th:KJK}
        Assume $X$ is $\eta$-semiprojective and let $\epsilon \in \ft^\vee_\Q$ be sufficiently small that the segments $\{\mu(F) + s\epsilon\}_{s \in (0,1],\,F\subseteq X^\sT}$ are disjoint from $\cH^\vee$. Then for $V \in K^0_\sG(X)$,
$$
            \chi^{\vir}(X\div\sG, V_0) \= \frac1{|W|}\sum_{\substack{F\subseteq X^\sT\\ \langle\mu(F), \eta \rangle>0}}\TJK^\eta_{\mu(F)+\eps} \left(\Lambda\udot(\fg_\C/\ft_\C)^\vee \cdot\,\chi^{\vir}_\sT\!\left(\!F,\,\frac{V\vert_F}{\Lambda\udot\(N^{\vir}_{F/X}\)^\vee}\right)\right).
$$
In case {\rm(d)} we get the same formula with $\Lambda\udot\(N^{\vir}_{F/X}\)^\vee$ replaced by $\sqrt{\mathfrak e^\sT}(N^{\vir}_{F/X})$.
   \end{theorem}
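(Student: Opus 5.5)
The plan is to deduce Theorem \ref{th:KJK} from Theorem \ref{th:KJK_large_n} by a polynomiality argument in $n$, exactly as the cohomological formula without $e^{c_1^\sT(L)}$ was deduced from the one with it. The abelianisation step (the factor $\frac1{|W|}\Lambda\udot(\fg_\C/\ft_\C)^\vee$) is already built into Theorem \ref{th:KJK_large_n}, so we work directly with that formula. Fix $V\in K^0_\sG(X)$ and consider both sides as functions of $n\in\Z$.

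First, the left hand side $n\mapsto\chi^{\vir}(X\div\sG,V_0\otimes L_0^n)$ is a quasi-polynomial in $n$. By virtual Riemann--Roch on the projective Deligne--Mumford stack $X\div\sG$ (Kawasaki--Toen style for the inertia stack), it is a sum over finitely many twisted sectors of polynomials in $n$ times roots of unity. Second, each summand on the right hand side, before applying any residue, has the form $t^{n\mu(F)}g_F(t)e^{n c_1(L|_F)}$ integrated over $F$. Expanding $e^{nc_1}$ (nilpotent), this is $t^{n\mu(F)}$ times a polynomial in $n$ with coefficients rational functions $f_{F,k}$ with poles on $\cH_\sT$. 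I would then use the defining property of $\TJK$ stated above,
$$
\TJK^\eta_\xi(t^{n\psi}f)\=\CT^{-\eta}(t^{n\psi}f),
$$
for $\psi$ in the chamber of $\xi$ and $n\gg0$, applied with $\psi=\mu(F)+\eps$. Here I replace $L$ by $L\otimes\eps$, as in the weakening of (a) in Section \ref{section:JK_loc}, so that $\mu(F)+\eps\notin\cH^\vee$ without changing the GIT quotient or the indexing set. This rewrites the right side of Theorem \ref{th:KJK_large_n}, for $n\gg0$, as $\sum_F\TJK^\eta_{\mu(F)+\eps}(t^{n\mu(F)}\cdots)$.

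The key step is to show that $n\mapsto\TJK^\eta_{\xi}(t^{n\psi}f)$ is itself a quasi-polynomial in $n$ for all $n\in\Z$, not just $n\gg0$, once $\xi$ is fixed and independent of $n$. I would prove this from Definition \ref{def:TJK}. Presumably $\TJK$ is obtained by summing $\JK^\eta_\xi$ over the finitely many points of the torsion subgroups where the hypertoric hyperplanes meet, after pulling back by $\exp$. On each such local piece $t^{n\psi}$ becomes $\zeta^{n\psi}e^{n\psi}$, and the $\JK$ residue picks out a fixed degree-$r$ part, which is polynomial in $n$.

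Two quasi-polynomials agreeing for $n\gg0$ agree for all $n$. Setting $n=0$ then gives the theorem, and case (d) is identical with $\sqrt{\mathfrak e^\sT}$ in place of $\Lambda\udot(N^{\vir})^\vee$. The main obstacle is this quasi-polynomiality. If $\TJK$ is defined in a way not obviously compatible with it, I would instead check it directly on the generating fractions $t^{n\psi}/\prod(1-t^{\gamma_i})$, using that $\xi$ is held fixed while $n$ varies.
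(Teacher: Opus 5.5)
Your mechanism matches the paper's: rewrite the large-$n$ formula via \eqref{eq:CT_JK}, observe that both sides are quasi-polynomials in $n$, and set $n=0$. Your direct proof of quasi-polynomiality of $n\mapsto\TJK^\eta_\xi(t^{n\psi}f)$ is a valid substitute for the De Concini--Procesi citation: the points $p$ are torsion, so $t^{n\psi}(p)$ is a root of unity, and $\JK$ extracts a fixed-degree part of $e^{n\psi}\exp_p^*f$.

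The gap is in the order of operations. You start from the nonabelian Theorem \ref{th:KJK_large_n}. To handle fixed loci with $\mu(F)\in\cH^\vee$ you then replace $L$ by $L\otimes\eps$. But the left hand side $\chi^{\vir}(X\div\sG,V_0\otimes L_0^n)$, and Theorem \ref{th:KJK_large_n} itself, require a $\sG$-linearised line bundle, and $L\otimes\eps$ is only $\sT$-linearised. A generic small $\eps\in\ft^\vee_\Q$ is not the restriction of a character of $\sG$. Such restrictions are $W$-invariant, and for semisimple $\sG$ the only one is $0$. Without the twist, \eqref{eq:CT_JK} does not apply with $\psi=\mu(F)$, because it needs $\psi$ in the \emph{open} chamber containing $\xi$. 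Nor does \eqref{eq:CT_JK_DCP} rescue you: writing $f$ as a combination of $t^\nu\prod_\gamma(1-t^{-\gamma})^{-d_\gamma}$, the points $n\mu(F)+\nu$ need not lie even in the closure of the chamber of $\mu(F)+\eps$. So exactly the case that the $\eps$ in the statement is designed for is not covered when $\sG$ is nonabelian.

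The fix is to do the $\eps$-twist and the passage to $n=0$ in the abelian setting, as the paper does. Apply Theorem \ref{KJKab_n} to $\sT\acts(X,L^N\otimes\eps)$, which has the same quotient $X\div\sT$ because $L$ lies in an open $\sT$-GIT chamber, and take $\Lambda\udot(\fg_\C/\ft_\C)^\vee\otimes V$ as the class. This gives the abelian formula at $n=0$. Then abelianise at $n=0$ using \eqref{eq:martin_K2}, which comes from Theorem \ref{th:martin_K} by the same quasi-polynomial argument. Applying it to $V\otimes\cO^{\vir}_X$ and using Proposition \ref{pro:descent_Ovir} gives the virtual version.

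When every $\mu(F)\notin\cH^\vee$ your direct route does work, since the extra factor $\Lambda\udot(\fg_\C/\ft_\C)^\vee$ is a Laurent polynomial and does not affect \eqref{eq:CT_JK}. A minor point: for a Deligne--Mumford stack $F$ the coefficient $\chi^{\vir}(F,E\otimes\mathsf L^n)$ is only quasi-polynomial in $n$, not polynomial, which is harmless.
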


\noindent In our running example \eqref{eg} we have expressed $\chi\(\!\pt\!/(\Z/2\Z), L^n_0\)$ as $-\Res_{t=\infty}f(t)\frac{dt}t$, where $f(t)=\frac{-t^{n+2}}{1-t^2}$ is the contribution \eqref{egloc} of the fixed point $0\in\PP^1$. By the residue theorem, this is minus the sum of the residues at $t=0$ and at $p=\pm1\in\sT$. The first vanishes, the latter we express cohomologically in terms of Chern characters (i.e. using Hirzebruch-Riemann-Roch) by using the exponential map $\ft_\C\ni u\mapsto pe^u=t\in\sT$ about $p\in\{\pm1\}$ to pullback to $\ft_\C$ and take residues at $u=0$. Since $\frac{dt}t=du$ we get
$$
-\sum_{p=\pm1}\Res_{u=0}f(pe^u)du\=-\Res_{u=0}\frac{e^{(n+2)u}}{1-e^{2u}}-\Res_{u=0}\frac{(-1)^ne^{(n+2)u}}{1-e^{2u}}\=\frac12+\frac{(-1)^n}2\,,
$$
recovering the answer 1 ($n$ even) and 0 ($n$ odd) of \eqref{10} even for $n=0$.

\begin{remark}\label{*}
The rest of this Section is devoted to proving the above results. Since Fourier series are so much simpler than Fourier (or Laplace) transforms,\footnote{For instance, the continuous analogue of the constant term for a function on $\ft_\C$ (instead of $\sT$) is a renormalisation of its Fourier or Laplace transform at $0$. Hence the definition \eqref{eq:global_residue} of the  cohomological $\res^{\eta}$ is much more complicated than the definition \eqref{CTeta} of the $K$-theoretic of $\CT^\eta$.} the proofs are much easier than their cohomological analogues. Since \cite[Appendix A]{AganagicOkounkov} already works within algebraic geometry\,---\,i.e. it does not use the moment map\,---\,we follow it closely, adapting only to incorporate the virtual structure.

Therefore we do not even need to use the cut $X_\Sigma$,
at the cost of replacing our assumption of weak $\eta$-semiprojectivity of $(X,L)$ by $\eta$-semiprojectivity of $X$. However we could, if we wanted, deduce the same results assuming only weak $\eta$-semiprojectivity by following the proofs in the cohomological setting closely:\footnote{For a hamiltonian $S^1$ action on a symplectic manifold $(X,\omega)$, Metzler \cite{Me} gave a $K$-theoretic analogue of Lerman's proof \cite{Lerman} of JK localisation using the symplectic cut.}
    \begin{enumerate}
        \item apply $K$-theoretic localisation to $V_\Sigma \otimes L_\Sigma^n \in K^0_\sT(X_\Sigma)$,
        \item apply $\CT^\eta$ \eqref{CTeta}, instead of $\res^\eta$ \eqref{eq:global_residue}, to the localisation formula\,---\,using $\CT^\eta(f)=\CT(f)=\CT^{-\eta}(f)$ for functions $f$ without poles in place of $\res^\eta(f)=\res^{-\eta}(f)$ \eqref{1}, and \eqref{eq:CT_vanishing} in place of Proposition \ref{pro:vanishing_property}\,---\,to deduce the abelian $K$-theoretic JK localisation formula of Theorem \ref{KJKab_n},
        \item apply Theorem \ref{th:martin_K} below to deduce the result for nonabelian $\sG$.
    \end{enumerate}
    \end{remark}

    \subsection{Global sections on quotients}
    Throughout we fix the assumptions at the beginning of this Section. In particular all $\sG$-semistable points are $\sG$-stable. The following result is really the key to everything which follows. It originates in the ``quantisation commutes with reduction" literature; see \cite[Remark 3.3\;(ii)]{Teleman} for instance. We follow and expand on \cite[Appendix A, Lemma 3]{AganagicOkounkov}.
    
    \begin{proposition}\label{pro:sections_on_quotient}
For $V \in \coh^\sG(X)$ and $n\gg0$,
        \begin{equation}\label{eq:sections_on_quotient}
	        H^0(X\div\sG, V_0 \otimes L_0^n)\ \cong\ H^0(X, V\otimes L^n)^\sG.
	    \end{equation}
	\end{proposition}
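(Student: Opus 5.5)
The plan is to compare both sides through sections over the semistable locus $X^{\ss}$. Descent along the geometric quotient $\pi:X^{\ss}\to X\div\sG$ gives
$$
H^0(X\div\sG,V_0\otimes L_0^n)\ \cong\ H^0\(X^{\ss},V\otimes L^n\)^\sG,
$$
because $\pi^*V_0\cong V|_{X^{\ss}}$, and $\sG$-invariant sections on $X^{\ss}$ are exactly sections of the descended sheaf. This uses that $\pi$ is a good quotient and that $\sG$ is reductive, so taking invariants is exact. It therefore suffices to show that for $n\gg0$ the restriction map
$$
H^0(X,V\otimes L^n)^\sG\To H^0\(X^{\ss},V\otimes L^n\)^\sG
$$
is an isomorphism.

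Injectivity comes first. Let $Z=X\setminus X^{\ss}$ be the unstable locus. The kernel consists of invariant sections supported on $Z$, so it lies in $H^0_Z(X,V\otimes L^n)^\sG$. The subsheaf $\mathcal H^0_Z(V)\subseteq V$ of sections supported on $Z$ is coherent and $\sG$-equivariant. I therefore need $H^0(X,\mathcal H^0_Z(V)\otimes L^n)^\sG=0$ for $n\gg0$. The input is the Hilbert--Mumford/Kirwan picture: at each point of $Z$ some destabilising one-parameter subgroup $\lambda$ acts on $L$ with weight of the wrong sign. For a coherent sheaf $\mathcal F$ supported on $Z$, the invariant part of $H^0(\mathcal F\otimes L^n)$ must then vanish once $n$ is large compared to the bounded weights of $\mathcal F$. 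Concretely, I would stratify $Z$ by the Kirwan/Hesselink stratification $Z=\bigsqcup S_\beta$. Restricting to $S_\beta$, and filtering $\mathcal F$ by powers of the ideal of $S_\beta$ with associated graded pieces on $Z_\beta$, the $\lambda_\beta$-weight of $\mathcal F\otimes L^n$ is bounded by $C-n\|\beta\|^2<0$. Hence there are no invariants.

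Surjectivity is the harder step. An invariant section $s$ on $X^{\ss}$ might have poles along $Z$, and I need to extend it to $X$ after twisting by $L^n$. I would use the local cohomology sequence
$$
0\to H^0_Z\to H^0(X,\cdot)\to H^0(X^{\ss},\cdot)\to H^1_Z(X,\cdot)\to\cdots,
$$
and take $\sG$-invariants, which is exact by reductivity. It then suffices to show $H^1_Z(X,V\otimes L^n)^\sG=0$ for $n\gg0$. Local cohomology along $S_\beta$ is computed by sheaves supported on $S_\beta$ built from $\mathcal Ext$ and normal directions, with weights bounded below by those of $L^n$. So the same weight argument as before should show that the invariant part vanishes for large $n$. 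This is the approach of Teleman and of Aganagic--Okounkov's Lemma 3. If $X$ is noncompact, I would use $\eta$-semiprojectivity, via Proposition \ref{weak2}, to ensure that $H^0$ and the relevant local cohomology are finitely generated over $H^0(\cO_X)^\sG$. That in turn gives a uniform $n$.

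The main obstacle is the weight estimate for $H^1_Z$, which is needed uniformly in the strata. It requires that the $\lambda_\beta$-weights of the conormal bundle to $S_\beta$ are positive while those of $L$ are negative. The fallback is to reduce to an equivariant embedding $X\subseteq\PP^m\times\AA^N$, handle $V$ via a $\sG$-equivariant resolution by sums of twists of $L$, and then apply the Kirwan/Teleman vanishing theorem there.
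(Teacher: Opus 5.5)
Your descent step is fine and matches the paper. The gap is in the restriction step, which is the whole content of the proposition. You reduce it to the vanishing of $H^0_Z(X,V\otimes L^n)^{\sG}$ and $H^1_Z(X,V\otimes L^n)^{\sG}$ for $n\gg0$, but you only assert the second vanishing (``should show''), and you yourself flag it as the main obstacle.

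The mechanism you describe is Teleman's computation: local cohomology along a Kirwan--Hesselink stratum $S_\beta$ expressed through $V|_{S_\beta}$ and symmetric powers of the normal bundle, whose $\lambda_\beta$-weights have a definite sign. That computation needs $X$ and its strata smooth and $V$ locally free. Here $X$ is a singular, possibly non-reduced, projective-over-affine Deligne--Mumford stack, and $V$ is an arbitrary coherent sheaf. In that generality the sheaves $\mathcal H^q_{S_\beta}(V)$ can be nonzero in many degrees, and nothing controls their weights.

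Your fallback is to embed in $\PP^m\times\AA^N$, resolve, and quote Teleman on the smooth ambient space. That would need his vanishing on the noncompact $\PP^m\times\AA^N$ in all local cohomology degrees, plus a spectral sequence argument through the resolution, and none of this is supplied. It also does not reach stacks: in case (c) only the coarse space $|X|$ is assumed to embed. Separately, local cohomology is not finitely generated over $H^0(\cO_X)^{\sG}$ (already $H^1_{\{0\}}(\AA^1,\cO)$ is not finitely generated over $\C[t]$). So uniformity in $n$ cannot come from finite generation; it would have to come from finiteness of the stratification.

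The idea you are missing makes stratifications and weights unnecessary. Since $X\div\sG$ is projective, $M=\bigoplus_n H^0(X^{\ss},V\otimes L^n)^{\sG}$ is a finitely generated module over the finitely generated ring $R=\bigoplus_n H^0(X,L^n)^{\sG}$. By the very definition of $X^{\ss}$, the positive-degree generators $s_1,\dots,s_k$ of $R$ vanish on $Z=X\setminus X^{\ss}$. Hence homogeneous polynomials of large degree in the $s_j$ vanish to any prescribed order along $Z$.

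\emph{Surjectivity.} Multiplying the finitely many generators $m_i$ of $M$ by such polynomials clears their poles along $Z$. So every element of $M$ of high degree, being a combination of such products, extends to $X$.

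\emph{Injectivity.} The kernel of restriction is a finitely generated $R$-module of sections of $\mathcal H^0_Z(V)\otimes L^n$. The sheaf $\mathcal H^0_Z(V)$ is annihilated by a fixed power of the ideal of $Z$. So the high-degree elements of the kernel, of the form $\sum_i p_i(s)\,k_i$ with $\deg p_i$ large, are zero.

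This is the paper's argument. It works verbatim for singular, non-reduced, stacky and noncompact $X$, with no input beyond projectivity of $X\div\sG$.
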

	
    \begin{proof}
We will prove this by showing both the restriction and descent maps
\beq{==}
H^0(X, V\otimes L^n)^\sG\ \To\ H^0\(X^{\ss}, V\otimes L^n|_{X^{\ss}}\)^\sG\ \To\ H^0(X\div\sG, V_0\otimes L_0^n)
\eeq
are isomorphisms. By the good quotient property $p_*\cO_{X^{\ss}}=\cO_{X\div\sG}$ of $p:X^{\ss}\to X\div\sG$ and the projection formula, the descent map $E\mapsto E_0$ can be described as $E\mapsto(p_*E|_{X^{\ss}})^\sG$. Thus the second arrow \eqref{==} is an isomorphism \emph{for all n} and
$$
            M\ :=\ \bigoplus\nolimits_{n\geq0}H^0\(X^{\ss},V \otimes L^n|_{X^{\ss}}\)^\sG\ \cong\ \bigoplus\nolimits_{n\geq0} H^0\(X\div\sG,V_0\otimes L_0^n\)
$$
is a finitely generated module over the finitely generated ring
$$
R\ :=\ \bigoplus\nolimits_{n\geq0}H^0\(X^{\ss},L^n|_{X^{\ss}}\)^\sG\ \cong\ \bigoplus\nolimits_{n\geq0} H^0\(X\div\sG,L_0^n\)
\stackrel{\eqref{lem:ss_on_ss_is_ss}}{\ \cong\ }\bigoplus\nolimits_{n\geq0}H^0(X,L^n)^\sG.
$$
This will help us prove the the restriction map \eqref{==} is also an isomorphism for $n\gg0$.\smallskip
 
\noindent\textbf{Sections of high degree extend.} 
So we can choose finitely many positive degree generators $s_1,\dots,s_k$ of $R=\bigoplus\nolimits_{n\geq0}H^0(X,L^n)^\sG$ and finitely many generators $m_i$ of $M= \bigoplus\nolimits_{n\geq0}H^0\(X^{\ss},V \otimes L^n|_{X^{\ss}}\)^\sG$.  The $s_j$ vanish on the unstable locus so, for any fixed $N$ there exists $d\gg0$ such that all homogeneous polynomials $p$ of degree $d$ in the $s_j$ vanish to order $N$ on the unstable locus. Taking $N\gg0$ then, all the
$$
p(s_1,\dots,s_k)\cdot m_i\,\text{ extend by zero to $X$.}
$$
But by finite generation, all elements of $R_{\ge d}$ are of the form $p(s_1,\dots,s_k)$ and all elements of $M$ of sufficiently high degree are linear combinations of the $p(s_1,\dots,s_k)\cdot m_i$. Thus the restriction map \eqref{==} is a surjection.
\smallskip
 
\noindent\textbf{Restriction is injective in high degree.}
Since $\bigoplus_{n\geq 0}H^0(X,V\otimes L^n)^\sG$ is also finitely generated as an $R$-module, so is kernel of the restriction map. Therefore, by the same argument, elements of the kernel of sufficiently high degree vanish on $X\setminus X^{\ss}$. Since they vanish on $X^{\ss}$ by definition, we find they are zero.
\end{proof}

Since our conditions imply $X\div\sG$ is projective, both sides of \eqref{eq:sections_on_quotient} are finite-dimensional. So by Serre vanishing, for $n\gg 0$,
    \begin{equation}\label{eq:chi_on_quotient}
        \chi(X\div\sG,V_0 \otimes L_0^n) \= \chi\_\sG(X,V \otimes L^n)^\sG\ :=\ \sum\nolimits_{j\geq0} (-1)^j \dim H^j\bigl(X, V \otimes L^n\bigr)^\sG.
    \end{equation}
 Beware that although $\chi\_\sG(X,V \otimes L^n)^\sG$ is well defined, $\chi\_\sG(X,V \otimes L^n)$ may not be, for instance if $X$ has an unstable noncompact irreducible component over which $\sG$ acts trivially and $L$ scales by a nontrivial character.

\subsection{Virtual structure sheaves}
Under condition (c) the $\sG$-equivariant perfect obstruction theory on $X$ endows $X\div \sG$ with a perfect obstruction theory by Lemma \ref{lem:inducingPotOnQuotient}.
Thus we get virtual structure sheaves $\cO^{\vir}_X\in K_0^\sG(X)$ and  $\cO^{\vir}_{X\div \sG} \in K_0(X\div\sG)$ by \cite{FantechiGottsche}.

Similarly under condition (d) the $\sG$-equivariant oriented $(-2)$-shifted symplectic structure on $X$ endows $X\div \sG$ with the same by \eqref{Xred}.
Thus we get the \emph{twisted virtual structure sheaves}
$$
\widehat\cO^{\vir}_X\,\in\,K^\sG_0\(X,\Z\big[\tfrac12\big]\)\ \text{ and }\ \widehat\cO^{\vir}_{X\div \sG}\,\in\,K_0\(X\div\sG,\Z\big[\tfrac12\big]\)
$$
of \cite[Definition 5.9]{OhThomas}. In the two cases (c),\,(d) we define $\chi^{\vir}(V)$ to be
$$
\chi(V\otimes\cO^{\vir})\ \text{ and }\ \chi^{\vir}(V)\ :=\ \chi(V\otimes\widehat\cO^{\vir})
$$
respectively. And in both cases they behave well under $p:X^{\ss}\to X\div\sG$, as follows.
    
    \begin{proposition}\label{pro:descent_Ovir}
       In case (c) we have $p^\ast \cO^{\vir}_{X\div\sG} = \cO^{\vir}_X\vert\_{X^{\ss}}\in K^\sG_0(X^{\ss})$, so
        \begin{equation*}
            \cO_{X\div\sG}^{\vir} \= (\cO_X^{\vir})\_0\ \in\ K_0(X\div\sG),
        \end{equation*}
        with the same for $\widehat\cO^{\vir}$ in case (d). In particular, since $X\div\sG$ is projective,
\begin{equation}\label{eq:chi_vir_on_quotient}
        \chi^{\vir}(X\div\sG,V_0 \otimes L_0^n) \= \chi^{\vir}_\sG(X,V \otimes L^n)^\sG \qquad \text{for }V \in K_\sG^0(X)\text{ and }n\gg0.
        \end{equation}
    \end{proposition}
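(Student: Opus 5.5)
The plan is to compare the two virtual structure sheaves on $X^{\ss}$ through their constructions, in the same way the virtual cycles were compared in Proposition \ref{pro:compatibility_of_obs} and Proposition \ref{pro:compatibility_cy4}. Then we take $\sG$-invariants to get the Euler characteristic statement.

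In case (c), $\cO^{\vir}$ is defined in \cite{FantechiGottsche} as $0^!_{E_1}[\cO_{C}]$, where $C\subset E_1=h^1/h^0(\EE^\vee)$ is the intrinsic normal cone. Equivalently, after choosing a global resolution, it is the $K$-theoretic refined Gysin pullback of the structure sheaf of the normal cone $C\subset E^\vee_{-1}$.

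First, flat pullback along the open immersion $X^{\ss}\into X$ commutes with everything, so $\cO^{\vir}_X|_{X^{\ss}}=\cO^{\vir}_{X^{\ss}}$. Here we use \cite[Proposition 3.14]{BehrendFantechi} for the cone and the fact that $K$-theoretic Gysin maps commute with flat pullback.

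Second, apply the smooth morphism $p:X^{\ss}\to X\div\sG$. The diagram \eqref{morphism_of_triangles} says $p^*\EE_{X\div\sG}$ is the cocone of $\EE|_{X^{\ss}}\to\cO\otimes\fg^\vee_\C$, and $\LL_p\cong\cO\otimes\fg^\vee_\C$. This is exactly the compatibility datum needed for the $K$-theoretic analogue of \cite[Proposition 5.10]{BehrendFantechi} with $u=v=p$ smooth. The analogue holds because the intrinsic normal cone of $X^{\ss}$ is the pullback of that of $X\div\sG$ modulo the $\fg$-directions, i.e.\ $\mathfrak C_{X^{\ss}}=p^*\mathfrak C_{X\div\sG}$ as cone stacks over the same $h^1/h^0$. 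So the Gysin pullbacks agree. This gives $p^*\cO^{\vir}_{X\div\sG}=\cO^{\vir}_X|_{X^{\ss}}$ in $K^\sG_0(X^{\ss})\cong K_0(X\div\sG)$. Since $p^*$ is the descent isomorphism, we get $\cO^{\vir}_{X\div\sG}=(\cO^{\vir}_X)_0$.

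In case (d) I would redo the argument of Proposition \ref{pro:compatibility_cy4} with $\widehat\cO^{\vir}$ in place of $[\,\cdot\,]^{\vir}$. In the standard form $T/\fg\to E\to(T/\fg)^\vee$, the isotropic cones satisfy $\pi^*C_{X^{\ss}/\sG}=C_{X^{\ss}}\subseteq E$, and the orientations match by \eqref{or-sr}. The twisted virtual structure sheaf is $\sqrt{0_E^!}$ in $K$-theory applied to $\cO_C$, twisted by a square root of $\det$ of the virtual tangent bundle (\cite[Definition 5.9]{OhThomas}). The $K$-theoretic square-root Gysin map commutes with flat pullback. The step I expect to be the main obstacle is the twist: I must check that $\det(T_{X^{\ss}}^{\vir})$ and $p^*\det(T^{\vir}_{X\div\sG})$ differ by $\det(\fg\oplus\fg^\vee)$, which is trivial. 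I would also check that the chosen square roots correspond under $p^*$, which I expect to hold because $\det\fg\otimes\det\fg^\vee$ is canonically trivial.

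Finally, \eqref{eq:chi_vir_on_quotient} follows. Tensor with $V\otimes L^n$: the class $(\cO^{\vir}_X\otimes V\otimes L^n)_0$ equals $\cO^{\vir}_{X\div\sG}\otimes V_0\otimes L_0^n$, because descent is compatible with tensor products. Then resolve $\cO^{\vir}_X$ by $\sG$-equivariant coherent sheaves and apply \eqref{eq:chi_on_quotient} termwise, which is valid since each term is coherent and $n\gg0$ can be chosen uniformly over finitely many terms.
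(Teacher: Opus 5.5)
Your proposal is correct, but it is organised differently from the paper's proof. The paper never looks at cones. It applies functoriality of virtual pullbacks \cite[Proposition 2.11]{Qu} to $X^{\ss}\xrightarrow{\,p\,}X\div\sG\xrightarrow{\,q\,}\pt$, and notes that $p^!=p^*$ because $p$ is smooth with the trivial relative obstruction theory $\Omega_p$ of Lemma \ref{lem:inducingPotOnQuotient}. In case (d) it substitutes Park's symmetric version \cite[Theorem B.3]{Pa3}, with relative obstruction theory $\Omega_p\oplus\Omega_p[-2]$. The determinant twist and the final Euler characteristic step are then exactly as you do them.

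You instead prove the required special case of that functoriality by hand. You use flat pullback of intrinsic normal cones together with compatibility of the ordinary and square-root $K$-theoretic Gysin maps with flat pullback. In effect this is the $K$-theoretic version of the proofs of Propositions \ref{pro:compatibility_of_obs} and \ref{pro:compatibility_cy4}. Your route is more self-contained and shows why the $\fg$ and $\fg^\vee$ directions drop out: the obstruction bundle, and the cone inside it, do not change. The cost is that you must work with $\sG$-equivariant global resolutions or standard forms, whereas the paper leaves all such bookkeeping to the cited theorems.

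One sentence should be corrected: $\mathfrak C_{X^{\ss}}$ is not equal to $p^*\mathfrak C_{X\div\sG}$ ``over the same $h^1/h^0$''. Suppose $\EE^\vee|_{X^{\ss}}=[E_0\to E_1]$ with $\fg\otimes\cO\subset E_0$ a subbundle. Then $p^*\EE^\vee_{X\div\sG}\cong[E_0/\fg\to E_1]$. So the two vector bundle stacks $[E_1/E_0]$ and $[E_1/(E_0/\fg)]$ differ by a $B\fg$-gerbe. The smooth base change for intrinsic normal cones (\cite[Proposition 3.14]{BehrendFantechi}, as used in Proposition \ref{pro:compatibility_cy4}) says that $\mathfrak C_{X^{\ss}}$ is the preimage of $p^*\mathfrak C_{X\div\sG}$ under this map. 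What follows, and what your Gysin comparison actually uses, is that the two cones in the common bundle $E_1$ coincide.
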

    \begin{proof}
We first work under condition (c) using perfect obstruction theories. Applying \cite[Proposition 2.11]{Qu} to the composition
        \begin{equation*}
            X^{\ss} \rt{\,\,\,p\,\,\,} X\div \sG \rt{\,\,\,q\,\,\,} \pt, 
        \end{equation*}
  gives $\cO^{\vir}_X \vert\_{X^{\ss}} = (q\circ p)^! (\cO_{\pt})$, where $p^!,\,q^!$ are his virtual pullbacks. So $q^! (\cO_{\pt})  = \cO^{\vir}_{X\div\sG}$, while $p^!=p^*$ because $p$ is smooth and we endowed it with the trivial relative obstruction theory $\Omega_p$ in Lemma \ref{lem:inducingPotOnQuotient}. This gives the first claim, from which
the second follows by the projection formula for $p$ and the good quotient property $(p_\ast \cO_{X^{\ss}})^\sG = \cO_{X\div\sG}$.
Finally \eqref{eq:chi_vir_on_quotient} now follows by applying \eqref{eq:chi_on_quotient} to $V\otimes \cO_X^{\vir}$.\smallskip

Working with $(-2)$-shifted symplectic structures under condition (d), the proof is the same on replacing \cite[Proposition 2.11]{Qu} by \cite[Theorem B.3]{Pa3} and using the relative obstruction theory $\Omega_p\oplus\Omega_p[-2]$ on $p$. The only change is the twisting, for which we take determinants in the exact triangle of Lemma \ref{obstruction_theory_quotient_cy4}. This gives
$$
p^*K^{\vir}_{X\div\sG}\ \cong\ K^{\vir}_X\big|_{X^{\ss}}\otimes\det\fg\otimes\det\fg^\vee\ \cong\ K^{\vir}_X\big|_{X^{\ss}}
$$
in $K^0_\sG(X^{\ss})$. So descending and twisting by square roots as in \cite[Equation 133]{OhThomas}, we get the same results for twisted virtual structure sheaves
$$
p^\ast \widehat\cO^{\vir}_{X\div\sG} \= \widehat\cO^{\vir}_X\vert\_{X^{\ss}}\,\in\,K^\sG_0(X^{\ss})
\quad\text{so that}\quad \widehat\cO^{\vir}_{X\div\sG} \= \(\widehat\cO^{\vir}_X\)_0\,\in\,K_0(X\div\sG).\qedhere
$$
    \end{proof}

\begin{remark}[Coefficients]\label{cffs}
We have been a little loose about the coefficients of $K$-theory since they do not affect $\chi^{\vir}$, which is ultimately what we are interested in. To form the twisted structure sheaf we have to invert $2$ in $K_0$ \cite[Section 5.1]{OhThomas}. To form the \emph{equivariant} twisted structure sheaf we also have to admit square roots $t^{\psi/2}$ of our characters. This can be done formally, or by replacing $\sT$ by the $2^r$-fold $t\mapsto t^2$ cover $\widetilde\sT$ of $\sT$ (it is another copy of $\sT$ of course). Thus $\widetilde\ft_\Z=2\ft_\Z$ and $\widetilde\ft_\Z^{\,\vee}=\frac12\ft_\Z^\vee$.
\end{remark}

\subsection{Poles and the $\eta$-constant term}\label{poles}   We need the toric analogue of the hyperplane arrangement $\cH_\C \subset \ft_\C$ from \eqref{eq:hyperplanes}. In our application $\Gamma$ will be the set of weights of $N^{\vir}_{X^\sT/X}$.

\begin{definition}\label{defHT}
    Given a finite subset $\Gamma\subset\ft^\vee_\Z\setminus\{0\}$, we consider the union of the kernels of the corresponding characters, 
    \begin{equation}
        \cH_\sT\ :=\ \bigcup\nolimits_{\gamma \in \Gamma} \ker(\gamma) \ \subset \ \sT.
    \end{equation}
\end{definition}

Note if $\gamma$ is primitive then $\ker(\gamma)$ is a subtorus of $\sT$, but in general is the orbit of such under the action of a finite subgroup of $\sT$.

Note also that $\cH_\C$ is the hyperplane arrangement tangent to the toric arrangement $\cH_\sT$ at $1 \in \sT$. If every $\gamma$ is primitive, then $\cH_\sT$ equals the image of $\cH_\C$ through the exponential map $\exp : \ft_\C \to \sT$, but in general it is bigger.\medskip

We can now describe the action of $\eta$-constant term functional $\CT^\eta$ \eqref{CTeta}  on the ring $H^0(\cO_{\sT\setminus\cH_\sT})$ of rational functions on $\sT$ whose poles lie in $\cH_\sT$. It is sufficient to describe it on its generators (as a vector space)
$$
            g \=t^\nu \cdot\prod_{\gamma\in\Gamma\,\cup\,-\Gamma}
        \frac{1}{(1-t^{\gamma})^{d_\gamma}}\,, \quad \nu \in \ft^\vee_\Z,\ d_\gamma\geq 0.
$$
We expand the denominator \emph{in $\eta$-positive monomials} only\,---\,i.e. $t^\gamma$ with $\<\gamma,\eta\>>0$\,---\,using
\beq{eq:eta_expansion}
\frac1{(1-t^{-\gamma})^d}\=\frac{(-1)^dt^{d\gamma}}{(1-t^\gamma)^d} \quad\text{and}\quad \frac1{(1-t^\gamma)^d}\=1+dt^\gamma+\frac{d(d+1)}2t^{2\gamma}+\cdots
\eeq
We then multiply by $t^\nu$ and taking the constant term in the resulting Laurent series to recover $\CT^\eta(g)$.



In particular it follows that if $f \in H^0(\cO_{\sT \setminus \cH_\sT})$ is regular away from $\bigcup_{i=1}^\ell \ker(\gamma_i) \subset \sT$, where the $\gamma_i$ are $\eta$-normalised $\<\gamma_i,\eta\>>0$, then
    \begin{equation}\label{eq:CT_vanishing}
        \CT^\eta(t^{-n\xi} f) \= 0  \quad\text{for }\xi\not\in\cone(\gamma_1, \dots, \gamma_\ell)\text{ and }n\gg0.
    \end{equation}

\subsection{Formal characters and localisation}
We would like to define a formal character
\beq{eq:extended_Euler_char}
                \widehat{\chi}\_\sT(X,W)\ :=\ \sum\nolimits_{\psi \in \ft_\Z^\vee} \Bigl(\sum\nolimits_{k \geq 0}(-1)^k \dim H^k(X,W)_\psi\Bigr)\,t^\psi\ \in\ \prod\nolimits_{\psi \in \ft^\vee_\Z} \<t^\psi\>,
\eeq
for $W \in \coh^\sT(X)$, as a formal series with possibly infinitely many nonzero coefficients in every direction of $\ft^\vee_\Z$. This is well defined, with individual coefficients finite, by the next Lemma. It then extends by linearity to any $W\in K_0^\sT(X)$.

        \begin{lemma}\label{pro:chi_well_def}
        Suppose $\dim H^0(\cO_X)^\sT < \infty$ for a projective-over-affine DM stack $X$ acted on by $\sT$. Then, for $W \in \coh^\sG(X)$,
            \begin{equation*}
                 \dim \bigoplus\nolimits_i H^i(X,W)_\psi\ <\ \infty \quad\text{for every } \psi\,\in\,\ft^\vee_\Z.
            \end{equation*}
        \end{lemma}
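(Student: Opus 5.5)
The plan is to reduce the statement to a finiteness statement about $\sT$-invariant sections of a coherent sheaf on a space that is projective over an affine. Fix $\psi\in\ft^\vee_\Z$ and let $\C_{-\psi}$ denote the trivial line bundle on which $\sT$ acts by the character $-\psi$. Then
$$H^i(X,W)_\psi\,\cong\,H^i\(X,W\otimes\C_{-\psi}\)^\sT\otimes\C_\psi.$$
So, replacing $W$ by $W\otimes\C_{-\psi}$, it suffices to treat $\psi=0$: we must show $\dim H^i(X,W)^\sT<\infty$ for every $i$ and every $\sT$-equivariant coherent $W$.

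First we use the structure map $f\colon X\to X_{\mathrm{aff}}=\Spec H^0(\cO_X)$. It is projective, and $H^0(\cO_X)$ is finitely generated by Notation~\ref{weak}. Passing to the coarse moduli space, on which pushforward of coherent sheaves is exact with rational coefficients, we see that $H^i(X,W)=H^0\(X_{\mathrm{aff}},R^if_*W\)$ is a finitely generated module $M$ over $A:=H^0(\cO_X)$. This is the usual coherence of higher direct images for proper maps of DM stacks. The module $M$ is $\sT$-equivariant, so it is $\ft^\vee_\Z$-graded compatibly with the grading of $A$.

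Next comes the key algebraic step. Since $\sT$ is reductive, $A^\sT$ is finitely generated, and for any finitely generated graded $A$-module $M$ the invariants $M^\sT=M_0$ form a finitely generated $A^\sT$-module. This is standard (Hilbert's argument, or \cite{mukai2003}): $M^\sT$ is a direct summand of $M$ via the Reynolds operator, and $(A\otimes_\C \mathrm{gens})^\sT$ is finite over $A^\sT$. By hypothesis $A^\sT$ is finite dimensional, hence $M^\sT$ is finite dimensional. This completes the argument.

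The main obstacle is the second step, namely making sure that coherence of $R^if_*W$ holds in the DM-stack setting and that it is compatible with the torus action. Here I would push $W$ forward to the coarse space $|X|$ and use that $|X|\to X_{\mathrm{aff}}$ is projective in the weak sense of Notation~\ref{weak}. If necessary, I would instead argue cohomology-by-cohomology using a $\sT$-equivariant Čech cover by affines, pulled back from an embedding $|X|\into\PP^m\times\AA^n$, and use Serre's theorem to get finite generation over $A$. The typo $\coh^\sG$ in the statement should be read as $\coh^\sT$.
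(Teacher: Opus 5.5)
Your argument is correct and is essentially the paper's proof: finite generation of $H^i(X,W)$ over $H^0(\cO_X)$ via the projective map to $X_{\mathrm{aff}}$, then finiteness of each weight space (equivalently, after your twist by $\C_{-\psi}$, of the invariants) using finite generation over the finite-dimensional ring $H^0(\cO_X)^\sT$. The paper phrases the second step as a surjection $\bigoplus_j H^0(\cO_X)\otimes\Psi_j\twoheadrightarrow H^i(X,W)$ combined with Lemma~\ref{trayn}, which says each $H^0(\cO_X)_\phi$ is finitely generated over $H^0(\cO_X)^\sT$ by Brion's lemma; your Reynolds-operator step reduces to exactly the same fact.
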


        \begin{proof}
            By pushing down the projective morphism $\pi : X\to X_{\mathrm{aff}}$ we find that $H^i(X,W)$ is a finitely generated $\sT$-module over $H^0(\cO_X)$, so is a quotient
           \beq{eq:generators_of_M}
            \bigoplus\nolimits_{j=1}^k H^0(\cO_X)\otimes\Psi_j\,\Onto\,H^i(X,W)
           \eeq
for some choice of characters $\psi_j\in\ft^\vee_\Z$. Thus $H^i(X,W)_\psi$ is a quotient of $\bigoplus_{j=1}^k H^0(\cO_X)_{\psi-\psi_j}$, which is finite dimensional by Lemma \ref{trayn}.
        \end{proof}

Under our running assumptions, the formal series $ \widehat\chi\_\sT(X,W)$ \eqref{eq:extended_Euler_char} satisfies a  $K$-theoretic localisation formula.

        \begin{theorem}\label{thm:K_localisation}
Suppose $X$ is $\eta$-semiprojective. Then for $W\in K_0^\sT(X)$,
        \begin{itemize}
            \item The series \eqref{eq:extended_Euler_char} is bounded in the $\eta$-positive direction:
            \begin{equation}\label{eq:novikov_condition}
\big\{\psi \,:\, \widehat\chi\_\sT(X,W)_\psi\,\ne\,0\,\text{ and }\,\langle \psi, \eta \rangle\,>\,c \big\} \ \text{ is finite for all }c,
            \end{equation}
        \item In case {\rm (c)}, for $V\in K^0_\sT(X)$ the series $\widehat{\chi}\_{\sT}\(X,V\otimes\cO^{\vir}_X\)$ is the $(-\eta)$-positive expansion \eqref{eq:eta_expansion} of the rational function on $\sT$
\beq{locloc}
         \sum_{F \subseteq X^\sT} \chi\_\sT\!\left(\!F,\,\frac{V\vert_F\otimes\cO^{\vir}_F}{\Lambda\udot\(N_{F/X}^{\vir}\)^\vee} \right) \ \in\ H^0\(\cO_{\sT \setminus \cH_\sT}\).
\eeq
        \item In case {\rm(d)}, for $V\in K^0_\sT(X)$ the series $\widehat{\chi}\_{\sT}\(X,V\otimes\cO^{\vir}_X\)$ is the $(-\eta)$-positive expansion \eqref{eq:eta_expansion} of the rational function on $\widetilde\sT$ (cf. Remark \ref{cffs})
\beq{locloc2}
         \sum_{F \subseteq X^\sT} \chi\_\sT\!\left(\!F,\,\frac{V\vert_F\otimes\widehat\cO^{\vir}_F}{\sqrt{\mathfrak e^\sT}(N^{\vir}_{F/X})} \right) \ \in\ H^0\Big(\cO_{\widetilde\sT \setminus \cH_{\widetilde\sT}}\Big).
\eeq
        \end{itemize}
        \end{theorem}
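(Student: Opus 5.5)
The plan is to prove both bullet points through the $\Z[\ft^\vee_\Z]$-module structure of the formal character, in the spirit of \cite[Appendix A]{AganagicOkounkov}. First establish the finiteness property \eqref{eq:novikov_condition} directly. Then deduce the localisation statements from virtual $K$-theoretic localisation in localised equivariant $K$-theory, using the fact that multiplication by $1-t^\gamma$ is injective on series bounded in the $\eta$-positive direction.

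\textbf{Step 1: boundedness.} As in the proof of Lemma \ref{pro:chi_well_def}, each $H^i(X,W)$ is a quotient of $\bigoplus_{j=1}^kH^0(\cO_X)\otimes\Psi_j$. The weights occurring in this module lie in $\bigcup_j\(\psi_j+\cC(X)\)$. Since $X$ is $\eta$-semiprojective, Proposition \ref{pro:alg_char_eta}$\,(1'')$ gives $\cC(X)\setminus\{0\}\subset\{\eta<0\}$. The cone $\cC(X)$ is rational polyhedral, being generated by the weights of finitely many algebra generators. Hence each slice $\cC(X)\cap\{\langle\cdot,\eta\rangle\ge-c'\}$ is bounded and contains only finitely many lattice points. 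By Lemma \ref{trayn} each weight space $H^0(\cO_X)_\psi$ is finite dimensional. So only finitely many weights $\psi$ with $\langle\psi,\eta\rangle>c$ occur in any $H^i(X,W)$, and only finitely many $i$ are nonzero. This gives \eqref{eq:novikov_condition} for coherent $W$, and it extends by linearity to $K_0^\sT(X)$.

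\textbf{Step 2: an algebraic framework.} Let $\mathcal B_\eta\subset\prod_\psi\<t^\psi\>$ be the set of formal series bounded in the $\eta$-positive direction. It is a module over Laurent polynomials, and $\widehat\chi\_\sT:K_0^\sT(X)\to\mathcal B_\eta$ is $\Z[\ft^\vee_\Z]$-linear.
\begin{itemize}
\item For $\<\gamma,\eta\><0$ the element $1-t^\gamma$ is invertible in $\mathcal B_\eta$, with inverse $1+t^\gamma+t^{2\gamma}+\cdots$. For $\<\gamma,\eta\>>0$ one first rewrites as in \eqref{eq:eta_expansion}. This inversion is exactly the $(-\eta)$-positive expansion.
\item Multiplication by $1-t^\gamma$ is injective on $\mathcal B_\eta$: look at the $\eta$-maximal terms.
\end{itemize}
Consequently, to identify $\widehat\chi\_\sT(X,V\otimes\cO^{\vir}_X)$ with the $(-\eta)$-expansion of \eqref{locloc}, it suffices to show that $e\cdot\widehat\chi\_\sT(X,V\otimes\cO^{\vir}_X)$ equals $e\cdot$\eqref{locloc} for some product $e$ of factors $1-t^\gamma$ that clears the denominators.

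\textbf{Step 3: localisation with an explicit annihilator.} Here we use virtual $K$-theoretic localisation: \cite{Qu} in case (c) and \cite[Section 7]{OhThomas} in case (d), the latter on the cover $\widetilde\sT$ of Remark \ref{cffs}. These give
\[
\cO^{\vir}_X\=\iota_*\sum\nolimits_F\frac{\cO^{\vir}_F}{\Lambda\udot\(N^{\vir}_{F/X}\)^\vee}
\]
in $K_0^\sT(X)$ after localisation. The derivation of this identity uses only Thomason concentration together with compatibility of virtual pullbacks with restriction to $X^\sT$; neither needs properness of $X$. Concretely, there is a product $e$ of factors $1-t^\gamma$ such that $e\cdot\(\cO^{\vir}_X-\iota_*(\cdots)\)=0$ in the unlocalised group, where $e$ also clears the denominators $\Lambda\udot(N^{\vir})^\vee$. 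Apply $\widehat\chi\_\sT$, which is legitimate by Lemma \ref{pro:chi_well_def} since $(2')$ holds. Because each $F\subseteq X^{\sT}\subseteq X^{\wt\eta}$ is projective, $\widehat\chi\_\sT$ of the $F$-terms are honest characters of the rational function \eqref{locloc} multiplied by $e$. Step 2 then completes the proof.

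\textbf{Main obstacle.} I expect the main difficulty to be Step 3: checking that the available virtual localisation theorems hold with an explicit unlocalised annihilator $e$ for noncompact $X$, in both cases. For case (c) I would run Qu's argument directly in $K_0^\sT(X)$. Thomason's theorem applied to $X\setminus X^\sT$ supplies the annihilating element, and the virtual normal bundle is handled via the deformation to the normal cone. For case (d) I would check that the square-root Gysin construction of \cite{OhThomas} is compatible with the same concentration argument, inverting $2$ and using $\widetilde\sT$.
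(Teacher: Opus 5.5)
Your route is the paper's. You bound the weights of $H^i(X,W)$ through the presentation \eqref{eq:generators_of_M} and $(1'')$. You then extend $\widehat\chi_{\sT}$ across $K_0^{\sT}(X)\bigl[(1-t^\gamma)^{-1}\bigr]_{\gamma\in\Gamma}$, using that each $1-t^\gamma$ is invertible in the ring of $\eta$-bounded series, and apply \cite[Theorem 3.3]{Qu} or its square-root analogue. Your annihilator $e$ is just an unpacking of what equality in that localised group means. The paper takes the localisation theorem in this ring as an input rather than re-deriving it; for case (d) it notes that the proof of \cite[Theorem 7.3]{OhThomas} only uses this localisation. So what you flag as the main obstacle is not re-proved there, and Steps 2--3 match the paper.

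There is, however, a genuine gap in Step 1. You assert that the weights of $H^0(\cO_X)\otimes\Psi_j$ lie in $\psi_j+\cC(X)$. But $\cC(X)$ is generated only by the weights of $H^0(\cO_X)_{\red}$, and the paper does not assume $X$ is reduced (Notation \ref{irredcpts}). Nilpotent functions can have weights outside $\cC(X)$, even $\eta$-positive ones.

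For example, take $X=\Spec\C[x,\epsilon]/(\epsilon^2)$ with $\C^*$-weights $-1$ on $x$ and $5$ on $\epsilon$, and $\eta=1$. Then $\cC(X)=\Q_{\le0}$ and $H^0(\cO_X)^{\C^*}$ is spanned by $1$ and $\epsilon x^5$, so $X$ is $\eta$-semiprojective. Yet $H^0(\cO_X)$ has weights $5,4,3,\dots$, so your inclusion already fails for $W=\cO_X$.

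The missing step, which is exactly how the paper argues, is to filter by the nilradical $I\subset H^0(\cO_X)$. It is nilpotent because $H^0(\cO_X)$ is Noetherian, and each $I^k/I^{k+1}$ is a finitely generated $H^0(\cO_X)_{\red}$-module. Hence the weights of $H^0(\cO_X)$, and so of every $H^i(X,W)$, lie in a finite union of translates $\phi_l+\cC(X)$. With that correction your bounded-slice argument gives \eqref{eq:novikov_condition} as stated.
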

        
        \begin{proof} Let $I:= \sqrt{0}\subset H^0(\cO_X)$ be the nilradical and write, noncanonically,
        \begin{equation}\label{eq:red_nilp}
            H^0(\cO_X)\ \cong\ H^0(\cO_X)\_{\red}\,\oplus\, \Bigl(\bigoplus\nolimits_{k\geq 1} I^k/I^{k+1}\Bigr)
        \end{equation}
as $\sT$-modules. Each $I^k/I^{k+1}$ is a finitely generated module over $H^0(\cO_X)_{\red}$ so the set of its weights is contained in a finite union of translates of the cone $\Delta(X_{\mathrm{aff}})$ of \eqref{Cdef}. Since only finitely many of the $I^k/I^{k+1}$ are nonzero the same is true for the set of weights of $H^0(\cO_X)$ by \eqref{eq:red_nilp}. And by \eqref{eq:generators_of_M} the same is also true for the weights of $H^i(X,W)$. Since $\Delta(X_{\mathrm{aff}})\subset \lbrace 0 \rbrace \cup \lbrace \eta<0\rbrace$ by $\eta$-semiprojectivity, this proves \eqref{eq:novikov_condition}. \medskip

The series satisfying \eqref{eq:novikov_condition} form a \textit{ring} which contains the image of the $H^0(\cO_\sT)$-module homomorphism $\widehat{\chi}\_\sT: K^\sT_0(X) \to \prod_{\psi} \langle t^\psi \rangle$. The element $1-t^{\gamma}$ is invertible in this ring, with
        \begin{equation*}
            \frac{1}{1-t^\gamma} \= \begin{cases}
                \sum_{k\geq 0}t^{k\gamma} & \text{if }\langle\gamma,\eta\rangle<0,\\
                -t^{-\gamma}\sum_{k\geq 0}t^{-k\gamma} & \text{if }\langle\gamma,\eta\rangle>0.
            \end{cases}
        \end{equation*}
        Thus $\widehat{\chi}\_\sT$ extends uniquely to the localisation 
        $K_0^\sT(X)\big[{(1-t^{\gamma})^{-1}}\big]_{\gamma \in \Gamma}$,             where $\Gamma \subset \ft_\Z^\vee$ is the set of weights of $\sT \curvearrowright N_{X^\sT/X}^{\vir}$. This is the ring which features in the localisation theorem \cite[Theorem 3.3]{Qu} for perfect obstruction theories,
     $$
            V \otimes \cO^{\vir}_X \=  i_\ast \sum\nolimits_{F \subseteq X^\sT\!} \left(\frac{V \vert_F \otimes \cO_F^{\vir}}{\Lambda\udot\(N_{F/X}^{\vir}\)^\vee}\right) \ \in\ K_0^\sT(X)\Big[\frac1{1-t^{\gamma}}\Big]_{\gamma \in \Gamma}\,.
    $$
Applying our extension of $\widehat{\chi}\_\sT$ to both sides gives \eqref{locloc}.

Similarly for oriented $(-2)$-shifted symplectic structures we have the localisation theorem
     $$
            V \otimes \widehat\cO^{\vir}_X \=  i_\ast \sum\nolimits_{F \subseteq X^\sT\!} \left(\frac{V \vert_F \otimes \widehat\cO_F^{\vir}}{\sqrt{\mathfrak e^\sT}(N^{\vir}_{F/X})}\right) \ \in\ K_0^\sT\(X\)\otimes\_{\Z[t^{\pm1}]}\Z\Big[\frac12,\,t^{\pm1/2},\,\frac1{1-t^{\gamma}}\Big]_{\gamma \in \Gamma}\,.
    $$
    (In \cite[Theorem 7.3]{OhThomas} this is stated with $\Q(t^{1/2})$ coefficients for simplicity, but the proof only uses the localisation above.)
        \end{proof}

    \subsection{Abelian $K$-theoretic JK localisation}
Both sides of \eqref{eq:KJK_large_n} are invariant under replacing $\sT$ by a cover $t\mapsto t^k$: the left-hand side by \eqref{eq:chi_vir_on_quotient}, the right-hand side because replacing $t$ by $t^k$ leaves the constant term $t^0$ unchanged. So to prove Theorem \ref{KJKab_n} we may assume that the $\sT$ action on $X^\sT$ is trivial so $K_0^\sT(X^\sT)=K_0(X^\sT)\otimes H^0(\cO_\sT)$ and the weights of $N^{\vir}_{X^\sT/X}$ are integral.

    \begin{proof}[Proof of Theorem \ref{KJKab_n}]
We first work with perfect obstruction theories in case (c). By \eqref{eq:chi_vir_on_quotient} the $T$-invariant part of $\widehat\chi\_\sT$, i.e. its $t^0$ coefficient, satisfies
$$
 \chi^{\vir}(X\div\sT,V_0 \otimes L_0^n) \= \widehat\chi^{\vir}_\sT(X,V \otimes L^n)^\sT \quad\text{for }n\gg0.
$$
By weak $\eta$-semiprojectivity we may apply the $K$-theoretic localisation formula of Theorem \ref{thm:K_localisation} to express $\widehat\chi\_\sT$ as the $(-\eta)$-positive expansion \eqref{eq:eta_expansion} of the rational function
$$
\sum_{F \subseteq X^\sT} \chi^{\vir}_\sT\!\left(\!F,\,\frac{V \otimes L^n\vert_F}{\Lambda\udot\(N^{\vir}_{F/X}\)^\vee}\right)\ \in\ H^0\(\cO_{\sT\setminus \cH_\sT}\).
$$
That is, applying \eqref{eq:eta_expansion} to $-\eta$ in place of $\eta$, we expand denominators in positive powers of only those $t^\gamma$ with $\<\gamma,\eta\><0$. On such expansions $\CT^{-\eta}$ takes the coefficient of $t^0$, so
\beq{eq:KJKab_large_n}
 \chi^{\vir}(X\div\sT,V_0 \otimes L_0^n) \= \sum_{F \subseteq X^\sT} \CT^{-\eta}\left(\chi^{\vir}_\sT\!\left(\!F,\,\frac{V \otimes L^n\vert_F}{\Lambda\udot\(N^{\vir}_{F/X}\)^\vee}\right)\right) \quad\text{for }n\gg0.
\eeq	
        It remains to prove that components $F \subseteq X^\sT$ with $\langle \mu(F), \eta \rangle \leq0$, contribute zero to the right-hand side of \eqref{eq:KJKab_large_n}. Write $L|_F$ as $\mathsf L \otimes\mu(F)$, where $\mathsf L$ has trivial $\sT$ action and $\mu(F)$ is the character, and write
$$
        \frac{V\vert_F}{\Lambda\udot\(N^{\vir}_{F/X}\)^\vee}\ \text{ as }\ \sum\nolimits_i E_i \otimes g_i\ \in\ K^0(F)\otimes H^0(\cO_{\sT\setminus \cH_\sT}).
$$
        Then the contribution is
\beq{eq:K_F_contribution}
         \CT^{-\eta}\left(   \chi^{\vir}_\sT\left(F,\,\frac{V \otimes L^n\vert_F}{\Lambda\udot\(N^{\vir}_{F/X}\)^\vee}\right)\right) \=   \sum_i \chi^{\vir}\(F,\, E_i|_F \otimes\mathsf L^n\)\, \CT^{-\eta}(t^{n\mu(F)}g_i).
\eeq
For $\langle \mu(F), \eta \rangle \leq0$ this vanishes for $n\gg0$ by \eqref{eq:CT_vanishing}.

Finally in case (d) the proof is the same on replacing \eqref{locloc} by \eqref{locloc2} and $\sT$ by $\widetilde\sT$.
    \end{proof}
    \subsection{Nonabelian $K$-theoretic JK localisation}
The nonabelian Theorem \ref{th:KJK_large_n} follows from the abelian Theorem \ref{KJKab_n} and the following abelianisation result (a $K$-theoretic analogue of Martin's Theorem \ref{martin}).

\begin{theorem}\label{th:martin_K}
Let $\op{Ad}:=X^{\sT-\ss} \times\_\sT (\fg_\C/\ft_\C)$ with $\sT$ acting on $\fg_\C/\ft_\C$ by the adjoint action as in Theorem \ref{martin}.
For $V \in K^\sG_0(X)$ and $n\gg0$,
    \begin{equation}\label{eq:martin_K_n}
        \chi(X\div \sG, V_0\otimes L_0^n) \= \frac{1}{\vert W \vert}\,\chi\left(X\div\sT,\, \Lambda\udot\op{Ad}^\vee \otimes V_0\otimes L_0^n\right).
    \end{equation}
\end{theorem}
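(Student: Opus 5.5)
Both sides of \eqref{eq:martin_K_n} will be reduced to invariant global sections, using Proposition \ref{pro:sections_on_quotient} and Serre vanishing as in \eqref{eq:chi_on_quotient}. The comparison then becomes a representation-theoretic identity: the Weyl character formula written as an integral over $\sT$, i.e.\ a $K$-theoretic Weyl integration formula.

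\textbf{Step 1: reduce both sides to invariants.} By \eqref{eq:chi_on_quotient} the left side equals $\chi\_\sG(X,V\otimes L^n)^\sG$ for $n\gg0$. Assume first that $V$ is a genuine $\sG$-equivariant sheaf; by linearity this suffices. For the right side, note that $\Lambda\udot\op{Ad}^\vee\otimes V_0$ is the descent of the $\sT$-equivariant class $\Lambda\udot(\fg_\C/\ft_\C)^\vee\otimes V$ on $X$. Apply Proposition \ref{pro:sections_on_quotient} to each graded piece $\Lambda^i(\fg/\ft)^\vee\otimes V$ for the group $\sT$, then use Serre vanishing on the projective stack $X\div\sT$ (Lemma \ref{check}). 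This gives
\[
\chi\(X\div\sT,\Lambda\udot\op{Ad}^\vee\otimes V_0\otimes L_0^n\)=\CT\Big(\prod\nolimits_i(1-t^{\eps_i})\cdot\chi\_\sT(X,V\otimes L^n)\Big)
\]
for $n\gg0$. A single $n$ works for all the finitely many summands.

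There is a subtlety: $\chi\_\sT(X,V\otimes L^n)$ might not be a finite Laurent polynomial when $X$ is noncompact. To handle this, I apply the same identity weight-by-weight. Write $\chi\_\sG(X,V\otimes L^n)=\sum_\lambda m_\lambda[V_\lambda]$ as a (possibly infinite) sum over irreducibles, with multiplicities finite by Lemma \ref{pro:chi_well_def} applied to $\sT$-weight spaces. The projection $\CT\big(\prod(1-t^{\eps_i})\cdot{-}\big)$ only sees finitely many $\sT$-weights, namely $0$ shifted by sums of roots, so only finitely many $\lambda$ contribute.

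\textbf{Step 2: the character identity.} For any finite-dimensional $\sG$-representation $U$, I claim
\[
\dim U^\sG=\frac1{|W|}\,\CT\Big(\prod\nolimits_{\eps\text{ roots}}(1-t^{\eps})\,\mathrm{ch}\,U\Big).
\]
Indeed, $\prod_{\text{roots}}(1-t^\eps)=\Delta\overline\Delta$ with $\Delta$ the Weyl denominator. By the Weyl character formula $\Delta\,\mathrm{ch}V_\lambda$ is an alternating sum of $|W|$ monomials, so the constant term above equals $\frac1{|W|}\,|W|\,\delta_{\lambda,0}$. Equivalently, this is the Weyl integration formula applied to the Haar-measure description \eqref{Haar} of $\CT$. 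The identity is linear in $U$, so it applies to the virtual representation $\chi\_\sG(X,V\otimes L^n)$, restricted as in Step 1 to the finitely many relevant isotypic pieces. Combining with Step 1 gives \eqref{eq:martin_K_n}.

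\textbf{Main obstacle.} I expect the difficulty to be the finiteness and uniformity in Step 1 for noncompact $X$. Specifically, one must ensure that $\chi\_\sT(X,-)$ restricted to the weights $\{\sum_{i\in S}\eps_i\}$ is well defined. One must also ensure that the same $n\gg0$ works simultaneously for the twists $\Lambda^i(\fg/\ft)^\vee\otimes V$. Well-definedness should follow from Lemma \ref{pro:chi_well_def}: its hypothesis $\dim H^0(\cO_X)^\sT<\infty$ holds by $\eta$-semiprojectivity via Proposition \ref{pro:alg_char_eta}. Uniformity holds because there are only finitely many twists, so one takes the maximum of the finitely many thresholds. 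If $V$ is only a class in $K^\sG_0$, I will write it as a difference of sheaves and use linearity throughout.
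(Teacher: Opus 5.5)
Your proposal is correct and follows essentially the same route as the paper: both sides of \eqref{eq:martin_K_n} are turned into the invariant parts of $\chi\_\sG(X,V\otimes L^n)$ and of $\chi\_\sT\(X,\Lambda\udot(\fg_\C/\ft_\C)^\vee\otimes V\otimes L^n\)$ using \eqref{eq:chi_on_quotient}, and these invariant parts are then compared by the Weyl integration formula. As in the paper, you handle the possibly infinite-dimensional cohomology through the finiteness of $\sT$-weight multiplicities from Lemma \ref{pro:chi_well_def}, reducing to a finite-dimensional $\sG$-subrepresentation; your finitely many relevant isotypic pieces play the role of the paper's submodule generated by $H^\sT\oplus\bigoplus_\eps H_{-\eps}$.
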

\begin{proof}
For fixed $k\ge0$ the $\sG$-representation $H:=H^k(X, V)$ may be infinite dimensional. Nonetheless the Weyl formula
\begin{equation}\label{eq:G_T_inv}
    \dim H^\sG \= \frac1{|W|}\dim \Bigl(H\otimes \prod\nolimits_{i}(1-t^{\eps_i})\Bigr)^\sT
\end{equation}
still holds, where the right-hand side is the dimension of a virtual representation, and the product is over the roots $\eps_i$ of $\sG$. This is
because the multiplicities $\{\dim H_\psi\}_{\psi \in \ft^\vee_\Z}$ of the irreducible $\sT$-modules in $H$ are all finite by Lemma \ref{pro:chi_well_def} so we can apply the usual Weyl integration formula argument to a finite dimensional $\sG$-submodule of $H$ generated by $H^\sT\oplus\bigoplus_\eps H_{-\eps}$, where $\eps$ runs through all weights appearing in $\prod_i(1-t^{\eps_i})$ (i.e. all subset sums of roots). Taking the alternating sum over $k$ therefore gives
    \begin{equation}\label{eq:new_martin}
        \chi_\sG(X, V)^\sG \= \frac{1}{\vert W \vert}\,\chi_\sT\left(X,\, \Lambda\udot (\fg_\C/\ft_\C)^\vee \otimes V\right)^\sT.
    \end{equation}
Replacing $V$ by $V\otimes L^n$,
    \begin{align*}
        \chi(X\div\sG, V_0\otimes L_0^n) &\stackrel{\eqref{eq:chi_on_quotient}}{=} \dim H^0(X, V\otimes L^n)^\sG\\
        &\stackrel{\eqref{eq:G_T_inv}}{=} \tfrac1{|W|}H^0\bigl(X, \Lambda\udot(\fg_\C/\ft_\C)^\vee\otimes V\otimes L^n\bigr)^\sT\\
        &\stackrel{\eqref{eq:chi_on_quotient}}{=} \tfrac1{|W|}\chi\bigl(X\div\sT, \Lambda\udot\op{Ad}^\vee\otimes V_0\otimes L_0^n\bigr).\qedhere
    \end{align*}
\end{proof} 

\subsection{Quasi-polynomials}
We claim we can  substitute $n=0$ into \eqref{eq:martin_K_n} to give the more general abelianisation formula
    \begin{equation}\label{eq:martin_K2}
        \chi(X\div \sG, V_0) \= \frac{1}{\vert W \vert}\,\chi\left(X\div\sT,\, \Lambda\udot\op{Ad}^\vee \otimes  V_0\right).
    \end{equation}
This would be clear if both sides were polynomial in $n$, as $\chi(Y,V\otimes L^n)$ indeed is for $Y$ a projective scheme. But it need not be true for orbi-bundles $V,\,L$ on a projective Deligne-Mumford stack. Instead we push forwards to the coarse moduli scheme $|Y|$ and use the projection formula and the fact that $L^d$ descends for some $d$. The upshot is that if we restrict to $n$ with a fixed residue $[n]$ mod $d$ then $\chi(Y,V\otimes L^n)$ \emph{is} polynomial. Thus for all $n$ it is determined by $d$ polynomials and is called a \emph{quasi-polynomial}, satisfying the following definition with lattice $\Z\cdot[L]$. (Or we can let $L$ vary through the lattice $\Pic(Y)$ to see it is a quasi-polynomial in the more general sense.)

    \begin{definition}\label{qp}
		A ($d$-periodic) \textit{quasi-polynomial} on a lattice $L$ is a function $P : L \to \Q$ determined by polynomials $\lbrace p_\ell \in \Sym(L_\Q^\vee) : \ell \in L/dL \rbrace$  by
		\begin{equation*}
		    P(\psi) \= p\_{[\psi]}(\psi) \quad \text{ for every }\psi \in L. 
		\end{equation*}
	\end{definition} 

Since both sides of \eqref{eq:martin_K_n} are quasi-polynomials in $n$, their equality for $n\gg0$ holds for every $n$ and in particular for $n=0$, proving \eqref{eq:martin_K2}.

\subsection{Toric JK residues}
    Given a point $p \in \sT$, consider the (analytic) exponential map
 $$
        \exp_p : \ft_\C\, \To\, \sT, \qquad \psi\,\Mapsto\, p \cdot e^\psi
 $$
    giving analytic local coordinates around $p \in \sT$. Alternatively it can be described algebraically via the ring homomorphism 
$$
        \exp_p^\ast : H^0(\cO_{\sT \setminus \cH_\sT}) \,\To\, \widehat{R}_\Gamma, \qquad t^\psi \,\Mapsto\, t^\psi(p) e^\psi
$$
    pulling back rational functions on $\sT$ to the complete ring \eqref{eq:R_gamma}. Recall \cite[Definition 6]{DeConciniProcesi}.
    
	\begin{definition}\label{def:TJK}
		Given $\eta \in \ft_\Q \setminus \cH$ and $\xi \in \ft^\vee_\Q \setminus \cH^\vee$, the \textit{toric Jeffrey--Kirwan residue} is
$$
			\TJK^\eta_\xi : H^0(\cO_{\sT \setminus \cH_\sT}) \,\To\, \C, \qquad f \,\Mapsto\, \sum\nolimits_{p} \JK^\eta_\xi \bigl( \exp_p^\ast f \bigr),
$$
	the sum being over the finite number of isolated intersection points $p \in \sT$ of at least $r$ independent affine hypertori (translates of codimension 1 subtori of $\sT$) in $\cH_\sT$.
	\end{definition}
	
    \begin{remarks}
By Proposition \ref{JKpm}, $\TJK^\eta_\xi$ is a linear functional, depending on $\eta$ and $\xi$ only through the chambers of $\ft_\Q \setminus \cH$ and $\ft^\vee_\Q \setminus \cH^\vee$ they lie in. It doesn't depend on $\Gamma$, in the sense that for $f \in H^0(\cO_{\sT \setminus \cH_{\sT,1} \cap \cH_{\sT,2}})$ and $\eta \notin \cH_1 \cup \cH_2$, $\xi \notin \cH_1^\vee \cup \cH_2^\vee$, the residue $\TJK^\eta_\xi(f)$ is the same when computed with respect to $\Gamma_1$ or $\Gamma_2$. Finally, $\TJK^{-\eta}_{-\xi}=(-1)^r\TJK^\eta_\xi.\hfill\square$
    \end{remarks}


$\TJK^\eta_\xi$ is related to $\CT^{-\eta}$ by \cite[Remark 7.7\,(2) and Theorem 7.10]{DeConciniProcesi}. To explain this let $\Gamma_\eta:=\{\gamma\in\Gamma\cup(-\Gamma):\<\gamma,\eta\>>0\}$ be the set of $\eta$-normalised weights.

    \begin{proposition}[De Concini--Procesi]
		Given $\eta \in \ft_\Q \setminus \cH$ and $\xi \in \ft_\Q^\vee\setminus \cH^\vee$ the function 
		\begin{equation*}
		    \ft^\vee_\Z \,\To\, \Z, \qquad \psi \,\Mapsto\, \TJK^{\eta}_{\xi}\bigl(t^{\psi} f\bigr)
		\end{equation*}
        is a quasi-polynomial in the sense of Definition \ref{qp} for every $f \in H^0(\cO_{\sT \setminus \cH_\sT})$.
        Fixing $d_\gamma\in\Z_{\ge 0}$ for $\gamma \in \Gamma_\eta$, and $\psi$ in the closure of the chamber of $\ft_\Q^\vee \setminus \cH^\vee$ containing $\xi$,
        \begin{equation}\label{eq:CT_JK_DCP}
            \CT^{-\eta} \left(t^{\psi} \prod\nolimits_{\gamma \in \Gamma_\eta} \frac{1}{(1-t^{-\gamma})^{d_\gamma}}\right) \= \TJK^{\eta}_{\xi}\left(t^{\psi} \prod\nolimits_{\gamma \in \Gamma_\eta} \frac{1}{(1-t^{-\gamma})^{d_\gamma}}\right).
        \end{equation}
	\end{proposition}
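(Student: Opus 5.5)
The plan is to reduce both statements to a single rational generator and compare the two functionals explicitly, separating the torsion points of $\sT$ that lie on $\cH_\sT$.

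\textbf{Reduction to one generator.} Write $g_\psi:=t^{\psi}\prod_{\gamma\in\Gamma_\eta}(1-t^{-\gamma})^{-d_\gamma}$. Every element of $H^0(\cO_{\sT\setminus\cH_\sT})$ is a finite linear combination of such functions, after using $(1-t^{\gamma})^{-1}=-t^{-\gamma}(1-t^{-\gamma})^{-1}$ to normalise all denominators to $\eta$-normalised weights. Multiplying by $t^\psi$ only shifts the exponent. So it suffices to prove quasi-polynomiality of $\psi\mapsto\TJK^\eta_\xi(g_\psi)$ for fixed $d_\gamma$, together with \eqref{eq:CT_JK_DCP}.

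\textbf{Quasi-polynomiality.} The poles of $g_\psi$ lie on $\cH_\sT$, and the points $p$ in Definition \ref{def:TJK} form a finite set of torsion points of $\sT$. Hence there is a $d$ with $p^d=1$ for all of them, and $t^{\psi}(p)$ depends only on $\psi$ mod $d\ft^\vee_\Z$. Next,
$$
\exp_p^*g_\psi=t^\psi(p)\,e^{\psi}\,h_p,\qquad h_p\in\widehat R_\Gamma \text{ independent of }\psi.
$$
$\JK^\eta_\xi$ only sees the degree $r$ part. Expanding $e^\psi=\sum_k\psi^k/k!$, only finitely many terms meet the finitely many nonzero homogeneous pieces of $h_p$ of degree $\ge -N$, since $h_p$ has bounded pole order. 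So $\JK^\eta_\xi(e^\psi h_p)$ is a polynomial in $\psi$, and summing over $p$ gives a $d$-periodic quasi-polynomial.

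\textbf{Identity \eqref{eq:CT_JK_DCP}.} By definition, $\CT^{-\eta}(g_\psi)$ is the number of ways to write
$$
-\psi=\sum_\gamma\sum_{i=1}^{d_\gamma}m_{\gamma,i}\gamma,\qquad m_{\gamma,i}\in\Z_{\ge0},
$$
so it is the multiplicity function, i.e. the vector partition function of the multiset $\Gamma_\eta$ evaluated at $-\psi$. This also needs a check on the sign bookkeeping: expanding in $t^{-\gamma}$ with $\<-\gamma,-\eta\>>0$ is the $(-\eta)$-positive expansion, so the admissible $\psi$ are those with $-\psi$ in the cone generated by $\Gamma_\eta$.

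Vector partition functions are known to be quasi-polynomial on the closures of the chambers of $\cH^\vee$, and these coincide with the chambers used for $\TJK$. The main obstacle is to show that on the closed chamber of $\xi$ (up to the sign swap) this quasi-polynomial is exactly $\sum_p\JK^\eta_\xi(\exp_p^*g_\psi)$. This is the Szenes--Vergne/Brion--Vergne residue formula for partition functions, which the statement attributes to \cite[Remark 7.7\,(2) and Theorem 7.10]{DeConciniProcesi}.

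I would first try to prove it by induction on $\sum d_\gamma$ and on $r$:
\begin{itemize}
\item In rank one it is the residue theorem, as in the running example: the constant term in the expansion at $\infty$ is minus the sum of residues at $0$ and at the roots of unity. The residue at $0$ vanishes when $\psi$ lies in the correct half-line.
\item In general rank, I would use the $K$-theoretic analogue of the contour-shifting argument behind Proposition \ref{pro:equality_of_residues}: deform $\wt\eta(s)\cdot\sT_\R$ towards $s\to\infty$, which picks up iterated residues at the points $p$. These are packaged by the flag/iterated-residue description of $\JK$.
\item The condition that $\psi$ lies in the closed chamber of $\xi$ is what makes the boundary contributions at infinity vanish, in the manner of \eqref{eq:CT_vanishing}.
\end{itemize}
If this is too heavy, I would cite De Concini--Procesi directly after matching sign conventions, including the relation $\TJK^{-\eta}_{-\xi}=(-1)^r\TJK^\eta_\xi$.
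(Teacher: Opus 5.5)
Your fallback coincides with the paper: it gives no argument of its own and simply quotes \cite[Remark 7.7\,(2) and Theorem 7.10]{DeConciniProcesi}. Your direct proof of quasi-polynomiality is correct and is a genuine addition. The points $p$ are torsion, so $t^\psi(p)$ depends only on $\psi$ modulo $d\ft^\vee_\Z$. Since $\exp_p^*f$ has bounded pole order, only the terms $\psi^k/k!$ of $e^\psi$ with $k$ at most that pole order minus $r$ reach the homogeneous part seen by $\JK^\eta_\xi$. Hence each $\JK^\eta_\xi(e^\psi\exp_p^*f)$ is a polynomial in $\psi$.

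The partition-function part, however, contains two concrete errors.

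\textbf{Sign.} Expand each $(1-t^{-\gamma})^{-d_\gamma}$ in powers of the $(-\eta)$-positive monomial $t^{-\gamma}$. The constant term of $t^\psi\prod_\gamma(1-t^{-\gamma})^{-d_\gamma}$ then counts solutions of $\psi=\sum_\gamma\sum_{i\le d_\gamma}m_{\gamma,i}\gamma$. So it is the partition function at $+\psi$, not at $-\psi$; compare $\CT^{-\eta}(t^n/(1-t^{-2}))=1$ for even $n\ge0$ in \eqref{egloc}. With your sign the left side would be supported in $-\cone(\Gamma_\eta)$, while the right side can only be nonzero for $\xi\in\cone(\Gamma_\eta)$. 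With the correct sign, $\psi$ and $\xi$ lie on the same side and no appeal to $\TJK^{-\eta}_{-\xi}=(-1)^r\TJK^\eta_\xi$ is needed.

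\textbf{Closures.} Partition functions are \emph{not} quasi-polynomial on the closure of every chamber of $\cH^\vee$, and \eqref{eq:CT_JK_DCP} is false in the generality printed. Take $r=2$, $\Gamma=\Gamma_\eta=\{e_1,e_2\}$, $\eta=(1,1)$, $d_{e_1}=d_{e_2}=1$, $\xi=(-1,1)$, and $\psi=(0,1)$ in the closure of the chamber of $\xi$. Then the left side is $1$, while the right side is $0$ by Proposition \ref{pro:vanishing_property}. Likewise, taking all $d_\gamma=0$ and $\psi=0$ gives $1\ne0$. The closed-chamber identity needs $\{\gamma:d_\gamma>0\}$ to span $\ft^\vee_\Q$ and its cone to contain $\xi$. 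This puts the chamber of $\xi$ inside one of De Concini--Procesi's big cells, which is their actual setting. This is a defect of the statement as transcribed, not only of your argument. It is harmless for its sole use \eqref{eq:CT_JK}: there $\nu+n\psi$ lies in the open chamber, and both sides vanish in the degenerate cases.

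\textbf{Contour sketch.} Your contour-shifting sketch is not a proof in rank $\ge2$. Pushing the contour across a polar hypertorus produces an integral over that hypertorus, not a residue at a point. Organising these integrals into $\sum_p\JK^\eta_\xi$, with boundary terms vanishing exactly on the closed big cell, is the whole content of the Szenes--Vergne/Brion--Vergne/De Concini--Procesi theorem. So the citation cannot be avoided this way without substantial further work.
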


But \emph{any} $f \in H^0(\cO_{\sT\setminus \cH_\sT})$ can be written as a linear combination of functions of the form 
$$
t^\nu \cdot\prod\nolimits_{\gamma \in \Gamma_\eta} 
        \frac{1}{(1-t^{\gamma})^{d_\gamma}}\ \text{ for some }\nu \in \ft^\vee_\Z\text{ and }d_\gamma\geq 0,
$$
and if an integral $\psi\in\ft_\Z^\vee$ is in the \textit{open} chamber of $\ft^\vee_\Q \setminus \cH^\vee$ containing $\xi$ then so is $\nu+n\psi$ for $n\gg0$. Thus \eqref{eq:CT_JK_DCP} gives
    \begin{equation}\label{eq:CT_JK}
        \CT^{-\eta}(t^{n\psi} f) \= \TJK^\eta_\xi(t^{n\psi}f) \qquad \text{for }n\gg0.
    \end{equation}

	\subsection{Proof of Theorem \ref{th:KJK}}
  We can finally prove Theorem \ref{th:KJK} (with $n=0$) from Theorem \ref{th:KJK_large_n} (with $n\gg0$) by using the fact that the relevant functions of $n$ are quasi-polynomials.
  
         First suppose  $\sG= \sT$ and $\mu(F) \notin \cH^\vee$ for every component $F\subseteq X^\sT$. The contribution of $F\subseteq X^\sT$ to the right hand side of the localisation formula \eqref{eq:KJKab_large_n} was computed in \eqref{eq:K_F_contribution},
$$
\CT^{-\eta}\left(\chi^{\vir}_\sT\!\left(\!F,\,\frac{V \otimes L^n\vert_F}{\Lambda\udot\(N^{\vir}_{F/X}\)^\vee}\right)\right)\=\sum_i \chi^{\vir}\(F,\, E_i|_F \otimes\mathsf L^n\)\, \CT^{-\eta}(t^{n\mu(F)}g_i)
$$
for $n\gg0$. But by \eqref{eq:CT_JK} this can be written as
$$
	    \sum\nolimits_{i=1}^\ell \chi^{\vir}\bigl(F,\, E_i|_F \otimes \mathsf L^n\bigr)\, \TJK^{\eta}_{\mu(F)}(t^{n\mu(F)}g_i) \= \TJK^\eta_{\mu(F)}\left( \chi^{\vir}_\sT\left(\!F,\, \frac{V \otimes L^n\vert_F}{\Lambda\udot\(N^{\vir}_{F/X}\)^\vee}\right)\right).
$$
    Thus \eqref{eq:KJKab_large_n}  has become, for $n \gg 0$,
$$
		    \chi^{\vir}\Bigl(X\div\sT, V_0 \otimes L_0^n \Bigr) \ = \sum_{\substack{F \subseteq X^\sT\\\
				\langle \mu(F), \eta \rangle >0}} \TJK^\eta_{\mu(F)}\left( \chi^{\vir}_\sT\left(\!F,\, \frac{V \otimes L^n\vert_F}{\Lambda\udot\(N^{\vir}_{F/X}\)^\vee}\right)\right).
$$
    Since both sides are quasi-polynomial in $n$ they coincide for $n=0$, which gives Theorem \ref{th:KJK} in this case. Under condition (d) the proof is the same on replacing $\Lambda\udot\(N^{\vir}_{F/X}\)^\vee$ by $\sqrt{\mathfrak e^\sT}\(N^{\vir}_{F/X}\)$ and $\sT$ by $\widetilde\sT$.

    If $\mu(F) \in \cH^\vee$ for some $F \subset X^\sT$, we can apply the argument above to the new linearisation $L^N \otimes \eps$, for $\eps \in \ft_\Z$ and $N\gg 0$. Finally, if $\sG$ is nonabelian, the claim follows from the abelian result combined with \eqref{eq:martin_K2}.
    
\subsection{Noncompact quotients}\label{C*sec}
In this Section we relax the $\eta$-semiprojective condition, allowing \emph{noncompact} quotients $X\div\sG$, in the presence of an additional $\C^*$ action on $X$ which allows us to replace $\chi^{\vir}\(X\div\sG,V_0\otimes L_0^n\)$ by the $\C^*$-character $\chi^{\vir}_{\C^*}\(X\div\sG,V_0\otimes L_0^n\)$. For instance in the abelian $\sG=\sT$ and perfect obstruction theory case we will write\footnote{The $L^n$s can then be removed using an affine JK residue; see \cite{Ontani} for details.}
$$
\chi^{\vir}_{\C^*}\(X\div\,\sT,V_0\otimes L_0^n\)\ \text{ in terms of }\ 
\chi^{\vir}_{\C^*\times\sT}\(X,V\!\otimes\!L^n\)\,=\!
\sum_{F \subseteq X^{\C^\ast \times \sT}} \chi^{\vir}_{\C^\ast \times \sT}\!\left(\!F,\, \frac{V\otimes L^n|_F}{\Lambda^\bullet\(N^{\vir}_{F/X}\)^\vee}\!\right)
$$
when the $\C^*$ action contracts $X$ onto $X^{\C^*}$ and the latter is $\eta$-semiprojective (so that $\wt\eta:\C^*\into\sT$ further contracts $X^{\C^*}$ onto a compact $X^{\C^*\times\sT}$ core as in Definition \ref{etasp}).
\medskip
        
So we again fix $\sG\acts(X,L,\EE,\phi)$ satisfying (a),\,(b)\,---\,and one of (c),\,(d)\,---\,as at the beginning of this Section. We replace the assumption that $X$ is $\eta$-semiprojective with the existence of an additional $\C^\ast$ action on $(X,L,\EE,\phi)$, commuting with the action of $\sG$, and an $\eta\in \ft_\Q\setminus\cH$\,---\,where as before $\cH \subset \ft_\Q$ is the hyperplane arrangement \eqref{eq:hyperplanes} defined by the set $\Gamma \subset \ft_\Q^\vee$ of $\sT$-weights of $N^{\vir}_{X^\sT/X}$\,---\,satisfying
    \begin{enumerate}
        \item[(e)] the $\C^\ast$-limits $\lim_{s\to 0}s\cdot x$ exist for every closed point $x\in X$ and
        \item[(f)] the limit locus $X^{\C^\ast}$ is $\eta$-semiprojective (Definition \ref{etasp}).
    \end{enumerate}
For an example consider the action of $\C^\ast \times (\sG=\C^*)$ on $\C^2$ by $(s,t)\cdot (x,y) := (tx, st^{-1}y)$; this satisfies (e) and (f) for $\eta>0$.

So $X^\sG$ and $X\div\sG$ need not be compact but $X^{\C^*\times\sG}$ is, and $X\div\sG$ is $\C^\ast$-semiprojective by Lemma \ref{lem:XdivG_semiproj} below. In particular $\chi^{\vir}_{\C^\ast}(X\div\sG, V_0\otimes L_0^n)\in\C(\!(s^{-1})\!)$ is a Laurent series in $s^{-1}$ by \eqref{eq:novikov_condition} of Theorem \ref{thm:K_localisation} applied to $\C^*\acts X\div\sG$. This Section is devoted to evaluating this Laurent series via Theorem \ref{th:Cast_loc} below.\medskip

We define the operator $\CT^{-\eta}_{-s}$ on rational functions $f:\C^*\times\sT\dashrightarrow \C$ by expanding them as Laurent series in $s^{-1}$ then applying the $(-\eta)$-constant term functional $\CT^{-\eta}$ of \eqref{CTeta} to each coefficient. We can write each $\CT^{-\eta}$ as an integral over a contour as in \eqref{CTeta} but to make this commute with the infinite sum over powers of $s^{-1}$ we need it to be absolutely convergent. Expanding the rational functions \eqref{eq:rational_factor} which arise shows that for $|s|>1$ this requires us to integrate over the countour $\eta(1+\epsilon)\cdot\sT_\R,\,0<\epsilon\ll1$, just outside $\sT_\R$ but inside any poles of these functions which are also outside $\sT_\R$. That is,
\beq{eq:integral_presentation}
\CT^{-\eta}_{-s}(f)(s)\=\frac1{(2\pi i)^r}\int_{\eta(1+\eps) \cdot\sT_\R} f(s,t)\; \frac{d{\mathbf t}}{\mathbf t} \qquad \text{for $|s|>1$ and $0<\eps \ll 1$}.
\eeq

    \begin{theorem}\label{th:Cast_loc}
  For $V \in K^0_{\C^\ast \times \sG}(X)$ and $n\gg0$ we have, in $\C(\!(s^{-1})\!)$,
\begin{eqnarray}\nonumber
  \chi^{\vir}_{\C^\ast}\(X\div\sG, V_0 \otimes L^n_0\) &=&
\frac1{|W|}\Big(\Lambda^\bullet(\fg_\C/\ft_\C)^\vee \cdot\chi_{\C^\ast \times \sT}^{\vir}(X,V\otimes L^n)\Big)^\sT \\ \label{eq:Cast_loc}
&=& \CT^{-\eta}_{-s}\left(\frac1{|W|}\prod\nolimits_{i}(1-t^{\eps_i})\cdot\sum_{F \subseteq X^{\C^\ast \times \sT}}\chi^{\vir}_{\C^\ast \times \sT}\!\left(\!F,\,\frac{V\otimes L^n|_F}{\Lambda^\bullet\(N^{\vir}_{F/X}\)^\vee} \right)\right)\!,
\end{eqnarray}
with $\epsilon_i \in \ft_\Z^\vee$ the roots of $\sG$.
In case {\rm(d)} we get the same formula with $\Lambda\udot\(N^{\vir}_{F/X}\)^\vee$ replaced by $\sqrt{\mathfrak e^{\C^\ast \times\sT}}\(N^{\vir}_{F/X}\)$.
    \end{theorem}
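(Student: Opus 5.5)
We argue in two stages, mirroring the compact case: the first equality is an abelianisation/invariants statement, the second a $\C^*$-equivariant localisation identity.

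\textbf{First equality.} We upgrade Proposition \ref{pro:sections_on_quotient} and \eqref{eq:chi_on_quotient} to the $\C^*$-equivariant setting. Since the $\C^*$ action commutes with $\sG$, the isomorphism $H^0(X\div\sG,V_0\otimes L_0^n)\cong H^0(X,V\otimes L^n)^\sG$ is $\C^*$-equivariant. The proof of Proposition \ref{pro:sections_on_quotient} (finite generation of $R$ and $M$, extension of high-degree sections) goes through weight space by weight space for $\C^*$. Higher cohomology of $V_0\otimes L_0^n$ vanishes for $n\gg0$ because $X\div\sG$ is projective over its affinisation. Lemma \ref{lem:XdivG_semiproj} makes every $\C^*$-weight space finite dimensional, so both sides are well-defined elements of $\C(\!(s^{-1})\!)$ by \eqref{eq:novikov_condition}.

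Next we apply Proposition \ref{pro:descent_Ovir} to descend $\cO^{\vir}$ (or $\widehat\cO^{\vir}$) $\C^*$-equivariantly. Then, for each $\C^*$-weight separately, we apply the Weyl formula \eqref{eq:G_T_inv}. This is legitimate because each $(\C^*\text{-weight},\sT\text{-weight})$ multiplicity is finite, by Lemma \ref{pro:chi_well_def} applied to $\C^*\times\sT$. The finiteness needed there is $\dim H^0(\cO_X)^{\C^*\times\sT}<\infty$, which follows from (e) and (f): functions invariant under $\C^*$ are determined by their restriction to $X^{\C^*}$, and on $X^{\C^*}$ the $\widetilde\eta$-invariants are finite dimensional by Proposition \ref{pro:alg_char_eta}. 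This gives the first line.

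\textbf{Second equality.} We pick a 1-parameter subgroup $\lambda(s)=(s^N,\widetilde\eta(s))$ of $\C^*\times\sT$ with $N\gg0$. Conditions (e) and (f) should make $X$ $\lambda$-semiprojective in the sense of Definition \ref{etasp}:
\begin{itemize}
\item limits exist, by first flowing with $\C^*$ and then with $\widetilde\eta$ inside $X^{\C^*}$, using Proposition \ref{pro:alg_char_eta}$(1')$ on weights of $H^0(\cO_X)$;
\item the fixed locus equals $X^{\C^*\times\sT}$, which is compact.
\end{itemize}
Theorem \ref{thm:K_localisation} then identifies the formal character $\widehat\chi_{\C^*\times\sT}(X,V\otimes L^n\otimes\cO^{\vir})$ with the $(-\lambda)$-positive expansion of the localisation sum. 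Multiplying by the Laurent polynomial $\prod_i(1-t^{\eps_i})$ commutes with this expansion. Taking the $t^0$ coefficient of each $s$-power of this expansion is what $\CT^{-\eta}_{-s}$ computes. Concretely, with $N\gg0$, expanding first in $s^{-1}$ and then in $(-\eta)$-positive monomials in $t$ agrees with the $(-\lambda)$-expansion coefficientwise. The contour description \eqref{eq:integral_presentation} makes this precise, since the contour $\eta(1+\eps)\sT_\R$ with $|s|>1$ separates the poles the same way.

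\textbf{Main obstacle.} The hardest step is exactly this compatibility of expansions: one must check that, for each fixed power of $s^{-1}$, only finitely many $t$-terms contribute, and that the denominators $1-s^at^\gamma$ with $a\neq0$ are expanded in $s^{\mp1}$ rather than $t$. We expect this to follow from the boundedness statement \eqref{eq:novikov_condition} for $\lambda$ with $N$ large relative to the $\eta$-weights appearing in the finitely many normal-bundle denominators. Case (d) is identical, using \eqref{locloc2} on the double cover.
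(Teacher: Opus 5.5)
Your proposal is correct and follows the paper's proof essentially step for step. The $\C^*$-equivariant form of Proposition \ref{pro:sections_on_quotient}, together with the weight-by-weight abelianisation \eqref{eq:new_martin} for $\C^*\times\sG$, gives the first line. The second line comes from $(N,\eta)$-semiprojectivity for $N\gg0$ (the paper's Lemma \ref{lem:k_eta}), Theorem \ref{thm:K_localisation}, and the sign comparison $\op{sign}(aN+\langle\psi,\eta\rangle)$ on the finitely many denominators $(1-s^at^\psi)^{-d}$ (Lemma \ref{lem:Cast_expansion}).

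There are two small slips, neither affecting the argument:
\begin{enumerate}
\item For $N\gg0$ the $\lambda$-fixed locus is $(X^{\C^*})^{\widetilde\eta}$. This contains $X^{\C^*\times\sT}$ but need not equal it; it is projective directly by (f).
\item Your restriction-to-$X^{\C^*}$ argument for $\dim H^0(\cO_X)^{\C^*\times\sT}<\infty$ only works on $X_{\red}$. The finiteness follows anyway from $\lambda$-semiprojectivity via Proposition \ref{pro:alg_char_eta}$(2')$.
\end{enumerate}
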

    
        In applications, it can be hard to compute \eqref{eq:Cast_loc} using \eqref{eq:integral_presentation} since it has many (divisors of) poles which restrict movements of the contour of integration. In practice this is solved by writing the rational function as a sum of summands with simpler poles. We can then evaluate \eqref{eq:integral_presentation} for each summand by shrinking or expanding the contour of integration. See \cite{Ontani} for an example. \medskip

In the rest of the Section we work towards proving Theorem \ref{th:Cast_loc}.
We identify the cocharacters of $\C^*\times\sT$ with $\Z\oplus\ft_\Z$, so that $(k,\eta)$ corresponds to the 1-parameter subgroup $\C^*\ni z\mapsto\(z^k, \widetilde\eta(z)\)\in\C^\ast \times \sT$.

    \begin{lemma}\label{lem:k_eta}
  Assumptions (e),\,(f) imply that $X$ is $(k,\eta)$-semiprojective for $k\gg0$.
    \end{lemma}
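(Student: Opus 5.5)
We use the algebraic characterisation of Proposition \ref{pro:alg_char_eta}: $X$ is $(k,\eta)$-semiprojective if and only if $(1')$ the cone $\cC(X)\subseteq\Q\oplus\ft^\vee_\Q$ of weights of $\C^*\times\sT$ on $H^0(\cO_X)_{\red}$ lies in $\{(k,\eta)\le0\}$, and $(2')$ the invariants of the corresponding 1-parameter subgroup on $H^0(\cO_X)$ are finite dimensional. The first thing to settle is the shape of $\cC(X)$. Write a weight as $(a,\psi)$ with $a$ the $\C^*$-weight. Assumption (e), via the $\C^*$ case of Proposition \ref{pro:alg_char_eta} (i.e. (1)$=(1')$ for the cocharacter $(1,0)$), gives $a\le0$ for every weight of $H^0(\cO_X)_{\red}$. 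Assumption (f), again via Proposition \ref{pro:alg_char_eta} and its consequence $(1'')$, says the weights $\psi$ of $H^0(\cO_{X^{\C^*}})_{\red}$ satisfy $\langle\psi,\eta\rangle<0$ unless $\psi=0$.

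The plan is then to relate the $a=0$ part of $H^0(\cO_X)$ to $H^0(\cO_{X^{\C^*}})$. Since all $\C^*$-limits exist, every $\C^*$-invariant function $f$ is determined by its restriction to $X^{\C^*}$ (as $f(x)=f(\lim s\cdot x)$), so restriction $H^0(\cO_X)_{\red,a=0}\to H^0(\cO_{X^{\C^*}})_{\red}$ is injective and $\sT$-equivariant. Hence the weights $(0,\psi)$ in $\cC(X)$ have $\langle\psi,\eta\rangle<0$ or $\psi=0$. Now $\cC(X)$ is a finitely generated (rational polyhedral) cone, generated by the weights of finitely many homogeneous algebra generators $(a_i,\psi_i)$. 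For generators with $a_i<0$, choose $k$ large enough that $ka_i+\langle\psi_i,\eta\rangle<0$; for those with $a_i=0$ we already have $\langle\psi_i,\eta\rangle<0$ or $\psi_i=0$. Thus for $k\gg0$ every nonzero element of $\cC(X)$ pairs strictly negatively with $(k,\eta)$, giving $(1'')$ and hence $(1')$ and condition (1) of Definition \ref{etasp}.

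For (2) I would aim to show the $(k,\eta)$-fixed locus is projective directly. Strict negativity on $\cC(X)\setminus\{0\}$ means the only $(k,\eta)$-invariants of $H^0(\cO_X)_{\red}$ are the constants, and the nilpotent graded pieces $I^j/I^{j+1}$ are finitely generated modules, so (as in the proof of Theorem \ref{thm:K_localisation}) their weights lie in finitely many translates of $\cC(X)$; that gives only finitely many invariant weights, hence $(2')$ finite dimensionality. An alternative route for (2): the $(k,\eta)$-fixed locus for generic large $k$ is contained in $X^{\C^*\times\widetilde\eta}=(X^{\C^*})^{\widetilde\eta}$, projective by (f), and Lemma \ref{trayn} closes it.

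The main obstacle I expect is the injectivity/restriction step: $X$ is only projective-over-affine and possibly nonreduced, so one must check that a $\C^*$-invariant function on $X_{\red}$ vanishing on $X^{\C^*}$ vanishes (done pointwise on closed points using limits and reducedness), and handle the translates coming from nilpotents carefully so that one uniform $k$ works; finite generation ensures only finitely many constraints on $k$.
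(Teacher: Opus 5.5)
Your proof of condition (1) of Definition \ref{etasp} is essentially the paper's. The paper also evaluates an equivariant function at $\lim_{s\to0}s\cdot x$ to get $\C^*$-weight $\le0$, and at a further $\wt\eta$-limit when that weight is $0$, then takes $k\gg0$ against finitely many generators. Packaging these limit arguments through Proposition \ref{pro:alg_char_eta} for (e) and (f), plus injectivity of restricting $\C^*$-invariants to $X^{\C^*}$, is fine.

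\textbf{The gap is in your first argument for $(2')$.} Strict negativity of $(k,\eta)$ on $\cC(X)\setminus\{0\}$ only tells you that the $(k,\eta)$-invariants of $H^0(\cO_X)_{\red}$ are its $\C^*\times\sT$-weight-$(0,0)$ part. It says nothing about how large that part is: a trivial action on $\AA^1$ has $\cC=\{0\}$ but infinite dimensional invariants. Also, ``only the constants'' is false when $X$ is disconnected.

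The missing input is the paper's $(\alpha,\beta)=(0,0)$ step. Such a $g$ satisfies $g(x)=g(x'')$ with $x''\in X^{\C^*\times\wt\eta}$, which is projective by (f). So $g$ takes finitely many values and is locally constant; the paper first reduces to $X$ reduced and connected precisely so that $g$ is constant. You could also get this from your own injectivity map: in weight $(0,0)$ it lands in $H^0(\cO_{X^{\C^*}})^{\sT}_{\red}$, which is finite dimensional by $(2')$ for $X^{\C^*}$.

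The same finite-dimensionality of weight spaces is silently needed in your nilpotent bookkeeping, since ``finitely many invariant weights'' gives nothing without it. The paper avoids that bookkeeping entirely by passing to $X_{\red}$ first, since (e), (f) and semiprojectivity only see the reduced structure.

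\textbf{Your alternative route to (2) is correct and genuinely different from the paper.} Take an equivariant embedding of $X$ in $\PP^m\times\AA^n$. Only finitely many $k$ satisfy $ka+b=0$ for a nonzero occurring weight $(a,b)$ of $\C^*\times\wt\eta(\C^*)$. So for all $k\gg0$ the fixed locus of $z\mapsto(z^k,\wt\eta(z))$ equals (set-theoretically) $X^{\C^*\times\wt\eta}=(X^{\C^*})^{\wt\eta}$, which is projective by (f). That is condition (2) of Definition \ref{etasp} directly; Lemma \ref{trayn} is not needed. Together with your proof of (1), this gives the lemma with no analysis of invariant rings and no reduction to connected $X$. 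The paper's route instead obtains $(1')$ and $(2')$ together from a single fact: after reducing to reduced connected $X$, every nonconstant generator has negative $(k,\eta)$-weight.
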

  
    \begin{proof}
    Since (e),\,(f) and semiprojectivity do not see nonreduced structure we may assume $X$ is reduced. Working one component at a time we also assume it is connected.
    
Fix $g\in H^0(\cO_X)\setminus\{0\}$ equivariant with respect to $\C^*\times\wt\eta$. We first claim that its weight $(\alpha,\beta)\in\Z^2$ has $\alpha\le0$ and whenever $\alpha=0$ we have $\beta<0$ unless $g$ is constant.

Choose $x\in X$ with $g(x)\ne0$. Then assumption (e) defines 
$x^\prime := \lim_{s\to 0}s\cdot x \in X^{\C^\ast}$ so
$$
        g(x^\prime) \= g\(\!\lim\nolimits_{s\to 0} s\cdot x\) \= \lim\nolimits_{s\to 0} \(s^{-\alpha} g(x)\)
$$
shows $\alpha\leq 0$. If $\alpha=0$ we use assumption (f) to define $x^{\prime\prime}:= \lim_{t\to 0}\eta(t)\cdot x^\prime \in X^{\C^\ast\times \widetilde\eta}$ and \begin{equation}
        g(x^{\prime\prime}) \= g\(\!\lim\nolimits_{t\to 0} \widetilde\eta(t) \cdot x^\prime\) \= \lim\nolimits_{t\to 0} \(t^{-\beta} g(x^\prime)) 
    \end{equation}
shows $\beta\leq 0$. Finally, if $(\alpha, \beta)=(0,0)$, we note that $g(x) = g(x^{\prime\prime})$ for every $x \in X$, so $g(X) = g(X^{\C^\ast \times \widetilde\eta})$ takes only finitely many values because $X^{\C^\ast \times \widetilde\eta}$ is projective. Since $X$ is connected $g$ is in fact constant, proving the claim. \medskip

Therefore there are finitely many nonconstant equivariant generators $g_1, \dots, g_n \in H^0(\cO_X)$ with weights $(\alpha_i,\beta_i)\in \Z^2$ as above. Thus we may choose $k\ll0$ such that the $(k,\eta)$-weights $\alpha_i k + \beta_i$ of the $g_i$ are all negative, so $X$ is $(k,\eta)$-semiprojective by Proposition \ref{pro:alg_char_eta}.
    \end{proof}

    \begin{lemma}\label{lem:XdivG_semiproj}
        $X\div\sG$ is $\C^\ast$-semiprojective.
    \end{lemma}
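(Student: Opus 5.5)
We must show $X\div\sG$ is $\C^*$-semiprojective: (1) $\lim_{s\to0}s\cdot y$ exists for every closed point $y\in X\div\sG$, and (2) the fixed locus $(X\div\sG)^{\C^*}$ is projective. The plan is to use the algebraic characterisation of Proposition \ref{pro:alg_char_eta}. For the projective-over-affine Deligne-Mumford stack $X\div\sG$, it suffices to check $(1')$, that the $\C^*$-weights of $H^0(\cO_{X\div\sG})_{\red}$ are all $\le0$, and $(2')$, that $H^0(\cO_{X\div\sG})^{\C^*}$ is finite dimensional.

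\textbf{Step 1: identify the ring.} By the good quotient property, $H^0(\cO_{X\div\sG})=H^0(\cO_{X^{\ss}})^\sG$, and the argument of Proposition \ref{pro:sections_on_quotient} (with $V=\cO_X$) relates this to $H^0(X,\cO_X)^\sG$. Rather than rely on an isomorphism, I would use the structure of $|X\div\sG|$: it is projective over $\Spec H^0(X,\cO_X)^\sG$. This holds because $|X\div\sG|=\mathrm{Proj}\bigoplus_n H^0(X,L^n)^\sG$ and the degree-zero part of this graded ring is $H^0(\cO_X)^\sG$. Hence $H^0(\cO_{X\div\sG})$ is a finite module over the image of $H^0(\cO_X)^\sG$, and its $\C^*$-weights lie in finitely many translates of the weights of that image, much as in the proof of Lemma \ref{pro:chi_well_def}.

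\textbf{Step 2: limits exist.} Condition (e) gives, via Proposition \ref{pro:alg_char_eta}, that all $\C^*$-weights on $H^0(\cO_X)_{\red}$ are $\le0$. The same then holds for the subring $H^0(\cO_X)^\sG$. Alternatively I can argue geometrically: the map $X\div\sG\to\Spec H^0(\cO_X)^\sG$ is proper, so limits downstairs lift upstairs, as in Proposition \ref{pro:eta_semiproj_dominant}. For the finite-module translates I would instead argue directly: take $y=[x]$ with $x\in X^{\ss}$; then $\lim_{s\to0} s\cdot x$ exists in $X$, and the limit of $s\cdot y$ in $|X\div\sG|$ exists by properness over the affine base. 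This gives (1). I would work on coarse spaces throughout, which suffices.

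\textbf{Step 3: compact fixed locus (main obstacle).} For (2) I would show $H^0(\cO_{X\div\sG})^{\C^*}$ is finite dimensional. Its image in functions is locally constant on $(X\div\sG)^{\C^*}$, so the task is to show that $(X\div\sG)^{\C^*}$ is proper. The first attempt is Lemma \ref{lem:k_eta}: $X$ is $(k,\eta)$-semiprojective for $k\gg0$, and hence weakly $\eta'$-semiprojective for the cocharacter $\eta'=(k,\eta)$ of $\C^*\times\sT$. Then Lemma \ref{check}, applied to $\C^*\times\sG$, should show that the fixed part $H^0(\cO_X)^{\C^*\times\sG}$ is finite dimensional.

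Two gaps remain. One must pass from $\eta'$-invariants to $\C^*$-invariants on $\sG$-invariants: the $\sT$-weights of $\sG$-invariants vanish, so the $\wt{\eta'}$-weight of a $\sG$-invariant function equals $k$ times its $\C^*$-weight, and therefore $\wt{\eta'}$-invariance coincides with $\C^*$-invariance there. The harder gap is the finite module extension from Step 1. Here I would use the finiteness of the $\C^*$-invariants in the base together with finite generation, exactly as in the \eqref{cond:semipr_3}$\iff$\eqref{cond:semipr_4} argument of Proposition \ref{weak2}.
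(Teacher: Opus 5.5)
Your proposal is correct and is essentially the paper's argument. In both, Lemma~\ref{lem:k_eta} and the fact that $\wt\eta(\C^*)\subset\sG$ acts trivially on $H^0(\cO_X)^\sG$ make $\Spec H^0(\cO_X)^\sG$ $\C^*$-semiprojective, and this lifts along the projective morphism $|X\div\sG|\to\Spec H^0(\cO_X)^\sG$. The paper simply packages these two steps as the two halves of Proposition~\ref{pro:eta_semiproj_dominant}, which you unpack via weights, properness and finite generation of $\C^*$-invariants.

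One correction: Lemma~\ref{check} is the wrong citation, since its stability hypothesis is not available for $\C^*\times\sT$ and its conclusion is not what you need. The finite dimensionality of $H^0(\cO_X)^{\C^*\times\sG}\subseteq H^0(\cO_X)^{\wt{\eta'}}$ follows directly from condition $(2')$ of Proposition~\ref{pro:alg_char_eta} applied to the $(k,\eta)$-semiprojective $X$.
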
    
    
    \begin{proof}
The $(\C^\ast\times \sG)$-equivariant composition
$$
            X \To X_{\mathrm{aff}}\,=\,\Spec H^0(\cO_X) \To \Spec\bigl(H^0(\cO_X)^\sG \bigr),
$$
is dominant because the pull back map on functions is injective. Since the domain $X$ is $(k,\eta)$-semiprojective by Lemma \ref{lem:k_eta}, so is the target by Proposition \ref{pro:eta_semiproj_dominant}. But the target is fixed by $\widetilde\eta(\C^\ast) \subset \sG$, so it is in fact $\C^\ast$-semiprojective. Since the GIT structure morphism 
$$
            |X\div\sG| \= \mathrm{Proj}\left(\bigoplus\nolimits_{n\geq 0} H^0(L^{n})^\sG\right) \To \Spec\bigl(H^0(\cO_X)^\sG \bigr)
$$
        is $\C^\ast$-equivariant and projective, Proposition \ref{pro:eta_semiproj_dominant} implies that $|X\div\sG|$ is also $\C^\ast$-semiprojective. Thus the same is true of $X\div\sG$.
    \end{proof}

\begin{lemma}\label{lem:Cast_expansion}
  For $V \in K^0_{\C^\ast \times \sT}(X)$ the series $\chi_{\C^\ast \times \sT}^{\vir}(X,V)\in\prod_{(a,\psi)\in\Z\oplus\ft^\vee_\Z} \<s^a\cdot t^\psi\>$ can be obtained from the rational function
\beq{ratfn}
  \sum_{F \subseteq X^{\C^\ast \times \sT}}\chi^{\vir}_{\C^\ast \times \sT}\!\left(\!F,\,\frac{V|_F}{\Lambda^\bullet\(N^{\vir}_{F/X}\)^\vee} \right)\ \text{ on }\C^*\times\sT
\eeq
by expanding in powers of $s^{-1}$ then taking the $(-\eta)$-positive expansion \eqref{eq:eta_expansion} of the coefficients. In case {\rm(d)} we get the same formula with $\Lambda\udot\(N^{\vir}_{F/X}\)^\vee$ replaced by $\sqrt{\mathfrak e^{\C^\ast \times\sT}}\(N^{\vir}_{F/X}\)$.
\end{lemma}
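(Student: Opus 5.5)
The plan is to reduce to Theorem \ref{thm:K_localisation} applied to a single 1-parameter subgroup of $\C^*\times\sT$, and then reorganise the resulting one-variable expansion into the iterated expansion in the statement.

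By Lemma \ref{lem:k_eta}, $X$ is $(k,\eta)$-semiprojective for $|k|\gg0$, with $k$ of the sign making $s\to\infty$ correspond to the $(-(k,\eta))$-positive direction. Replacing $\sT$ by the rank $r+1$ torus $\C^*\times\sT$ and $\eta$ by $\eta_k:=(k,\eta)$, Theorem \ref{thm:K_localisation} does two things.

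First, it makes $\chi^{\vir}_{\C^*\times\sT}(X,V)$ a well defined formal series, bounded in the $\eta_k$-positive direction. The finiteness of its coefficients comes from Lemma \ref{pro:chi_well_def}, once we check $\dim H^0(\cO_X)^{\C^*\times\sT}<\infty$. This check follows from the weight analysis in the proof of Lemma \ref{lem:k_eta}: invariant functions have weight $(0,0)$ and are therefore locally constant.

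Second, it identifies this series with the $(-\eta_k)$-positive expansion of the rational function \eqref{ratfn}. This needs $\eta_k$ to avoid the hyperplanes of weights of $N^{\vir}_{X^{\C^*\times\sT}/X}$, which is arranged by taking $|k|$ large and perturbing if necessary.

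It remains to show that the $(-\eta_k)$-positive expansion agrees with the iterated expansion: expand first in $s^{-1}$, then expand each coefficient $(-\eta)$-positively in $t$. It suffices to check this on each factor $1/(1-s^at^\gamma)$ appearing in the denominators. A weight $(a,\gamma)$ has $\langle(a,\gamma),\eta_k\rangle=ak+\langle\gamma,\eta\rangle$. For $|k|\gg0$ the sign of this is the sign of $ak$ when $a\ne0$, and the sign of $\langle\gamma,\eta\rangle$ when $a=0$. So the expansion direction of each factor is dictated by the power of $s$ first, and by $\eta$ only when $a=0$. This is exactly the iterated rule.

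What must still be verified is that the iterated expansion of the whole product lands in the same completed ring: in each power of $s^{-1}$ only finitely many terms contribute, and the $t$-expansions are compatible. This works because the $s$-exponents are bounded above and each $s$-coefficient is a rational function in $t$ with poles on $\cH_\sT$.

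\textbf{Main obstacle.} The key step is this comparison of the two expansions. We must choose a single $k$ valid uniformly for all the finitely many weights appearing in \eqref{ratfn}, and then confirm that the product of the individually expanded factors is the unique inverse in the Novikov-type ring used in the proof of Theorem \ref{thm:K_localisation}. The way through is to define that ring for the cone determined by $\eta_k$, note that it contains the iterated-expansion ring for $|k|\gg0$, and invoke uniqueness of inverses.

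Case (d) is the same argument with $\sqrt{\mathfrak e^{\C^*\times\sT}}$ in place of $\Lambda\udot(N^{\vir}_{F/X})^\vee$, working on the double cover as in Remark \ref{cffs}.
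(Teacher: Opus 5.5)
Your proposal is correct and follows essentially the paper's argument: use Lemma \ref{lem:k_eta} to get $(k,\eta)$-semiprojectivity, apply Theorem \ref{thm:K_localisation} to the cocharacter $(k,\eta)$ of $\C^*\times\sT$, and compare the $(-k,-\eta)$-positive expansion with the iterated one factor by factor, via the sign of $ak+\langle\gamma,\eta\rangle$ for $k\gg0$. Your extra care about products goes beyond what the paper writes, but one claim should be adjusted: the $\eta_k$-Novikov ring does not literally contain the whole ring of Laurent series in $s^{-1}$ with $(-\eta)$-expanded coefficients, and what you actually need is only that the expansions of the finitely many factors lie in both rings, on whose intersection the two multiplications agree.
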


\begin{proof}
   Since $X$ is $(k,\eta)$-semiprojective by Lemma \ref{lem:k_eta}, the result is already given by Theorem \ref{thm:K_localisation} if we can show the $(-k,-\eta)$-positive expansion of \eqref{ratfn} is the same as expanding first in $s^{-1}$ and then taking the $(-\eta)$-positive expansion of the coefficients.
    
    Note the poles of \eqref{ratfn} all have denominators of the form
    \begin{equation}\label{eq:rational_factor}
        \frac{1}{(1-s^a t^\psi)^d}\,, \qquad d>0,
    \end{equation}
    where $(a,\psi)$ run over the $(\C^\ast \times \sT)$-weights of $N^{\vir}_{X^{\C^\ast\times \sT}/X}$.
By \eqref{eq:eta_expansion}, the $(-k,-\eta)$-positive expansion acts on \eqref{eq:rational_factor} according to the sign
    \begin{equation*}
       \op{sign}\Big\langle (a,\psi), (k,\eta)\Big\rangle\= \op{sign}(ak+ \langle \psi, \eta \rangle) \= \begin{cases}
           \op{sign}(a) & \text{if $a\neq 0$,}\\
            \op{sign} \langle \psi, \eta \rangle & \text{if $a=0$,}
        \end{cases}
    \end{equation*}
    for $k\gg0$. The first case says that when $a\ne0$ the $(-k,-\eta)$-positive expansion of \eqref{eq:rational_factor} is the expansion in negative powers of $s$ (i.e. in positive powers of $(s^a t^\psi)^{-1}$ when $a>0$ and of 
$s^at^\psi$ when $a<0$)\,---\,i.e. as a Laurent series in $s^{-1}$. And when $a=0$ \eqref{eq:rational_factor} is independent of $s$ and its $(-k,-\eta)$-positive expansion is its $(-\eta)$-positive expansion as a function on $\sT$.
\end{proof}
    
    \begin{proof}[Proof of Theorem \ref{th:Cast_loc}] 
Viewing \eqref{eq:sections_on_quotient} of Proposition \ref{pro:sections_on_quotient} as an isomorphism of $\C^\ast$-repre\-sentations gives, in $\C(\!(s^{-1})\!)$ and for $n\gg0$,
\beqa
        \chi_{\C^\ast}^{\vir}(X\div\sG,V_0 \otimes L_0^n) &=& \chi_{\C^\ast \times \sG}^{\vir}(X,V \otimes L^n)^\sG \\
&=& \sum_{m\ll\infty} \chi_{\C^\ast \times \sG}^{\vir}\bigl(X,V \otimes L^n \otimes s^{-m}\bigr)^{\C^\ast\times  \sG}\,  s^m \\
&=& \frac{1}{|W|}  \sum_{m\ll\infty} \chi_{\C^\ast \times \sT}^{\vir}\bigl(X, \Lambda\udot(\fg_\C/\ft_\C)^\vee\otimes V \otimes L^n \otimes s^{-m}\bigr)^{\C^\ast \times\sT}\,s^m \\
&=& \frac{1}{|W|} \chi_{\C^\ast \times \sT}^{\vir}\bigl(X, \Lambda\udot(\fg_\C/\ft_\C)^\vee\otimes V \otimes L^n\bigr)^{\sT},
\eeqa    
where the third equality is the abelianisation result \eqref{eq:new_martin} for $(\C^\ast \times \sG) \acts X$.

Thus we have a $\C^\ast$-equivariant version of \eqref{eq:new_martin}.
    By Lemma \ref{lem:Cast_expansion} the right hand side can be computed by expanding the contents of the brackets in \eqref{eq:Cast_loc}, taking the $(-\eta)$-positive expansion of the resulting coefficients, then taking the $\sT$-invariant parts. But this is precisely $\CT^{-\eta}_{-s}$.
    \end{proof}

    
\appendix
\section{Some facts from GIT}
We review a few standard facts from Geometric Invariant Theory for the action $\sG\acts(X,L)$ of a reductive algebraic group on a polarised Deligne-Mumford stack. 
The open locus of semistable points is
$$
X^{L-\ss}\=X^{\ss}\ :=\ \bigcup\nolimits_{n \geq 0}\  \bigcup\nolimits_{s \in H^0(X,L^n)^\sG} \ \big\{x\in X\ :\ s(x) \neq 0\big\}.
$$

\begin{remark}\label{rem:ss_on_closed}
    If $X$ is projective-over-affine and $Z\hookrightarrow X$ is a closed invariant substack then $Z^{\ss} = Z \cap X^{\ss}$ by Serre vanishing. This is not true for an arbitrary quasi-projective $X$ since sections of $L^n\vert_Z$ do not necessarily extend to sections on $X$. An example is given by $Z= \AA^1=\PP^1\setminus \lbrace p \rbrace\subset\PP^2 \setminus \lbrace p \rbrace=X$.
\end{remark}

We will always assume that stability\,=\,semistability for $\sG\acts(X,L)$, so that all points of $X^{\ss}$ are stable and hence have finite stabilisers. Therefore the stack quotient of $X^{\ss}$ by $\sG$ is a Deligne-Mumford stack, the \emph{GIT quotient stack}
$$
X\div\sG\ :=\ X^{\ss}/\sG.
$$
We denote the underlying GIT quotient \emph{scheme} by
$$
    |X\div\sG|\ :=\ \mathrm{Proj}\left(\bigoplus\nolimits_{n\geq 0} H^0(X,L^{n})^\sG\right)
$$
since it is the coarse moduli scheme of $X\div\sG$ (this is essentially the well-known fact that the scheme-theoretic GIT quotient is a \emph{geometric quotient} of the stable\,=\,semistable locus of $X$). In particular $\pi_*\;\cO_{X\div\sG}=\cO_{|X\div\sG|}$, where $\pi:X\div\sG\to|X\div\sG|$ is the projection. Of course
$\pi$ is an isomorphism if and only if all semistable points have trivial stabilisers (both as points of the Deligne-Mumford stack $X$ and under the action of $\sG$).

Recall our convention from Notation\;\ref{weak} that projective is meant in the weak sense of projective coarse moduli space.

\begin{proposition}\label{pro:proj_criterion}
    Let $\sG$ act on a polarised projective-over-affine Deligne-Mumford stack $(X, L)$, so that semistable points are stable. The following are equivalent:
    \begin{enumerate}
        \item the following restriction map has finite rank,
        \begin{align}\label{eq:restriction_map}
        H^0(X, \cO_X)^\sG \To H^0(X^{L-\ss}, \cO_{X^{L-\ss}})^\sG,
    \end{align}
        \item  $X\div\sG$ is projective,
        \item  $Y\div\sG$ is projective for every irreducible component $Y \subseteq X$,
        \item  $Y_{\red}\div\sG$ is projective for every irreducible component $Y_{\red}\subseteq X_{\red}$,
        \item $\dim H^0(\cO_Y)^\sG<\infty$ for every irreducible component $Y\subseteq X$ with $Y\cap X^{L-\ss}\ne\emptyset$,
        \item $H^0(\cO_{Y_{\red}})^\sG=\C$ for every irreducible component $Y_{\red}\subseteq X_{\red}$ with $Y_{\red}\cap X^{L-\ss}\ne\emptyset$.
    \end{enumerate}
\end{proposition}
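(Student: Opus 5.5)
The plan is to reduce everything to the single equivalence (1)$\iff$(2) for an arbitrary polarised projective-over-affine stack, and then to pass to irreducible components and reduced structures. Throughout I work on coarse moduli spaces: $|X\div\sG|=\mathrm{Proj}\,R$ with $R=\bigoplus_{n\ge0}H^0(X,L^n)^\sG$, and I use that $H^0(\cO_{X\div\sG})=H^0(\cO_{X^{\ss}})^\sG$ by the good quotient property.

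\emph{(1)$\iff$(2).} The structure map $|X\div\sG|\to\Spec R_0=\Spec H^0(\cO_X)^\sG$ is projective, since $R$ is finitely generated over $R_0$ by reductivity and finite generation of $H^0(\cO_X)$. Pulling back functions along this map lands in $H^0(\cO_{X\div\sG})=H^0(\cO_{X^{\ss}})^\sG$, and this pullback is exactly the restriction map \eqref{eq:restriction_map}. So the structure map factors through the closed subscheme $\Spec(\im)$, with $\im$ the image of \eqref{eq:restriction_map}.
\begin{itemize}
\item If $\im$ is finite dimensional, then $\Spec(\im)$ is a finite set of points and $|X\div\sG|$ is projective over it, hence projective.
\item Conversely, if $X\div\sG$ is projective then $H^0(\cO_{X\div\sG})$ is finite dimensional, and it contains $\im$.
\end{itemize}

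\emph{(2)$\iff$(3)$\iff$(4).} By Remark~\ref{rem:ss_on_closed} we have $Y^{\ss}=Y\cap X^{\ss}$, so $X\div\sG$ is the finite union of the closed substacks $Y\div\sG$. A quasi-projective stack is projective if and only if each closed piece of a finite closed cover is. For (3)$\iff$(4), note that projectivity only depends on the underlying reduced structure.

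\emph{(4)$\iff$(6).} Apply (1)$\iff$(2) to the irreducible reduced $Y_{\red}$. If $Y_{\red}\cap X^{\ss}=\emptyset$ there is nothing to check. Otherwise restriction to the nonempty open set $Y_{\red}^{\ss}$ is injective on functions, because $Y_{\red}$ is integral. So the image of \eqref{eq:restriction_map} is all of $H^0(\cO_{Y_{\red}})^\sG$, which is a domain. A finite dimensional domain over $\C$ is a field finite over $\C$, hence equals $\C$.

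\emph{(5).} For the direction (5)$\Rightarrow$(3), finite dimensionality of $H^0(\cO_Y)^\sG$ gives finite rank of the restriction map for $Y$, so (1)$\Rightarrow$(2) applied to $Y$ makes $Y\div\sG$ projective. For the direction (6)$\Rightarrow$(5):
\begin{enumerate}
\item Filter $H^0(\cO_Y)$ by powers $I^k$ of its nilradical $I$; only finitely many graded pieces are nonzero.
\item Each graded piece $I^k/I^{k+1}$ is a finitely generated $\sG$-equivariant module over $A=H^0(\cO_Y)_{\red}$.
\item By reductivity, the invariants of a finitely generated equivariant $A$-module form a finitely generated module over $A^\sG$, and invariants are exact on the filtration.
\item One then needs $A^\sG$ finite dimensional. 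Here $A$ sits inside $H^0(\cO_{Y_{\red}})$ as a finite module, and $H^0(\cO_{Y_{\red}})^\sG=\C$ by (6).
\end{enumerate}

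I expect step (5) to be the main obstacle. The difficulty is the bookkeeping between $H^0(\cO_Y)_{\red}$ and $H^0(\cO_{Y_{\red}})$, and making the invariant-theory finiteness argument work for stacks via their coarse spaces. For the latter I would push everything forward to $|Y|$ and use that $|Y|\to Y_{\mathrm{aff}}$ is projective.
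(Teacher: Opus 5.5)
Your proof is correct, but it closes the cycle of implications differently from the paper.

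\textbf{The paper's route.} It proves $(1)\iff(2)$ and $(2)\Rightarrow(3)\Rightarrow(4)\Rightarrow(6)$, then returns to $(1)$ algebraically via $(6)\Rightarrow(1)$.
\begin{itemize}
\item For $(6)\Rightarrow(1)$ it embeds $H^0(\cO_{X_{\red}})^\sG$ into $\bigoplus_Y H^0(\cO_{Y_{\red}})^\sG$, which gives finite rank of the restriction map on $X_{\red}$.
\item It then uses $0\to\sqrt0_X\to\cO_X\to\cO_{X_{\red}}\to0$ to show that the image of the restriction map on $X$ is a finitely generated ring with finite dimensional reduction, hence finite dimensional.
\item Separately it proves $(5)\iff(6)$ in one step. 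It uses that $H^0(\cO_{Y_{\red}})^\sG$ is a finite module over $H^0(\cO_Y)^\sG_{\red}$ (Brion's lemma), together with Noetherianity.
\end{itemize}

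\textbf{Your route.} You return to $(2)$ geometrically.
\begin{itemize}
\item $(3)\Rightarrow(2)$: $|X\div\sG|$ is quasi-projective and covered by finitely many projective closed subschemes $|Y\div\sG|$, so it is proper and hence projective.
\item $(4)\Rightarrow(3)$: projectivity only sees the reduced structure.
\item This sidesteps the nilpotents of $X$ entirely.
\item You only need $(6)\Rightarrow(5)$ by hand, since $(5)\Rightarrow(6)$ follows by routing through $(3)$ and $(4)$, using $(1)\iff(2)$ for $Y$ and $Y_{\red}$.
\end{itemize}
For that one direction, the kernel of $H^0(\cO_Y)\to H^0(\cO_{Y_{\red}})$ is exactly the nilradical. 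So $A^\sG\into H^0(\cO_{Y_{\red}})^\sG=\C$ already gives $A^\sG=\C$, and the finiteness of $A$ inside $H^0(\cO_{Y_{\red}})$ that you mention is not needed. Your nilradical filtration then plays the role of the paper's ``Noetherian ring with finite dimensional reduction'' step.

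\textbf{Comparison.} The paper's route stays at the level of the algebraic criterion $(1)$ and makes $(5)\iff(6)$ self-contained. Yours is shorter and more geometric.

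One small imprecision: finiteness of $|X\div\sG|\to\Spec R_0$ being projective uses finite generation of the section ring $\bigoplus_n H^0(X,L^n)$, not just of $H^0(\cO_X)$. The paper takes this for granted as well.
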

\begin{proof}
    $(1)\iff(2)$ The pullback map $H^0\(|X\div\sG|, \cO_{|X\div\sG|}\)\into H^0(X^{L-\ss}, \cO_{X^{L-\ss}})^\sG$ is an isomorphism because $X^{L-\ss}\to|X\div\sG|$ is a \emph{good} quotient. Hence the projective morphism $p: |X\div\sG| \rightarrow \Spec\!\(H^0(X, \cO_X)^\sG\)$ factors through Spec of the restriction map \eqref{eq:restriction_map},
    \begin{align*}
        p^*\,:\,H^0(X, \cO_X)^\sG \To H^0\(|X\div\sG|, \cO_{|X\div\sG|}\)\ \cong\ H^0(X^{L-\ss}, \cO_{X^{L-\ss}})^\sG.
    \end{align*}
So $p$ factors through a projective surjective morphism $|X\div\sG| \to\Spec(\im p^*)$, meaning $|X\div\sG|$ is projective if and only if $\Spec(\im p^*)$ is, if and only if $\im p^*$ is finite dimensional. \\
    $(2)\so(3)\so(4)$ As closed subschemes, $|Y_{\red}\div \sG|\subseteq|Y\div \sG|\subseteq|X\div\sG|$ are also projective. \\
    $(4)\so(6)$ Since we have proved $(2)\so(1)$, the restriction map \eqref{eq:restriction_map} for $Y_{\red}$ has finite rank. But it is also injective because $Y_{\red}$ is irreducible, so $H^0(\cO_{Y_{\red}})^\sG$ is finite dimensional. It is nonzero because $Y$ intersects $X^{L-\ss}$, and has no zero divisors, so it is $\C$. \\
    $(6)\so(1)$ Finite dimensionality of the bottom left corner of the commutative diagram
\begin{equation*}
\begin{tikzcd}[row sep=15pt]
H^0(X_{\red},\cO_{X_{\red}})^\sG \ar[r]\ar[d, hook']& H^0\Big(X_{\red}^{L-\ss},\cO_{X^{L-\ss}_{\red}}\Big)^\sG \ar[d, hook', shorten <= -1.2mm, shorten >= -.7mm] \\
\bigoplus\limits_{Y:\,Y^{\ss}\ne\emptyset}H^0(Y_{\red},\cO_{Y_{\red}})^\sG \ar[r]& \bigoplus\limits_{Y:\,Y^{\ss}\ne\emptyset}H^0\(Y_{\red}^{L-\ss},\cO_{Y_{\red}^{L-\ss}}\)^\sG
\end{tikzcd}
\end{equation*}
implies the upper horizontal arrow has finite rank. Therefore the exact sequence $0\to\sqrt0_X\to\cO_X\to\cO_{X_{\red}}\to0$ gives
$$
\begin{tikzcd}[row sep=15pt, column sep=15pt]
0 \ar[r]& H^0\(X,\sqrt0_X\)^\sG \ar[r]\ar[d]& H^0(X,\cO_X)^\sG \ar[dr,"c"]\ar[r]\ar[d,"b"]& H^0\(X_{\red},\cO_{X_{\red}}\)^\sG \ar[d] \\
0 \ar[r]& H^0\(X^{L-\ss},\sqrt0_{X^{L-\ss}}\)^\sG \ar[r,"a"]& H^0\(X^{L-\ss},\cO_{X^{L-\ss}}\)^\sG \ar[r]& H^0\Big(X^{L-\ss}_{\red},\cO_{X^{L-\ss}_{\red}}\Big)^\sG
\end{tikzcd}
 $$
with $c$ of finite rank. So the quotient of $\im b$ by the nilpotent subring $(\im a\cap\im b)\subset\im b$ is finite dimensional. Therefore $\im b$ is a finitely generated ring whose reduction is finite dimensional, so is itself finite dimensional. \\
    $(5)\iff(6)$ Since $Y$ is projective over affine the composition $Y_{\red}\into Y\to Y_{\op{aff}}:=\Spec H^0(\cO_Y)$ (the affinisation of $Y$) is a projective morphism. Thus $H^0(\cO_{Y_{\red}})$ is finitely generated as a module over $H^0(\cO_Y)_{\red}$ (it seems likely they're the same, but we do not need this). The same then holds for $\sG$ invariants:
\beq{tinvts}
H^0(\cO_{Y_{\red}})^\sG\,\text{ is finitely generated as a module over its subring }\,H^0(\cO_Y)_{\red}^\sG
\eeq
by the second half of \cite[Lemma 2.3]{brion2011}. Therefore one is finite dimensional if and only if the other is. And since $H^0(\cO_Y)^\sG$ is Noetherian it is finite dimensional if and only if its reduction $H^0(\cO_Y)_{\red}^\sG$ is. So $\dim H^0(\cO_Y)^\sG<\infty\iff\dim H^0(\cO_{Y_{\red}})^\sG<\infty$. But, for $Y$ nonempty, $H^0(\cO_{Y_{\red}})^\sG$ is nonzero and without zero divisors, so has $\dim<\infty$ if and only if it equals $\C$.
\end{proof}

The divisibility assumption in the next result may not be necessary, but given the weak notion of ampleness we are using for line bundles on Deligne-Mumford stacks it is easiest to work with those which descend to the coarse moduli space $|X\div\sG|$.

\begin{lemma}\label{lem:ss_on_ss_is_ss}
 Let $\sG$ act on a polarised projective-over-affine Deligne-Mumford stack $(X, L)$, so that semistable points are stable. Then for $k$ sufficiently large and divisible,
$$
H^0(X^{\ss}, L^k\vert_{X^{\ss}})^\sG\ \cong\ H^0(X,L^k)^\sG.
$$
\end{lemma}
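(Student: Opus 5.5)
The plan is to compare both sides through the good quotient $q\colon X^{\ss}\to|X\div\sG|$, taking $k$ divisible enough that $L^k$ descends to an honest ample line bundle $\mathcal L$ on the coarse space $|X\div\sG|=\mathrm{Proj}\bigoplus_n H^0(X,L^n)^\sG$. First fix $d$ such that $L^d|_{X^{\ss}}$, with its $\sG$-linearisation, descends to $|X\div\sG|$. This is possible because stabilisers of stable points are finite, so some power acts trivially on the fibres of $L$. Then for $k\in d\Z$ the good quotient property $(q_*\cO_{X^{\ss}})^\sG=\cO_{|X\div\sG|}$ and the projection formula give
$$H^0\big(X^{\ss},L^k|_{X^{\ss}}\big)^\sG\ \cong\ H^0\big(|X\div\sG|,\mathcal L^{k/d}\big).$$
So it suffices to show that the restriction map $H^0(X,L^k)^\sG\to H^0(X^{\ss},L^k)^\sG$ is an isomorphism in high degree.

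For this I would argue as in the proof of Proposition \ref{pro:sections_on_quotient} with $V=\cO_X$. Set $R=\bigoplus_n H^0(X,L^n)^\sG$, a finitely generated ring since $\sG$ is reductive and $X$ is projective-over-affine. Set $M=\bigoplus_n H^0(X^{\ss},L^n)^\sG$. The key fact is that $M$ is a finitely generated $R$-module. This holds because $M$, in degrees divisible by $d$, is the section module $\bigoplus_m H^0(|X\div\sG|,\mathcal L^m)$ of the projective morphism $|X\div\sG|\to\Spec H^0(\cO_X)^\sG$, and $\mathcal O(1)$ on $\mathrm{Proj}\,R$ is a power of $\mathcal L$, so Serre's theorem gives finite generation and agreement with $R$ in high degree. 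The other degrees are handled similarly using the finitely many twists $L^i$, $0\le i<d$, descended after twisting.

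With this in hand, surjectivity follows as before. Products of the positive-degree generators $s_j$ of $R$ vanish to arbitrarily high order on the unstable locus, so $p(s)\cdot m_i$ extends from $X^{\ss}$ to $X$ once the degree is large. Here we use that $X$ is projective-over-affine, so that a section of $L^k$ on the open set $X^{\ss}$ whose poles along the unstable locus are cancelled extends. Injectivity also follows: the kernel of restriction is a finitely generated $R$-module, so its elements of high degree are killed by high powers of the ideal of the unstable locus. Since they also vanish on $X^{\ss}$, they vanish on all of $X$, allowing for nilpotents by using the maximalist scheme structure and Artin--Rees.

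The main obstacle is the extension step. One must make precise that multiplying by high powers of invariants vanishing on $X\setminus X^{\ss}$ turns sections over $X^{\ss}$ into sections over $X$. I would do this through the affinisation: cover $X^{\ss}$ by the loci $\{s_j\neq0\}$. A section over $\{s_j\ne0\}$ is of the form $f/s_j^N$ with $f$ a section of some power of $L$ on $X$, so multiplying by $s_j^{N}$ and gluing, with the finitely many denominators controlled by finite generation of $M$, gives the extension. Restricting throughout to $k$ sufficiently large and divisible absorbs both the descent issue and the high-degree requirement.
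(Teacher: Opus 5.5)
Your proposal is correct, and its decisive step is the paper's own argument: descend $L^d$ to $|X\div\sG|$, identify $H^0(X^{\ss},L^{dj})^\sG$ with $H^0(|X\div\sG|,\cO(dj))$ via the good quotient property and the projection formula, then use Serre's theorem that the degree-$j$ piece of $\bigoplus_j H^0(X,L^{dj})^\sG$ agrees with the sections of $\cO(j)$ on its $\mathrm{Proj}$ for $j\gg0$. Once you have that ``agreement with $R$ in high degree'' the lemma is already proved, so the extension and injectivity argument modelled on Proposition~\ref{pro:sections_on_quotient} is superfluous. (Invoking that proof verbatim would also be circular, since it uses this very lemma to identify its ring $R$. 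And its injectivity step needs high-degree elements to \emph{lie in} high powers of the ideal of the unstable locus, followed by Artin--Rees, not merely to be killed by such powers.)
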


\begin{proof}
Via $p:X^{\ss}\to X\div\sG$ the line bundle $L$ descends to an (orbi-)line bundle $\cO(1)$ such that $p^*\cO(1)\cong L$. Pick a $d>0$ so that the stabiliser group of each point $x$ acts trivially on the fibre $\cO_x(d)$. Then $\cO(d)$ descends under $\pi:X\div\sG\to|X\div\sG|$. Then the idea is to show that $H^0(X^{\ss}, L^{dj}\vert_{X^{\ss}})^\sG\cong H^0(X,L^{dj})^\sG$ for $j\gg0$ by equating both with $H^0\(|X\div\sG|,\cO(dj)\)$.

Since $p$ is a \emph{good quotient} we know $\(p_*\;\cO_{X^{\ss}}\)^\sG\cong\cO_{X\div\sG}$, while $\pi_*\;\cO_{X\div\sG}=\cO_{|X\div\sG|}$ by construction. So by the projection formula for $\rho=\pi\circ p$,
\beqa
H^0\(X^{\ss},L^{dj}|_{X^{\ss}}\)^\sG  &=& H^0\Big(|X\div\sG|,\(\rho_*\;\rho^*\cO(dj)\)^\sG\Big) \\
&=& H^0\(|X\div\sG|,(\rho_*\;\cO_{X^{\ss}})^\sG\otimes\cO(dj)\) \\ &=& H^0\(|X\div\sG|,\cO(dj)\) \\
&=& H^0\(X,L^{dj}\)^\sG\ \text{ when }j\gg0.
\eeqa
To get the last equality recall that $|X\div\sG,\cO(d)|=\mathrm{Proj}\bigoplus\nolimits_{j\geq 0} H^0(X,L^{dj})^\sG$ and that for $j\gg0$ the $j$th piece of the graded ring coincides with global sections of $\cO(dj)$ on its Proj.
\end{proof}

\section{Moment polyhedra of projective-over-affine DM stacks}\label{appendix}

The moment polyhedron (Definition \ref{def:moment_polyhedron}) of a projective-over-affine polarised Deligne-Mumford stack with torus action is the union of the moment polyhedra of its irreducible components; these we study now. So let a torus $\sT$ act on an \emph{irreducible} polarised projective-over-affine Deligne-Mumford stack $(X,L)$. We let
\beq{Cdef}
\cC(X)\ :=\ \Delta\(X_{\mathrm{aff}}\)\ \subseteq\ \ft_\Q^\vee
\eeq
denote the moment polyhedron of the affinisation $X_{\op{aff}}:=\Spec H^0(\cO_X)$ of $X$\,---\,i.e. the polyhedral cone of positive rational linear combinations of $\sT$-weights of $H^0(\cO_X)_{\red}$ (the quotient of the ring $H^0(\cO_X)$ by its nilpotents). Firstly we claim this is the same as the cone made from $H^0(\cO_{X_{\red}})$ instead.

\begin{lemma}\label{newC}
The cone $\cC(X)=\Delta\(X_{\mathrm{aff}}\)$ also equals $\Delta\((X_{\red})_{\mathrm{aff}}\)\subseteq\ft_\Q^\vee$.
\end{lemma}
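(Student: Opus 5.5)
We compare the two $\sT$-algebras $A:=H^0(\cO_X)_{\red}$ and $B:=H^0(\cO_{X_{\red}})$. Since $\Delta$ of an affine scheme is the cone of positive rational combinations of weights of its reduced coordinate ring (and $B$ is already reduced, being functions on a reduced stack), the claim is that the $\Q_{\ge0}$-cones spanned by the weights of $A$ and of $B$ coincide. There is a natural equivariant ring map $H^0(\cO_X)\to H^0(\cO_{X_{\red}})$ whose kernel consists of the global functions vanishing on $X_{\red}$. Such functions are nilpotent: they are locally nilpotent, and $X$ is quasi-compact. So the map induces an injection $A\hookrightarrow B$, and every weight of $A$ is a weight of $B$. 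This gives $\cC(X)\subseteq\Delta\((X_{\red})_{\mathrm{aff}}\)$.

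For the reverse inclusion we use finiteness. As in the proof of Proposition \ref{pro:proj_criterion} $(5)\iff(6)$, the composition $X_{\red}\into X\to X_{\mathrm{aff}}$ is projective. Hence $B$ is a finitely generated module over $H^0(\cO_X)$, and since $B$ is reduced the action factors through $A$. So $B$ is a finite $A$-algebra, and in particular integral over $A$.

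Take a weight vector $f\in B_\psi$ with $f\neq0$. Integrality gives a monic equation $f^m+a_1f^{m-1}+\dots+a_m=0$ with $a_i\in A$. Projecting onto the weight $m\psi$ component, and using that $A$ is graded by weights, we may take each $a_i$ homogeneous of weight $i\psi$.

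Now $f$ is not nilpotent, since $B$ is reduced. So not all $a_i$ vanish, for otherwise $f^m=0$. Hence some $a_i\neq0$ has weight $i\psi$, which places $i\psi$, and therefore $\psi$, in the cone $\cC(X)$. This gives $\Delta\((X_{\red})_{\mathrm{aff}}\)\subseteq\cC(X)$ and proves the lemma.

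The main point needing care is the degree-projection step. One must check that the homogeneous components of the $a_i$ still satisfy a monic relation for $f$. This holds because $f$ is homogeneous of weight $\psi$, so the weight-$m\psi$ component of the relation is exactly the relation with each $a_i$ replaced by its weight-$i\psi$ part. One must also check the finite generation of $B$ over $A$, which is quoted from the projectivity of $X_{\red}\to X_{\mathrm{aff}}$ exactly as in \eqref{tinvts}, without taking invariants.
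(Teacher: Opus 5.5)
Your proof is correct. Its skeleton matches the paper's. The inclusion $\cC(X)\subseteq\Delta((X_{\red})_{\mathrm{aff}})$ comes from the injection $H^0(\cO_X)_{\red}\into H^0(\cO_{X_{\red}})$. The reverse inclusion comes from the finiteness of $B$ over $A$, which follows from projectivity of $X_{\red}\to X_{\mathrm{aff}}$ as in \eqref{tinvts}.

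Where you differ is in how that finiteness is turned into a statement about weights. The paper takes equivariant $A$-module generators $g_1,\dots,g_k$ of $B$, of weights $\psi_1,\dots,\psi_k$, and uses that the powers $g_i^d$ are nonzero. But writing $g_i^d=\sum_j a_jg_j$ only gives $d\psi_i-\psi_j\in\cC(X)$ for some $j$ depending on $d$. To conclude $\psi_i\in\cC(X)$, and hence that all weights of $B=\sum_jAg_j$ lie in $\cC(X)$, one must let $d\to\infty$ and use that the finitely generated rational cone $\cC(X)$ is closed. The paper compresses this into a single line.

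Your integrality argument avoids that step. Projecting a monic equation for a homogeneous non-nilpotent $f\in B_\psi$ onto weight $m\psi$ directly produces a nonzero element of $A_{i\psi}$ for some $1\le i\le m$. So $\psi\in\cC(X)$ follows with no limit and no closedness. The paper's version is quicker to state; yours is tighter and fully self-contained.
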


\begin{proof}
The inclusion $\cC(X)\subseteq\Delta\((X_{\red})_{\mathrm{aff}}\)$ is immediate from
$$
H^0(\cO_X)_{\red}\ \Into\ H^0(\cO_{X_{\red}}).
$$
Recall from \eqref{tinvts} that the latter is finitely generated as a module over the former, so we may pick equivariant generators with weights $\psi_1,\dots,\psi_k$. The $d$th powers of these generators are nonzero but their weights $d\psi_i\not\in\{\psi_1,\dots,\psi_k\}$ for $d\gg0$ (unless $\psi_i=0$), so $d\psi_i\in\cC(X)$. Therefore $\Delta\((X_{\red})_{\mathrm{aff}}\)\subseteq\cC(X)$.
\end{proof}

The following is a slight generalisation of \cite[Lemma 2.18]{PR}.

\begin{lemma}\label{lem:polyhedron_structure}
For $X$ irreducible, $\Delta(X,L)$ is a polyhedron of the form
\begin{equation}\label{eq:polytope}
    \Delta(X,L)\= P\,+\,\cC(X) \qquad \text{where $P$ is a polytope.}
\end{equation}
If moreover $\dim H^0(X,\cO_X)^\sT<\infty$ and there exists a $\eta \in \ft_\Q$ negative on $\cC(X) \setminus \lbrace 0 \rbrace$, then $P$ is the convex hull of $\mu(X^\sT)$ so
    \begin{equation}\label{eq:hull}
    \Delta(X,L)\=\mathrm{Hull}\Big(\mu\(X^\sT\)\Big)\,+\,\cC(X).
\end{equation}
\end{lemma}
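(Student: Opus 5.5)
The plan is to reduce to the case where $X$ is reduced and $L$ descends to the coarse space, and then to work with the graded ring $R:=\bigoplus_{k\ge0}H^0(X_{\red},L^k)$. This is a finitely generated $\sT$-equivariant algebra over $A:=H^0(\cO_{X_{\red}})$, and $A$ is finitely generated because $X$ is projective-over-affine. By Definition~\ref{def:moment_polyhedron} we have $\Delta=\{\psi/k : R_{k,\psi}\ne0\}$. Since $X$ is irreducible, $R$ is a domain modulo nilpotents, so the set $S$ of bidegrees $(k,\psi)$ with $R_{k,\psi}\neq0$ is a monoid: products of nonzero sections are nonzero.

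For \eqref{eq:polytope}, choose finitely many equivariant homogeneous generators of $R$. Some lie in degree $0$, with weights $\chi_j$ generating $\cC(X)$ by Lemma~\ref{newC}. The others lie in degree $k_i>0$, with weights $\psi_i$. Then $S$ is the monoid generated by the $(0,\chi_j)$ and the $(k_i,\psi_i)$. Taking the cone over $S$ and slicing at $k=1$ gives $\Delta=\mathrm{Hull}\{\psi_i/k_i\}+\cC(X)$; in one direction this uses the monoid property, and in the other it uses the fact that a positive combination of generators has a multiple lying in $S$. So we may take $P=\mathrm{Hull}\{\psi_i/k_i\}$. A rational point of the cone with positive first coordinate has some multiple lying in $S$, and this is exactly why $\Delta$ is defined with rational scaling.

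For \eqref{eq:hull}, assume $\dim H^0(\cO_X)^\sT<\infty$ and that $\eta$ is negative on $\cC(X)\setminus\{0\}$. Then $\cC(X)$ is pointed, and $\Delta$ has vertices. It suffices to show two things: each vertex of $\Delta$ lies in $\mu(X^\sT)$, and $\mu(X^\sT)\subseteq\Delta$. The second holds because a fixed point $x$ is semistable for $L\otimes\Psi^{-1}$ with $\psi=\mu(x)$. This follows from the observation (Remarks~\ref{rem:polyhedron}) that $\mu(x)=\mu(\overline{\sT x})$, together with Remark~\ref{rem:ss_on_closed}, which lets sections on $\{x\}$ extend.

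For a vertex $v$, pick a generic $\xi\in\ft_\Q$ that is uniquely maximised on $\Delta$ at $v$; necessarily $\xi$ is negative on $\cC(X)\setminus\{0\}$. Choose a point $x\in X^{L\otimes V^{-1}-\ss}$. Then $\Delta(\overline{\sT x})\subseteq\Delta$ contains $v$, so $v$ is also a vertex of the orbit closure's polyhedron. Take the limit $y=\lim_{s\to0}\widetilde\xi(s)^{\pm1}\cdot x$, with the sign chosen so that the limit exists. It does exist, by $\xi$-semiprojectivity of the affinisation: weights of $H^0(\cO)$ are negative under $\xi$, as in the proof of Proposition~\ref{pro:alg_char_eta}. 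The limit point $y$ is fixed by $\widetilde\xi$, and generically by $\sT$. Moreover $\mu(y)$ is the $\xi$-maximal vertex of $\Delta(\overline{\sT x})$, namely $v$.

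The main obstacle is this last step. Directly, one must check that the limit still carries an invariant section of $L^k\otimes V^{-k}$, so that it is semistable at $v$ and not merely at some other face. An alternative is to invoke Brion's \cite[Theorem 1]{Brion} on the projective completion, as the paper does in Remarks~\ref{rem:polyhedron}. That route requires care because $X$ is only projective-over-affine. To handle it, embed $X$ equivariantly in $\PP^m\times W$, and use the finiteness of $H^0(\cO_X)^\sT$ to arrange, as in Lemma~\ref{lem:equivariant_compactification}, that compactifying $W$ adds no new semistable points near $v$. Brion's result then puts the vertex $v$ in $\mu(\overline X^\sT)$, and the boundary points are excluded because they are unstable in that chamber. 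Hence $v\in\mu(X^\sT)$, and taking convex hulls together with $\cC(X)$ gives \eqref{eq:hull}.
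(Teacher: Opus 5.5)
Your proof of \eqref{eq:polytope} is complete, and reducing \eqref{eq:hull} to showing that every vertex of $\Delta$ lies in $\mu(X^\sT)$ is correct. The step you flag as the main obstacle closes directly, so you do not need the compactification.

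Take $s\in H^0(X,L^k)_{kv}$ with $s(x)\neq0$, and view it as a $\sT$-invariant section of $M:=(L\otimes V^{-1})^k$. There is no sign to choose. Since $\cC(X)\subset\{\xi\le0\}$, Proposition \ref{pro:alg_char_eta} says the limit that exists is $y=\lim_{u\to0}\widetilde\xi(u)\cdot x$. Pull back along the extended orbit map $f\colon\AA^1\to X$, $u\mapsto\widetilde\xi(u)\cdot x$, with $f(0)=y$. Equivariantly $f^*M\cong\AA^1\times\C$, with fibre weight $m=k\langle\mu(y)-v,\widetilde\xi\,\rangle$, the weight of $\widetilde\xi$ on $M_y$. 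Invariance of $s$ forces $f^*s$ to be $u\mapsto u^mc$ with $c\neq0$. Regularity at $u=0$ then gives $m\ge0$, i.e.\ $\langle\mu(y),\xi\rangle\ge\langle v,\xi\rangle$. For generic $\xi$ we have $y\in X^\sT$, so you already know $\mu(y)\in\Delta$, and unique maximality forces $\mu(y)=v$. Then $m=0$ and $s(y)\neq0$, which is the semistability of the limit you were after; the orbit closure plays no role.

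If you keep the compactification route instead, two points need attention. First, the hypothesis $(\Sym^{>0}W^\vee)^\sT=0$ of Lemma \ref{lem:equivariant_compactification} is about the ambient $W$. It needs the $\xi$-negativity of the weights of the nonconstant generators, not just $\dim H^0(\cO_X)^\sT<\infty$. Second, you need $\Delta(\overline X,\overline L)\subseteq\Delta(X,L)$ to keep $v$ a vertex.

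The paper argues differently on both halves.
\begin{itemize}
\item It embeds $X\into\PP^m\times X_{\mathrm{aff}}$, takes $P=\Delta(\pi_{\PP^m}(X),\cO(1))$, and deduces \eqref{eq:polytope} from surjectivity of $H^0(\PP^m,\cO(\ell))\otimes H^0(\cO_X)\to H^0(X,L^\ell)$ for $\ell\gg0$.
\item For \eqref{eq:hull} it quotes \cite[Theorem 1\,(iii)]{Brion} for that projective model. It then identifies the model's fixed points with $X^\sT$, using $H^0(\cO_X)^\sT=\C$ and the contraction of $X_{\mathrm{aff}}\subset\C^k$ to the origin.
\end{itemize}
You instead use generators of the section ring, which needs no embedding, and then a vertex-by-vertex limit argument. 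Completed as above, this is a self-contained substitute for Brion's theorem. As far as I can see it uses only the existence of $\eta$, not $\dim H^0(\cO_X)^\sT<\infty$.

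The paper's route is shorter if you accept Brion. Yours proves \eqref{eq:hull} directly as an identity for $\Delta$, and never has to match the fixed points of an auxiliary model with $X^\sT$. That matching needs care in the paper's version, because $\pi_{\PP^m}(X)$ need not be closed. For example, take the graph of $a\mapsto[1:a]$ in $\PP^1\times\AA^1$ with $t\cdot([x_0:x_1],a)=([x_0:tx_1],ta)$. Its closure $\PP^1$ acquires the extra fixed point $[0:1]$, so its polytope is a segment while $\mathrm{Hull}(\mu(X^\sT))$ is a point. The two still agree after adding $\cC(X)$.
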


\begin{proof}
Replacing $L$ by a power $L^d$ if necessary we may assume it descends to the coarse moduli space $|X|$ and then work with its reduction, because by Lemma \ref{newC} this does not change the cone $\cC(X)$, it only scales $\Delta$,
$$
\Delta(X,L) \= \tfrac1d\Delta\(X,L^d\) \= \tfrac1d\Delta\(|X|_{\red}, L^d\),
$$
and our assumption $\dim H^0(\cO_X)^\sT<\infty$ implies $\dim H^0(\cO_{|X|_{\red}})^\sT=\dim H^0(\cO_{X_{\red}})^\sT<\infty$ by \eqref{tinvts}.
So we assume $X$ is an integral scheme and $L$ is very ample, the restriction of $\cO(1)$ via a $\sT$-equivariant embedding $X\into\PP^m\times\Xaff$.

    We claim \eqref{eq:polytope} holds for $P:= \Delta(\pi\_{\PP^m}(X), \cO(1))$. The inclusion $\supseteq$ is clear by multiplying functions in $H^0(\cO_X)$ and sections of $\cO_{\PP^m}(n)$ pulled back to $X$. So we focus on $\subseteq$.

    Pick $\psi\in\Delta$, so $H^0(X,L^\ell)_{\ell\psi}\ne0$ for some $\ell>0$. We may assume $\ell$ is sufficiently large that the restriction map
$$
H^0(\PP^m, \cO(\ell)) \otimes H^0(X, \cO_X) \To H^0\(X, L^\ell\)
$$    
is onto. Therefore we can find a nonzero element of $H^0(X, L^\ell)_{\ell\psi}$ 
of the form
$$
        \pi_{\PP^m}^\ast s \otimes f \qquad \text{for } s \in H^0(\PP^m,\cO(\ell))\_{\nu\;}, \ \  f \in H^0(X, \cO_X)\_{\ell\psi-\nu}
$$
for some $\nu\in\ft_\Z^\vee$. Thus $\nu/\ell\in P$ and $\ell\psi-\nu\in\cC(X)$, so
\begin{equation*}
\psi\ =\ \frac\nu\ell+\frac{\ell\psi-\nu}\ell\ \in\ P\,+\,\cC(X),
\end{equation*}
completing the proof of \eqref{eq:polytope}.

By \cite[Theorem 1\;(iii)]{Brion}  we know that $P$ is the convex hull of $\mu\(\pi\_{\PP^m}(X)^\sT,\cO(1)\)$ so to prove \eqref{eq:hull} it remains to show 
$\pi\_{\PP^m}(X)^\sT\cong X^\sT$.

By assumption $H^0(X,\cO_X)^\sT\subset H^0(X,\cO_X)$ is a finite dimensional integral domain, so it is the constants $\C$. The cocharacter $\eta$ generates a 1-parameter subgroup $\C^*\subset\sT$ all of whose weights on $H^0(X,\cO_X)$ are strictly negative except for the zero weight space which is just the constants $\C\cong H^0(X,\cO_X)^\sT$. So using nonconstant $\sT$-equivariant generators $f_1,\dots,f_k\in H^0(X,\cO_X)$ to embed $\Xaff\into\C^k$, the only $\C^*$ fixed point is the origin (which lies in $\Xaff$ because it is closed in $\C^k$).\footnote{Alternatively, note the ideal of $\Xaff\into\C^k$ is generated by $\C^*$-equivariant functions of strictly negative weight, so they all vanish at $0\in\C^k$.} Thus $X\into\PP^m\times\C^k$ has $\sT$ fixed locus $X^\sT\times\{0\}$.
\end{proof}

        \begin{lemma}\label{trayn}
  Suppose $\dim H^0(\cO_X)^\sT < \infty$ for a projective-over-affine DM stack $X$ acted on by $\sT$. Then $X^\sT$ is projective and $\dim H^0(\cO_X)_{\psi}<\infty$ for every weight $\psi\in\ft^\vee_\Z.$
            \end{lemma}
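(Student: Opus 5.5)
The plan is to reduce to the affinisation and use the grading by $\sT$-weights. Let $A:=H^0(\cO_X)$, a finitely generated $\C$-algebra graded by $\ft^\vee_\Z$, with $A_0=A^\sT$ finite dimensional by assumption. Since $X\to X_{\mathrm{aff}}=\Spec A$ is projective and $\sT$-equivariant, the fixed locus $X^\sT$ is projective over $(X_{\mathrm{aff}})^\sT$. So projectivity of $X^\sT$ will follow once we show $(X_{\mathrm{aff}})^\sT$ is a finite set of points.

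The ideal of $(X_{\mathrm{aff}})^\sT$ is generated by the homogeneous elements of nonzero weight, so its coordinate ring is $A/\langle A_\psi:\psi\ne0\rangle$. This ring is a quotient of $A_0$, since every element of $A$ is congruent modulo that ideal to its weight-zero part. Hence it is finite dimensional, $(X_{\mathrm{aff}})^\sT$ is a finite scheme, and $X^\sT$ is projective.

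For the second claim I would use the standard fact that each $A_\psi$ is a finitely generated $A_0$-module. Let $f_1,\dots,f_n$ be homogeneous algebra generators of $A$, of weights $\psi_1,\dots,\psi_n$. Consider the monoid $M=\{m\in\Z_{\ge0}^n:\sum m_i\psi_i=\psi\}$; by Gordan's lemma it is a finitely generated module over the affine monoid $M_0=\{m:\sum m_i\psi_i=0\}$. The monomials $f^m$ for $m\in M$ span $A_\psi$, and the monomials for $m\in M_0$ lie in $A_0$. Therefore $A_\psi$ is generated over $A_0$ by the finitely many $f^m$ with $m$ in a finite generating set of $M$ over $M_0$. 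As $A_0$ is finite dimensional, $\dim A_\psi<\infty$.

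The only slightly delicate step is the Gordan/Dickson finiteness for $M$ over $M_0$. It follows because the minimal elements of $M$ under the coordinatewise order are finite (Dickson's lemma), and for $m\ge m'$ both in $M$ the difference $m-m'$ lies in $M_0$. With that in hand both claims are routine. For a DM stack no issue arises, since everything is phrased through $H^0$ and the projective morphism to the affinisation.
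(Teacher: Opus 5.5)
Your proof is correct and follows the paper's argument. Projectivity of $X^\sT$ comes from the surjection of $A_0$ onto the coordinate ring $A/J$ of $(X_{\mathrm{aff}})^\sT$ together with the projective map $X^\sT\to(X_{\mathrm{aff}})^\sT$; finiteness of $A_\psi$ comes from finite generation over $A_0$, which the paper cites from \cite[Lemma 2.3]{brion2011} and which you instead prove directly via Dickson's lemma on the exponent monoid, a valid self-contained substitute.
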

            
            \begin{proof}
            By \cite[Lemma 2.3]{brion2011} $H^0(\cO_X)_{\psi}$ is finitely generated as a module over $H^0(\cO_X)^\sT$. Therefore the former is finite dimensional if the latter is.
            
The $\sT$-fixed locus in $X_{\mathrm{aff}}$ is $(X_{\mathrm{aff}})^\sT \cong \Spec\bigl(H^0(\cO_X)/J\bigr)$, where $J$ is the ideal generated by the weight spaces $H^0(\cO_X)_\psi$ for $\psi \neq 0$. In particular, there is a surjection $H^0(\cO_X)^\sT \onto H^0(\cO_X)/J$, so the target is finite dimensional and $(X_{\mathrm{aff}})^\sT$ is projective. By the following commutative diagram, with vertical arrows projective,
$$
\begin{tikzcd}[row sep=1.5em]
X^\sT \ar[d]\ar[r,hook]& X \ar[d] \\
X_{\op{aff}}^\sT \ar[r,hook]& X_{\op{aff}}
\end{tikzcd}
$$
this makes $X^\sT$ projective.
            \end{proof}
           
Suppose a subtorus $\sS$ of $\sT\acts(X,L)$ acts trivially on $X$. Then the next result notes that the $\sT$-moment polytope $\Delta^\sT(X,L)$ is a translate of the $\sT/\sS$-moment polytope
$$
\Delta^{\sT/\sS}(X,L)\ \subset\ (\ft_\Q/\fs_\Q)^\vee\ \subset\ \ft^\vee_\Q.
$$

\begin{lemma}\label{lem:trivial_act_from_polyhedron}
For any $\psi \in \Delta^\sT(X,L)$, the $\sS$ action on $L\otimes\Psi^{-1}$ is trivial and
    $$
    \Delta^\sT(X,L) \= \psi\,+\,\Delta^{\sT/\sS}\(X, L\otimes\Psi^{-1}\).
    $$
In particular $\sS$ acts trivially on an irreducible $X_{\red}$ if and only if $\Delta^{\sT}(X,L)$ is contained in a translate of $\sS^\perp:= \lbrace \psi \in\ft_\Q^\vee\ :\ \psi(\sS) = 1 \rbrace$. And $\sS\subset \sT$ is the biggest subtorus fixing all of $X_{\red}$ if and only if $\mu^\sT(X)$ is a polyhedron of full dimension in a translate of $\sS^\perp$. \hfill$\square$
\end{lemma}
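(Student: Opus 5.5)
The plan is to reduce everything to the case where $\sS$ acts trivially on $X$ and to compare weight spaces of sections directly.

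\emph{Triviality of $\sS$ on $L\otimes\Psi^{-1}$.} Since $\sS$ acts trivially on $X$, it acts on each fibre of $L$ over a point $x$ by a character $\chi_x\in\fs^\vee_\Z$. Because $\sS$ is connected (a torus), $\chi_x$ is locally constant in $x$, so it is constant on the irreducible $X$; call it $\chi$. Any nonzero $s\in H^0(X_{\red},L^k)_{k\psi}$ is then acted on by $\sS$ both through $k\chi$ (fibrewise) and through $k\psi|_\sS$ (its weight). Hence $\psi|_\sS=\chi$, which says exactly that $\sS$ acts trivially on $L\otimes\Psi^{-1}$. Two caveats need care here. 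First, we must pass to $X_{\red}$ and use that $s$ is nonzero at some point. Second, in the stacky case the characters are rational, and we pick $k$ so that $L^k$ is a genuine $\sT$-linearised bundle.

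\emph{The translation formula.} Twisting by $\Psi^{-1}$ shifts all weights: $H^0(L^k\otimes\Psi^{-k})_{k\phi}=H^0(L^k)_{k(\phi+\psi)}$. This gives $\Delta^\sT(X,L)=\psi+\Delta^\sT(X,L\otimes\Psi^{-1})$. Since $\sS$ acts trivially on both $X$ and $L\otimes\Psi^{-1}$, every section has weight vanishing on $\fs$, i.e.\ lying in $(\ft/\fs)^\vee=\sS^\perp$. So the $\sT$- and $\sT/\sS$-moment polyhedra of $(X,L\otimes\Psi^{-1})$ agree, which proves the displayed formula.

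\emph{The converse.} Suppose $\Delta$ lies in a translate $\psi+\sS^\perp$. Choose $k$ with $L^k$ very ample, so that sections of $L^k$ separate points and tangent directions of $X_{\red}$, equivariantly. All $\sT$-weights of $H^0(X_{\red},L^k)$ lie in $k\psi+\sS^\perp$, so $\sS$ acts on the whole space $H^0(L^k)$ by the single character $k\psi|_\sS$. Under the equivariant embedding into $\PP(H^0(L^k)^\vee)$ (times the affine part, whose functions likewise have weights in $\sS^\perp$ because $\cC(X)$ is a difference of weights in $\Delta$'s recession), $\sS$ therefore acts by scalars, hence trivially on the image. It follows that $\sS$ acts trivially on $X_{\red}$. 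The main obstacle is making this embedding argument work in the projective-over-affine stacky setting. Here I would pass to $|X|_{\red}$ with $L^d$ descended, as in the proof of Lemma \ref{lem:polyhedron_structure}, and use the embedding into $\PP^m\times X_{\mathrm{aff}}$ there.

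\emph{The final statement.} By the first part, $\Delta$ lies in a translate of $\sS^\perp$ for any subtorus $\sS$ fixing $X_{\red}$. Let $\sS$ be the biggest such subtorus. If $\Delta$ failed to be full-dimensional in $\psi+\sS^\perp$, its affine span would be a translate of some proper subspace $\sS'^\perp$ with $\sS'\supsetneq\sS$, defined by the integral characters of that span. The converse then shows that $\sS'$ fixes $X_{\red}$, contradicting maximality. Conversely, if $\Delta$ is full-dimensional in $\psi+\sS^\perp$, any larger torus $\sS'$ fixing $X_{\red}$ would force $\Delta$ into a translate of $\sS'^\perp\subsetneq\sS^\perp$, which is impossible.
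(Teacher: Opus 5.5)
The paper gives no proof of this lemma (it is left to the reader, citing Brion for projective $X$). Your argument is correct and is the natural elementary one: compare weight spaces for the translation formula, and for the converse use the same reduction to $|X|_{\red}$ (with $L^d$ descended) and the same equivariant embedding $X\into\PP^m\times\Xaff$ as in the proof of Lemma \ref{lem:polyhedron_structure}.

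Two small points should be made explicit:
\begin{itemize}
\item To get from triviality of $\sS$ on $|X|_{\red}$ back to triviality (of a cover of $\sS$) on the stack $X_{\red}$, note that each orbit map $\sS\to X_{\red}$ then factors through a residual gerbe $B\Gamma_x$. So the infinitesimal $\fs$-action vanishes at every point, hence on the reduced $X_{\red}$, giving $X_{\red}\subseteq X^{\sS}$ in the sense of the Notation.
\item In your last paragraph, record that $\sS$ itself fixes $X_{\red}$ by your converse. Then compare an arbitrary $\sS'$ fixing $X_{\red}$ with the torus $\sS\sS'$, rather than only considering tori that contain $\sS$.
\end{itemize}
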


We leave the elementary proof to the reader; for $X$ projective see \cite[Theorem 1\;(i)]{Brion}.
This allows us to show that $\partial_d\Delta\setminus\partial_{d-1}\Delta$ is locally a finite union of dimension $d$ polyhedra.

\begin{lemma} \label{lem:codimension}
For finitely many subtori $\sS \subseteq \sT$ there is an irreducible component $Y\subseteq X^\sS$ with
$$
        \codim\(\mu(Y)\subset\ft_\Q^\vee\)\ =\ \rank(\sS).
$$
For all other subtori $\sS \subseteq \sT$ and irreducible components $Y \subseteq X^\sS$,
$$
        \codim\(\mu(Y)\subset\ft_\Q^\vee\)\ >\ \rank(\sS) \quad\text{and}\quad
       \mu(Y)\ \subseteq\ \partial_{r-\rank(\sS)-1} \Delta.
$$
\end{lemma}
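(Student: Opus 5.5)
The plan is to reduce to the case of an irreducible reduced $X$ and $L$ descending to $|X|$, as in the proof of Lemma \ref{lem:polyhedron_structure}. This changes nothing about the polyhedra $\mu(Y)$ up to scaling, and we may work one irreducible component at a time. Then we analyse $X^\sS$ for subtori $\sS\subseteq\sT$.

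The first step is finiteness of the set of stabiliser tori. Embed $X$ equivariantly in $\PP^m\times\AA^n$ with linear $\sT$ action. There are only finitely many weights $\chi_1,\dots,\chi_N$, and for any point $x$ the identity component of $\sT_x$ is $\bigcap_{i\in I(x)}\ker\chi_i$ (suitably normalised by differences of weights on the projective factor), where $I(x)$ is the set of weight spaces in which $x$ has nonzero coordinate. Hence the identity components of stabilisers range over a finite set $\mathcal S$ of subtori. For $\sS\notin\mathcal S$ we have $X^\sS=X^{\sS'}$, where $\sS'\supsetneq\sS$ is the smallest element of $\mathcal S$ containing $\sS$. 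So every irreducible component $Y$ of $X^\sS$ is fixed by a strictly larger torus $\sS'$.

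The second step is the dimension count. By Lemma \ref{lem:trivial_act_from_polyhedron}, for any component $Y\subseteq X^\sS$, $\mu(Y)$ lies in a translate of $\sS^\perp$, so $\codim\mu(Y)\ge\rank\sS$. Equality holds exactly when $\sS$ is the largest torus fixing $Y_{\red}$, again by that lemma. Now let $\sS^Y$ be the largest subtorus fixing $Y$; it lies in $\mathcal S$ because a generic point of $Y$ has stabiliser identity component $\sS^Y$. Then the pairs $(\sS,Y)$ with $\codim=\rank\sS$ have $\sS=\sS^Y\in\mathcal S$, and each $X^\sS$ has finitely many components. This gives the first claim.

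For all other pairs, $\sS\subsetneq\sS^Y$, so $\codim\mu(Y)=\rank\sS^Y>\rank\sS$. Moreover every point $y\in Y$ has $\dim\sT_y\ge\rank\sS^Y\ge\rank\sS+1$. Any $\psi\in\mu(Y)$ is realised by some $y\in Y^{L\otimes\Psi^{-1}-\ss}$, and by Remark \ref{rem:ss_on_closed} this $y$ is semistable in $X$. Hence $\psi\in\partial_{r-\rank\sS-1}\Delta$ by the definition of $\partial_d\Delta$.

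The main obstacle is the finiteness step for Deligne--Mumford stacks and nonreduced structure. There, stabilisers include finite groups and the embedding exists only for the coarse space. I would pass to $|X|_{\red}$, noting that the identity components of torus stabilisers and the fixed loci $X^\sS$ (up to nilpotents) agree with those on the coarse space. I would also check carefully the projective-factor normalisation of the weights, using the $\sT$-linearisation of $\cO(1)$.
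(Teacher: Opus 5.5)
Your proof is correct and is essentially the paper's. Both arguments reduce to integral $X$ and use Lemma \ref{lem:trivial_act_from_polyhedron} to get $\codim\mu(Y)\ge\rank\sS$, with equality exactly when $\sS$ is the generic stabiliser torus of $Y$. Both get finiteness from the coordinate description of stabilisers in $\PP^m\times\AA^n$. In the strict case your $\sS^Y$ plays the role of the paper's rank $s+1$ torus $\mathsf U$, and your appeal to Remark \ref{rem:ss_on_closed} usefully makes explicit that semistable points of $Y$ are semistable in $X$.

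One harmless slip: your aside that $X^\sS=X^{\sS'}$ for a smallest $\sS'\in\mathcal S$ containing $\sS$ is false in general. For $(\C^*)^3$ acting on $\PP^3$ with weights $0,e_1,e_2,e_3$ and $\sS=\{(1,t,t)\}$, $X^\sS$ is two lines whose generic stabilisers $\ker e_1$ and $\ker(e_2-e_3)$ are incomparable, and no single $\sS'\supsetneq\sS$ fixes both lines. You never use that claim, though, since $\sS^Y$ does the job.
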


\begin{proof}
We can assume that $X$ is integral. Lemma \ref{lem:trivial_act_from_polyhedron} gives the inequality $\codim(\mu(Y))\ge s:=\rank(\sS)$. Suppose $Y \subseteq X^\sS$ is an irreducible component with $\codim(\mu(Y)) = s$. Let $\sT_Y^0\subseteq\sT_Y\subseteq\sT$ denote the identity component of its stabiliser group. Then $\sS \subseteq \sT_Y^0$ so 
        \begin{align*}
            \rank(\sT_Y^0)\ \le\ \codim(\mu(Y))\ =\  s\ \le\ \rank(\sT_Y^0).
        \end{align*}
Hence $\sS=\sT_Y^0$ is the identity component of the stabiliser group of a generic point of $Y$.

We claim there are only finitely many such subtori of $\sT$. It is enough to prove this for $\PP(V)\times W$, where $V,\,W$ are $\sT$-representations. Write them as sums of 1-dimensional representations $V=\oplus_iV_i,\,W=\oplus_jW_j$ with weights $\psi_i,\,\phi_j$ respectively. The identity component of the stabiliser group of a point $\([x],y\)$ with coordinates $(x_i),\,(y_j)$ is
$$
\sT_{([x],y)}^0\ =\ \bigcap\nolimits_{(i,i')\in I\times I}\ker(\psi_i\psi_{i'}^{-1})\ \cap\ \bigcap\nolimits_{j\in J}\ker(\phi_j),
$$
where $I=\{i\,:\,x_i\ne0\},\,J=\{j\,:\,y_j\ne0\}$. There are only finitely many such subtori.

Conversely, if the inequality $\codim(\mu(Y))\ge s$ is strict then the  polyhedron $\mu(Y)$ lies in an affine subspace $\psi+\mathfrak u_\Q^\vee\subseteq\ft_\Q^\vee$ of codimension $s+1$. This defines a subtorus $\mathsf U\subset\sT$ of rank $s+1$ which stabilises $Y$ by Lemma \ref{lem:trivial_act_from_polyhedron}. Thus $\mu(Y)\subseteq\partial_{r-s-1}\;\Delta$.
\end{proof}

\begin{lemma}\label{lem:interior_polytope}
Let $\sS\subseteq \sT$ be a subtorus of dimension $s$ acting trivially on $X$. Then $\partial^{\sm}_{r-s}\;\Delta = \partial_{r-s}\;\Delta \setminus \partial_{r-s-1}\;\Delta$ lies in the relative interior of $\Delta$.
\end{lemma}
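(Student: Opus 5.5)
The plan is to reduce first to the case that \emph{defines} $\partial_{r-s}$ correctly, and then argue pointwise. We may pass to $X_{\red}$ and work one irreducible component at a time, since all the objects involved (moment polyhedra and the strata $\partial_d$) are insensitive to nilpotents and are unions over components. By Lemma \ref{lem:trivial_act_from_polyhedron}, $\sS$ acting trivially means $\Delta$ lies in a translate $\psi+\sS^\perp$, which has dimension $r-s$. After replacing $\sT$ by $\sT/\sS$ and $L$ by $L\otimes\Psi^{-1}$ (using the translation formula of that Lemma), we reduce to the case $s=0$. Here the claim is that points of $\partial_r\Delta\setminus\partial_{r-1}\Delta$ are interior points of $\Delta$, where the interior is taken in its affine span and $\Delta$ is now full dimensional if $\sS$ is the biggest trivially acting subtorus. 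If some bigger subtorus acts trivially, then $\partial_{r-s}\Delta=\partial_{r-s-1}\Delta$ and the statement is vacuous, so we may assume $\sS$ is maximal.

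Next, take $\psi\in\Delta$ on the topological boundary of $\Delta$; the aim is to show $\psi\in\partial_{r-1}\Delta$. By Lemma \ref{lem:polyhedron_structure}, $\Delta=P+\cC(X)$ is a polyhedron, so $\psi$ lies on a proper face. Choose $\lambda\in\ft_\Q$, primitive integral with cocharacter $\widetilde\lambda$, supporting that face: $\langle\cdot,\lambda\rangle\le\langle\psi,\lambda\rangle$ on $\Delta$, with equality exactly on the face. Take $x\in X^{L\otimes\Psi^{-1}-\ss}$ and an invariant section $s$ of weight $k\psi$ with $s(x)\ne0$. The key step is to show that some semistable point for the same linearisation has positive-dimensional stabiliser, namely one containing $\widetilde\lambda(\C^*)$. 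I would consider the limit $x_0=\lim_{t\to0}\widetilde\lambda(t)^{\pm1}\cdot x$, with the sign chosen so that the limit exists. This uses the fact that all weights of sections of powers of $L\otimes\Psi^{-1}$ pair with $\lambda$ to give $\le0$, together with a Hilbert–Mumford type argument in an equivariant embedding $X\into\PP^m\times\AA^n$. In that embedding, coordinates of weight with positive $\lambda$-pairing vanish on the closure of $\sT\cdot x$ relative to $\psi$. So the orbit closure under $\widetilde\lambda$ is contained in the compact part and the limit exists. Since $s$ is $\sT$-invariant for the twisted linearisation, $s(x_0)=s(x)\ne0$. Hence $x_0$ is $L\otimes\Psi^{-1}$-semistable and fixed by $\widetilde\lambda(\C^*)$, so $\dim\sT_{x_0}\ge1$, giving $\psi\in\partial_{r-1}\Delta$.

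The main obstacle is existence of the limit in the noncompact projective-over-affine setting. The affine coordinates might have the wrong sign of $\lambda$-weight, which would send the orbit off to infinity. To handle this, I would use that the sections with nonvanishing value at $x$ have weights in $k\Delta$, all on one side of the supporting hyperplane. Then I would consider the affine cone over the GIT quotient: the invariant ring $\bigoplus_k H^0(L^k)_{k\psi}$ is nonzero only for weights on the face. So restricting to the closed $\sT$-invariant subvariety $\overline{\sT x}$, whose moment polyhedron lies in $\Delta$ and contains $\psi$ on its boundary, Lemma \ref{lem:polyhedron_structure} applied to the toric variety $\overline{\sT x}$ identifies the face with the polyhedron of a torus-invariant stratum. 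That stratum is fixed by the subtorus annihilating the face's normal directions. It meets $X^{L\otimes\Psi^{-1}-\ss}$ by Remark \ref{rem:ss_on_closed}, which finishes the argument. This orbit-closure reduction to toric geometry is the route I would actually follow if the direct limit argument becomes messy.
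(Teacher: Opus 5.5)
\textbf{The gap.} Your reduction to $s=0$ via Lemma \ref{lem:trivial_act_from_polyhedron} matches the final step of the paper's proof. The core of your limit argument, however, has a genuine gap: the claim that $s(x_0)=s(x)\neq0$ ``since $s$ is $\sT$-invariant'' is false in general.

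Extend $u\mapsto\widetilde\lambda(u)\cdot x$ to $f\colon\AA^1\to X$ with $f(0)=x_0$. Then $f^*(L\otimes\Psi^{-1})^k$ is equivariantly $\cO_{\AA^1}$ twisted by the $\widetilde\lambda$-weight $w$ of its fibre at $x_0$. Its invariant sections are $c\,u^{w}e$ for an equivariant frame $e$. So $s(x_0)\neq0$ if and only if $w=0$, i.e.\ iff the Hilbert--Mumford weight of $x$ along $\widetilde\lambda$ vanishes.

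Your rule ``choose the sign so that the limit exists'' does not guarantee this. In the paper's example \eqref{eg}, take $\psi=1$ (a vertex of $\Delta=[-1,1]$), $\lambda=1$, $s\in H^0(\cO(1))_1$ (nonzero at $0=\PP(t^{-1})$, zero at $\infty=\PP(t)$), and $x$ in the open orbit. Both limits $\lim_{u\to0}u^{\pm1}\cdot x$ exist. One of them is $\infty$, where $s$ vanishes and which is $L\otimes\Psi^{-1}$-unstable.

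So the obstacle you flagged, existence of the limit, is the easy part. Proposition \ref{pro:alg_char_eta}, applied to $\overline{\sT x}$ (whose cone $\cC(\overline{\sT x})$ lies in $\{\lambda\le0\}$), provides $\lim_{u\to0}\widetilde\lambda(u)\cdot x$. Nonvanishing at the limit is exactly where the supporting hyperplane has to be used.

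\textbf{The repair.} Take specifically $x_0=\lim_{u\to0}\widetilde\lambda(u)\cdot x$ with your $\lambda$ (maximised on $\Delta$ at $\psi$), not the limit of $\widetilde\lambda(u)^{-1}$. Let $m$ be the $\widetilde\lambda$-weight of $L|_{x_0}$, so $w=k\bigl(m-\langle\psi,\lambda\rangle\bigr)$.
\begin{itemize}
\item Regularity of $f^*s=c\,u^we$ with $c\neq0$ gives $m\ge\langle\psi,\lambda\rangle$.
\item Conversely, for $j\gg0$ some $\widetilde\lambda$-weight vector of $H^0(X,L^j)$ is nonzero at the $\widetilde\lambda$-fixed point $x_0$, hence has weight $jm$. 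Its $\sT$-weight components lie in $j\Delta$, so the supporting property gives $m\le\langle\psi,\lambda\rangle$.
\end{itemize}
Thus $w=0$, and $x_0$ is an $L\otimes\Psi^{-1}$-semistable point fixed by $\widetilde\lambda(\C^*)$.

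\textbf{Your toric fallback.} This can also be made to work, but it rests on the face--orbit correspondence for the possibly non-normal orbit closure $\overline{\sT x}\subseteq\PP^m\times\AA^n$. That is, each face $F'$ of $\Delta(\overline{\sT x})$ must be the moment polyhedron of an orbit closure with $(r-\dim F')$-dimensional generic stabiliser. Lemma \ref{lem:polyhedron_structure} does not supply this. You would also need the degenerate case $\dim\Delta(\overline{\sT x})<r$, where $x$ itself has positive-dimensional stabiliser.

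\textbf{Comparison with the paper.} Once repaired, your route genuinely differs from the paper's, which argues by contrapositive. If every $L\otimes\Psi^{-1}$-semistable point had finite stabiliser, they would all be stable by \cite[Remark 8.3]{Kirwan}. Openness of stability in the linearisation \cite[Theorem 3.3]{GulbrandsenHalleHulek} would then put a neighbourhood of $\psi$ inside $\Delta$. The paper's argument is two lines given those citations. Yours is more hands-on and exhibits an explicit semistable point with positive-dimensional stabiliser, in effect reproving the needed case of the Hilbert--Mumford criterion.
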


\begin{proof}
We prove the claim for $\sS = \lbrace 1 \rbrace$ first, using the description \eqref{Deltass} of $\Delta\ni\psi$. If $L\otimes\Psi^{-1}$ has stable points then by the openness of stability the same would be true for nearby $\psi'$ (proved for $(\PP^m \times \AA^n,\cO(1))$ by an application of the Hilbert--Mumford criterion in \cite[Theorem 3.3]{GulbrandsenHalleHulek}, for instance). Thus if $\psi$ is in the topological boundary it admits only strictly semistable points. Some of these must have stabilisers of $\dim>0$ by \cite[Remark 8.3]{Kirwan}, so $\psi\in\partial_{r-1}\;\Delta$.

For a general $\sS$ this shows that $\partial^{\sm}_{r-s}\,\Delta^{\sT/\sS}(X,L)$ is in the interior of $\partial_{r-s}\,\Delta^{\sT/\sS}(X,L)$. Since $\Delta^\sT(Y,L) = \psi+\Delta^{\sT/\sS}(X,L)$ by Lemma \ref{lem:trivial_act_from_polyhedron}, this gives the result.
\end{proof}

\bigskip \noindent {\tt{r.ontani@imperial.ac.uk}} \\
\noindent {\tt{richard.thomas@imperial.ac.uk}} \medskip

\noindent Department of Mathematics, Imperial College London, London SW7 2AZ, United Kingdom

\end{document}